\documentclass{article}
\usepackage[utf8]{inputenc}
\usepackage[T1]{fontenc}
\usepackage{textcomp}
\usepackage{amsmath}
\usepackage{amsthm}
\usepackage{amsfonts}
\usepackage{amssymb}

\usepackage{graphicx}
\usepackage{xcolor}

\usepackage{enumerate}

\usepackage{anysize}
\marginsize{2cm}{2cm}{2cm}{2.5cm}

\usepackage[unicode, colorlinks=true,bookmarksdepth=3]{hyperref}
\usepackage[capitalise, nameinlink]{cleveref}

\newtheoremstyle{thm}{}{}{\itshape}{}{\color{blue}\bfseries}{}{ }{} 
\theoremstyle{thm}
\newtheorem{theorem}{Theorem}[section]
\newtheorem{proposition}[theorem]{Proposition}
\newtheorem{lemma}[theorem]{Lemma}
\newtheorem{corollary}[theorem]{Corollary}
\newtheorem{definition}{Definition}[section]

\newtheoremstyle{rmk}{}{}{\small}{}{\color{blue}}{}{ }{}
\theoremstyle{thm}
\newtheorem{remark}{Remark}[section]

\renewenvironment{proof}[1][Proof]{%
  \noindent {\bfseries \color{blue} #1: }%
}{%
  {\hfill \(\Box\) \smallskip}%
}

\numberwithin{equation}{section}
\crefname{equation}{}{} 

\allowdisplaybreaks[1]
\usepackage{fancyhdr}
\begin{document}
\title{Polynomial mixing for stochastic viscous conservation law equation on the whole line}

\author{Peng Gao \thanks{School of Mathematics and Statistics, Center for Mathematics and Interdisciplinary Sciences, Northeast Normal University, Changchun, China. Email: gaopengjilindaxue@126.com} \and Liying Li \thanks{Department of Mathematics, Southern University of Science and Technology, Shenzhen, China. Email: lily@sustech.edu}}

\date{}

\maketitle

\begin{abstract}
  In this paper, we study the long-time behavior of the viscous conservation law equation on the whole real line, subject to a white-in-time force and arbitrarily small damping and viscosity constants. We establish polynomial mixing rates in two settings: (a) a general super-quadratic flux with hyperviscous dissipation, where the growth of the nonlinearity is controlled by the order of hyperviscosity, and the force is sufficiently non-degenerate on a generic~\(L^2\)-basis; and (b) a general quadratic flux with standard second-order dissipation, and the force is non-degenerate on some unconditional basis. Both settings include the classical Burgers-type nonlinearity. The proof is based on an abstract polynomial coupling criterion developed in~\cite{Gao2026PolynomialMixingWhiteforced} and a Foia\c{s}--Prodi estimate. 
\end{abstract}

\medskip
\noindent\textbf{MSC 2020:} 37A25, 37A30, 60H15

\smallskip
\noindent\textbf{Keywords:} polynomial mixing; viscous conservation law equation; viscous Burgers equation; Foia\c{s}--Prodi estimate; coupling method

\tableofcontents

\section{Introduction}\label{sec:introduction}
The viscous conservation law equation is an important class of partial differential equations (PDEs). It provides a fundamental and complete description of real compressible flows across all speed regimes. In this paper, we consider the (damped) viscous conservation law equation with random forcing in one dimension
\begin{equation}\label{eq:conservation-law}
u_t + a u + \nu (-\partial_{xx})^{\gamma/2} u + \bigl( F(u) \bigr)_x = f^{\omega}, \quad \nu,\gamma > 0, \ a \ge 0.
\end{equation}
Equation \cref{eq:conservation-law} captures two competing physical mechanisms: nonlinear convection that steepens flow gradients into shock waves or shear layers, and viscous dissipation that smooths these sharp discontinuities through internal friction and thermal conduction.
When the flux~\(F = u^2/2\) is quadratic, equation \cref{eq:conservation-law} reduces to the Burgers equation, which was introduced initially~\cite{Burger1940HydrodynamicsApplicationModel,Burger1974NonlinearDiffusionEquation} as a simplified fluid equation to study the turbulence.
The addition of a (small) dissipative term is motivated by the vanishing-viscosity selection principle, which states that the physically relevant solutions of the inviscid conservation law can be recovered by the solutions of the viscous equation in the~\(\nu \to 0\) limit.
The selection principle is expected to hold not only for the usual second derivative, but also for general dissipative term. Rigorous results have been established for fractional dissipation~\(\gamma \in (0,2]\)~\cite{Dronio2003VanishingNonlocalRegularization,Alibau2007EntropyFormulationFractal},
while higher-order and fractional dissipations~\(\gamma \ge 2\) extend this mechanism and arise naturally in the study of hyperviscous conservation laws and turbulence~\cite{Tadmor2004BurgersEquationVanishing,Banerj2019FractionalHyperviscosityInduced}.

The primary goal of this paper is to understand the long-time behavior of \cref{eq:conservation-law}, particularly the existence and uniqueness of stationary states and the rate of mixing. The study of this topic has several motivations. From a physical viewpoint, this is a natural model for non-equilibrium statistical dynamics, as the energy injected into the system by the force is balanced by the dissipation effect. With quadratic flux, the stochastic Burgers equation shares similar nonlinearity to the Navier--Stokes nonlinearity, and is one of the simplest models in which aspects of turbulence can be studied rigorously~\cite{BoritcKuksin2021OnedimensionalTurbulenceStochastic,KuksinK41TheoryTurbulence}. The stochastic PDE (SPDE) can also be viewed as an infinite-dimensional random dynamical system or Markov process, and it is interesting to see how dissipation, nonlinear transport, and stochastic forcing interact to determine its ergodic properties.

For the mixing property of \cref{eq:conservation-law} on the real line that is studied in this paper, we have to add a non-zero damping~\(a > 0\) to control the low-frequency mode; this is a unavoidable technical assumption since our coupling techniques depends on partial differential equation energy estimates. For similar problems on bounded domains, such damping effect is already provided by the boundedness of the domain itself, so one can take~\(a = 0\). If merely the existence and uniqueness of stationary solutions are concerned, then certain probabilistic methods can bypass the damping addition, as we will see below. For the long-term behavior of randomly forced viscous conservation laws, there are only a few results available. The invariant measure for such equations was studied in~\cite{debussche2015invariant,chen2019invariant,drivas2023invariant} and the references therein.
Compared with the above works, a notable feature of our contribution is that we are able to establish polynomial mixing on the whole line, a setting not covered in previous studies.

The ergodic proprieties of \cref{eq:conservation-law} and related equations have been extensively studied in the past thirty years. On~\(\mathbb{T}^d\), the results are usually stated for the stochastic Hamilton--Jacobi--Bellman (HJB) equation
\begin{equation}\label{eq:HJB}
\partial_t h + F ( \nabla h ) = \nu \Delta h + \tilde{f}^{\omega}.
\end{equation}
In dimension one, \cref{eq:HJB} is the integrated form of \cref{eq:conservation-law} with~\(a = 0\) and~\(\gamma = 2\), namely, if~\(h\) solves \cref{eq:HJB}, then~\(u = \partial_x h\) solves \cref{eq:conservation-law} with~\(f = \tilde{f}\). The flux function~\(F\) is the Hamiltonian in the HJB equation. In higher dimensions the resulting equation from~\(u = \nabla h\) only generalizes \cref{eq:conservation-law} if additionally~\(u\) and~\(f^{\omega}\) are gradient fields. In this setup, it has been proven that, under fairly general assumptions of the random forcing, in every dimension, for all convex Hamiltonians, for both positive viscosity and zero-viscosity, there is a unique stationary solution~\(u\) for every prescribed average velocity~\(v = \int_{\mathbb{T}^d} u \in \mathbb{R}^d \), a One Force --- One Solution principle holds, and the dynamics is exponentially mixing~\cite{Sinai1991TwoResultsConcerning,EKhaninMazelSinai2000InvariantMeasuresBurgers,IturriKhanin2003BurgersTurbulenceRandom,DirrSougan2005LargetimeBehaviorViscous,GomesIturriKhaninPadill2005ViscosityLimitStationary,BoritcKhanin2013HyperbolicityMinimizers1D,debussche2015invariant,KhaninZhang2017HyperbolicityMinimizersRegularity, Boritc2018ExponentialConvergenceStationary,IturriKhaninZhang2020ExponentialConvergenceSolutions}; moreover, the viscous stationary solutions will converge to the inviscid ones in the~\(\nu \to 0\) limit~\cite{EKhaninMazelSinai2000InvariantMeasuresBurgers,GomesIturriKhaninPadill2005ViscosityLimitStationary}.

On the whole space~\(\mathbb{R}^d\), the ergodic problems of \cref{eq:conservation-law} is much harder, as the compactness is lost in many aspects: the domains, the functional spaces, the embedding structures and so on. There are two kinds of forcing to consider: the first kind of forcing is to take~\(f^{\omega}(t,x)\) as a spatially homogeneous random processes in~\(x\), which is a direct generalization of the periodic setting; the second kind is to allow~\(f^{\omega}(t,x)\) to have suitable decays as~\(\lvert x \rvert \to \infty \), so the noise will provide certain compactification effects.

For undamped Burgers equation with homogeneous force on~\(\mathbb{R}\), there is an analogous picture to the periodic case: for every prescribed properly defined ``average velocity''~\(v\), there exists a unique stationary solutions. This was first established for inviscid Burgers in~\cite{bakhtin2014space,Bakhti2016InviscidBurgersEquation}, and then in the viscous case in~\cite{bakhtin2019thermodynamic}. The inviscid limit of the stationary solutions is proven in~\cite{BakhtiLi2018ZeroTemperatureLimit}.
The exact quadratic form of~\(F\) of the Burgers equation allows reformulation of the ergodic problems in terms of infinite geodesics or infinite-volume polymer measures, and various probabilistic estimates then apply. Similar results are also established in~\cite{dunlap2021stationary} using more a PDE-based approach. Later~\cite{drivas2023invariant} shows existence of stationary solutions for at least some sparse set of velocity. It is worth noting that in all these results, the uniqueness part comes purely from the comparison principle and the ordering of one-dimensional space, and thus cannot be extended to higher dimensions.
Another motivation to study the one-dimensional HJB equation with fast-decorrelated homogeneous force is that it is conjectured to belong to the Kardar–Parisi–Zhang (KPZ) universality class. This has been proven rigorously for many integrable models like the KPZ equation itself, i.e., \cref{eq:HJB} with quadratic~\(F\) and~\(\tilde{f}\) the space-time white noise~\cite{AmirCorwinQuaste2011ProbabilityDistributionFree,MatetsQuasteRemeni2021KPZFixedPoint}, while for general stochastic conservation laws it is wide open; we refer to~\cite{BakhtiKhanin2018GlobalSolutionsRandom} for more discussion in this direction.

For the undamped stochastic Burgers equation with decaying force, using probabilistic representation of the solutions, existence and uniqueness of stationary solutions are also established in some settings~\cite{HoangKhanin2003RandomBurgersEquation,BakhtiKhanin2010LocalizationPerronFrobeniusTheory,Bakhti2013BurgersEquationPoisson}. A exponential mixing rate is only established in~\cite{BakhtiKhanin2010LocalizationPerronFrobeniusTheory} where the forcing creates a deep trap near the origin.
The goal of this paper is to establish mixing rate of~\cref{eq:conservation-law} with more general~\(L^2\)-noises, following the coupling approach developed in~\cite{Shirik2008ExponentialMixingRandomly,KuksinShirik2012MathematicsTwoDimensionalTurbulence,Gao2026PolynomialMixingWhiteforced}.

Mixing rates of SPDEs on the whole space is generally difficult from the viewpoint of PDEs. Even with a positive damping, one still does not have Poincar\'e inequality or compact Sobolev embedding which are essential for the parallel programs in bounded domains.
Recently,~\cite{NersesZhao2024ExponentialMixingWhiteForced} establishes exponential mixing for the complex Ginzburg--Landau equation on the real line by combining weighted Foia\c{s}--Prodi estimates with the classical coupling method, which provides a powerful methodological framework.
The work~\cite{Gao2026PolynomialMixingWhiteforced} shows polynomial mixing of \cref{eq:conservation-law} with~\(\gamma = 4\), by extending the coupling method and establishing an abstract criterion for polynomial mixing of Markov processes.
Here, the order of polynomial mixing is arbitrarily high, but the constant before the decay rate will depend on the order.
This framework is later applied to obtained polynomial mixing for the white-forced Navier-Stokes system in the whole space~\cite{NersesZhao2024PolynomialMixingWhiteforced} and the white-forced wave equation on the whole line~\cite{gao2024polynomialwave}. In all these results, the random force is assumed to take the form
\begin{equation}\label{eq:force-form}
f^{\omega}(t,x) = h + \eta, \quad \eta =  \frac{\partial}{\partial t}(\sum_{ j = 1}^{\infty} b_j e_j(x) \beta_j(t)),
\end{equation}
where~\( \{ e_j(x) \}_{j=1}^{\infty}\) is an orthonormal basis in~\(H = L^2 (\mathbb{R})\) or~\(H^1(\mathbb{R})\),~\(\{\beta_j(t)\}_{j=1}^{\infty}\) is a family of independent Wiener process, and~\(h \) is a fixed element in~\(H\) satisfying some controllability condition. The only non-degeneracy condition imposed is that~\(b_j \neq 0\) for~\(1 \le j \le N\), where~\(N\) depends on the basis and the number~\(a, \nu > 0\).

This paper extends and continues the line of research initiated in~\cite{Gao2026PolynomialMixingWhiteforced}. Our original motivation is to develop a weighted Foia\c{s}--Prodi estimate for the classical Burgers equation where~\(\gamma = 2\). However, the nonlinearity of the Burgers equation and general conservation law is worse than that of the Navier--Stokes equation, in the sense that there is no cancellation of convection term in the absence of divergence-free condition~\(\nabla \cdot u = 0\). We handle the nonlinearity in two approaches. The first approach is to use extra regularity provided by superviscosity~\(\gamma>2\). This generalizes~\cite{Gao2026PolynomialMixingWhiteforced} where~\(\gamma = 4\), and also allows the flux function to grow super-quadratically where the growth exponent is controlled by~\(\gamma\); see \cref{eq:H-noise}. The second approach is to keep the usual second order dissipation, but assume additionally that the~\(L^2\)-basis~\(\{ e_j \}_{j=1}^{\infty}\) appearing in \cref{eq:force-form} also forms \emph{an unconditional basis} in~\(H^s\) for some~\(s > 0\);  see \cref{eq:H-unconditional-basis}. We then use a new Poincar\'e inequality (see \cref{lem:poincare-cutoff-reverse}) to obtain a Foia\c{s}--Prodi estimate that can treat all flux functions with a most quadratic growth.

The remainder of this paper is organized as follows. In \cref{sec:assumpt-main-results}, we present the precise assumptions and main results of this paper. In \cref{sec:mixing-extension}, we first recall the well-posedness theory of~\cref{eq:conservation-law} via the variational framework, and then present the abstract criterion for polynomial mixing of Markov processes in~\cite{Gao2026PolynomialMixingWhiteforced} and sketch the construction of the coupling extension. In~\cref{sec:foia-prodi-estimate}, we derive weighted Foia\c{s}--Prodi estimates for~\cref{eq:conservation-law}, which are the key step in verifying the abstract criterion. In \cref{sec:energy-estimate} we establish several energy estimates and stopping time properties that will be used in the proof of the main theorem, which will be done in \cref{sec:mixing-proof}. In the appendix, we collect some functional inequalities and auxiliary estimates that will be used in this paper.

\subsection*{Notation}
We denote by~\(C, C_a, C_{\nu}\), etc, inessential positive constants that may change from line to line. We use notation~\({\lesssim, \lesssim_a, \lesssim_{\nu}}\), etc, to indicate inequalities valid up to an inessential multiplicative constants such as~\(C, C_a, C_{\nu}\).
We use~\(a\wedge b\) and~\(a \vee b\) to denote the minimum and the maximum of two numbers.

We write~\(H = L^{2}(\mathbb{R})\) and denote its by~\(\lVert \cdot \rVert \), and inner product by~\(\langle \cdot,\cdot \rangle \). For~\(s > 0\),~\(H^s(\mathbb{R}) \) is the Sobolev space with norm
\begin{equation*}
\lVert f \rVert_s = \lVert  ( 1 + \lvert \xi \rvert^2  )^{s/2} \hat{f}(\xi)  \rVert.
\end{equation*}
We denote by~\(D^s\) the Fourier multiplier~\(\lvert \xi \rvert^s \), so that
\begin{equation*}
\lVert D^s f \rVert + \lVert f \rVert  \lesssim \lVert f \rVert_s, \quad s \ge 0.
\end{equation*}

Let \(X\) be a Polish space with a metric \(d_{X}\), the Borel \(\sigma\)-algebra on \(X\) is denoted by \(\mathcal{B}(X)\) and the set of Borel
probability measures by \(\mathcal{P}(X)\). The law of a random element~\(\xi\) is denoted by~\(\mathcal{L}(\xi)\).  We use~\(\mathbb{I}_{\Gamma}\) to denote the indicator function of an event~\(\Gamma\). We denote by~\(B_X(x,R)\) is the open ball in~\(X\) of radius~\(R>0\) centered at~\(x \in X\), and by~\(\bar{B}_X(x,R)\) its closure. \(C_{b}(X)\) is the space of bounded continuous functions \(f:X\rightarrow \mathbb{R}\) endowed with the
norm \(\|f\|_{\infty}=\sup_{u\in X}|f(u)|\).
We write \(C(X)\) when \(X\) is compact. \(L_{b}(X)\) is the space of functions~\(f\in C_{b}(X)\) such that
\begin{equation*}
 \|f\|_{L(X)}=\|f\|_{\infty}+\sup\limits_{u\neq v}\frac{|f(u)-f(v)|}{d_{X}(u,v)}<\infty.
\end{equation*}
The dual-Lipschitz metric on \(\mathcal{P}(X)\) is defined by
\begin{equation*}
\|\mu_{1}-\mu_{2}\|^{*}_{L(X)}=\sup_{\|f\|_{L(X)}\leq 1}|\langle f,\mu_{1}\rangle-\langle f,\mu_{2}\rangle|,\quad \mu_{1},\mu_{2}\in \mathcal{P}(X),
\end{equation*}
where \(\langle f,\mu \rangle=\int_{X}f(u)\mu(du)\). The total variation metric on \(\mathcal{P}(X)\) is defined by
\begin{equation*}
\|\mu_{1}-\mu_{2}\|_{var}=\frac12\sup_{f\in C_b(X),\|f\|_{\infty}\leq1}|\langle f,\mu_{1}\rangle-\langle f,\mu_{2}\rangle|,~~\mu_{1},\mu_{2}\in \mathcal{P}(X).
\end{equation*}

We use~\(\mathbb{P}\) and~\(\mathbb{E}\) to denote the probability measure and expectation on the space of SPDE solutions, and use~\(\mathbf{P}\),~\(\mathbf{E}\) to denote the probability and expectation on the coupling space. We use~\(\mathsf{P}\) and~\(\mathsf{E}\) for a generic probability space in the appendix. We use~\(\mathbf{u}\) to denote a vector~\(\mathbf{u} = (u, \tilde{u})\).

\bigskip
{\bf Acknowledgements.}
Peng Gao and Liying Li are grateful to Professors Konstantin Khanin and Vahagn Nersesyan for their valuable discussions on this problem.
Peng Gao thanks Professor Liying Li for his invitation to Southern University of Science and Technology, and for his hospitality.
Peng Gao thanks Professor Sergei Kuksin for his invitation to Universit\'{e} Paris Cit\'{e}, and for his scientific supervision on ergodicity and mixing for random dynamical systems. Peng Gao thanks the financial support of the China Scholarship Council (No. 202406620219).
This work is supported by National Natural Science Foundation of China (Grant No. 12371188).

\section{Main result}\label{sec:assumpt-main-results}

We consider the stochastic viscous conservation law
\begin{equation}\label{eq:sto-cons-law-assumption}
u_t + a u + \nu (-\partial_{xx})^{1 + \kappa} u + \bigl( F(u) \bigr)_x = h + \eta, \quad u(t,x) : \mathbb{R}^2 \to \mathbb{R},
\end{equation}
where \(\nu, a > 0, \ \kappa \ge 0\), \(\eta\) is given by \cref{eq:force-form}, \(\{ e_j(x) \}_{j=1}^{\infty}\) is an orthonormal basis in \(H := L^2(\mathbb{R})\), and \(\{\beta_j(t)\}_{j=1}^{\infty}\) is a family of independent Wiener processes defined on a filtered probability space \((\Omega, \mathcal{F}, \mathcal{F}_t, \mathbb{P})\) satisfying the usual conditions.
Without loss of generality, we can take
\begin{equation*}
\Omega :=  \mathcal{C}_0 (0, \infty; H)
\end{equation*}
to be the space of continuous function taking values in~\(H\) and vanishing at~\(t = 0\), and~\(\mathcal{F}\) is the completion of the Borel~\(\sigma\)-algebra associated with the local uniform convergence in this space. The operator~\((-\partial_{xx})^{s} = D^{2s}\) in \cref{eq:sto-cons-law-assumption} is the Fourier multiplier defined by
\begin{equation*}
(D^{2s} f)^{\wedge} (\xi) = \lvert \xi \rvert^{2s} \hat{f}(\xi), \quad 2s \ge 0.
\end{equation*}
When~\(\kappa = 0\) and~\(F(u) = u^2/2\), equation \cref{eq:sto-cons-law-assumption} reduces to the classical Burgers equation with damping:
\begin{equation}\label{eq:cburgers}
u_t + a u  + u u_x - \nu u_{xx} = h + \eta.
\end{equation}

We have the following assumptions. First, we assume that the growth and regularity of~\(F'\) is controlled by~\(\kappa_1\) as follows:
\begin{equation}\label{eq:H-growth-of-F}\tag{H1}
 F' \in \mathrm{Lip}_{1+ \kappa_1}, \quad  \kappa_1  \begin{cases}
= 0, & \kappa = 0, \\
\in [0,\kappa), & \kappa > 0,
\end{cases}
\end{equation}
where
\begin{equation*}
\mathrm{Lip}_{1+s} :=  \Bigl\{  G \text{ locally Lipschitz}: \sup_{ x } \frac{\lvert G(x) \rvert }{ \lvert x \rvert^{1+s} } + \sup_{x \neq y } \frac{ \lvert G'(x) - G'(y) \rvert }{ \lvert x-y \rvert^s } < \infty \Bigr\}, \quad s \ge 0.
\end{equation*}
Second, we assume that the noise is sufficiently regular and has mild decay at~\(\infty\):
\begin{equation}\label{eq:H-noise}\tag{H2}
e_j \in H^{1+\kappa}, \quad \mathcal{B}_0 = \sum_{j \ge 1} \lvert b_j \rvert^2 < \infty, \quad \mathcal{B}_1 = \sum_{j \ge 1} \lvert b_j \rvert^2 \lVert e_j \rVert_{1+\kappa}^2 < \infty, \quad \mathcal{B}_{\varphi,m} = \sum_{j = 1}^{\infty} \lvert b_j \rvert^2 \lVert \varphi^m e_j \rVert^2 < \infty,
\end{equation}
where~\(\varphi(x) = \ln (2+x^2)\) and~\(m \ge 1\) satisfies an explicit lower bound~\cref{eq:range-for-m0} depending on~\(\kappa_1, \kappa\); in particular, when~\(\kappa_1 = 0\), \(m = 1\) is sufficient.
Lastly, when~\(\kappa = 0\), we assume additionally that
\begin{equation}\label{eq:H-unconditional-basis}\tag{H3}
\{ e_j \}_{j \ge 1} \text{ is an unconditional basis for~\(H^{s_0}(\mathbb{R}) \) for some~\(s_0 \in (0,1/2) \)}.
\end{equation}

We recall the following definition and result about \emph{unconditional basis} used in this paper; see also~\cite[Chap 7-8]{Heil2011BasisTheoryPrimer}.
\begin{definition}\label{def:schauder-and-uncond-basis}
  Let~\(X\) be a Banach space and~\(\{ x_n \} \subset X\).
  \begin{enumerate}
    \item\label{item:6}~\(\{ x_n \}\) is a Schauder basis for~\(X\) if for every~\(x \in X\), there exist unique scalar~\(a_n(x)\) such that
          \begin{equation}\label{eq:31}
          x = \sum_n  a_n(x) x_n.
        \end{equation}
        \item\label{item:7}~\(\{ x_n \}\) is called an unconditional basis if the series in \cref{eq:31} converges unconditionally.
  \end{enumerate}
\end{definition}

\begin{proposition}\label{prop:unconditional-mapping}
  If~\(\{ e_n \}_{n=1}^{\infty}\) is an orthonormal basis of~\(L^2(\mathbb{R})\) and an unconditional basis for~\(H^s(\mathbb{R})\), then there exist weights~\(\{ w_n \}_{n=1}^{\infty}\) so that the map
  \begin{equation*}
  U: H^s(\mathbb{R}) \to \ell^2(w), \quad U f \mapsto \bigl( \langle f,e_n \rangle_{L^2}  \bigr)_{n=1}^{\infty},
\end{equation*}
is a Hilbert-space isomorphism, that is, for some~\(c, C > 0\),
\begin{equation*}
 c \sum_{n =1}^{\infty} \langle f,e_n \rangle^2  \le \lVert f  \rVert^2_{H^s(\mathbb{R})} \le C \sum_{n =1}^{\infty} \langle f,e_n \rangle^2 .
\end{equation*}
\end{proposition}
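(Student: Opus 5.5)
The proof identifies the weights explicitly as \(w_n = \lVert e_n \rVert_{H^s}^2\), reduces the claim to the statement that a normalized unconditional basis of a Hilbert space is a Riesz basis, and proves that by averaging over random signs. We read the displayed inequality as the weighted one, \(c \sum_n w_n \langle f,e_n\rangle^2 \le \lVert f\rVert_{H^s}^2 \le C \sum_n w_n \langle f,e_n\rangle^2\), which is the norm of \(\ell^2(w)\). The unweighted version printed in the statement cannot hold in general, since \(\lVert e_n\rVert_{H^s}\) may be unbounded.

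\textbf{Step 1: the coefficient functionals are the \(L^2\) inner products.} Let \(f \in H^s\). By the basis property, \(f = \sum_n a_n(f) e_n\) with convergence in \(H^s\). Since \(\lVert \cdot \rVert \le \lVert \cdot \rVert_s\), the series also converges in \(L^2\). Pairing with \(e_m\) and using \(L^2\)-orthonormality gives \(a_m(f) = \langle f, e_m\rangle\). Hence \(Uf\) is exactly the coefficient sequence of \(f\), and \(U\) is injective because the expansion is unique.

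\textbf{Step 2: normalize.} Put \(x_n = e_n/\lVert e_n\rVert_s\), which is a normalized unconditional basis of the Hilbert space \(H^s\). The coefficients of \(f\) with respect to \(x_n\) are \(c_n = \lVert e_n\rVert_s \langle f,e_n\rangle\), so \(\sum_n |c_n|^2 = \sum_n w_n \langle f,e_n\rangle^2\). It therefore suffices to show that \(\{x_n\}\) is a Riesz basis, i.e.
\begin{equation*}
c \sum_n |c_n|^2 \le \Bigl\lVert \sum_n c_n x_n \Bigr\rVert_s^2 \le C \sum_n |c_n|^2
\end{equation*}
for all finitely supported sequences.

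\textbf{Step 3: the Riesz basis property (the main obstacle).} This is the Lorch / K\"othe--Toeplitz theorem, which we could cite from \cite[Chap.~7--8]{Heil2011BasisTheoryPrimer}. A self-contained argument uses the unconditional basis constant. By the uniform boundedness principle there is \(K\) such that
\begin{equation*}
\Bigl\lVert \sum_n \varepsilon_n c_n x_n \Bigr\rVert_s \le K \Bigl\lVert \sum_n c_n x_n \Bigr\rVert_s
\end{equation*}
for every choice of signs \(\varepsilon_n = \pm 1\). In a Hilbert space, expanding the square and averaging over independent Rademacher signs kills the cross terms, so
\begin{equation*}
\mathsf{E}\Bigl\lVert \sum_n \varepsilon_n c_n x_n \Bigr\rVert_s^2 = \sum_n |c_n|^2 \lVert x_n\rVert_s^2 = \sum_n |c_n|^2 .
\end{equation*}
Combining the two facts gives \(K^{-2} \sum_n |c_n|^2 \le \lVert \sum_n c_n x_n\rVert_s^2 \le K^2 \sum_n |c_n|^2\). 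For the lower bound, apply the sign inequality with the roles of \(\sum_n c_n x_n\) and \(\sum_n \varepsilon_n c_n x_n\) swapped, then average.

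\textbf{Step 4: isomorphism.} The estimate in Step 3 extends by density to all of \(H^s\), which gives boundedness of \(U\) and of its inverse on the range. For surjectivity, take any \((\alpha_n) \in \ell^2(w)\). The partial sums of \(\sum_n \alpha_n e_n\) are Cauchy in \(H^s\) by the upper bound, so the series converges to some \(f \in H^s\). By Step 1, \(Uf = (\alpha_n)\).
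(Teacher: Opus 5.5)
Your proof is correct and follows essentially the route behind the paper's citation of \cite[Chap.~7--8]{Heil2011BasisTheoryPrimer}: take \(w_n=\lVert e_n\rVert_s^2\), use that a normalized unconditional basis of a Hilbert space is a Riesz basis (which you prove by Rademacher averaging), and you are right that the displayed inequality must carry the weights \(w_n\), since without them it would say \(\lVert f\rVert_s\asymp\lVert f\rVert\) by Parseval, which fails for every \(s>0\). One labeling slip: averaging the sign inequality as you state it gives the lower bound \(\sum_n|c_n|^2\le K^2\lVert\sum_n c_nx_n\rVert_s^2\), while the swapped version gives the upper bound, not the lower one.
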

A canonical example for~\(\{ e_n \}_{n=1}^{\infty}\) satisfying \cref{eq:H-unconditional-basis} are sufficiently smooth wavelet functions; see~\cite[Proposition 1, Chapter 6.4]{Meyer1992WaveletsOperators}.

Based on the well-posedness theory of stochastic viscous conservation law equation \cref{eq:conservation-law} in \cref{sec:well-posedness-spde}, the white-forced viscous conservation law equation \cref{eq:conservation-law} defines a Markov family \((u_{t}, \mathbb{P}_{u})\) parameterized by the initial condition \(u\in H\).
Let \(P_{t}(u,A):=\mathbb{P}(u_{t}(u,\omega)\in A)\) be the transition function. We introduce the Markov operators \(\mathcal{B}_{t}\) and \(\mathcal{B}_{t}^{*}\) associated with \(P_{t}\); see \cref{sec:AC} for the detailed definitions.

If \(\mu\in \mathcal{P}(H)\) such that \(\mathcal{B}_{t}^{*}\mu=\mu\) for any \(t\geq0\), the measure \(\mu\) is called an invariant measure for the Markov family \((u_{t}, \mathbb{P}_{u})\).
The invariant measure is a powerful tool for describing the long-term behavior of random dynamical systems.

The main result in this paper is the following.
\begin{theorem}\label{thm:main-theorem}
Let~\(a, \nu > 0\) and~\(\kappa \ge 0\). Assume \cref{eq:H-growth-of-F}, \cref{eq:H-noise} and additionally \cref{eq:H-unconditional-basis} if~\(\kappa = 0\). Then there exists an integer~\(N \ge 1\) such that as long as
\begin{equation*}
b_j \neq 0, \quad 1\le j \le N,
\end{equation*}
and
\begin{equation}\label{eq:span-assumption}
h \in \mathrm{span} \{ e_1, \dotsc, e_N \}
\end{equation} holds,
the white-forced viscous conservation law equation \cref{eq:conservation-law} admits a unique invariant measure \(\mu\in \mathcal{P}(H)\) with polynomial mixing rate~\((1+t)^{-p}\), with~\(p \in (1, q^{\ast}/2-2)\). Here, the constant~\(q^{\ast} = q^{\ast}(\kappa_1, \kappa, m) \in (6, \infty]\) is explicitly defined in \cref{eq:def-q-star}, equals to~\(\infty\) when~\(\kappa_1 = 0\), and grows linearly in~\(m\) for large~\(m\) when~\(\kappa_1 > 0\).
Moreover, for any~\(p \in (1, q^{\ast}/2-2)\), there is a constant~\(C_p > 0\) such that
\begin{equation}\label{PMV}
\Big\lvert  \mathbb{E} f \bigl( u(t) \bigr) - \int_H f (u) \, \mu(du)  \Big\rvert   \le C_p \lVert f \rVert_{\mathrm{Lip}} (1+t)^{-p} \bigl(  1  + \lVert u_0 \rVert^2  \bigr), \quad t \ge 0,
\end{equation}
for any initial data~\(u_0 \in H\) and any bounded Lipschitz-continuous function~\(f: H \to \mathbb{R}\).
\end{theorem}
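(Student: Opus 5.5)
The proof will verify the hypotheses of the abstract polynomial-mixing criterion of~\cite{Gao2026PolynomialMixingWhiteforced}, recalled in \cref{sec:mixing-extension}. That criterion asks for three things. First, a recurrence property: the hitting time of a ball \(B_H(0,d)\) by the pair \(\mathbf{u}=(u,\tilde u)\) must have polynomial moments of order \(q\), bounded by \(C(1+\lVert u_0\rVert^2+\lVert\tilde u_0\rVert^2)\). Second, a polynomial squeezing property for a coupling extension \((\mathbf{u}_t,\mathbf{P}_{\mathbf{u}})\). Third, a weak-topology continuity estimate for starting points inside the ball. The exponent \(q\) fixes the rate, and \(p<q^{\ast}/2-2\) reflects the loss in the criterion. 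Once these are checked, the criterion gives a unique invariant measure \(\mu\) together with \cref{PMV}.

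\textbf{Step 1 (energy estimates).} Apply Itô's formula to \(\lVert u\rVert^2\). The flux term vanishes because \(\int F(u)_x u\,dx=0\). With \(a>0\) this yields exponential dissipation of \(\mathbb{E}\lVert u\rVert^2\), and also bounds on \(\nu\int_0^t\lVert D^{1+\kappa}u\rVert^2\) together with exponential supermartingale estimates. Next I apply Itô's formula to the weighted quantities \(\lVert\varphi^m u\rVert^2\) and to the \(H^{1+\kappa}\)-norm. In the weighted estimate the commutator \(\int F(u)(\varphi^{2m})'u\) appears. Since \(\lvert(\varphi^{2m})'\rvert\lesssim\varphi^{2m-1}/(1+\lvert x\rvert)\) and \(F(u)u\lesssim\lvert u\rvert^{3+\kappa_1}\), I control this term by Gagliardo--Nirenberg interpolation between \(\lVert u\rVert\) and \(\lVert D^{1+\kappa}u\rVert\). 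This is where \(\kappa_1<\kappa\) is needed, and where the admissible moment order depends on \(m\); the resulting threshold defines \(q^{\ast}\) through \cref{eq:range-for-m0}. From these estimates I obtain polynomial moments of the hitting times of a ball in \(H\), by the standard stopping-time and Lyapunov argument. The damping \(a\) is what drives small-ball recurrence, and the hypothesis \(h\in\mathrm{span}\{e_1,\dots,e_N\}\) is used to make the ball \(B_H(0,d)\) reachable with positive probability.

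\textbf{Step 2 (Foiaş--Prodi estimate, the main obstacle).} Let \(P_N\) be the projection onto \(\mathrm{span}\{e_1,\dots,e_N\}\). I will show that if \(u\) and \(v\) solve \cref{eq:sto-cons-law-assumption}, the latter with an extra control \(-\lambda P_N(v-u)\), then
\[
\lVert u(t)-v(t)\rVert^2\le e^{-at/2}\lVert u(0)-v(0)\rVert^2
\]
on the event where the energy functionals from Step 1 grow at most linearly. For \(w=u-v\), the nonlinear term is \(\int (F(u)-F(v))_x w=-\int G\,w\,w_x\) with \(G=\int_0^1F'(v+\theta w)\,d\theta\). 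Integrating by parts gives \(\tfrac12\int G_x w^2\), which requires control of \(\lVert G_x\rVert_{L^\infty}\) and has no cancellation.

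For \(\kappa>0\) I handle this term with hyperviscosity. I bound \(\lVert G_x\rVert_\infty\lesssim(1+\lVert u\rVert_\infty+\lVert v\rVert_\infty)^{\kappa_1}(\lVert u_x\rVert_\infty+\lVert v_x\rVert_\infty)\) and interpolate in time. The high modes \((I-P_N)w\) are absorbed by \(\nu\lVert D^{1+\kappa}w\rVert^2\) via a weighted decomposition: localize with a cutoff of radius \(R\), use the decay supplied by \(\varphi^m\) outside that radius, and use the fact that \(\lVert(I-P_N)\chi_R f\rVert\to0\) as \(N\to\infty\) on the \(H^{1+\kappa}\) ball.

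For \(\kappa=0\) the spatial regularity is insufficient. There I use \cref{eq:H-unconditional-basis} and \cref{prop:unconditional-mapping} to get the reverse Poincaré inequality \(\lVert(I-P_N)w\rVert_{H^{s_0}}\)-type control of \cref{lem:poincare-cutoff-reverse}, namely \(\lVert Q_N w\rVert\le\varepsilon_N\lVert Q_N w\rVert_{s_0}\), combined with \(\lvert\int G_x w^2\rvert\lesssim\lVert u_x\rVert\,\lVert w\rVert_{L^4}^2\) and \(F'\) Lipschitz. I expect closing this step to be the hardest part: the constants must depend only on time-integrated energies, so that the exceptional event has polynomially small probability.

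\textbf{Step 3 (coupling and conclusion).} Following \cref{sec:mixing-extension}, I build the coupling by the Girsanov theorem and a maximal coupling of the laws of \(u\) and \(v\) on \([0,T]\). The total-variation cost is bounded by \(\lambda^2\mathbb{E}\int\lVert P_N w\rVert^2\), which the estimate of Step 2 makes small. The non-degeneracy assumption \(b_j\neq0\) for \(j\le N\) is what makes the control \(\lambda P_N w\) lie in the Cameron--Martin space. Iterating over the stopping times, the squeezing probability is positive and the failure times have polynomial tails with the moments from Step 1. Feeding the recurrence, squeezing and continuity estimates into the abstract criterion gives \cref{PMV} for every \(p\in(1,q^{\ast}/2-2)\).
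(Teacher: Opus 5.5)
There is a genuine gap: the energy functionals on which you build the good event are infinite for general $u_0\in H$. You apply It\^o's formula to $\lVert\varphi^m u\rVert^2$ with the time-independent weight, and to $\lVert u\rVert_{1+\kappa}^2$. You then run the Foia\c{s}--Prodi argument on the event where these grow linearly from their initial values. But \cref{THACPM} requires three things: squeezing uniformly for $\mathbf{u}$ in a ball of $H\times H$, recurrence bounds depending only on $\lVert u\rVert$ and $\lVert u'\rVert$, and a final constant depending only on $\lVert u_0\rVert$.

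There are $u_0\in H$ with $\lVert\varphi u_0\rVert=\infty$, for instance $u_0=\lvert x\rvert^{-1/2}(\ln\lvert x\rvert)^{-1}$ for $\lvert x\rvert\ge e$. Nothing in the dynamics creates the missing spatial decay: not the damped dissipative semigroup, not the conservative flux, not the noise. So in general $\lVert\varphi u(t)\rVert=\infty$ for all $t$. Likewise $\lVert u_0\rVert_{1+\kappa}=\infty$ in general. Your good event can therefore have probability zero, and squeezing cannot hold on $B_H(0,\delta)^2$.

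The missing idea is the space--time weight $\psi(t,x)=\varphi(x)[1-e^{-t/\varphi(x)}]$ of \cref{eq:def-psi}. It has three properties that matter:
(i) $\psi(0,\cdot)=0$, so $\mathcal{E}^{\psi}_u(0)=0$ for every $u_0\in H$.
(ii) $\lvert\partial_t\psi\rvert\lesssim1$, so the extra It\^o term is bounded by the unweighted norm, as in \cref{eq:time-derivative}.
(iii) $\psi\le\min(t,\varphi)$ becomes large on $\{\lvert x\rvert\ge A\}$ only after a time $T_\varepsilon$.
Property (iii) is why \cref{lem:FP-estimate} holds only for $t\ge T_\varepsilon$. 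It is preceded by the crude Gronwall bound \cref{lem:w-naive-growth-bd} on $[0,T_\varepsilon]$, which produces the prefactor $e^{C_{K,\epsilon}\rho+\epsilon R^{2\bar p}/2}$ in \cref{eq:L2-norm-decay-under-coupling}; your clean $e^{-at/2}$ bound is too optimistic. Similarly, the paper never uses an $H^{1+\kappa}$ It\^o estimate. It only uses the dissipation integrals $\int_0^t\lVert u\rVert^{2(p-1)}\lVert u\rVert^2_{1+\kappa}\,ds$, which are finite for all $u_0\in H$.

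This also undermines your Step 2, in three places.

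For $\kappa>0$, bounding $\lVert G_x\rVert_{L^\infty}$ needs $u\in H^{3/2+}$. The available dissipation does not give this when $\kappa\le1/2$, and your $H^{1+\kappa}$ estimate is unavailable for $u_0\in H$. The paper never uses $\lVert G_x\rVert_{L^\infty}$. Instead it places $\mathsf{Q}_N\chi_A$ on the products $w_xF'(z)$ and $w\,\partial_xF'(z)$, which lie in $H^{\kappa'}$, and applies \cref{lem:poincare-cutoff}. This is where $\kappa>0$ and $\kappa_1<\kappa$ are needed (\cref{eq:sharp-bound-for-kappa}). The weighted energy estimate only needs $\kappa_1<2\kappa$.

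For $\kappa=0$, the global inequality $\lVert\mathsf{Q}_Nw\rVert\le\varepsilon_N\lVert\mathsf{Q}_Nw\rVert_{s_0}$ that you state is false on $\mathbb{R}$. Translate a fixed band-limited bump to infinity: its $\mathsf{P}_N$-part tends to zero, while its $H^{s_0}$-norm stays comparable to its $L^2$-norm. \cref{lem:poincare-cutoff-reverse} only controls $\chi_A\mathsf{Q}_N$, and the exterior region again needs $\psi$.

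Your feedback $-\lambda\mathsf{P}_N(v-u)$, in place of the paper's drift $\mathsf{P}_N[F(u)_x-F(v)_x]$, is a workable variant. However, it leaves the low-mode term $\langle wG,\partial_x\mathsf{P}_Nw\rangle$ in the $w$-equation. You must then absorb it by $\lambda\lVert\mathsf{P}_Nw\rVert^2$, with $\lambda$ chosen after $N$ and $K$. The paper's drift removes this term exactly.

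Finally, condition (2) of \cref{THACPM} is the moment bound $\mathbb{E}_u g(\lVert u_t\rVert)\le\tilde g(\lVert u\rVert)$, supplied by \cref{lem:L2-moment-growth}. It is not a continuity estimate.
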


\begin{remark}
The estimate \cref{PMV} is equivalent to the following estimate
\begin{equation*}
\begin{split}
\|P_{t}(u_{0},\cdot)-\mu\|_{L(H)}^{*}\leq C_p(1+\|u_{0}\|^{2})(1+t)^{-p},~t\geq 0,
\end{split}
\end{equation*}
and also to
\begin{equation*}
\begin{split}
\|\mathcal{B}_{t}^{*}\lambda-\mu\|_{L(H)}^{*}\leq C_p(1+\|u_{0}\|^{2})(1+t)^{-p},~t\geq 0,
\end{split}
\end{equation*}
for any \(\lambda\in \mathcal{P}(H)\).
\end{remark}

\cref{thm:main-theorem} provides a polynomial mixing result for the white-forced classical viscous Burgers equation~\cref{eq:cburgers}. For future reference, we state it explicitly as follows.
\begin{corollary}
Let~\(a, \nu > 0\). Assume \cref{eq:H-growth-of-F}, \cref{eq:H-noise} and \cref{eq:H-unconditional-basis}. Then, there exists an integer~\(N \ge 1\) such that as long as
\begin{equation*}
b_j \neq 0, \quad 1\le j \le N,
\end{equation*}
and \cref{eq:span-assumption} holds,
the white-forced classical viscous Burgers equation~\cref{eq:cburgers} admits a unique invariant measure~\({\mu\in \mathcal{P}(H)}\) with polynomial mixing rate~\((1+t)^{-p}\), for any~\(p > 1\). More precisely, for every~\(p > 1\), there exists a constant~\(C_p > 1\) such that
\begin{equation*}
\begin{split}
\Big\lvert  \mathbb{E} f \bigl( u(t) \bigr) - \int_H f (u) \, \mu(du)  \Big\rvert   \le C_p \lVert f \rVert_{\mathrm{Lip}}  \bigl(  1  + \lVert u_0 \rVert^2  \bigr) (1+t)^{-p}, \quad t \ge 0, \quad u_0 \in H.
\end{split}
\end{equation*}
\end{corollary}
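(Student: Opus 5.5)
The corollary is the special case \(\kappa = 0\), \(F(u) = u^2/2\) of \cref{thm:main-theorem}. The plan is to check that this case satisfies the hypotheses of the theorem and then read off the conclusion.

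First, \cref{eq:H-growth-of-F}. For \(\kappa = 0\) we must take \(\kappa_1 = 0\), so we need \(F' \in \mathrm{Lip}_1\). Here \(F'(u) = u\), hence \(\lvert F'(x)\rvert/\lvert x\rvert = 1\) and \(G' = 1\) is constant, so the Hölder-type seminorm with exponent \(s = 0\) (that is, \(\sup_{x\neq y}\lvert G'(x)-G'(y)\rvert\)) vanishes. Thus \(F' \in \mathrm{Lip}_1\).

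Second, the remaining hypotheses. The corollary assumes \cref{eq:H-noise} and \cref{eq:H-unconditional-basis} directly. Since \(\kappa_1 = 0\), the text after \cref{eq:H-noise} notes that \(m = 1\) already meets the lower bound \cref{eq:range-for-m0}, so the weighted noise condition is consistent with the assumed one. Equation \cref{eq:sto-cons-law-assumption} with \(\kappa = 0\) and \(F = u^2/2\) is exactly \cref{eq:cburgers}. Hence \cref{thm:main-theorem} applies with the same \(N\), and the nondegeneracy conditions \(b_j \neq 0\) for \(j \le N\), together with \cref{eq:span-assumption}, carry over unchanged.

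Third, the range of \(p\). The theorem states that \(q^\ast(\kappa_1,\kappa,m) = \infty\) when \(\kappa_1 = 0\). The admissible interval \((1, q^\ast/2 - 2)\) is therefore \((1,\infty)\), so every \(p > 1\) is allowed. For each such \(p\), \cref{PMV} gives the stated bound with a constant \(C_p\); replacing \(C_p\) by \(\max(C_p, 2)\) gives \(C_p > 1\) if needed. Uniqueness and existence of \(\mu\) come from the theorem as well.

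The only point needing care is the interpretation of \(\mathrm{Lip}_{1+s}\) at \(s = 0\) and confirming \(q^\ast = \infty\) via \cref{eq:def-q-star}, which the theorem statement asserts explicitly. No further obstacle is expected.
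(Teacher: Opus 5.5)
Your proposal is correct and matches the paper, which gives no separate proof and presents the corollary as the case \(\kappa=\kappa_1=0\), \(F(u)=u^2/2\) of \cref{thm:main-theorem}. Your checks are exactly what that specialization needs: \(F'(u)=u\in\mathrm{Lip}_1\), \(m=1\) meets \cref{eq:range-for-m0}, and \(q^{\ast}=\infty\) makes the admissible range of \(p\) equal to \((1,\infty)\).
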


\section{Coupling method}\label{sec:mixing-extension}

\subsection{Well-posedness of stochastic conservation law}\label{sec:well-posedness-spde}

In this section we sketch the well-posedness theory of \cref{eq:conservation-law}, based on the variational framework in~\cite{AgrestVeraar2024CriticalVariationalSetting}. The setting is simplified by the additive noise and polynomial nonlinearity.

Let~\({V = V_1 = H^{\gamma}}\) and~\({V^{\ast} = V_0 = H^{-\gamma}}\), where~\(\gamma = 1 + \kappa\), and~\(V_{\theta} = [V_0, V_1]_{\theta} \) be the interpolation space. In particular, \(H = H^{\ast} = L^2(\mathbb{R}) = V_{1/2}\).
 We rewrite \cref{eq:conservation-law} as
\begin{equation*}
d u(t) + A \bigl(t, u(t) \bigr) \, dt  = Q dW_t,
\end{equation*}
where~\(W\) is a cylindrical Wiener process on~\(H\), and
\begin{equation*}
Q: H \to H; \quad v \mapsto \sum_{j  = 1}^{\infty} b_j \langle v, e_j \rangle e_j
\end{equation*}
is a Hilbert–Schmidt operator due to the condition~\(\sum_{j = 1}^{\infty} b_j^2 < \infty\).
The operator~\(A\) takes the forms
\begin{equation*}
  A (t, u)  = A_0 u - \mathcal{F}(u) - h, \quad A_{0} u =\nu ( - \partial_{xx} )^{\gamma} u + a u , \quad \mathcal{F}(u) = - (F(u))_x,
\end{equation*}
where~\(A_0 \in L (V, V^{\ast})\),~\(\mathcal{F}: V \to V^{\ast}\),~\(h \in L^2 \subset V^{\ast}\).
Using the above notation, the following results is stated in~\cite[Section 3.1]{AgrestVeraar2024CriticalVariationalSetting}.
\begin{theorem}\label{thm:well-posedness}
  Suppose~\(A\) satisfies
  \begin{enumerate}
    \item (coercivity) there exists~\(\theta > 0\) such that
          \begin{equation*}
          \langle A_0 u, u  \rangle \ge \theta \lVert u \rVert^2_V;
        \end{equation*}
    \item\label{item:4} (boundedness of~\(A_0\)) there exists~\(C > 0\) such that
          \begin{equation*}
          \lVert A_0 w \rVert_{V^{\ast}}   \le C \lVert w \rVert_V;
        \end{equation*}
    \item\label{item:5} (Lipschitzness and local monotonicity) there exist~\(\rho \ge 0\) and~\(\beta \in (1/2,1)\) such that
          \begin{equation}\label{eq:index-in-monotonicity}
            2 \beta \le 1+ \frac{1}{ \rho + 1},
          \end{equation}
          so that
          \begin{equation}
          \label{eq:local-monotonicity}
            \lVert \mathcal{F}(u) - \mathcal{F}(v) \rVert_{V^{\ast}}    \lesssim  ( 1 + \lVert  u \rVert_{V_{\beta}}^{\rho} + \lVert v \rVert_{V_{\beta}}^{\rho}    ) \lVert  u - v \rVert_{V_{\beta}}.
          \end{equation}
  \end{enumerate}

  Then, for all~\(u_0 \in H\), there exists a unique (probability) strong solution~\(u \in \mathcal{C} \bigl( [0,\sigma); H \bigr) \cap L_{\mathrm{loc}}^2 \bigl( [0,\sigma); V \bigr)\), where~\(\sigma \in (0,\infty]\) is blow-up time, and
  \begin{equation*}
  \mathsf{P} \bigl(  \sigma < \infty, \ \sup_{ t \in [0,\sigma)}  \lVert u(t) \rVert^2_H + \int_0^{\sigma} \lVert u(t) \rVert_V^2 \, dt < \infty  \bigr) = 0.
  \end{equation*}
\end{theorem}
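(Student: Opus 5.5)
The statement is quoted from~\cite[Section~3.1]{AgrestVeraar2024CriticalVariationalSetting}, so the proof should mainly check that our setting fits that framework. For completeness I would also give a self-contained argument that uses the additive structure of the noise.

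\emph{Step 1 (removing the noise).} Let~\(z\) be the stochastic convolution, i.e.\ the solution of the linear equation~\(dz + A_0 z\,dt = Q\,dW\) with~\(z(0)=0\). By coercivity and boundedness of~\(A_0\), and since~\(Q\) is Hilbert--Schmidt, the standard linear variational theory (It\^o formula for~\(\lVert z\rVert^2\)) gives
\begin{equation*}
z \in \mathcal{C}([0,T];H)\cap L^2(0,T;V)
\end{equation*}
almost surely, for every~\(T\). Then~\(u\) solves the problem if and only if~\(v = u - z\) solves, pathwise, the random PDE
\begin{equation*}
\partial_t v + A_0 v = \mathcal{F}(v+z) + h, \qquad v(0)=u_0 .
\end{equation*}
This reduces everything to a deterministic problem for each fixed~\(\omega\), with~\(z\) as a given coefficient.

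\emph{Step 2 (local existence by contraction).} I would use maximal regularity of the coercive operator~\(A_0\) in the pair~\((V,V^{\ast})\). If~\(g \in L^2(0,T;V^{\ast})\), the solution of~\(\partial_t w + A_0 w = g\) lies in~\(\mathcal{C}([0,T];H)\cap L^2(0,T;V)\), with norm bounded by~\(\lVert w(0)\rVert + \lVert g\rVert_{L^2V^{\ast}}\). To close a fixed point in this space, I bound the nonlinearity using~\eqref{eq:local-monotonicity} together with the interpolation inequality
\begin{equation*}
\lVert w\rVert_{V_\beta}\le \lVert w\rVert_H^{2-2\beta}\lVert w\rVert_V^{2\beta-1}.
\end{equation*}
This gives
\begin{equation*}
\lVert \mathcal{F}(u)-\mathcal{F}(v)\rVert_{L^2_tV^{\ast}} \lesssim \Bigl\lVert \bigl(1+\lVert u\rVert_{V_\beta}^{\rho}+\lVert v\rVert_{V_\beta}^{\rho}\bigr)\lVert u-v\rVert_{V_\beta}\Bigr\rVert_{L^2_t}.
\end{equation*}
The condition~\eqref{eq:index-in-monotonicity} is exactly~\((\rho+1)(2\beta-1)\le 1\), i.e.\ the total power of~\(\lVert\cdot\rVert_V\) is at most~\(1\). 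By H\"older in time, the right-hand side is controlled by~\(\sup_t\lVert\cdot\rVert_H\) raised to some power, times~\(\lVert\cdot\rVert_{L^2_tV}\) raised to power at most one. It also gains a factor~\(T^{\epsilon}\) in the subcritical case~\((\rho+1)(2\beta-1)<1\). In the critical case there is no such factor; instead I use smallness of~\(\lVert z\rVert_{L^2(0,T;V)}\) and of the free linear evolution of~\(u_0\) in~\(L^2(0,T;V)\) as~\(T\to0\), which holds by absolute continuity of the integral. Either way the map is a contraction on a small ball for small~\(T\), which gives a unique local solution. Uniqueness in the whole class~\(\mathcal{C}H\cap L^2V\) follows from an energy estimate for the difference, using the same bound and Gronwall's lemma.

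\emph{Step 3 (maximal solution and blow-up criterion).} Gluing the local solutions gives a maximal solution on~\([0,\sigma)\). Measurability and adaptedness of~\(\sigma\) and~\(u\) follow because everything is constructed pathwise from~\(z\), which is adapted. The main obstacle is the blow-up alternative in the critical case. There the local existence time is not controlled by~\(\lVert v(t_0)\rVert_H\) alone. It depends on how fast the linear flow from~\(v(t_0)\) becomes small in~\(L^2V\), and that is not uniform over bounded sets of~\(H\).

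To handle this, I argue by contradiction. Suppose~\(\sigma<\infty\) while~\(\sup_{[0,\sigma)}\lVert u\rVert_H + \int_0^\sigma\lVert u\rVert_V^2<\infty\). Then~\(\int_{t_0}^{\sigma}\lVert v\rVert_V^2\to0\) as~\(t_0\uparrow\sigma\), so on~\([t_0,\sigma)\) the nonlinear term is small in~\(L^2V^{\ast}\) by the Step 2 estimate. The maximal-regularity identity then makes~\(v\) Cauchy in~\(H\) as~\(t\uparrow\sigma\). Hence~\(v(\sigma^-)\) exists in~\(H\), and restarting the local theory from~\(v(\sigma^-)\) contradicts the maximality of~\(\sigma\). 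This yields the stated almost-sure blow-up criterion.
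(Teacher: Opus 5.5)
Your argument is correct, but it takes a different route from the paper. The paper gives no proof of this statement: it imports it from \cite[Section~3.1]{AgrestVeraar2024CriticalVariationalSetting} and then only checks hypothesis (3) for the flux. The Agresti--Veraar framework works directly on the SPDE using stochastic maximal regularity, and it covers multiplicative, time-dependent noise; what the citation buys is that generality at no cost.

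You instead use the fact that the noise here is additive. Subtracting the stochastic convolution \(z\) turns the problem into a random PDE, which you solve path by path with Lions' theorem in \((V,V^{\ast})\), the interpolation \(\lVert w\rVert_{V_\beta}\le\lVert w\rVert_H^{2-2\beta}\lVert w\rVert_V^{2\beta-1}\), a fixed point and Gronwall. This is elementary and self-contained, and the blow-up alternative becomes a deterministic statement for each path of \(z\) with the regularity from Step 1. Reading \(2\beta\le 1+\frac{1}{\rho+1}\) as \((\rho+1)(2\beta-1)\le 1\) is exactly what makes the H\"older-in-time bound work. It also makes the Gronwall uniqueness work, since that needs \(\lVert u\rVert_{V_\beta}^{\rho/(1-\beta)}\in L^1_t\). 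Handling the critical blow-up alternative by extending \(v\) continuously to \(\sigma\), rather than through a uniform local existence time, is the right fix.

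Two details should be made explicit when you write it out.

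\emph{Order of choices in the critical case.} Center the ball at the linear solution \(w_\ast\) (data \(u_0\), forcing \(h\)). Let \(M\) bound \(\sup_t\lVert v+z\rVert_H\) on the ball and set \(\delta=\lVert w_\ast+z\rVert_{L^2(0,T;V)}\). The Lipschitz constant on the ball is then of order \(T^{1-\beta}+M^{\rho(2-2\beta)}(\delta+r)^{\rho(2\beta-1)}\). Fix the radius \(r\) first so that this constant is at most \(1/2\). Only then shrink \(T\) so that \(\lVert\mathcal{F}(w_\ast+z)\rVert_{L^2(0,T;V^{\ast})}\le r/2\). The crude bound \(\lVert\mathcal{F}(v+z)\rVert_{L^2V^{\ast}}\lesssim M^{\rho}(\delta+r)\) alone does not give the self-map for large data, because \(M^\rho\) is not small.

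\emph{Adaptedness.} Adaptedness of \(u\) and the stopping-time property of \(\sigma\) need the remark that the solution on \([0,t]\) depends continuously on \(z|_{[0,t]}\). The contraction estimate provides this.
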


Under our setting, the blowup time is~\(\infty\) due to the a priori energy estimate that follows from \cref{sec:l2-energy-estimate}. The first and second assumptions in \cref{thm:well-posedness} are routine to check. We will here verify the third assumption, that is, identity~\(\rho\) and~\(\beta\) in \cref{eq:index-in-monotonicity,eq:local-monotonicity}.
Indeed,
\begin{align*}
\label{eq:30}
  \lVert \mathcal{F}(u) - \mathcal{F}(v) \rVert_{V^{\ast} } & = \sup_{ \lVert w \rVert_{H^{\gamma}} \le 1} \langle - \partial_x F(u) + \partial_x F(v), w  \rangle \\
                    & = \sup_{ \lVert w \rVert_{H^{\gamma}} \le 1 } \langle F(u) - F(v), \partial_x w \rangle \\
                    & \le \lVert F(u) - F(v) \rVert_{L^2} \\
                    & \lesssim ( 1 + \lVert u \rVert_{L^{\infty}}^{1+\kappa_1} + \lVert v \rVert_{L^{\infty}}^{1+\kappa_1}   ) \lVert u-v \rVert_{H^s} \\
                    & \lesssim ( 1+ \lVert u \rVert_{H^s}^{1+\kappa_1} + \lVert v \rVert_{H^s}^{1+\kappa}   ) \lVert u-v \rVert_{H^s},
\end{align*}
where~\(s > 1/2\). Comparing with \cref{eq:index-in-monotonicity,eq:local-monotonicity}, the parameter~\(s\) and~\(\kappa_1\) need to satisfy
\begin{equation*}
2 \beta \le 1 + \frac{1}{ 2+ \kappa_1}, \quad s =(1- \beta) \cdot (-\gamma)  + \beta \cdot \gamma = (2 \beta - 1) \gamma, \quad \beta \in (1/2, 1).
\end{equation*}
We can find~\(\beta\) (and hence~\(s\)) if and only if
\begin{equation*}
\frac{s}{\gamma} = 2 \beta - 1 \le \frac{1}{ 2 +\kappa_1} \iff s \le \frac{1+\kappa}{ 2 +\kappa_1}.
\end{equation*}
Since~\(s > 1/2\), this requires
\begin{equation*}
\frac{1}{ 2}< \frac{1+\kappa}{ 2 +\kappa_1}\iff \kappa_1 < 2\kappa.
\end{equation*}
Our growth condition \cref{eq:H-growth-of-F} is stronger than this. Due to the limitation of our coupling method, we cannot allow~\(\kappa_1 \ge \kappa\); 
see \cref{rmk:range-of-kappa}.
\subsection{An abstract coupling criterion}\label{sec:AC}

In order to prove \cref{thm:main-theorem}, we recall a general criterion from~\cite{Gao2026PolynomialMixingWhiteforced}, which gives sufficient conditions for polynomial mixing.
\par
Let \(X\) be a separable Banach space with a norm \(\|\cdot\|\). Let \((u_{t}, \mathbb{P}_{u})\) be a Feller family of Markov processes in \(X\), \(P_{t}(u,A):=\mathbb{P}(u_{t}(u,\omega)\in A)\) is
the transition function. We introduce the following associated Markov operators
\begin{equation*}
\begin{split}
&\mathcal{B}_{t}: C_{b}(X)\rightarrow C_{b}(X),~\mathcal{B}_{t}f(u):=\int_{X}f(v)P_{t}(u,dv),~\forall f\in C_{b}(X),\\
&\mathcal{B}_{t}^{*}: \mathcal{P}(X)\rightarrow \mathcal{P}(X),~\mathcal{B}_{t}^{*}\lambda(A):=\int_{X}P_{t}(u,A)\lambda(du),~\forall \lambda\in \mathcal{P}(X).
\end{split}
\end{equation*}

\begin{definition} (See~\cite{Shirik2008ExponentialMixingRandomly,KuksinShirik2012MathematicsTwoDimensionalTurbulence})
Let \((\mathbf{u}_{t},\mathbf{P}_{\mathbf{u}})\) be a Markov family in \(X\times X\),
\((\mathbf{u}_{t},\mathbf{P}_{\mathbf{u}})\) is called an \textbf{extension} of \((u_{t}, \mathbb{P}_{u})\) if for any \(\mathbf{u}=(u,u^{\prime})\in X\times X\) the laws under \(\mathbf{P}_{\mathbf{u}}\) of processes \(\{\Pi_{1}\mathbf{u}_{t}\}_{t\geq0}\) and \(\{\Pi_{2}\mathbf{u}_{t}\}_{t\geq0}\) coincide with
those of \(\{u_{t}\}_{t\geq0}\) under \(\mathbb{P}_{u}\) and \(\mathbb{P}_{u^{\prime}}\) respectively, where \(\Pi_{1}\) and \(\Pi_{2}\) denote the
projections from \(X\times X\) to the first and second component.
\end{definition}

We are now in a position to present an abstract criterion for polynomial mixing, established in~\cite{Gao2026PolynomialMixingWhiteforced}; see~\cite{gao2024polynomialwave} for a more readily applicable version. This criterion is the main tool we use to prove Theorem~\ref{thm:main-theorem}.
\begin{theorem} (See~\cite[Theorem 1.2 and Remark 1.1]{Gao2026PolynomialMixingWhiteforced} or~\cite[Theorem 2.1]{gao2024polynomialwave})\label{THACPM}
Let \((u_{t},\mathbb{P}_{u})\) be a Feller family of Markov processes in \(X\).
Supposed that
\par
(1) There exists an extension \((\mathbf{u}_{t},\mathbf{P}_{\mathbf{u}})\), a stopping time \(\sigma\), a closed set \(B\subset X\), and an increasing
function \(g(r)\geq1\) of the variable \(r\geq0\) such that the following two properties
hold
\par
\textbf{Recurrence} There exists \(p_0>1\) such that
\begin{equation}\label{eq:abstract-recurrence}
\mathbb{E}_{\mathbf{u}} \tau_{\textbf{B}}^{p_0}\leq G(\mathbf{u})
\end{equation}
for all \(\mathbf{u}=(u,u^{\prime})\in \textbf{X}:=X\times X\),
where \(\tau_{\textbf{B}}=\tau_{\textbf{B}}(\textbf{u},\omega)=\inf\{t\geq0:\textbf{u}_{t}(\textbf{u},\omega)\in \textbf{B}:=B\times B\}\) and \(G(\mathbf{u}):=g(\|u\|)+g(\|u^{\prime}\|)\).
\par
\textbf{Polynomial squeezing} There exist positive constants \(c_{1},c_{2},c_3,c_4\)
such that, for any \(\mathbf{u}\in \mathbf{B}\), we have
\begin{equation}\label{eq:abstract-squeezing}
  \begin{split}
&\|u_{t}-u_{t}^{\prime}\|\leq c_{1}\|u-u^{\prime}\|(t+1)^{-p_0},~{\rm{for}}~0\leq t\leq \sigma,\\
&\mathbb{P}_{\mathbf{u}}(\sigma=\infty)\geq c_{2},\\
&\mathbb{E}_{\mathbf{u}}(\mathbb{I}_{\sigma<\infty}\sigma^{p_0})\leq c_3,\\
&\mathbb{E}_{\mathbf{u}}(\mathbb{I}_{\sigma<\infty}G(\mathbf{u}_{\sigma})^{p_0})\leq c_4.
\end{split}
\end{equation}

(2) There is an increasing function \(\tilde{g}(r)\geq1\) such that
\begin{equation*}
\begin{split}
\mathbb{E}_{u}g(\|u_{t}\|)\leq \tilde{g}(\|u\|),~{\rm{for}}~t\geq 0,~u\in X.
\end{split}
\end{equation*}

Then the family \((u_{t},\mathbb{P}_{u})\) has a unique stationary measure \(\mu\in \mathcal{P}(X)\), and it holds for any \(p\le p_0\) that
\begin{equation*}
\begin{split}
\|P_{t}(u,\cdot)-\mu\|^{*}_{L}\leq \mathcal{V}(\|u\|)(t+1)^{-p},~{\rm{for}}~t\geq 0,~u\in X,
\end{split}
\end{equation*}
where \(\mathcal{V}\) is defined by \(\mathcal{V}(r):=C\bigl(g(r)+\tilde{g}(0)\bigr)\), with \(C>0\) a constant.
\end{theorem}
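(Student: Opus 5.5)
The plan is to prove the criterion by the classical coupling argument of Shirikyan and Kuksin--Shirikyan, replacing the exponential bookkeeping by polynomial moments. Throughout I will assume that \(B\) is bounded, say \(B\subset \bar B_X(0,R)\); this is the situation in all applications, and I expect it to be needed in Step~2 below.

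\textbf{Step 1: the coupling time.} Using the extension \((\mathbf{u}_t,\mathbf{P}_{\mathbf{u}})\) and the strong Markov property, I define alternating stopping times. Set \(\rho_1=\tau_{\mathbf B}\). Then run the squeezing time \(\sigma\) from \(\mathbf{u}_{\rho_1}\), and put \(\rho_1' = \rho_1+\sigma\circ\theta_{\rho_1}\). If \(\rho_1'<\infty\), set \(\rho_2=\rho_1'+\tau_{\mathbf B}\circ\theta_{\rho_1'}\), and continue in the same way.

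Let \(K\) be the index of the first attempt with \(\sigma=\infty\), and let \(T=\rho_K\) be the last entrance into \(\mathbf B\). By the second line of \cref{eq:abstract-squeezing} and the strong Markov property,
\[
\mathbf P(K>k)\le (1-c_2)^{k}.
\]

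I then bound \(\mathbf E T^{p_0}\) using Minkowski's inequality in \(L^{p_0}\). Write \(T\) as the sum of \(\rho_1\) and the increments \(X_k=(\rho_k'-\rho_k)+(\rho_{k+1}-\rho_k')\) over the failed attempts \(k<K\). By the third and fourth lines of \cref{eq:abstract-squeezing} together with the recurrence bound \cref{eq:abstract-recurrence}, applied at \(\mathbf u_{\rho_k'}\), I obtain
\[
\mathbf E\bigl[\mathbb I_{k<K} X_k^{p_0}\bigr]\le C(1-c_2)^{k-1}(c_3+c_4).
\]
Hence \(\|T\|_{L^{p_0}}\le G(\mathbf u)^{1/p_0}+C\sum_k (1-c_2)^{(k-1)/p_0}\), which gives \(\mathbf E T^{p_0}\le C\,G(\mathbf u)\). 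I expect this bookkeeping to be the main obstacle: it requires conditioning on the failure event while simultaneously bounding the next hitting time by \(G(\mathbf u_\sigma)\).

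\textbf{Step 2: pairwise contraction.} For \(f\in L_b(X)\) with \(\|f\|_{L}\le1\),
\[
|\mathcal B_tf(u)-\mathcal B_tf(u')|\le \mathbf E\bigl|f(u_t)-f(u_t')\bigr|.
\]
I split this expectation according to whether \(T\le t/2\) or \(T>t/2\).

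On \(\{T\le t/2\}\), the pair is in \(\mathbf B\) at time \(T\) and \(\sigma=\infty\) thereafter. The squeezing bound then gives \(\|u_t-u_t'\|\le 2c_1R\,(t/2+1)^{-p_0}\).

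On \(\{T>t/2\}\), I use \(|f|\le1\) and Chebyshev's inequality: \(2\mathbf P(T>t/2)\le 2\,\mathbf E T^{p_0}(t/2)^{-p_0}\).

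Combining the two cases yields
\[
|\mathcal B_tf(u)-\mathcal B_tf(u')|\le C\,G(\mathbf u)(1+t)^{-p_0}.
\]

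\textbf{Step 3: the invariant measure and the rate.} First, \(\{P_t(u,\cdot)\}_t\) is Cauchy in \(\|\cdot\|_L^*\). Indeed, \(\mathcal B^*_{t+s}\delta_u-\mathcal B^*_t\delta_u\) is obtained by integrating the Step~2 bound against \(P_s(u,du')\). Condition~(2) gives \(\int G(u,u')P_s(u,du')\le g(\|u\|)+\tilde g(\|u\|)\), so
\[
\|\mathcal B^*_{t+s}\delta_u-\mathcal B^*_t\delta_u\|_L^*\le C\bigl(g+\tilde g\bigr)(\|u\|)(1+t)^{-p_0}
\]
uniformly in \(s\). By completeness of \((\mathcal P(X),\|\cdot\|_L^*)\), the limit \(\mu\) exists. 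It is invariant by the Feller property, and it is independent of \(u\) by Step~2.

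Next, applying Step~2 with \(u=0\) and using \(\mathbf E g(\|u_t(0)\|)\le\tilde g(0)\) together with Fatou's lemma gives \(\int g\,d\mu\le\tilde g(0)\). Integrating the Step~2 bound against \(\mu(du')\) then yields
\[
\|P_t(u,\cdot)-\mu\|_L^*\le C\bigl(g(\|u\|)+\tilde g(0)\bigr)(t+1)^{-p_0}.
\]
This bound also gives uniqueness, since any invariant measure must coincide with \(\mu\).
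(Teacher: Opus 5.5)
The paper gives no proof of this criterion. It quotes it from \cite{Gao2026PolynomialMixingWhiteforced,gao2024polynomialwave}, which adapt the Shirikyan/Kuksin--Shirikyan coupling scheme to polynomial moments. Your proposal is a correct, self-contained version of that scheme, and every step checks:
\begin{itemize}
\item the renewal bound \(\mathbf{P}(K>k)\le(1-c_2)^k\);
\item the increment bound \(\mathbf{E}[\mathbb{I}_{\{K>k\}}X_k^{p_0}]\le 2^{p_0-1}(c_3+c_4)(1-c_2)^{k-1}\), obtained by conditioning at \(\rho_k\) for the \(\sigma\)-part, and at \(\rho_k'\) and then \(\rho_k\) for the hitting-time part;
\item Minkowski in \(L^{p_0}\);
\item the split at \(t/2\), where the squeezing may be used at the non-stopping time \(\rho_K\) because it holds a.s.\ at every \(\rho_k\) simultaneously;
\item the Cauchy argument in the complete space \((\mathcal{P}(X),\lVert\cdot\rVert_L^*)\).
\end{itemize}

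Two features of your argument are worth recording.

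\textbf{Boundedness of \(B\) is necessary.} Your assumption that \(B\) is bounded is not a convenience; without it the statement as printed is false. Take \(X=\mathbb{R}\), the deterministic flow \(u_t=e^{-t}u\) with the synchronous extension, \(B=X\), \(\sigma\equiv\infty\) and \(g\equiv\tilde g\equiv1\). Every hypothesis holds with \(c_1=\sup_{t\ge0}e^{-t}(1+t)^{p_0}\) and \(c_2=c_3=c_4=1\). Yet \(\lVert P_t(u,\cdot)-\delta_0\rVert_L^*\ge\frac12\) whenever \(\lvert u\rvert\ge e^t\), so no bound of the form \(2C(1+t)^{-p}\) can hold. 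In the paper's application \(B=B_H(0,\delta)\), so nothing is lost there.

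\textbf{A weaker fourth condition suffices.} Because you sum the increments with Minkowski, you only use \(\mathbf{E}_{\mathbf{u}}\mathbb{I}_{\{\sigma<\infty\}}G(\mathbf{u}_\sigma)\le c_4\). This follows from the stated \(G^{p_0}\) bound since \(G\ge2\). Your route therefore proves the criterion under a weaker fourth squeezing condition.

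\textbf{Points to tighten.}
\begin{itemize}
\item State explicitly that you use the strong Markov property of the extension, and that \(\sigma\) is compatible with the shifts \(\theta_\rho\). The statement leaves both implicit.
\item Your Fatou step needs \(g(\lVert\cdot\rVert)\) to be lower semicontinuous. You can avoid this by writing \(\lVert P_t(u,\cdot)-P_{t+s}(0,\cdot)\rVert_L^*\le C(1+t)^{-p_0}\int G(u,u')\,P_s(0,du')\le C(1+t)^{-p_0}\bigl(g(\lVert u\rVert)+\tilde g(0)\bigr)\) and letting \(s\to\infty\).
\item For uniqueness against an invariant \(\mu'\) that need not integrate \(g\), use \(\lVert\mu'-\mu\rVert_L^*\le\int\min\{2,\mathcal{V}(\lVert u\rVert)(1+t)^{-p}\}\,\mu'(du)\), which tends to \(0\) by dominated convergence.
\end{itemize}
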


\subsection{Construction of an extension}\label{sec:constr-mixing-extens}
We outline how to construct an extension~\((\mathbf{u}_t, \mathbf{P}_{\mathbf{u}})\), this is inspired by the works~\cite{KuksinShirik2012MathematicsTwoDimensionalTurbulence,NersesZhao2024ExponentialMixingWhiteForced,Gao2026PolynomialMixingWhiteforced}.

Let~\(\{ u(t) \}_{t \ge 0}\) and~\(\{ u'(t) \}_{t \ge 0}\) be the solution to \cref{eq:conservation-law} with initial conditions~\(u,u' \in H\). Let~\(\{ v(t) \}_{t \ge 0}\) be the solution to the following auxillary process
\begin{equation}\label{eq:aux-process}
\left\{
\begin{aligned}
& v_t +  F(v)_x + a v + \mathsf{P}_N \bigl[  F(u)_x  - F(v)_x \bigr] = - \nu(-\partial_{xx})^{1+\kappa} v + h + \eta.  \\
&v \big\vert_{t = 0} = u'.
\end{aligned}\right.
\end{equation}
We fix a sufficiently large time step~\(\mathcal{T} > 0\) to be specified later. By~\cite[Theorem 1.2.28]{KuksinShirik2012MathematicsTwoDimensionalTurbulence}, there is a maximal coupling between the two laws on~\(\mathcal{C}(0,\mathcal{T}; H)\):
\begin{equation*}
\mathcal{L} \bigl( \{ v(t) \}_{t  \in [0, \mathcal{T} ]} \bigr), \quad \mathcal{L} \bigl(  \{ u'(t) \}_{t \in [0, \mathcal{T} ]} \bigr),
\end{equation*}
that is, there is some probability space~\((\tilde{\Omega}, \tilde{\mathcal{F}}, \tilde{\mathbf{P}})\), on which there are two~\(\mathcal{C}(0, \mathcal{T} ; H)\)-valued random elements~\(\mathfrak{U}'(\omega; u,u')\) and~\(\mathfrak{V}(\omega; u, u')\), such that
\begin{enumerate}
\item~\(\tilde{\mathbf{P}} \{  \mathfrak{U}' \in \cdot  \} = \mathbb{P} \{ u' \in \cdot \}\), \(\tilde{\mathbf{P}} (\mathfrak{V} \in \cdot ) = \mathbb{P} (v \in \cdot)\);
\item~\(\tilde{\mathbf{P}} \{  \mathfrak{U}' \ne \mathfrak{V} \} = \lVert \mathcal{L} ( \{ v(t) \}_{[0, \mathcal{T} ]} ) - \mathcal{L} ( \{ u'(t) \}_{[0, \mathcal{T} ]} ) \rVert_{TV}\);
\item conditioned on the event~\(\{  \mathfrak{U}' \neq \mathfrak{V} \}\), the two random elements~\(\mathfrak{U}',\mathfrak{V}\) are independent.
\end{enumerate}
Writing~\(\tilde{v}_t = \mathfrak{V}_t(\omega; u, u')\) and~\(\tilde{u}'_t = \mathfrak{U}'_t(\omega; u, u')\). Then the process~\(\{ \tilde{v}(t) \}_{t \in [0, \mathcal{T} ]}\) solves
\begin{equation*}
\left\{
\begin{aligned}
&\tilde{v}_t + F(\tilde{v})_x +  a\tilde{v} + \mathsf{P}_N \bigl(- F(\tilde{v})_x \bigr) = - \nu (-\partial_{xx})^{1+\kappa} \tilde{v} + h + \Lambda, \\
& \tilde{v} \big\vert_{t = 0} = u',
\end{aligned}\right.
\end{equation*}
where
\begin{equation*}
\{ \Lambda(t) \}_{t \in [0, \mathcal{T} ]}   \ \stackrel{\mathrm{d}}{=}\  \bigl\{ \eta(t) - \mathsf{P}_N \bigl(F(u)_x\bigr) \bigr\}_{t \in [0, \mathcal{T} ] }.
\end{equation*}
Now let~\(\{ \tilde{u}(t) \}_{t \in [0, \mathcal{T} ]}\) solves
\begin{equation}\label{eq:couple-u}
\left\{
\begin{aligned}
&\tilde{u}_t + F(\tilde{u})_x + a \tilde{u} + \mathsf{P}_N [ - F(\tilde{u})_x] = - \nu (-\partial_{xx} )^{1+\kappa} \tilde{u} + h + \Lambda,\\
&\tilde{u} \big\vert_{t = 0} = u.
\end{aligned}\right.
\end{equation}
Note that~\(\{ u(t) \}_{t \in [0, \mathcal{T} ]}\) solves the same equation \cref{eq:couple-u} with~\(\tilde{u}\) replaced by~\(u\) and~\(\Lambda\) by~\(\eta - \mathsf{P}_N \bigl( F(u)_x \bigr)\), and hence by the uniqueness in law for \cref{eq:couple-u} (see \cref{sec:well-posedness-spde}), we have
\begin{equation*}
\{ \tilde{u}(t) \}_{[0, \mathcal{T} ]} \ \stackrel{\mathrm{d}}{=}\ \{ u(t) \}_{[0, \mathcal{T} ]}.
\end{equation*}
Therefore, on~\((\tilde{\Omega}, \tilde{\mathcal{F}}, \tilde{\mathbf{P}})\) there is a~\(\mathcal{C}(0, \mathcal{T} ; H)\)-valued random element~\(\mathfrak{U}(\omega; u,u')\) with distribution given by~\({\mathbb{P}(u \in \cdot)}\), and~\((\mathfrak{U},\mathfrak{U}')\) is a coupling of~\(\bigl( u(t) \bigr)_{t \in [0, \mathcal{T} ]}\) and~\(\bigl( u'(t) \bigr)_{t \in [0, \mathcal{T} ]}\).

We now construct the mixing extension as follows. Let~\(\bigl\{  ( \Omega^k, \mathcal{F}^k, \tilde{\mathbf{P}}^k ) \bigr\}_{ k \ge 0}\) be independent copies of the space~\((\tilde{\Omega}, \tilde{\mathcal{F}}, \tilde{\mathbf{P}})\), and let~\((\Theta, \mathcal{G}, \mathbf{P}  )\) be their direct product.
The random elements~\(\mathfrak{U},\mathfrak{U}'\) can also be viewed as measurable maps defined on~\(\Omega^k\). For any~\(u, u' \in H\) and~\(\omega = (\omega^1, \omega^2, \dotsc ) \in \Theta\), let~\(\tilde{u}_0 = u\), \(\tilde{u}'_0 = u'\), and recursively
\begin{equation*}
\tilde{u}_t := \mathfrak{U}_s \bigl( \omega^k; \tilde{u}_{k \mathcal{T}}(\omega),  \tilde{u}'_{k \mathcal{T}} (\omega) \bigr), \quad
\tilde{u}'_t := \mathfrak{U}'_s \bigl( \omega^k; \tilde{u}_{k \mathcal{T}}(\omega),  \tilde{u}'_{k \mathcal{T}} (\omega) \bigr), \quad t =s + k \mathcal{T}, \ s \in [0, \mathcal{T} ), \ k \ge 0,
\end{equation*}

\medskip
The goal is to show that for suitably chosen~\(\mathcal{T}\), for~\(\mathbf{u}_t :=  (\tilde{u}_t, \tilde{u}'_t)\), the coupling \((\mathbf{u}_t, \mathbf{P}_{\mathbf{u}})\) is the desired mixing extension  of~\((u_t, \mathbb{P}_u)\). The details will be given in \cref{sec:mixing-proof}. We briefly describe the mechanism below.

Due to the dissipation and damping, the Foia\c{s}--Prodi estimate guaranteed that~\(\lVert \tilde{u}(t) - \tilde{v}(t)  \rVert\) (which has the same law as~\(\lVert u(t) - v(t) \rVert\) under~\(\mathbb{P}\)) decays exponentially, as long as certain energy of~\(\tilde{u}, \tilde{u}', \tilde{v}\) grow at most linearly. Thus, we introduce a stopping time~\(\sigma = \sigma_1 \wedge \tilde{\tau}\), where
\begin{align*}
 \sigma_1 &\approx \text{ the first time } \tilde{u}' \neq \tilde{v},  \\
\tilde{\tau} & \approx \text{ the first time energy of } \tilde{u}(t), \tilde{u}'(t), \tilde{v}(t) \text{ reach } C(1+t).
\end{align*}
When~\(t \le \sigma_1\), the Foia\c{s}--Prodi estimate will help to control~\(\lVert \tilde{u}(t) - \tilde{u}'(t) \rVert\), this is the first condition in polynomial squeezing.
Using a martingale inequality we can have power-law tail bound on~\(\tilde{\tau}\), and by Girsanov transform we can estimate the total variational distance~\(d_{TV} ( \tilde{v}(k \mathcal{T}), \tilde{u}'(k \mathcal{T}) )\), which leads to a power-law tail bound for~\(\sigma_1\). Combining these we obtain the remaining condition.

\section{Foia\c{s}--Prodi estimate}\label{sec:foia-prodi-estimate}

\subsection{Two Poincar\'{e}-type inequalities}

The first Poincaré-type inequality also appears in~\cite[Lemma 3.1]{NersesZhao2024ExponentialMixingWhiteForced}. We present a brief proof here for completeness.
\begin{lemma}\label{lem:poincare-cutoff}
Let~\(A > 0\). For every~\(\varepsilon > 0\) and~\(s > 0\), there exists~\(N = N_{\varepsilon}\) such that
\begin{equation*}
\lVert \mathsf{Q}_N \chi_A f \rVert \le\varepsilon \lVert f \rVert_{H^s}.
\end{equation*}
\end{lemma}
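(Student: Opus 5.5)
The plan is to show that the operator \(\mathsf{Q}_N \chi_A : H^s \to L^2\) is compact and that the projections \(\mathsf{Q}_N = I - \mathsf{P}_N\) onto \(\overline{\mathrm{span}}\{e_j\}_{j > N}\) tend to zero strongly. Here \(\chi_A\) is the cutoff to \([-A,A]\), smooth or the indicator; either works for the argument below. Uniform convergence on compact sets then gives the claim.

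\textbf{Step 1: compactness.} I claim \(K = \{\chi_A f : \lVert f \rVert_{H^s} \le 1\}\) is precompact in \(L^2(\mathbb{R})\). All these functions are supported in \([-A,A]\) and bounded in \(L^2\). Equicontinuity of translations must be checked.

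If \(\chi_A\) is smooth, then \(\chi_A f\) is bounded in \(H^s\) with compact support, and Rellich–Kondrachov (or Kolmogorov–Riesz) applies directly.

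If \(\chi_A\) is the indicator, I would use \(s_1 = \min(s, 1/4)\). For \(s_1 < 1/2\), multiplication by the indicator of an interval is bounded on \(H^{s_1}\); this is a classical fact. Then \(\chi_A f\) is bounded in \(H^{s_1}\) and supported in \([-A,A]\), and the Kolmogorov–Riesz criterion again gives precompactness in \(L^2\). This case is the main technical point.

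A simpler route avoids multiplier theory. Write
\[
\lVert \tau_h(\chi_A f) - \chi_A f \rVert \le \lVert \tau_h f - f \rVert + \lVert (\tau_h \chi_A - \chi_A) f \rVert .
\]
The first term is at most \(C|h|^{s_1}\lVert f \rVert_{H^s}\) by Plancherel. For the second term, \(\tau_h \chi_A - \chi_A\) is supported on two intervals of length \(|h|\). On these intervals, bound by Hölder and the Sobolev embedding \(H^{s_1} \subset L^{p}\) with \(p = 2/(1-2s_1)\). This gives a factor \(|h|^{s_1}\). So equicontinuity holds uniformly on \(K\).

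\textbf{Step 2: strong convergence and the finite net.} Since \(\{e_j\}\) is an orthonormal basis of \(L^2\), we have \(\lVert \mathsf{Q}_N g \rVert \to 0\) for each fixed \(g\), and \(\lVert \mathsf{Q}_N \rVert \le 1\). Given \(\varepsilon\), cover \(K\) by a finite \(\varepsilon/2\)-net \(g_1, \dots, g_M\). Choose \(N\) so that \(\lVert \mathsf{Q}_N g_i \rVert \le \varepsilon/2\) for all \(i \le M\).

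Then for \(\lVert f \rVert_{H^s} \le 1\), pick \(g_i\) with \(\lVert \chi_A f - g_i \rVert \le \varepsilon/2\), and
\[
\lVert \mathsf{Q}_N \chi_A f \rVert \le \lVert \mathsf{Q}_N (\chi_A f - g_i) \rVert + \lVert \mathsf{Q}_N g_i \rVert \le \varepsilon .
\]
The general case follows by homogeneity in \(f\).

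The only nonroutine point is the equicontinuity estimate when \(\chi_A\) is a sharp cutoff and \(s\) is small. It is handled by the Hölder–Sobolev bound on the strips of width \(|h|\) described in Step 1.
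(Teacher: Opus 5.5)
Your proof is correct and follows the paper's route: compactness of \(\chi_A : H^s \to L^2\) (Rellich--Kondrachov) combined with strong convergence \(\mathsf{Q}_N \to 0\) on \(L^2\). Your finite-net step spells out the upgrade to uniform smallness over the unit ball, which is what the lemma actually requires and which the paper's phrase ``converges strongly'' leaves implicit; your Kolmogorov--Riesz estimate for a sharp cutoff adds detail the paper omits.
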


\begin{proof}
It suffices to show that~\(T_N :=  \mathsf{Q}_N \circ \chi_A: H^s \to L^2\) converges strongly to~\(0\). The statement follows from the compactness of the operator~\(\chi_A: H^s  \to L^2\) by Rellich--Kondrachov theory, and that~\(\mathsf{Q}_N: L^2 \to L^2\) converges strongly to~\(0\) as~\(N \to \infty\).
\end{proof}

The second Poincaré-type inequality concerns~\(T^{\ast}_N = \chi_A \circ \mathsf{Q}_N\), which is only applicable to unconditional basis. This will be key step in the proof of the Foias\c{s}--Prodi estimate for \cref{eq:conservation-law}.
\begin{lemma}\label{lem:poincare-cutoff-reverse}
If~\(\{ e_n \}_{n=1}^{\infty}\) is an unconditional basis for~\(H^s(\mathbb{R})(s>0)\), then there exists~\(N = N(\varepsilon)\) such that
\begin{equation*}
\lVert \chi_A \mathsf{Q}_N f \rVert \le \varepsilon \lVert f \rVert_{H^s}.
\end{equation*}
\end{lemma}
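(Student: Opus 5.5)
The plan is to reduce the estimate to strong convergence of a dual family of operators, in the same spirit as the proof of \cref{lem:poincare-cutoff}. The key observation is that, for an unconditional basis, the tail projections \(\mathsf{Q}_N\) behave well on \(H^s\) itself, not only on \(L^2\). Here \(\mathsf{P}_N\) is the orthogonal projection in \(L^2\) onto \(\mathrm{span}\{e_1,\dots,e_N\}\) and \(\mathsf{Q}_N = I - \mathsf{P}_N\).

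\textbf{Step 1: uniform boundedness of \(\mathsf{Q}_N\) on \(H^s\).} By \cref{prop:unconditional-mapping}, the coefficient map \(U\) is an isomorphism of \(H^s\) onto a weighted space \(\ell^2(w)\), with norm equivalent to \(\sum_n w_n\langle f,e_n\rangle^2\). In these coordinates \(\mathsf{Q}_N\) simply truncates the coefficient sequence, so
\begin{equation*}
\lVert \mathsf{Q}_N f\rVert_{H^s}^2 \lesssim \sum_{n>N} w_n \langle f,e_n\rangle^2 \lesssim \lVert f\rVert_{H^s}^2 .
\end{equation*}
Hence \(\sup_N \lVert \mathsf{Q}_N\rVert_{H^s\to H^s}<\infty\). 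Moreover \(\mathsf{Q}_N f\to 0\) in \(H^s\) for each fixed \(f\), because the tail of a convergent series tends to zero. Equivalently, uniform boundedness follows from the unconditional basis constant.

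\textbf{Step 2: compactness reduction.} Write \(\chi_A\mathsf{Q}_N = (\chi_A)\circ \mathsf{Q}_N\), where \(\chi_A: H^s\to L^2\) is compact by Rellich--Kondrachov, as already used in \cref{lem:poincare-cutoff}. If \(\chi_A\) is only a sharp indicator rather than a smooth cutoff, I would first note that multiplication by a sharp indicator is bounded on \(H^{s'}\) for \(s'<1/2\). Alternatively, I would dominate the indicator by a smooth compactly supported cutoff and use \(\lVert\chi_A g\rVert\le\lVert\tilde\chi g\rVert\). Either way, compactness of \(H^s\to L^2\) localized to a bounded set holds for any \(s>0\).

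\textbf{Step 3: the main step, upgrading pointwise to uniform convergence.} A compact operator composed with a uniformly bounded family that converges strongly to zero does not in general converge in norm. So I will argue by contradiction. Suppose there exist \(\varepsilon>0\), \(N_k\to\infty\), and \(f_k\) with \(\lVert f_k\rVert_{H^s}=1\) and \(\lVert\chi_A\mathsf{Q}_{N_k}f_k\rVert\ge\varepsilon\). Set \(g_k=\mathsf{Q}_{N_k}f_k\). These are bounded in \(H^s\) by Step 1. I claim \(g_k\rightharpoonup 0\) weakly in \(H^s\). Indeed, in the coordinates of \(\ell^2(w)\), each fixed coordinate \(\langle g_k,e_n\rangle\) vanishes once \(N_k\ge n\). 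Together with boundedness, this gives weak convergence to \(0\), since \(H^s\cong\ell^2(w)\) is Hilbert and coordinate functionals span a dense set of the dual. Compact operators map weakly null sequences to norm null sequences, so \(\lVert\chi_A g_k\rVert\to0\), which is a contradiction.

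This contradiction argument is exactly where unconditionality is needed. For a mere \(L^2\)-orthonormal basis, \(\mathsf{Q}_{N_k}f_k\) need not stay bounded in \(H^s\), and the argument breaks down. I expect the only delicate point to be justifying that \(\mathsf{Q}_N\) (defined via \(L^2\) inner products) coincides with the tail of the \(H^s\) basis expansion. This identification follows from \cref{prop:unconditional-mapping}, whose isomorphism is given precisely by the \(L^2\) coefficients.
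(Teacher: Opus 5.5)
Your proof is correct. It shares the paper's reduction: via \cref{prop:unconditional-mapping} you pass to \(\ell^2(w)\), where \(\mathsf{Q}_N\) becomes coordinate truncation and is therefore uniformly bounded (indeed an orthogonal projection there), and you use compactness of \(\chi_A\colon H^s\to L^2\).

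The two proofs differ at the key step. The paper writes \(\chi_A\mathsf{Q}_N = K\tilde{\mathsf{Q}}_N U\) with \(K=\chi_A U^{-1}\) compact. It checks by hand that \(K\tilde{\mathsf{Q}}_N\to 0\) when \(K\) has finite rank, because the tails of the dual sequences representing the functionals \(\phi_j\) vanish. It then passes to a general compact \(K\) by approximating it in norm by finite-rank operators. That step tacitly uses \(\lVert\tilde{\mathsf{Q}}_N\rVert\le 1\), which is your Step 1. You instead argue by contradiction. A normalized bad sequence \(g_k=\mathsf{Q}_{N_k}f_k\) is bounded in \(H^s\) and coordinatewise null, hence weakly null, and a compact operator sends it to a norm-null sequence.

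What your route buys is that it confronts the uniformity in \(f\) head-on. The paper says it suffices to show strong convergence of \(K\tilde{\mathsf{Q}}_N\), which by itself would not give the operator-norm bound the lemma asserts. Its argument does in fact give norm convergence, but only because the finite-rank estimate is uniform over \(\lVert c\rVert_{\ell^2(w)}\le 1\) and \(\tilde{\mathsf{Q}}_N\) is a contraction. The paper's route, in turn, is more hands-on and exhibits the quantitative mechanism, namely the decay of dual tails.

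Your discussion of sharp versus smooth cutoffs in Step 2 is unnecessary. Compactness of \(g\mapsto\chi_A g\) from \(H^s\) to \(L^2\) only needs \(\chi_A\) to be bounded with compact support.
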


\begin{proof}
  Let~\(U\) and~\(\ell^2(w)\) be introduced in \cref{prop:unconditional-mapping}. Then
\begin{equation*}
\|f\|_{H^s}^2 \asymp \sum_{n=1}^\infty w_n |\langle f,e_n\rangle|^2 .
\end{equation*}
We can write~\(T^{\ast}_N = K \tilde{\mathsf{Q}}_N U\), where~\(K = \chi_A \circ U^{-1}: \ell^2(w) \to L^2(\mathbb{R})\) is a compact operator, and~\(\tilde{\mathsf{Q}}_N = U \mathsf{Q}_N U^{-1}: \ell^2(w) \to \ell^2(w)\) is the truncation to the post-\(N\) coordinates.
It suffices to show the strong convergence
\begin{equation}\label{eq:op-norm-to-zero}
\lim_{N \to \infty} \lVert K \tilde{\mathsf{Q}}_N f \rVert_{L^2(\mathbb{R})}  \to 0 , \quad f \in \ell^2(w).
\end{equation}
Indeed, let~\(\mathcal{S}\) be the set of operators~\(K: \ell^2(w) \to L^2(\mathbb{R})\) such that \cref{eq:op-norm-to-zero} holds. We will show that
\begin{enumerate}
\item The set~\(\mathcal{S}\) is norm closed, that is, if~\(K_m \in \mathcal{S}\) and~\(K_m \to K\) in operator norm, then~\(K \in \mathcal{S}\).
\item If~\(K\) is finite-rank, then~\(K \in \mathcal{S}\).
\end{enumerate}
Then, since all compact operators between two Hilbert spaces can be approximated by finite-rank operators in the operator norm, \cref{eq:op-norm-to-zero} follows.

For the first item, the norm-closedness of \(\mathcal{S}\) follows from
\begin{equation*}
\lVert K \tilde{\mathsf{Q}}_N f \rVert_{L^2(\mathbb{R})} \le \lVert K - K_m \rVert_{\ell^2(w) \to L^2(\mathbb{R})} \lVert  f \rVert_{\ell^2(w)} + \lVert K_m \tilde{\mathsf{Q}}_N f \rVert_{\ell^2(w) \to L^2(\mathbb{R})}.
\end{equation*}
For the second item, writing~\(c = (c_i) \in \ell^2(w)\), let~\(K\) be a finite-rank operator defined as
\begin{equation*}
K (c) :=  \sum_{ j = 1}^M \phi_j(c) y_j, \quad  \phi_j \in (\ell^2(w))^{\ast}, \ y_j \in L^2(\mathbb{R}).
\end{equation*}
Then~\(\phi_j\) can be identified as an element in~\(\ell^2(w^{-1})\):
\begin{equation*}
\phi_j(c) = \sum_{ k \ge 1} a^{(k)}_{j } c_k, \quad \sum_{k = 1}^{\infty} \lvert a^{(k)}_j \rvert^2/ w_j < \infty.
\end{equation*}
Hence,
\begin{equation*}
K\tilde{\mathsf{Q}}_N(c) = \sum_{j =1}^M y_j \sum_{ k \ge N } a^{(k)}_j c_k \le \sum_{j = 1}^M \lVert y_j \rVert_{L^2} \sum_{k \ge N} \lvert a^{(k)}_j \rvert^2 / w_k \to 0.
\end{equation*}
This completes the proof.
\end{proof}

\subsection{Foia\c{s}--Prodi estimate}\label{sec:FP-estimate}

Inspired by~\cite{abergel1989attractor,NersesZhao2024ExponentialMixingWhiteForced}, let us introduce a smooth space-time weight function.
Let~\(\varphi(x) = \ln (2+x^2)\) and
\begin{equation}\label{eq:def-psi}
\psi(t,x) = \varphi(x) \Bigl[  1 - \exp \Bigl( - \frac{t}{\varphi(x)} \Bigr) \Bigr].
\end{equation}
The weight function~\(\psi(t,x)\) has the following property:
\begin{enumerate}
\item~\(0 \le \psi (t,x) \le \varphi(x)\) for all~\(t \ge 0\), \(x > 0\)
\item~\(\psi(t,x) \to \infty\) for~\(t, x \to \infty\).
\item~\(\partial_t \psi, \partial_x^k \psi  \in L^{\infty} \) for~\(k \ge 1\) and~\( D^s \psi \in L^{\infty}\) for~\(s \ge 1\) (see also~\cref{lem:frac-deri-of-psi}).
\end{enumerate}

Let us consider the following systems
\begin{equation*}
\begin{split}
&u_t+au+\nu (-\partial_{xx})^{1+\kappa} u +\bigl( F(u) \bigr)_x =h+\eta,\\
&v_t+av+\nu(-\partial_{xx})^{1+\kappa} v+\mathsf{P}_N \bigl[F(u)_x-F(v)_x\bigr]+\bigl( F(v) \bigr)_x= h+\eta.
\end{split}
\end{equation*}
Set \(w=u-v\), then \(w\) satisfies
\begin{equation}\label{w-equ}
w_t + a w + Q_N \bigl[F(u)_x - F(v)_x \bigr] =
-\nu(-\partial_{xx})^{1+\kappa}w.
\end{equation}
We define
\begin{equation*}
e_z^{\alpha} (r) :=  \lVert z(r) \rVert^{2\alpha} \lVert z(r) \rVert_{1+\kappa}^2,
\quad \tilde{e}_z (r) :=  \lVert  \psi(r) z(r) \rVert^{2\kappa_1} \lVert \psi(r) z(r) \rVert_{1+\kappa}^2
~~~{\rm{for}}~r\ge 0.
\end{equation*}

Now, we are in a position to state the Foia\c{s}--Prodi estimate, for which \cref{lem:poincare-cutoff-reverse} plays an important role in the proof.
\begin{proposition}\label{lem:FP-estimate}
  Let~\(C_{\ast} > 0\) be a sufficiently large depending on~\(a, \nu, \kappa_1, \kappa\).
For every~\(\varepsilon>0\), there exists~\(N_{\varepsilon} \ge 1\) and~\(T_{\varepsilon} \ge 2\) such that
\begin{equation}\label{eq:FP-estimate-ineq}
  \lVert w(t+T) \rVert^2 \le \lVert w(s+T) \rVert^2 \exp \Bigl(  - a(t-s) +
  C_{\ast} \varepsilon \int_{s+T}^{t+T}  \bigl[e_u^{\alpha}(r) + e_v^{\alpha}(r) + \tilde{e}_u(r) + \tilde{e}_v(r)\bigr] \, dr
  \Bigr), \quad 0 \le s < t
\end{equation}
holds for all~\(N \ge N_{\varepsilon} \ge 1\) and~\(T \ge T_{\varepsilon} \ge 2\),
where \(w\) is the solution to equation
\cref{w-equ} and
\begin{equation*}
\alpha=
\begin{cases}
1,        & \kappa= 0, \\
\kappa_1, & \kappa> 0.
\end{cases}
\end{equation*}

\end{proposition}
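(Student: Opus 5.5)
We test \cref{w-equ} against \(w\) and then apply Gronwall. The energy identity is
\begin{equation*}
\frac12\frac{d}{dt}\lVert w\rVert^2 + a\lVert w\rVert^2 + \nu\lVert D^{1+\kappa}w\rVert^2 = -\bigl\langle \mathsf{Q}_N[F(u)_x-F(v)_x], w\bigr\rangle = \bigl\langle F(u)-F(v), \partial_x \mathsf{Q}_N w\bigr\rangle .
\end{equation*}
Using the symmetry of \(\mathsf{Q}_N\) in \(L^2\), we may equally put \(\mathsf{Q}_N\) on either factor. Write \(F(u)-F(v)=G\,w\) with \(G=\int_0^1F'(\theta u+(1-\theta)v)\,d\theta\). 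By \cref{eq:H-growth-of-F}, \(\lvert G\rvert\lesssim 1+\lvert u\rvert^{1+\kappa_1}+\lvert v\rvert^{1+\kappa_1}\), and \(G\) has Hölder-type regularity controlled by \(u,v\).

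The whole difficulty is to show that the right-hand side is at most \(\frac{\nu}{2}\lVert w\rVert_{1+\kappa}^2+\frac a2\lVert w\rVert^2+C\varepsilon\,(\text{energies})\lVert w\rVert^2\). The constant \(\varepsilon\) must come from the smallness of the high modes \(\mathsf{Q}_N\). To get it, we split space with the cutoff \(\chi_A\) into a bounded region \(\lvert x\rvert\le A\) and its complement \(\lvert x\rvert> A\).

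\emph{Far field.} On \(\{\lvert x\rvert>A\}\) we use the weight: for \(t\ge T_\varepsilon\) and \(\lvert x\rvert\ge A\), the function \(\psi\) is large, roughly \(\ln A\). Hence \(\lVert(1-\chi_A)u\rVert_{L^\infty}\lesssim (\ln A)^{-1}\lVert\psi u\rVert_{H^{s}}\) for \(s>1/2\), up to commutator terms involving \(\partial_x\psi\in L^\infty\). This smallness, with \(A\) chosen large depending on \(\varepsilon\), combined with Gagliardo--Nirenberg interpolation between \(\lVert\psi u\rVert\) and \(\lVert \psi u\rVert_{1+\kappa}\), produces the terms \(\tilde e_u,\tilde e_v\) with prefactor \(\varepsilon\). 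The requirement \(T\ge T_\varepsilon\) is exactly what makes \(\psi(t,\cdot)\) comparable to \(\varphi\) on \(\lvert x\rvert\le e^{C/\varepsilon}\), so we take \(T_\varepsilon\) of that order.

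\emph{Near field.} On \(\{\lvert x\rvert\le A\}\) the treatment depends on \(\kappa\).

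When \(\kappa>0\), we move \(\mathsf{Q}_N\) onto the factor in the form \(\langle \chi_A G w,\partial_x\mathsf{Q}_N w\rangle\), or rather use \(\mathsf{Q}_N\chi_A\). By \cref{lem:poincare-cutoff}, \(\lVert \mathsf{Q}_N\chi_A f\rVert\le\varepsilon\lVert f\rVert_{H^{s}}\) for a small \(s>0\), and there is slack from \(1+\kappa>1\). We bound \(\lVert \partial_x(Gw)\rVert_{H^{s-1}}\)-type quantities by \(\lVert u\rVert_{L^\infty}^{\kappa_1}\lVert u\rVert_{1+\kappa}\lVert w\rVert^{\theta}\lVert w\rVert_{1+\kappa}^{1-\theta}\). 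Young's inequality then absorbs the \(\lVert w\rVert_{1+\kappa}\) part into dissipation, leaving \(\varepsilon e^{\kappa_1}_u\lVert w\rVert^2\). The condition \(\kappa_1<\kappa\) is what makes the Young exponents close: the exponent on \(\lVert u\rVert_{1+\kappa}\) is at most \(2\).

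When \(\kappa=0\), there is no spare derivative. Instead we keep \(\mathsf{Q}_N\) on \(w\) and write the term as \(\langle Gw,\partial_x\chi_A\mathsf{Q}_N w\rangle\) plus harmless commutators. Interpolating \(\lVert\chi_A \mathsf{Q}_N w\rVert_{H^{1}}\) with \(\lVert\chi_A\mathsf{Q}_N w\rVert\le\varepsilon\lVert w\rVert_{H^{s_0}}\) via \cref{lem:poincare-cutoff-reverse} (this is where \cref{eq:H-unconditional-basis} is needed) yields smallness. The growth \(\lvert G\rvert\lesssim 1+\lvert u\rvert+\lvert v\rvert\) with \(\lVert u\rVert_{L^\infty}^2\lesssim\lVert u\rVert\lVert u\rVert_1\) then leads after Young to \(\varepsilon(\lVert u\rVert^2\lVert u\rVert_1^2+\cdots)\lVert w\rVert^2\), matching \(\alpha=1\).

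Having obtained
\begin{equation*}
\frac{d}{dt}\lVert w\rVert^2\le\bigl(-a+C_\ast\varepsilon[e^\alpha_u+e^\alpha_v+\tilde e_u+\tilde e_v]\bigr)\lVert w\rVert^2 \quad\text{for } t\ge T_\varepsilon,
\end{equation*}
Gronwall on \([s+T,t+T]\) gives \cref{eq:FP-estimate-ineq}. The main obstacle is the near-field estimate for \(\kappa=0\). There the derivative must land on \(\mathsf{Q}_N w\) while smallness is available only for \(\chi_A\mathsf{Q}_N\) in the weak norm \(H^{s_0}\to L^2\), and the interpolation between \(\varepsilon\lVert w\rVert_{s_0}\) and \(\lVert w\rVert_1\) must still be absorbed by \(\nu\lVert w\rVert_1^2\). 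I would first try a paraproduct/Kato–Ponce split of \(\partial_x(Gw)\) so that only \(L^2\)-level quantities of \(\chi_A\mathsf{Q}_N w\) appear.
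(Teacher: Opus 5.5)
Your architecture matches the paper: the energy identity for \cref{w-equ}, the \(\chi_A\) near/far split, the weight \(\psi\) in the far field, \cref{lem:poincare-cutoff} for \(\kappa>0\) and \cref{lem:poincare-cutoff-reverse} for \(\kappa=0\), then Gronwall on \([s+T,t+T]\). However, the step you flag as the main obstacle is a genuine gap, and the route you sketch for it does not close.

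\textbf{The \(\kappa=0\) near field.} You write this term as \(\langle Gw,\partial_x(\chi_A\mathsf{Q}_Nw)\rangle\) and propose to interpolate \(\lVert\chi_A\mathsf{Q}_Nw\rVert_{H^1}\) against \(\lVert\chi_A\mathsf{Q}_Nw\rVert\le\varepsilon\lVert w\rVert_{s_0}\). That needs \(\lVert\mathsf{Q}_Nw\rVert_{H^1}\lesssim\lVert w\rVert_{H^1}\) uniformly in \(N\). Assumption \cref{eq:H-unconditional-basis} (via \cref{prop:unconditional-mapping}) makes the \(L^2\)-orthogonal projections \(\mathsf{Q}_N\) uniformly bounded only on \(H^{s_0}\), with \(s_0<1/2\). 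On \(H^1\) their norms may grow with \(N\) while \(\varepsilon=\varepsilon(N)\) shrinks, so no smallness survives.

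The missing idea is elementary: never let a derivative fall on \(\mathsf{Q}_Nw\). Expand by the Leibniz rule and move \(\mathsf{Q}_N\) and \(\chi_A\), both self-adjoint on \(L^2\), onto the undifferentiated test function:
\begin{equation*}
\langle \mathsf{Q}_N\chi_A\,\partial_x(wF'(z)),w\rangle=\bigl\langle w_xF'(z)+w\,F''(z)z_x,\ \chi_A\mathsf{Q}_Nw\bigr\rangle .
\end{equation*}
Now only the \(L^2\)-level bound \(\lVert\chi_A\mathsf{Q}_Nw\rVert\le\varepsilon\lVert w\rVert_{s_0}\) is needed. The first piece is \(\lesssim\varepsilon\lVert w\rVert_{s_0}\lVert w\rVert_1\lVert z\rVert_{L^\infty}\lesssim\varepsilon\lVert w\rVert^{1-s_0}\lVert w\rVert_1^{1+s_0}\lVert z\rVert_{1-s_0}\). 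The second is \(\lesssim\varepsilon\lVert w\rVert_{s_0}\lVert w\rVert_{1-s_0}\lVert z_x\rVert\le\varepsilon\lVert w\rVert\lVert w\rVert_1\lVert z\rVert_1\). The restriction \(s_0<1/2\) is exactly what gives \(H^{1-s_0}\hookrightarrow L^\infty\), and Young's inequality then absorbs the powers of \(\lVert w\rVert_1\) into the dissipation. No paraproduct is needed.

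\textbf{The far field.} It must be handled the same way, not in your initial form \(\langle F(u)-F(v),\partial_x\mathsf{Q}_Nw\rangle\). After the Leibniz expansion one uses only \(\lVert\mathsf{Q}_N\rVert_{L^2\to L^2}\le1\) and \(1-\chi_A\le\varepsilon\psi\) for \(t\ge T_\varepsilon\). Since \(\psi(t,x)\ge(1-e^{-1})\min\{\varphi(x),t\}\), this holds once \(\ln A\gtrsim1/\varepsilon\) and \(T_\varepsilon\gtrsim1/\varepsilon\).

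\textbf{The \(\kappa>0\) near field.} This sketch also needs correcting. \cref{lem:poincare-cutoff} bounds \(\lVert\mathsf{Q}_N\chi_Af\rVert\) by \(\varepsilon\lVert f\rVert_{H^{\kappa'}}\) with \(\kappa'>0\). So what must be bounded is \(\lVert w_xF'(z)\rVert_{\kappa'}\) and \(\lVert w\,\partial_xF'(z)\rVert_{\kappa'}\), via fractional Leibniz and composition estimates, not an \(H^{s-1}\)-norm.

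Moreover, for \(\kappa_1>0\) the factor \(\lVert u\rVert_{L^\infty}^{\kappa_1}\lVert u\rVert_{1+\kappa}\) you propose is too large. With Young exponents \((2,2)\) it yields \(\lVert u\rVert_{L^\infty}^{2\kappa_1}\lVert u\rVert_{1+\kappa}^2\), which is not bounded by \(e^{\kappa_1}_u=\lVert u\rVert^{2\kappa_1}\lVert u\rVert_{1+\kappa}^2\). With your \(\lVert w\rVert^{\theta}\lVert w\rVert_{1+\kappa}^{1-\theta}\) split it yields a monomial \(\lVert u\rVert^{a}\lVert u\rVert_{1+\kappa}^{b}\) with \(a<\kappa_1 b\). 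That is not dominated by \(1+e^{\kappa_1}_u\) when \(\lVert u\rVert\) is small.

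The paper instead keeps \(\varepsilon\lVert w\rVert\lVert w\rVert_{1+\kappa}\) intact and applies Young with exponents \((2,2)\), arriving at \(\lVert z\rVert_{L^\infty}^{2\kappa_1}\lVert z\rVert_{1+\kappa'}^2\) for small \(\kappa'\). It then uses \(H^{1/2+\kappa'}\hookrightarrow L^\infty\), Young on the cross terms and \cref{lem:sobolev-interpolation}. This turns the expression into \(e^{\kappa_1}_u+e^{\kappa_1}_v\) provided \((1+\kappa')(2\kappa_1+2)\le2(1+\kappa)\), which is where \(\kappa_1<\kappa\) is actually used.
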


\begin{proof}
Since
\begin{equation*}
F(u) - F(v) =w \int_0^1 F' \bigl(v + \lambda(u-v)\bigr) \, d\lambda = w \int_0^1 F'(z_{\lambda}) \, d \lambda,
\end{equation*}
where \(z_{\lambda}:=v + \lambda(u-v)\), we have
\begin{equation}\label{eq:FP-RHS}
  \partial_t \lVert w \rVert^2 + 2a \lVert w \rVert^2 + 2 \nu \lVert D^{1+\kappa} w \rVert^2 =
\langle  - \mathsf{Q}_N  [F(u) - F(v)]_x, w \rangle  = -  \int_0^1  \langle \mathsf{Q}_N  \partial_x[ w  F'(z_{\lambda})], w \rangle  \, d \lambda.
\end{equation}
By Gronwall's inequality, it suffices to show that for every~\(\varepsilon \in (0, a \wedge \nu)\), uniformly in~\(\lambda\),
\begin{equation}\label{eq:upb-RHS-FP}
\Big\lvert  \langle \mathsf{Q}_N \partial_x [ w F'(z_{\lambda})] \rangle  \Big\rvert \le \varepsilon \lVert w \rVert_{1+\kappa}^2 +  C \varepsilon \lVert w \rVert^2 \Bigl[e_u^{\alpha} + e_v^{\alpha} + \tilde{e}_u + \tilde{e}_v  \Bigr].
\end{equation}
In order to prove \cref{eq:upb-RHS-FP}, We will estimate the following two terms
\begin{equation}\label{eq:two-term-decomp}
 \langle \mathsf{Q}_N \bigl(w_x F'(z_{\lambda}) \bigr), w \rangle , \quad \langle \mathsf{Q}_N \bigl(w \partial_x F'(z_{\lambda})\bigr), w \rangle.
\end{equation}
In the rest of the proof, we will drop the~\(\lambda\) dependence for brevity.

For the first term in \cref{eq:two-term-decomp}, we have
\begin{equation*}
  \langle \mathsf{Q}_N (w_x F'(z)),w \rangle  = \langle \mathsf{Q}_N \chi_A (w_x F'(z)), w \rangle  + \langle \mathsf{Q}_N (1-\chi_A) (w_x F'(z)), w \rangle =:   I_{11} + I_{12}.
\end{equation*}
When~\(\kappa > 0\), using \cref{lem:poincare-cutoff}, for sufficiently large~\(N = N(\varepsilon,A)\) we have for arbitrarily small~\(\kappa'>0\),
\begin{equation*}
 I_{11}  \le  \lVert  w \rVert  \lVert \mathsf{Q}_N \chi_A (w_x F'(z)) \rVert \le \varepsilon\lVert w \rVert \lVert w_x F'(z) \rVert_{\kappa'}.
\end{equation*}
Then, by \cref{lem:fractional-product,lem:Lip-sobolev-embedding} and~\(\lvert F''(z) \rvert \lesssim \lvert z \rvert^{\kappa_1}\), we have
\begin{equation*}
  \begin{split}
    \lVert w_x F'(z) \rVert_{\kappa'} & \lesssim \lVert w_x \rVert_{\kappa'} \lVert F'(z) \rVert_{\frac12 + 2 \kappa'} \lesssim \lVert w \rVert_{1+\kappa} \lVert z \rVert^{\kappa_1}_{L^{\infty}} \lVert z  \rVert_{\frac12 + 2\kappa'} \lesssim \lVert w \rVert_{1+\kappa} \lVert z \rVert^{\kappa_1+1}_{\frac12 + 2 \kappa'}  ,
  \end{split}
\end{equation*}
and hence
\begin{equation}\label{eq:I11-sigma-positive}
  \begin{split}
    I_{11} & \le  \varepsilon \lVert w \rVert^2_{1+\kappa} + C \varepsilon \lVert w \rVert^2  \lVert z \rVert^{2\kappa_1+2}_{\frac12 + 2 \kappa'} \\
           & \le  \varepsilon \lVert w \rVert^2_{1+\kappa}  +  C \varepsilon \lVert w \rVert^2 \bigl(  \lVert u \rVert_{\frac12 + 2\kappa'}^{2 \kappa_1+2}  + \lVert v \rVert_{\frac12 + 2 \kappa'}^{2 \kappa_1+2}   \bigr) \\
    &\le  \varepsilon \lVert w \rVert^2_{1+\kappa} + C \varepsilon \lVert w \rVert^2 \bigl(  e_u^{\kappa_1} + e_v^{\kappa_1} \bigr),
  \end{split}
\end{equation}
where the last inequality follows from \cref{lem:sobolev-interpolation} with~\(\kappa'\) sufficiently small and~\(1+\kappa_1 < 2+2\kappa\).
When~\(\kappa = 0\), by \cref{eq:H-unconditional-basis} and \cref{lem:poincare-cutoff-reverse} with~\(s = s_0\in(0,\frac{1}{2})\), we have
\begin{equation}\label{eq:I11-sigma-zero}
  \begin{aligned}
I_{11} &= \langle \mathsf{Q}_N \chi_A (w_x F'(z)), w \rangle = \langle w_x F'(z), \chi_A \mathsf{Q}_N w \rangle  \\
& \le \varepsilon \lVert w \rVert_{s_0} \lVert w_x  \rVert \lVert F'(z) \rVert_{L^{\infty}}       \\
&\le \varepsilon \lVert w \rVert_{s_0} \lVert w_x \rVert \lVert z \rVert_{L^{\infty}} \\
&\le \varepsilon \lVert w \rVert_1^{1+s_0} \lVert w \rVert^{1-s_0} \lVert z \rVert_{1-s_0} \\
       & \le \varepsilon \lVert w \rVert_1^2 + C \varepsilon \lVert w \rVert^2 \lVert z \rVert_{1-s_0}^{\frac{2}{1-s_0}} \\
    & \le \varepsilon \lVert w \rVert_1^2 + C \varepsilon \lVert w \rVert^2 \Bigl(  \lVert u \rVert_{1-s_0}^{\frac{2}{1-s_0}} + \lVert v \rVert_{1-s_0}^{\frac{2}{1-s_0}}   \Bigr) \\
& \le \varepsilon \lVert w \rVert_1^2 + C \varepsilon \lVert w \rVert^2  \bigl( e_u^1 + e_v^1 \bigr),
  \end{aligned}
\end{equation}
where the last line follows from standard interpolation or \cref{lem:sobolev-interpolation}.

For~\(I_{12}\), we have
\begin{equation*}
 I_{12}  \le \varepsilon \lVert \psi F'(z) \rVert_{L^{\infty}} \lVert w_x  \rVert \lVert w \rVert  \le \varepsilon \lVert w \rVert_1^2 + C \varepsilon \lVert w \rVert^2  \lVert \psi F'(z) \rVert_{L^{\infty}}^2,
 \end{equation*}
where
\begin{equation*}
\lVert \psi F'(z) \rVert_{L^{\infty}} \lesssim
\begin{cases}
\lVert \psi z \rVert_{L^{\infty}} \lesssim \lVert \psi z \rVert_1,   & \kappa = 0,\\
\lVert \psi z^{1+\kappa_1} \rVert_{L^{\infty}}  \lesssim \lVert \psi z \rVert_{L^{\infty}}^{1+\kappa_1} \lesssim \lVert \psi u \rVert_{L^{\infty}}^{1+\kappa_1} + \lVert \psi v \rVert_{L^{\infty}}^{1 + \kappa_2} \lesssim \tilde{e}_u + \tilde{e}_v,    & \kappa > 0.
\end{cases}
\end{equation*}
Here, in the last line we use~\(\psi(t,x) \ge c > 0\) for~\(t \ge 2\) in the first inequality, and in the last inequality
\begin{equation*}
\lVert \psi f \rVert_{L^{\infty} }^{1+\kappa_1} \lesssim \lVert \psi f \rVert_{\frac{1}{2} + s}^{1+\kappa_1} \le \lVert \psi f \rVert^{\kappa_1} \lVert \psi f \rVert_{1+\kappa}, \quad f = u, v,
\end{equation*}
which follows from \cref{lem:sobolev-interpolation} provided~\(s\) is arbitrarily small and~\(\frac12(1+\kappa_1) < 1 + \kappa\).

For the second term in \cref{eq:two-term-decomp}, we write
\begin{equation*}
  \langle \mathsf{Q}_N (w\partial_x F'(z)), w \rangle  =
\langle  \mathsf{Q}_N \chi_A (w \partial_x F'(z)), w \rangle  +   \langle \mathsf{Q}_N (1-\chi_A) (w \partial_x F'(z)), w  \rangle  =:   I_{21} + I_{22}.
\end{equation*}
When~\(\kappa > 0\), by Cauchy--Schwartz and \cref{lem:poincare-cutoff}, we have
\begin{equation*}
I_{21} \le \lVert w \rVert \lVert \mathsf{Q}_N \chi_A (w \partial_x F'(z))  \rVert \le \varepsilon \lVert w \rVert   \lVert w \partial_x F'(z) \rVert_{\kappa'}.
\end{equation*}
Using \cref{lem:kato-ponce} and \cref{lem:composition-operator}, by choosing arbitrarily small~\(\kappa'>0\) we have
\begin{align*}
I_{21} &\le \varepsilon \lVert w \rVert  \lVert F'(z) \rVert_{1+\kappa'} \bigl( \lVert w \rVert_{L^{\infty}} + \lVert D^{\kappa'} w \rVert_{L^{\infty}} \bigr) \\
       &\le \varepsilon \lVert w \rVert_{1+\kappa}^2 + C\varepsilon \lVert w \rVert^2 \lVert z \rVert_{L^{\infty}}^{2\kappa_1}  \lVert z \rVert_{1+\kappa'}^2,  \\
& \le \varepsilon \lVert w \rVert_{1+\kappa}^2 + C \varepsilon \lVert w \rVert^2 \bigl( \lVert u \rVert_{\frac12 + \kappa'}^{2 \kappa_1} + \lVert v \rVert_{\frac12 + \kappa'}^{2 \kappa_1}    \bigr) \bigl(  \lVert u \rVert_{1+\kappa'}^2 + \lVert v \rVert_{1+\kappa'}^2   \bigr).
\end{align*}
For~\(f = u, v\), by \cref{lem:sobolev-interpolation} and~\(\kappa_1 + 2 < 2(1+\kappa)\),
\begin{equation*}
\lVert f \rVert_{\frac12 + \kappa'}^{2 \kappa_1 } \lVert f  \rVert_{1+\kappa'}^2  \lesssim \lVert f \rVert^{2 \kappa_1} \lVert f \rVert^2_{1+\kappa} = e^{\kappa_1}_f
\end{equation*}
provided that~\(\kappa_1\) is sufficiently small. For the cross terms, using \cref{lem:sobolev-interpolation} we have
\begin{equation}\label{eq:cross-term-est}
  \lVert u \rVert ^{2 \kappa_1 }_{\frac12 + \kappa'} \lVert v \rVert_{1 + \kappa'}^{2} \lesssim \lVert u \rVert^{2\kappa_1+2}_{\frac12 +  \kappa'} + \lVert v \rVert^{2\kappa_1 + 2}_{ 1 + \kappa'}
  \lesssim \lVert u \rVert^{2 \kappa_1} \lVert u \rVert_{1+\kappa}^2 + \lVert v \rVert^{2 \kappa_1} \lVert v \rVert_{ 1 +\kappa}^2 = e_u^{\kappa_1 } + e_v^{\kappa_1},
\end{equation}
where~\(\kappa'\) needs to chosen sufficiently small so that
\begin{equation}\label{eq:sharp-bound-for-kappa}
\bigl( \frac{1}{2} + \kappa' \bigr) ( 2\kappa_1 + 2 ) \le 2(1+\kappa), \quad (1+\kappa')(2\kappa_1 + 2) \le 2(1+\kappa),
\end{equation}
which is possible if~\(\kappa_1 \in [0, \kappa)\); the other cross term can be estimated similarly. In conclusion, we have shown that for~\(z = \lambda u + (1-\lambda) v\), uniformly for~\(\lambda \in [0,1]\),
\begin{equation}\label{eq:unbalance-product-term-est}
\lVert z \rVert_{L^{\infty}}^{ 2 \kappa_1} \lVert z \rVert_{1+\kappa'}^2 \lesssim \lVert u \rVert^{2\kappa_1} \lVert u \rVert_{1+\kappa}^2 + \lVert v \rVert^{2 \kappa_1} \lVert v \rVert_{1+\kappa}^2,
\end{equation}
provided~\(\kappa_1 < \kappa\) and~\(\kappa'>0\) sufficiently small.

When~\(\kappa = 0\), using \cref{eq:H-unconditional-basis} and \cref{lem:poincare-cutoff-reverse} with~\(s = s_0\in(0,\frac{1}{2})\), we have
\begin{align*}
I_{21} & \le \lVert \chi_A \mathsf{Q}_N w \rVert \lVert w \partial_x F'(z) \rVert \\
& \le \varepsilon \lVert w\rVert_{s_0} \lVert w \rVert_{1-s_0} \lVert F''(z) \partial_x z \rVert \\
& \le   \varepsilon \lVert w \rVert \lVert w \rVert_1 \lVert z\rVert_1 \\
       & \le \varepsilon \lVert w \rVert_1^2 + \frac{\varepsilon}{4} \lVert w \rVert^2 \lVert z \rVert_1^2 \\
  & \le \varepsilon \lVert w \rVert_1^2 + \frac{\varepsilon}{2} \lVert w \rVert^2 ( \lVert u \rVert^2_1 + \lVert v \rVert_1^2    )  .
\end{align*}

For~\(I_{22}\), we have
\begin{equation*}
I_{22} \le \varepsilon \lVert w \rVert \lVert w  \rVert_{L^{\infty}} \lVert \psi \partial_x F'(z) \rVert \le \varepsilon \lVert w \rVert_{1+\kappa}^2 + \frac{\varepsilon}{4}\lVert w \rVert^2 \lVert \psi \partial_x F'(z) \rVert^2.
\end{equation*}
We have
\begin{equation*}
\lVert \psi \partial_x F'(z) \rVert = \lVert \psi F''(z) z_x \rVert
\le \begin{cases}
\lVert \psi z_x \rVert \le \lVert \psi z \rVert_1 + \lVert \psi_x \rVert_{L^{\infty}}  \lVert z \rVert \lesssim \lVert \psi z \rVert_1 ,    & \kappa = 0, \\
\lVert \psi z_x \rVert \lVert z \rVert_{L^{\infty}}^{\kappa_1} \lesssim  \lVert \psi z \rVert^{\kappa_1}_{L^{\infty}}  \lVert \psi z \rVert_1 \lesssim \tilde{e}_u + \tilde{e}_v  ,   & \kappa > 0,
\end{cases}
\end{equation*}
where we use~\(\psi(t,x) \ge c > 0\) for~\(t \ge 2\), and \cref{eq:unbalance-product-term-est} for the case~\(\kappa > 0\).

The inequality \cref{eq:upb-RHS-FP} follows from the above estimates on~\(I_{11}\), \(I_{12}\), \(I_{21}\) and~\(I_{22}\). This completes the proof.
\end{proof}

\begin{remark}\label{rmk:range-of-kappa}
The condition~\(\kappa_1 < \kappa\) is used in \cref{eq:sharp-bound-for-kappa} for the estimation of~\(I_{21}\) and~\(I_{22} \) in the case~\(\kappa>0\).
\end{remark}

We have the following estimate that allows~\(s,t\) to be small, if we do not require an arbitrarily small prefactor~\(\varepsilon > 0\) as in \cref{lem:FP-estimate}.
\begin{proposition}\label{lem:w-naive-growth-bd}
There exists some constant~\(C= C (a,\nu)\) such that for all~\(N \ge 1\) and~\(t > s \ge 0\),
\begin{equation*}
\lVert w(t) \rVert^2 \le \lVert w(s) \rVert^2  \exp \Bigl( - a(t-s) + C\int_s^t  \bigl[e^{\kappa_1}_u(r) + e^{\kappa_1}_v(r)  \bigr] \, dr     \Bigr).
\end{equation*}
\end{proposition}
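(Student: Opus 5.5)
We start from the energy identity \cref{eq:FP-RHS} for \(w=u-v\),
\begin{equation*}
\partial_t \lVert w \rVert^2 + 2a \lVert w \rVert^2 + 2\nu \lVert D^{1+\kappa} w \rVert^2 = -\int_0^1 \langle \mathsf{Q}_N \partial_x [w F'(z_\lambda)], w\rangle \, d\lambda .
\end{equation*}
Since we no longer need a small prefactor, we drop the cutoffs \(\chi_A\), the weight \(\psi\) and the Poincar\'e inequalities of \cref{lem:poincare-cutoff,lem:poincare-cutoff-reverse}. We use only that \(\mathsf{Q}_N\) is an orthogonal projection on \(L^2\), so \(\lVert \mathsf{Q}_N g\rVert \le \lVert g \rVert\) uniformly in \(N\). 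The goal is a bound of the form
\begin{equation*}
\bigl\lvert \langle \mathsf{Q}_N \partial_x[wF'(z)], w\rangle \bigr\rvert \le \nu \lVert w\rVert_{1+\kappa}^2 - \nu\lVert w\rVert^2\cdot 0 + C\lVert w \rVert^2 \bigl(e_u^{\kappa_1}+e_v^{\kappa_1}\bigr),
\end{equation*}
uniformly in \(\lambda\). More precisely, the dissipative part \(\nu\lVert D^{1+\kappa}w\rVert^2\) should absorb everything, up to a harmless multiple of \(\lVert w\rVert^2\) that must itself be dominated. Gronwall then gives the claim with \(-a(t-s)\): one factor \(a\lVert w\rVert^2\) is kept from \(2a\lVert w\rVert^2\), and the other absorbs the lower-order part of \(\lVert w\rVert_{1+\kappa}^2 \lesssim \lVert w\rVert^2+\lVert D^{1+\kappa}w\rVert^2\). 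Concretely, we use \(\varepsilon\)-Young with \(\varepsilon\) small compared to \(a\wedge\nu\), so that \(\varepsilon\lVert w\rVert_{1+\kappa}^2\le \nu\lVert D^{1+\kappa}w\rVert^2+a\lVert w\rVert^2\).

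We split the inner product as in \cref{eq:two-term-decomp} into \(\langle \mathsf{Q}_N(w_xF'(z)),w\rangle\) and \(\langle \mathsf{Q}_N(w\,\partial_xF'(z)),w\rangle\). For the first term we bound it by \(\lVert w_x\rVert\,\lVert F'(z)\rVert_{L^\infty}\lVert w\rVert\). Using \(\lvert F'(z)\rvert\lesssim \lvert z\rvert^{1+\kappa_1}\), this is at most \(\lVert w\rVert_{1}\lVert w\rVert\lVert z\rVert_{L^\infty}^{1+\kappa_1}\).
\begin{itemize}
\item When \(\kappa>0\), we have \(\lVert w\rVert_1\le \lVert w\rVert_{1+\kappa}\). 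Young's inequality then gives \(\varepsilon\lVert w\rVert_{1+\kappa}^2+C\lVert w\rVert^2\lVert z\rVert_{\frac12+\kappa'}^{2+2\kappa_1}\), and the last factor is \(\lesssim e_u^{\kappa_1}+e_v^{\kappa_1}\) by \cref{lem:sobolev-interpolation}, exactly as in \cref{eq:I11-sigma-positive}.
\item When \(\kappa=\kappa_1=0\), we interpolate \(\lVert z\rVert_{L^\infty}\lesssim\lVert z\rVert^{1/2}\lVert z\rVert_1^{1/2}\), which yields \(\lVert z\rVert_{L^\infty}^2\lesssim \lVert z\rVert^2+\lVert z\rVert_1^2\lesssim e_z^0+\dots\). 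This is the delicate point, addressed below.
\end{itemize}

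For the second term we bound it by \(\lVert w\rVert\,\lVert w\rVert_{L^\infty}\lVert F''(z)z_x\rVert\). We estimate \(\lVert w\rVert_{L^\infty}\lesssim \lVert w\rVert_{1+\kappa}\), or, when \(\kappa=0\), \(\lVert w\rVert_{L^\infty}\lesssim\lVert w\rVert^{1/2}\lVert w\rVert_1^{1/2}\). We also use \(\lVert F''(z)z_x\rVert\lesssim \lVert z\rVert_{L^\infty}^{\kappa_1}\lVert z\rVert_1\). Young's inequality then gives \(\varepsilon\lVert w\rVert_{1+\kappa}^2 + C\lVert w\rVert^2\lVert z\rVert_{L^\infty}^{2\kappa_1}\lVert z\rVert_1^2\) when \(\kappa>0\), and \(C\lVert w\rVert^2\lVert z\rVert_1^{4/3}\) or similar when \(\kappa=0\). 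For \(\kappa>0\), the product \(\lVert z\rVert_{L^\infty}^{2\kappa_1}\lVert z\rVert_1^2\) is controlled by \cref{eq:unbalance-product-term-est}, with the cross terms handled as in \cref{eq:cross-term-est} using \(\kappa_1<\kappa\).

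The main obstacle is matching the exponents to the statement's weight \(e^{\kappa_1}_f=\lVert f\rVert^{2\kappa_1}\lVert f\rVert_{1+\kappa}^2\), which for \(\kappa=\kappa_1=0\) is just \(\lVert f\rVert_1^2\). In that case the powers produced by Young's inequality, such as \(\lVert z\rVert_{L^\infty}^{4}\lesssim \lVert z\rVert^2\lVert z\rVert_1^2\), are not directly bounded by \(\lVert z\rVert_1^2\). My first attempt is therefore to avoid Young's inequality and integrate by parts instead. We write \(\langle \mathsf{Q}_N\partial_x g, w\rangle=-\langle g,\partial_x \mathsf{Q}_Nw\rangle\), but this requires \(\mathsf{Q}_N\) to commute with \(\partial_x\), which fails for a general basis. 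So instead I would distribute the derivatives as above but choose the interpolation so that exactly one factor \(\lVert z\rVert_1^2\) appears. For example, \(\lvert\langle \mathsf{Q}_N(w_xF'(z)),w\rangle\rvert\le \lVert w\rVert_1\lVert w\rVert\lVert z\rVert_{L^\infty}\), and \(\lVert z\rVert_{L^\infty}\lesssim\lVert z\rVert_1\) gives \(\varepsilon\lVert w\rVert_1^2+C\lVert w\rVert^2\lVert z\rVert_1^2\). Likewise \(\lVert w\rVert\lVert w\rVert_{L^\infty}\lVert z_x\rVert\le \lVert w\rVert\lVert w\rVert_1\lVert z\rVert_1\) gives the same bound. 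This matches \(e^{0}_u+e^0_v\) with a constant depending only on \(a,\nu\) (and \(F\)). The analogous choices for \(\kappa>0\), namely one \(L^\infty\) factor via \(H^{\frac12+\kappa'}\) interpolated into \(\lVert f\rVert^{2\kappa_1}\lVert f\rVert_{1+\kappa}^2\), complete the argument.
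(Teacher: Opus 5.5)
Your proposal is correct and is essentially the paper's own proof. Like the paper, you drop the cutoffs and use only $\lVert \mathsf{Q}_N g\rVert\le\lVert g\rVert$, split $\partial_x[wF'(z)]$ into $w_xF'(z)$ and $w\,\partial_xF'(z)$, bound these by $\lVert w\rVert_1\lVert w\rVert\lVert z\rVert_{L^\infty}^{1+\kappa_1}$ and $\lVert w\rVert\lVert w\rVert_{L^\infty}\lVert z\rVert_{L^\infty}^{\kappa_1}\lVert z\rVert_1$, and close with Young, Sobolev embedding and \cref{eq:unbalance-product-term-est}; for $\kappa=0$ the paper makes exactly your final choice $\lVert z\rVert_{L^\infty}\lesssim\lVert z\rVert_1$, $\lVert w\rVert_{L^\infty}\lesssim\lVert w\rVert_1$, so the detours you considered there are unnecessary, and choosing $\varepsilon$ small relative to $a\wedge\nu$ just makes explicit the absorption that preserves the rate $-a(t-s)$.
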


\begin{proof}
Following th notation in the proof of \cref{lem:FP-estimate}, we have
\begin{equation*}
\begin{split}
  \big\lvert \langle \mathsf{Q}_N (w_x F'(z)), w \rangle  \big\rvert  & \le \lVert w_x F'(z) \rVert \lVert w \rVert \le    \lVert w_x \rVert \lVert w \rVert \lVert F'(z) \rVert_{L^{\infty}} \\
                                                & \lesssim \lVert w_x \rVert \lVert w \rVert \lVert z \rVert^{\kappa_1+1}_{L^{\infty}}   \le \frac{a \wedge \nu}{2} \lVert w_x \rVert^2 + C_{a,\nu} \lVert w \rVert^2 \lVert z \rVert^{2 \kappa_1+2}_{\frac12 + \kappa'} \\
  &\le \frac{a \wedge \nu}{ 2} \lVert w_x \rVert^2 + C_{a,\nu} \lVert w \rVert^2 \bigl(  \lVert u \rVert_{\frac12 + \kappa'}^{2 \kappa_1 + 2} + \lVert v \rVert_{\frac12 + \kappa'}^{2 \kappa_1 + 2}   \bigr) \\
  & \le \frac{a \wedge \nu}{ 2} \lVert w_x \rVert^2 + C_{a,\nu} \lVert w \rVert^2 \bigl(  \lVert u \rVert^{ 2 \kappa_1} \lVert u \rVert_{1+\kappa}^2 + \lVert v \rVert^{2 \kappa_1} \lVert v \rVert_{1+\kappa}^2    \bigr), \\
  \big\lvert  \langle \mathsf{Q}_N (w F'(z)_x), w \rangle  \big\rvert & \le \lVert w F'(z)_x  \rVert \lVert w \rVert \le    \lVert w \rVert_{L^{\infty}} \lVert w \rVert \lVert F''(z) z_x \rVert \\
  & \lesssim \lVert w \rVert_{L^{\infty}} \lVert w \rVert \lVert z \rVert_{L^{\infty}}^{\kappa_1} \lVert z \rVert_1 \\
& \le \frac{a \wedge \nu}{2} \lVert w \rVert_{1+\kappa}^2 + C_{a,\nu} \lVert w \rVert^2 \bigl(  \lVert u \rVert^{2\kappa_1} \lVert u \rVert_{1+\kappa}^2 + \lVert v \rVert^{2 \kappa_1} \lVert v \rVert_{1+\kappa}^2     \bigr),
\end{split}
\end{equation*}
where the last line follows from \cref{eq:unbalance-product-term-est} if~\(\kappa>0\), or holds trivially if~\(\kappa_1 = \kappa = 0\). Hence, continuing \cref{eq:FP-RHS}, we have
\begin{equation*}
  \partial_t \lVert w \rVert^2 + 2a \lVert w \rVert^2 + 2 \nu \lVert D^{1+\kappa} w \rVert^2 \le
  (a \wedge \nu) \lVert w \rVert_{1+\kappa}^2 + C_{a,\nu} \lVert w \rVert^2 ( e^{\kappa_1}_u + e^{\kappa_2 } _v ).
\end{equation*}
The statement then follows from Gronwall's inequality.
\end{proof}

\section{Energy estimate}\label{sec:energy-estimate}

\subsection{\texorpdfstring{\(L^2\)}{L2}-energy estimate}\label{sec:l2-energy-estimate}
Let
\begin{equation*}
\mathcal{E}_u^p(t) :=  \lVert u(t) \rVert^{2p} + p(a \wedge \nu) \int_0^t\lVert u(s) \rVert^{2(p-1)} \lVert u(s) \rVert_{1+\kappa}^2 \, ds, \quad p \ge 1.
\end{equation*}
We have the~\(L^2\)-energy estimate.
\begin{lemma}\label{lem:L2-energy-estimate}
Let~\(q > 2\) and~\(p \ge 1\). For sufficiently large~\(K > 0\),
\begin{equation*}
\mathbb{P} \Bigl(  \sup_{ t \ge 0} \bigl(   \mathcal{E}_u^p(t)  -  K t - K \lVert u_0 \rVert^{2p}   \bigr) \ge \rho \Bigr) \le  C_{p,q,K,D} \frac{\mathbb{E} \lVert u_0 \rVert^{(2p-1)q} + 1 }{ \rho^{q/2 -1}},\quad \rho \ge 2.
\end{equation*}
\end{lemma}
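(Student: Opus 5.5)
We apply Itô's formula to \(\lVert u(t) \rVert^{2p}\). The key structural fact is that the flux term drops out of the \(L^2\) energy identity. Indeed, \(\langle F(u)_x, u \rangle = -\langle F(u), u_x \rangle = -\int \partial_x G(u)\,dx = 0\), where \(G(z) = \int_0^z F'(y)\,y\,dy\). This holds for \(u \in H^{1+\kappa}\) by the growth bound \cref{eq:H-growth-of-F}, first for smooth \(u\) and then by density, using the variational solution framework of \cref{thm:well-posedness}.

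Itô's formula then gives
\begin{equation*}
d\lVert u \rVert^2 + 2\bigl(a\lVert u\rVert^2 + \nu \lVert D^{1+\kappa}u\rVert^2\bigr)\,dt = \bigl(2\langle h,u\rangle + \mathcal{B}_0\bigr)\,dt + 2\sum_j b_j\langle u,e_j\rangle\,d\beta_j .
\end{equation*}
We use \(a\lVert u\rVert^2+\nu\lVert D^{1+\kappa}u\rVert^2 \ge c(a\wedge\nu)\lVert u\rVert_{1+\kappa}^2\), absorb \(2\langle h,u\rangle\) by Young's inequality, and apply Itô's formula to \(x\mapsto x^p\). This yields
\begin{equation*}
d\lVert u\rVert^{2p} + 2p(a\wedge\nu)\lVert u\rVert^{2(p-1)}\lVert u\rVert_{1+\kappa}^2\,dt \le C_p\bigl(1+\lVert u\rVert^{2(p-1)}\bigr)\,dt + dM_t,
\end{equation*}
where \(M_t = 2p\int_0^t \lVert u\rVert^{2(p-1)}\sum_j b_j\langle u,e_j\rangle\,d\beta_j\), and the Itô correction \(2p(p-1)\lVert u\rVert^{2(p-2)}\sum_j b_j^2\langle u,e_j\rangle^2 \le C_p\mathcal{B}_0\lVert u\rVert^{2(p-1)}\) has been included in the drift. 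Since \(\lVert u\rVert^{2(p-1)}\le \lVert u\rVert^{2(p-2)}\lVert u\rVert^2\) and \(\lVert u\rVert\le\lVert u\rVert_{1+\kappa}\), Young's inequality lets us absorb \(C_p\lVert u\rVert^{2(p-1)}\) into half of the dissipation plus a constant \(K_0\). We keep the remaining half of the dissipation, \(\tfrac{p}{2}(a\wedge\nu)\int\lVert u\rVert^{2(p-1)}\lVert u\rVert^2_{1+\kappa}\), as a reserve. Integrating, and after adjusting the constant in front of the dissipation (the definition of \(\mathcal{E}^p_u\) uses \(p(a\wedge\nu)\)), we obtain
\begin{equation*}
\mathcal{E}^p_u(t) - \lVert u_0\rVert^{2p} - K_0 t \le M_t - \tfrac{p}{2}(a\wedge\nu)\int_0^t \lVert u\rVert^{2(p-1)}\lVert u\rVert_{1+\kappa}^2\,ds .
\end{equation*}

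It therefore suffices to bound \(\mathbb{P}\bigl(\sup_t (M_t - \delta\langle M\rangle_t - (K-K_0)t) \ge \rho\bigr)\). The quadratic variation satisfies \(d\langle M\rangle_t \le 4p^2\mathcal{B}_0\lVert u\rVert^{4p-2}\,dt\), which is not directly dominated by the dissipation \(\lVert u\rVert^{2p-2}\lVert u\rVert^2_{1+\kappa}\) when \(p>1\). This is the main obstacle. We do not use an exponential supermartingale argument. Instead we split time into unit blocks \([k,k+1]\) and use a polynomial-moment estimate. By the Burkholder--Davis--Gundy inequality,
\begin{equation*}
\mathbb{E}\sup_{[k,k+1]}\lvert M_t-M_k\rvert^{q} \lesssim \mathbb{E}\Bigl(\int_k^{k+1}\lVert u\rVert^{4p-2}\,ds\Bigr)^{q/2} \le \mathbb{E}\sup_{[k,k+1]}\lVert u\rVert^{(2p-1)q}.
\end{equation*}
The right-hand side is bounded uniformly in \(k\) by \(C(\mathbb{E}\lVert u_0\rVert^{(2p-1)q}+1)\). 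This uniform moment bound follows from the same Itô computation applied to \(\lVert u\rVert^{(2p-1)q}\): damping \(a>0\) gives \(\frac{d}{dt}\mathbb{E}\lVert u\rVert^{r} \le -c\,\mathbb{E}\lVert u\rVert^r + C\), and a further application of BDG gives the supremum over each block. This explains the exponent \((2p-1)q\) in the statement.

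Finally, the event in the statement forces \(\sup_{[k,k+1]}(M_t - M_k) \ge \rho/2 + (K-K_0)k/2\) for some \(k\), after telescoping \(M_k=\sum_{j<k}(M_{j+1}-M_j)\) and paying a linear budget \((K-K_0)k/2\) per block. By Chebyshev's inequality applied with the \(q\)-th moment, block \(k\) contributes
\begin{equation*}
C\bigl(\mathbb{E}\lVert u_0\rVert^{(2p-1)q}+1\bigr)(\rho+k)^{-q}.
\end{equation*}
Summing over \(k\) already gives \(\rho^{-(q-1)}\), which is at least as good as the stated \(\rho^{-(q/2-1)}\). The weaker exponent \(q/2-1\) leaves room for the cruder telescoping bound. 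In that bound one controls \(\sup_n \lvert M_n\rvert - cn\) through the martingale moment inequality \(\mathbb{E}\lvert M_n\rvert^q\lesssim n^{q/2-1}\sum_k\mathbb{E}\lvert M_{k+1}-M_k\rvert^q\), which with linear-in-\(n\) thresholds produces exactly the rate \(\rho^{-(q/2-1)}\) for \(q>2\). That route is the one we would write out.
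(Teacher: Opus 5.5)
Your argument is essentially the paper's: Itô's formula for $\lVert u\rVert^{2p}$ with the flux dropping out, Young's inequality to reduce to $\sup_t(M_t-ct)$, and $d\langle M\rangle_t\lesssim\mathcal{B}_0\lVert u\rVert^{4p-2}\,dt$. The last bound means $\mathbb{E}\langle M\rangle^{q/2}$ is controlled by $(2p-1)q$-th moments of $\lVert u\rVert$, which explains the exponent in the statement. The final step is a polynomial tail bound for a martingale with linear drift; the paper cites \cref{lem:martingale-linear-growth} for it, and you do it by hand on unit blocks.

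One claim in your last paragraph is false and should be deleted. The event does not force a single block increment $\sup_{[k,k+1]}(M_t-M_k)$ to exceed $\rho/2+(K-K_0)k/2$. Set $c=K-K_0$ and take a path whose increments all equal $2c$, with $\rho>4c$. Then no block exceeds its threshold, yet $M_k-ck=ck\ge\rho$ once $k\ge\rho/c$. After telescoping, a linear budget per block only tells you that some increment exceeds a constant. That event is not rare, and summing it over blocks diverges. So there is no $\rho^{-(q-1)}$ bound from this argument. The rate $\rho^{-(q/2-1)}$ is not slack: it reflects the $\sqrt{n}$-size fluctuations of $M_n$, and you must use the cancellation across blocks.

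The route you say you would write out does exactly this and is correct. Burkholder gives $\mathbb{P}\bigl(|M_n|\ge(\rho+cn)/2\bigr)\lesssim n^{q/2}(\rho+n)^{-q}$, and BDG gives the within-block term $\lesssim(\rho+n)^{-q}$. Summing over $n$ yields $\rho^{1-q/2}$ with constant $\mathbb{E}\lVert u_0\rVert^{(2p-1)q}+1$. Also drop the $-\delta\langle M\rangle_t$ from your intermediate reduction. As you note, the reserved dissipation does not dominate $\langle M\rangle_t$, and you never use it.

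Your blockwise argument is in fact a self-contained, sound proof of \cref{lem:martingale-linear-growth}. Applying BDG at order $q$ to $M$ itself on $[0,n+1]$ produces precisely the sum $\sum_n K_q(n+1)^{q/2}(n+\rho)^{-q}$ displayed in that proof. The paper instead routes through $\sup_t(\langle M\rangle_t-2t)$. That quantity cannot be controlled by the hypothesis $\mathbb{E}\langle M\rangle_t^{q/2}\le K_q t^{q/2}$ alone: Chebyshev at order $q/2$ gives terms of order $K_q$, which are not summable, and $\langle M\rangle_t$ may grow faster than $2t$. So writing out your final route costs little and gives a martingale step that can actually be checked.
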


\begin{proof}
By Itô's formula, we have
\begin{equation}\label{eq:19}
  \begin{split}
 d \lVert u(t) \rVert^{2p} &= p \lVert u(t) \rVert^{2p-2} \Bigl(
2 \langle  u, -\nu D^{2+2\kappa} u  -  [F(u)]_{x} - au \rangle \, dt + 2 \langle u, d \eta \rangle + \mathcal{B}_0 dt \Bigr) \\
 & \mathrel{\phantom{=}}  +  2p (p-1) \lVert u \rVert^{2(p-2)} \sum_{j = 1}^{\infty} b_j^2 \langle u, e_j \rangle^2  \, dt \\
& \le - 2p \lVert u(t) \rVert^{2p-2} \bigl(a \lVert u(t)  \rVert^2  + \nu \lVert D^{1+\kappa} u(t) \rVert^2 \bigr) \, dt + d M_p(t) + K_p \mathcal{B}_0  \lVert u \rVert^{2(p-1)} \,  dt,
  \end{split}
\end{equation}
where
\begin{equation*}
M_p(t) := 2p  \int_0^t \lVert u(s) \rVert^{2(p-1)} \langle u(s), d\eta \rangle.
\end{equation*}
Integrating and using Young's inequality for~\(\lVert u \rVert^{2(p-1)} \le \varepsilon \lVert u \rVert^{2p} + C\), we obtain
\begin{equation}\label{eq:6}
\mathcal{E}_u^p(t) -  C_{1,p} \lVert u_0 \rVert^{2p} \le C_{2,p} t + C_{1,p} M_p(t),
\end{equation}
and hence
\begin{equation*}
\bigl\{  \mathcal{E}_u^p(t) - C_{1,p} \lVert u_0 \rVert^{2p} - (C_{1,p}+C_{2,p}) t \ge \rho   \bigr\} \subset \{ M_p(t) - t \ge \rho/C_{1,p} \}.
\end{equation*}
For~\(q \ge 2\), we have
\begin{equation*}
\begin{split}
\mathbb{E} \langle M_p\rangle_t^{q/2} &= 4p^2 \mathbb{E}  \Bigl[ \int_0^t \lVert u \rVert^{4 (p-1)} \sum_{j = 1}^{\infty} b_j^2 \langle u, e_j \rangle^2 \, ds   \Bigr]^{q/2}\\
& \lesssim_{p,q, \mathcal{B}_0} \mathbb{E} \Bigl[  \int_0^t \lVert u \rVert^{4p-2} \, ds  \Bigr]^{q/2} \\
& \lesssim_{p,q,\mathcal{B}_0} t^{q/2-1} \mathbb{E} \int_0^t \lVert u \rVert^{(2p-1)q} \, ds.
\end{split}
\end{equation*}
Integrating and taking expectation of \cref{eq:19} with~\(p\) replaced by~\((p-\frac12)q\), we obtain
\begin{equation*}
\mathbb{E} \int_0^t \lVert u \rVert^{(2p-1)q} \, ds \lesssim_{p,q,\mathcal{B}_0} \mathbb{E} \lVert u_0 \rVert^{(2p-1)q} + 1.
\end{equation*}
The desired inequality then follows from \cref{lem:martingale-linear-growth}.
\end{proof}

Integrating and taking expectation of \cref{eq:19}, then dropping the~\(\nu \lVert D^{1+\kappa} u \rVert^2 \) term and using the Young's inequality, we obtain
\begin{equation*}
\lVert u(t) \rVert^{2p} + pa \int_0^t \lVert u(s) \rVert^{2p} \, ds  \le C_{a,p,\mathcal{B}_0} t.
\end{equation*}
By Gronwall's inequality we obtain the following growth estimate.
\begin{lemma}\label{lem:L2-moment-growth}
  Let~\(u(t)\) be the solution to \cref{eq:conservation-law}. Then for any~\(p \ge 1\),
  \begin{equation*}
  \mathbb{E} \lVert u(t) \rVert^{2p} \le e^{ - a pt}\mathbb{E} \lVert u_0  \rVert^{2p} + C_{a,p,\mathcal{B}_0}.
  \end{equation*}
\end{lemma}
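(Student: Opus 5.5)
The plan is to work with the quantity \(\mathbb{E}\lVert u(t)\rVert^{2p}\) directly. We start from the Itô identity \cref{eq:19}, take expectations, and close a linear differential inequality.

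First, we need that the martingale \(M_p\) has zero mean. To justify this we localize with the stopping times \(\tau_R=\inf\{t:\lVert u(t)\rVert\ge R\}\). On \([0,t\wedge\tau_R]\) the integrand \(\lVert u\rVert^{2(p-1)}\langle u, Q\,\cdot\rangle\) is bounded, because \(Q\) is Hilbert--Schmidt with \(\sum b_j^2=\mathcal{B}_0<\infty\). Hence \(\mathbb{E}M_p(t\wedge\tau_R)=0\). The nonlinear term drops out exactly, since \(\langle u,(F(u))_x\rangle=\int \partial_x G(u)\,dx=0\), where \(G'(r)=rF'(r)\) and \(G(0)=0\), and \(u\in H^{1+\kappa}\) for a.e. \(t\) (this is the cancellation already used in \cref{eq:19}). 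Dropping the nonnegative dissipation term \(2p\nu\lVert u\rVert^{2p-2}\lVert D^{1+\kappa}u\rVert^2\), we get
\begin{equation*}
\frac{d}{dt}\,\mathbb{E}\lVert u(t\wedge\tau_R)\rVert^{2p}\le \mathbb{E}\Bigl[\mathbb{I}_{t<\tau_R}\bigl(-2pa\lVert u\rVert^{2p}+K_p\mathcal{B}_0\lVert u\rVert^{2p-2}\bigr)\Bigr],
\end{equation*}
where \(K_p = 1+2(p-1)\) accounts for the Itô correction.

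Next, we apply Young's inequality to the forcing term:
\begin{equation*}
K_p\mathcal{B}_0\lVert u\rVert^{2p-2}\le pa\lVert u\rVert^{2p}+C_{a,p,\mathcal{B}_0}.
\end{equation*}
This gives \(\frac{d}{dt}Y\le -paY+C\) for \(Y(t)=\mathbb{E}\lVert u(t)\rVert^{2p}\), modulo the localization. To handle the localization, we first work with the integrated form on \([0,t\wedge\tau_R]\). We then let \(R\to\infty\), using \(\tau_R\to\infty\) a.s. (global well-posedness from \cref{thm:well-posedness}) together with Fatou's lemma. Alternatively, we can apply Itô's formula to \(e^{pat}\lVert u\rVert^{2p}\), which gives
\begin{equation*}
\mathbb{E}\, e^{pa(t\wedge\tau_R)}\lVert u(t\wedge\tau_R)\rVert^{2p}\le \mathbb{E}\lVert u_0\rVert^{2p}+C\int_0^t e^{pas}\,ds,
\end{equation*}
and then pass to the limit with Fatou's lemma. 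This yields
\begin{equation*}
\mathbb{E}\lVert u(t)\rVert^{2p}\le e^{-pat}\,\mathbb{E}\lVert u_0\rVert^{2p}+\frac{C}{pa},
\end{equation*}
which is the claim.

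The main (mild) obstacle is rigor in the Itô formula for \(\lVert u\rVert^{2p}\) in the variational setting with \(p>1\). We handle this by applying the Itô formula for \(\lVert u\rVert^2\) from the variational framework, then composing with \(x\mapsto x^p\) via the one-dimensional Itô formula; the exponential weight is handled the same way. For \(p=1\) no Young step is needed. If \(\mathbb{E}\lVert u_0\rVert^{2p}=\infty\), the estimate is trivial.
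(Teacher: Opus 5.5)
Your proposal is correct and follows the same route as the paper: take expectations in the Itô identity \cref{eq:19}, drop the dissipative term, absorb \(\lVert u\rVert^{2p-2}\) into \(pa\lVert u\rVert^{2p}\) by Young's inequality, and close with Gronwall to get the rate \(e^{-apt}\). Of your two localization options, keep the exponential-weight one (Itô's formula for \(e^{pat}\lVert u\rVert^{2p}\) stopped at \(t\wedge\tau_R\), then Fatou); an integrated inequality on \([0,t]\) alone does not yield the exponential decay, and this version supplies the rigor that the paper's brief argument leaves implicit.
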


\subsection{Weighted \texorpdfstring{\(L^2\)}{L2}-estimate}\label{sec:weighted-l2-energy}
We define
\begin{equation*}
\mathcal{E}^{\psi}_u(t) :=  \lVert \psi u \rVert^{2(1+\kappa_1)} + (a  \wedge \nu) \int_0^t \lVert \psi u(s) \rVert^{2\kappa_1} \lVert \psi u(s) \rVert_{1+\kappa}^2  \, ds, \quad p \ge 1.
\end{equation*}
Assume~\(\mathcal{B}_{\varphi,m_0} < \infty \) where
\begin{equation}\label{eq:range-for-m0}
m_0 = m_0(\kappa_1, \kappa)  \begin{cases}
= 1, & \kappa_1 = 0, \\
> 1+2(\frac{1}{2} + \kappa_1 ) (2 + \kappa) = \mathfrak{m}(1+\kappa_1,6),  & \kappa_1 >0;
\end{cases}
\end{equation}
see also \cref{eq:def-m-frak} for the definition of~\(\mathfrak{m}\).

The goal of this section is to establish the following estimate for weighted~\(L^2\)-energy. The lower bound for~\(m_0\) when~\(\kappa_1 > 0\) in~\cref{eq:range-for-m0} is to allow~\(q > 6\) that is needed to prove polynomial mixing in \cref{sec:poly-squeezing}; see also~\cref{eq:def-q-star} for the definition of~\(q^{\ast}\).
\begin{proposition}\label{lem:weighted-L2-energy-estimate}
There exist~\(q_{\ast} = q_{\ast}(\kappa_1,\kappa,m_0) \in (6,\infty]\) and~\(\bar{p} = \bar{p}(\kappa,\kappa_1) \ge 1\) such that for every~\(q \in (2,q_{\ast})\) and for sufficiently large~\(K\),
\begin{equation}\label{eq:linear-growth-weight-L2}
\mathbb{P} \Bigl(  \sup_{ t \ge 0}\bigl(  \mathcal{E}^{\psi}_u(t) -  K t - K \lVert u_0 \rVert^{2 \bar{p}}\bigr) \ge \rho \Bigr) \le  C_{q,K} \frac{ \mathbb{E} \lVert u_0 \rVert^{2 \bar{p} q}  + 1}{\rho^{q/2 - 1}}, \quad \rho \ge 2.
\end{equation}
\end{proposition}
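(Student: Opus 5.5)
We mimic the proof of \cref{lem:L2-energy-estimate}, now applied to the weighted quantity \(\lVert \psi u \rVert^{2(1+\kappa_1)}\). The plan is to derive a pathwise differential inequality from Itô's formula, absorb every bad term into either the dissipation or the unweighted energy controlled by \cref{lem:L2-energy-estimate}, and then apply the martingale linear-growth bound \cref{lem:martingale-linear-growth}.

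\textbf{Step 1: Itô formula for \(\lVert \psi u \rVert^2\).} Apply Itô's formula to \(\lVert \psi u\rVert^2\), where \(\psi\) is the deterministic weight \cref{eq:def-psi}. This produces:
\begin{enumerate}
\item the dissipative terms \(-2a\lVert\psi u\rVert^2 - 2\nu\langle \psi u, \psi D^{2+2\kappa}u\rangle\);
\item the time-derivative term \(2\langle \psi\psi_t u,u\rangle\), bounded by \(C\lVert u\rVert\,\lVert\psi u\rVert\) since \(\psi_t\in L^\infty\);
\item the nonlinear term \(-2\langle \psi^2 u, F(u)_x\rangle\);
\item the forcing terms \(2\langle\psi^2 u,h\rangle\) and the martingale \(2\langle \psi^2 u,d\eta\rangle\);
\item the Itô correction \(\sum_j b_j^2\lVert\psi e_j\rVert^2\le \mathcal{B}_{\varphi,1}\), using \(\psi\le\varphi\).
\end{enumerate}

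For the viscous term we write \(\psi D^{2+2\kappa}u = D^{1+\kappa}(\psi D^{1+\kappa} u)+[\psi,D^{1+\kappa}]D^{1+\kappa}u\), and similarly commute \(\psi\) through once more. The commutators are of lower order because \(D^s\psi,\partial_x^k\psi\in L^\infty\) (\cref{lem:frac-deri-of-psi} and the Kato--Ponce lemma). This gives \(\nu\lVert \psi u\rVert_{1+\kappa}^2 - C\lVert u\rVert_{1+\kappa}\lVert\psi u\rVert_{\kappa+1-\delta}\) and similar remainders. These are absorbed into half the dissipation plus \(C\lVert u\rVert_{1+\kappa}^2(1+\lVert u\rVert^{\cdot})\).

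For the nonlinear term we introduce \(G\) with \(G'(u)=uF'(u)\). Then
\begin{equation*}
\langle\psi^2u,F(u)_x\rangle=-\langle 2\psi\psi_x,G(u)\rangle,
\end{equation*}
and since \(\lvert G(u)\rvert\lesssim\lvert u\rvert^{3+\kappa_1}\) and \(\lvert \psi\psi_x\rvert\lesssim\psi\),
\begin{equation*}
\lvert\langle\psi^2u,F(u)_x\rangle\rvert\lesssim \lVert u\rVert_{L^\infty}^{1+\kappa_1}\lVert u\rVert\,\lVert\psi u\rVert.
\end{equation*}
When \(\kappa_1=0\) this is \(\lesssim \lVert u\rVert_1\lVert u\rVert\,\lVert \psi u\rVert\), which is harmless.

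\textbf{Step 2: raising to the power \(1+\kappa_1\).} Next apply Itô to \(\lVert\psi u\rVert^{2(1+\kappa_1)}\), which multiplies everything by \((1+\kappa_1)\lVert\psi u\rVert^{2\kappa_1}\). Every error term has the form \(\lVert \psi u\rVert^{\alpha}\) times unweighted quantities of \(u\). We use Young's inequality, together with the interpolation \(\lVert\psi u\rVert\le\lVert\varphi u\rVert\) and the dissipation, to bound all of them by
\begin{equation*}
\tfrac12 (a\wedge\nu)\lVert\psi u\rVert^{2\kappa_1}\lVert\psi u\rVert_{1+\kappa}^2 + C\bigl(1+\lVert u\rVert^{2(\bar p-1)}\lVert u\rVert_{1+\kappa}^2+\lVert u\rVert^{2\bar p}\bigr).
\end{equation*}
The exponent \(\bar p\) is chosen large enough to absorb the nonlinear contributions. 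The Itô corrections are controlled by
\begin{equation*}
\lVert\psi u\rVert^{2\kappa_1}\mathcal{B}_{\varphi,1}+\lVert\psi u\rVert^{2\kappa_1}\sum_j b_j^2\langle\psi u,\psi e_j\rangle^2/\lVert\psi u\rVert^2 .
\end{equation*}
Integrating in time gives
\begin{equation*}
\mathcal{E}^\psi_u(t)\le C\mathcal{E}_u^{\bar p}(t)+Ct+M^\psi(t),
\end{equation*}
where \(M^\psi(t) = 2(1+\kappa_1)\int_0^t \lVert\psi u\rVert^{2\kappa_1}\langle\psi^2u,d\eta\rangle\).

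\textbf{Step 3: tail bound.} The event in \cref{eq:linear-growth-weight-L2} is contained in the union of the event of \cref{lem:L2-energy-estimate} (with \(p=\bar p\)) at level \(\rho/2\) and the event \(\{\sup_t(M^\psi(t)-t)\ge\rho/2\}\). For the latter we apply \cref{lem:martingale-linear-growth}, which requires
\begin{equation*}
\mathbb{E}\langle M^\psi\rangle_t^{q/2}\lesssim t^{q/2-1}\,\mathbb{E}\int_0^t\lVert\psi u\rVert^{(4\kappa_1+2)q/2}\Bigl(\sum_j b_j^2\lVert\psi e_j\rVert^2\Bigr)^{q/2}ds .
\end{equation*}

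\textbf{Main obstacle.} The crux is bounding moments \(\mathbb{E}\lVert\psi u\rVert^{r}\) with \(r\approx(1+2\kappa_1)q\) uniformly in time. When \(\kappa_1=0\) this is just the same Itô computation at higher power, with no restriction on \(q\), so \(q_\ast=\infty\). When \(\kappa_1>0\) a high power of the weighted norm cannot be closed by dissipation alone. My first attempt would be to interpolate
\begin{equation*}
\lVert\psi u\rVert\le \lVert\varphi^{m_0}u\rVert^{\theta}\lVert u\rVert^{1-\theta}
\end{equation*}
and to control \(\mathbb{E}\lVert\varphi^{m_0}u\rVert^{2}\) through an Itô estimate using \(\mathcal{B}_{\varphi,m_0}<\infty\). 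The admissible \(q\) then depends on \(m_0\) through \(\mathfrak{m}\); this dependence defines \(q_\ast(\kappa_1,\kappa,m_0)\), and the condition \cref{eq:range-for-m0} ensures \(q_\ast>6\).
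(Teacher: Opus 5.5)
\textbf{There is a genuine gap in your Step 2 for $\kappa_1>0$.} You claim every error term can be absorbed into the weighted dissipation plus unweighted energy, but this fails, and this step is exactly where the paper's key idea sits.

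After multiplying by $\lVert\psi u\rVert^{2\kappa_1}$, the commutator remainder of the viscous term has size $\lVert\psi u\rVert^{2\kappa_1}\lVert u\rVert_{1+\kappa}\lVert\psi u\rVert_{\kappa}$ (cf.\ \cref{eq:29}). This term has degree $2\kappa_1+1$ in $\psi u$, while the dissipation $\lVert\psi u\rVert^{2\kappa_1}\lVert\psi u\rVert_{1+\kappa}^2$ has degree $2\kappa_1+2$. So any Young splitting that removes all weighted factors puts $\lVert u\rVert_{1+\kappa}$ to the power $2(1+\kappa_1)$. Any splitting that keeps $\lVert u\rVert_{1+\kappa}$ quadratic leaves a remainder like $C\lVert\psi u\rVert^{2\kappa_1}\lVert u\rVert_{1+\kappa}^2$.

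Neither remainder is controlled by $\mathcal{E}^{p}_u$, which only contains $\int_0^t\lVert u\rVert^{2(p-1)}\lVert u\rVert_{1+\kappa}^2\,ds$. Since $\psi(t,\cdot)$ is unbounded, $\lVert\psi u\rVert$ is not dominated by any unweighted quantity. The bound $\lVert\psi u\rVert\le\lVert\varphi u\rVert$ does not help, because nothing controls $\lVert\varphi u\rVert$. Hence the inequality $\mathcal{E}^\psi_u\le C\mathcal{E}^{\bar p}_u+Ct+M^\psi$ that you arrive at is not available.

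\textbf{The missing idea} is to trade excess powers of $\lVert\psi u\rVert$ for a \emph{linear} quantity with a higher weight, via H\"older in $x$: $\lVert\psi u\rVert^{2m}\le\lVert\psi^m u\rVert^2\lVert u\rVert^{2(m-1)}$. This is \cref{lem:IBP-weighted-laplacian-term}: the remainder becomes $C(\lVert u\rVert^{2\beta}\lVert u\rVert_{1+\kappa}^2+\lVert\psi^m u\rVert^2+1)$ with $m=1+\kappa_1(2+\kappa)$. The integral $\int_0^t\lVert\psi^m u\rVert^2\,ds$ is then controlled by $\mathcal{G}^{1,m}_u$.

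That energy has a linear-growth tail bound for every $m$ (\cref{lem:weighted-energy-higher-m}), for two reasons. At power one, all error terms are bounded by $\delta\lVert\psi^m u\rVert_{1+\kappa}^2+C(1+\lVert u\rVert^{2(p_1-1)}\lVert u\rVert_{1+\kappa}^2)$ (\cref{lem:IBP-weight-Laplacian-term-alpha-1}). And the martingale satisfies $\langle\tilde{M}_m\rangle_t\le\mathcal{B}_{\varphi,m}\int_0^t\lVert\psi^m u\rVert^2\,ds$, which the damping dominates, so \cref{lem:super-mart-ineq} gives an exponential tail.

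With $p=1+\kappa_1$, the correct integrated inequality is $\mathcal{G}^{p,1}_u\le Ct+C\mathcal{E}^{p_2p}_u+C\mathcal{G}^{1,m}_u+\tilde{M}''_p$. The tail bound is then a union of three events, not two. So the higher weight is needed already in the drift, not only for the martingale moments.

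When you then bound $\mathbb{E}\int_0^t\lVert\psi u\rVert^{(2p-1)q}\,ds$ for \cref{lem:martingale-linear-growth}, you must rerun the same computation at power $(p-\tfrac12)q$. This requires $\mathcal{B}_{\varphi,\mathfrak{m}(p,q)}<\infty$, and that requirement is what defines $q_\ast$. Use $\psi^m$ rather than $\varphi^{m_0}$ here: $\lVert\varphi^{m_0}u_0\rVert$ may be infinite for $u_0\in H$, whereas $\psi(0,\cdot)=0$.

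\textbf{The $\kappa_1=0$ case has a related problem.} You claim moments of $\lVert\psi u\rVert$ of all orders follow from ``the same It\^o computation at higher power''. That computation hits the same obstruction, namely a term $\lVert\psi u\rVert^{2p-2}\lVert u\rVert_{1+\kappa}^2$. The paper needs no such moments. For $\kappa_1=0$ the quantity is $\mathcal{G}^{1,1}_u$, whose martingale is handled by the exponential supermartingale bound described above. The only polynomial loss then comes from \cref{lem:L2-energy-estimate}, which is why $q_\ast=\infty$ with $m=1$.
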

The expressions of~\(q_{\ast}\) and~\(\bar{p}\) are explicit:
\begin{equation}\label{eq:def-q-star}
  q_{\ast} = \begin{cases}
+\infty, & \kappa_1 = 0, \\
  \sup \{ q: \mathfrak{m}(1+\kappa_1, q) \le m \} \sim  c_{\kappa_1,\kappa} m,   & \kappa_1 > 0,
\end{cases}
\end{equation}
where~\(\mathfrak{m}\) is given in \cref{eq:def-m-frak} or \cref{eq:range-for-m0}, and
\begin{equation}\label{eq:def-p-bar}
  \bar{p} =
  \begin{cases}
p_1 = 2 + \kappa, & \kappa_1 = 0,\\
 p_2 (1+\kappa_1) \le (2+2\kappa)(1+\kappa_1) + \kappa^{-1} + 2 , & \kappa_1 > 0,
\end{cases}
\end{equation}
(see also~\cref{eq:def-p1} and \cref{eq:def-p-2} for definitions of~\(p_1\) and~\(p_2\)).

To prove \cref{lem:weighted-L2-energy-estimate}, we introduce the following auxiliary energy function:
\begin{equation*}
\mathcal{G}_u^{p,m}(t) = \lVert \psi^m u(t) \rVert^{2p} + (a\wedge \nu) \int_0^t \lVert \psi^m u \rVert^{2(p-1)}  \lVert \psi^m u(s) \rVert^2_{1+\kappa} \, ds, \quad p \ge 1, \ m \ge 1.
\end{equation*}
In the Itô's formula of~\(\lVert \psi^m u \rVert^2 \), we need to estimate
\begin{equation}\label{eq:def-A-u}
\mathcal{A}_m[u] = \langle \psi^m u, \psi^m ( \nu D^{2(1+\kappa) } u - F(u)_x + (\psi^m)_t u ) \rangle + \nu \lVert D^{1+\kappa}(\psi^m u) \rVert^2 .
\end{equation}

\begin{lemma}\label{lem:IBP-weight-Laplacian-term-alpha-1}
  Assume \cref{eq:H-growth-of-F}.
Let~\(m \ge 1\) and~\(\delta > 0\). There exists~\(C_{\delta,m} > 0\) such that
  \begin{equation*}
  \mathcal{A}_m[u] \le \delta \lVert \psi^m u \rVert^2_{1+\kappa} + C_{\delta,m}  \bigl( 1 +   \lVert u \rVert^{2(p_1 -1)} \lVert u \rVert_{1+\kappa}^2   \bigr) ,
\end{equation*}
where
\begin{equation}\label{eq:def-p1}
p_1 = 2 +  \kappa.
\end{equation}
\end{lemma}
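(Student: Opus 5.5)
The plan is to estimate the three contributions to \(\mathcal{A}_m[u]\) in \cref{eq:def-A-u} separately: the dissipative (commutator) part, the flux part, and the time-derivative-of-weight part. Each is reduced to lower-order terms in which at least one derivative or one power of \(\psi\) falls on a bounded function. Throughout I use the properties of \(\psi\) listed after \cref{eq:def-psi}. The quantities \(\partial_t\psi\), \(\partial_x^k\psi\) (\(k\ge1\)) and \(D^s\psi\) (\(s\ge1\)) are bounded, and moreover \(|\partial_x\psi|\lesssim|\varphi'|\lesssim (1+|x|)^{-1}\). The pointwise Young inequality \(\psi^{j}\le \delta'\psi^{m}+C_{\delta',m}\) for \(0\le j<m\) is used repeatedly to absorb lower powers of the weight.

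\emph{Time-derivative term.} This is \(\langle\psi^m u,(\psi^m)_t\psi^m u\rangle\). The product \(\psi^{2m-1}\psi_t\) is not bounded uniformly in \(x\), but \(\psi_t=e^{-t/\varphi}\le1\). Writing \(m\psi^{2m-1}\psi_t\le \delta'\psi^{2m}+C\), this term is bounded by \(\delta'\|\psi^m u\|^2+C\|u\|^2\). I interpret the paper's notation so that \(\|u\|^2\) is controlled by \(1+\|u\|^{2(p_1-1)}\|u\|^2_{1+\kappa}\) up to the lower-order convention; if this needs care, I would fold such terms into the constant through the \(1\) and Young.

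\emph{Flux term.} Let \(G'(s)=sF'(s)\), so \(|G(s)|\lesssim|s|^{3+\kappa_1}\) by \cref{eq:H-growth-of-F}. Integrating by parts,
\(\langle \psi^{2m}u,F(u)_x\rangle=-\int(\psi^{2m})_x G(u)\,dx\).
The key observation is that \(|(\psi^{2m})_x|\lesssim \varphi^{2m-1}|\varphi'|\lesssim \ln^{2m-1}(2+x^2)/(1+|x|)\), which is bounded by \(C_m\). Hence the flux term is at most \(C_m\|u\|_{L^\infty}^{1+\kappa_1}\|u\|^2\). By Gagliardo--Nirenberg (\cref{lem:sobolev-interpolation}),
\(\|u\|_{L^\infty}\lesssim\|u\|^{1-\theta}\|u\|_{1+\kappa}^{\theta}\) with \(\theta=\frac{1}{2(1+\kappa)}\).
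The resulting exponent of \(\|u\|_{1+\kappa}\) is \((1+\kappa_1)\theta\le 2\), because \(\kappa_1\le\kappa\). Young's inequality then gives the bound \(C(1+\|u\|^{2(p_1-1)}\|u\|_{1+\kappa}^2)\); the exponent \(p_1=2+\kappa\) is exactly what makes the homogeneities match in the worst case.

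\emph{Dissipative term.} Write \(\psi^m D^{2\gamma}u\), with \(\gamma=1+\kappa\), as \(D^{\gamma}(\psi^m D^{\gamma}u)\) plus a commutator, and \(D^\gamma(\psi^m u)=\psi^m D^\gamma u+[D^\gamma,\psi^m]u\). The combination in \(\mathcal{A}_m\) then reduces to pairings of \(\psi^m u\) or \(D^\gamma(\psi^m u)\) with \([D^\gamma,\psi^m]u\).

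The main obstacle is the fractional commutator estimate with an unbounded weight. I would use a Kato--Ponce type commutator bound (in the spirit of \cref{lem:kato-ponce}) with the weight derivatives: \(\|[D^\gamma,\psi^m]u\|\lesssim \|\nabla\psi^m\cdot\|\) acting on \(u\) at order \(\gamma-1\). Since \(\partial_x\psi^m=m\psi^{m-1}\psi_x\) still carries the unbounded factor \(\psi^{m-1}\), I would first absorb it via \(\psi^{m-1}\le\delta'\psi^m+C\), and commute weights through lower-order derivatives using the boundedness of \(D^s\psi\) (\cref{lem:frac-deri-of-psi}). This yields \(\|[D^\gamma,\psi^m]u\|\le \delta'\|\psi^m u\|_{\gamma}+C\|u\|_{\gamma}\). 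Cauchy--Schwarz and Young then give \(\delta\|\psi^m u\|^2_{1+\kappa}+C_{\delta,m}\|u\|_{1+\kappa}^2\). This last term is again folded into \(C(1+\|u\|^{2(p_1-1)}\|u\|^2_{1+\kappa})\) as in the energy framework; if it is not directly dominated, I would instead interpolate \(\|u\|_{\gamma-1/2}\) between \(\|u\|\) and \(\|u\|_\gamma\) to gain room for Young.

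Summing the three bounds and choosing \(\delta'\) small relative to \(\delta\) gives the claimed estimate.
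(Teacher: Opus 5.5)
The gap is in the dissipative term, which you rightly single out as the crux. Two things go wrong there.

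First, your reduction misplaces one commutator. Writing $\psi^m D^{2\gamma}u=D^\gamma(\psi^m D^\gamma u)-[D^\gamma,\psi^m]D^\gamma u$ produces the pairing $\langle\psi^m u,[D^\gamma,\psi^m]D^\gamma u\rangle$. Here the commutator acts on $D^\gamma u$, not on $u$. Bounded by Cauchy--Schwarz, it costs $\|[D^\gamma,\psi^m]D^\gamma u\|\lesssim\|u\|_{1+2\kappa}$, which you do not control when $\kappa>0$. The fix is to use that $D^\gamma$ and multiplication by $\psi^m$ are self-adjoint, so $[D^\gamma,\psi^m]^*=-[D^\gamma,\psi^m]$. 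This rewrites the term as $-\langle[D^\gamma,\psi^m](\psi^m u),D^\gamma u\rangle$. Together with $\langle D^\gamma(\psi^m u),[D^\gamma,\psi^m]u\rangle$, this is exactly the paper's splitting \cref{eq:25}.

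Second, the ``unbounded factor'' you want to absorb does not exist. \cref{lem:commutator} needs only $\|\partial_x\psi^m\|_{L^\infty}$ and $\|D^{1+\kappa}\psi^m\|_{L^\infty}$, and both are bounded uniformly in $t$ by \cref{lem:frac-deri-of-psi}. Directly, $|\partial_x\psi^m|\le m\varphi^{m-1}|\varphi'|\lesssim\ln^{m-1}(2+x^2)/(1+|x|)$. This is the same observation you already made for $(\psi^{2m})_x$ in the flux step. Your proposed remedy would not have worked anyway. A pointwise bound $\psi^{m-1}\le\delta'\psi^m+C$ cannot be passed through an $L^\infty$-based commutator estimate, and ``commuting weights through lower-order derivatives'' with an unbounded weight just reproduces the same problem.

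With both corrections, \cref{lem:commutator} gives $\|[D^\gamma,\psi^m]u\|\lesssim\|u\|_\kappa$ and $\|[D^\gamma,\psi^m](\psi^m u)\|\lesssim\|\psi^m u\|_\kappa$. Hence the dissipative contribution is at most $C(\|\psi^m u\|_{1+\kappa}\|u\|_\kappa+\|u\|_{1+\kappa}\|\psi^m u\|_\kappa)\le\delta\|\psi^m u\|^2_{1+\kappa}+C_\delta\|u\|^2_{1+\kappa}$, as in \cref{eq:27,eq:28}.

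The time-derivative step matches \cref{eq:time-derivative}. Your flux step is correct and slightly cleaner than \cref{eq:26}, since bounding $(\psi^{2m})_x$ in $L^\infty$ avoids carrying $\|\psi^m u\|$. Two minor points there: the $L^\infty$ Gagliardo--Nirenberg bound is not \cref{lem:sobolev-interpolation}, though it is standard in one dimension; and the final Young step also uses $\|u\|\le\|u\|_{1+\kappa}$ to dispose of a leftover pure power of $\|u\|$.

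You are right to worry about folding $C\|u\|^2_{1+\kappa}$ into $C(1+\|u\|^{2(p_1-1)}\|u\|^2_{1+\kappa})$. It is not dominated: take $\|u\|\to0$ with $\|u\|_{1+\kappa}=\|u\|^{-(1+\kappa)}$. Interpolating $\|u\|_{\gamma-1/2}$ does not rescue this when $\kappa>0$. However, the paper's proof produces exactly the same term and does not fold it away either. What is really proved, and what is used in \cref{lem:weighted-energy-higher-m}, is the bound with an extra $C_\delta\|u\|^2_{1+\kappa}$. That term is harmless there, since $\int_0^t\|u\|^2_{1+\kappa}\,ds$ is controlled by $\mathcal{E}^1_u(t)$ and \cref{lem:L2-energy-estimate}.
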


\begin{proof}
  Let us write~\(\phi = \psi^m\).  Then by \cref{lem:frac-deri-of-psi} and~\(\lvert \partial_t \psi \rvert \lesssim 1 \), we have
  \begin{equation}\label{eq:22}
\lVert D^s \phi \rVert_{L^{\infty} } \le C, \ s \ge 1, \quad  \lvert \partial_t \phi \rvert \lesssim \lvert \psi \rvert^{m-1} = \lvert \phi \rvert^{\frac{m-1}{m}}.
\end{equation}
Let us treat the case~\(\kappa > 0\). We have
\begin{equation}\label{eq:25}
\begin{split}
\Big\lvert  \bigl(  D^{2(1+\kappa)} ( \phi u  ), \phi u \bigr) -  \bigl(  \phi u, \phi D^{2+2\kappa} u \bigr) \Big\rvert
& \le \Big\lvert \bigl(  D^{1+\kappa} (\phi^2 u) - \phi D^{1+\kappa} (\phi u), D^{1+\kappa} u \bigr)   \Big\rvert \\
&+ \Big\lvert \bigl(  D^{1+\kappa} (\phi u) - \phi D^{1+\kappa} u, D^{1+\kappa}(\phi u) \bigr)  \Big\rvert
\end{split}
\end{equation}
Using \cref{lem:commutator} and \cref{eq:22}, for~\(s \ge 1\), the first term is bounded by
\begin{equation}\label{eq:27}
C  \lVert D^{1+\kappa} u \rVert \Bigl[  \lVert \partial_x \phi \rVert_{L^{\infty}} \lVert D^{\kappa} (\phi u) \rVert + \lVert D^{1+\kappa} \phi \rVert_{L^{\infty} } \lVert \phi u \rVert     \Bigr]  \lesssim \lVert D^{1+\kappa} u \rVert \lVert \phi u \rVert_{\kappa} \le \delta \lVert  \phi u \rVert_{1+\kappa}^2 + C_{\delta} \lVert u \rVert_{1+\kappa}^2,
\end{equation}
and the second term is bounded by
\begin{equation}\label{eq:28}
C \lVert D^{1+\kappa} (\phi u) \rVert \Bigl[  \lVert \partial_x \phi \rVert_{L^{\infty}} \lVert D^{\kappa} u \rVert + \lVert D^{1+\kappa} \phi \rVert_{L^{\infty}} \lVert u \rVert     \Bigr]
\lesssim \lVert D^{1+\kappa} (\phi u) \rVert \lVert u \rVert_{\kappa} \le  \delta \lVert  \phi u \rVert_{1+\kappa}^2 + C_{\delta} \lVert u \rVert_{1+\kappa}^2.
\end{equation}
Let~\(\Phi(r) = \int_0^r s F'(s) \, ds\), we have
\begin{equation}\label{eq:26}
  \begin{split}
\Big\lvert  \langle \phi u, \phi F'(u) u_x \rangle  \Big\rvert &= \Big\lvert  \langle \phi^2 , [\Phi(u)]_x \rangle  \Big\rvert
                                     =  2 \lvert  \langle \phi \phi_x, \Phi(u) \rangle  \rvert \lesssim \lVert \phi u \rVert  \lVert u^{2 +\kappa_1} \rVert  \\
                                   &\le \delta \lVert \phi u \rVert^2_{1+\kappa} + C_{\delta} \lVert u \rVert^{4+2\kappa_1}_{L^{4+2\kappa_1}}\\
                                   &\le \delta \lVert \phi u \rVert^2_{1+\kappa} + C_{\delta} \lVert u \rVert^{4+2\kappa_1}_{\frac{1}{2} } \\
  & \le \lVert \phi u \rVert_{1+\kappa}^2 + C_{\delta} \lVert u \rVert^{2+2 \kappa_1} \lVert u \rVert_{1+\kappa}^2 ,
  \end{split}
\end{equation}
where we use~\( H^{1/2}\hookrightarrow  L^p\) and \cref{lem:sobolev-interpolation}.
Finally, by Young's inequality,
\begin{equation}\label{eq:time-derivative}
\big\lvert  \langle \phi u, \phi_t u  \rangle  \big\rvert \le\int_{\mathbb{R}} \Big\lvert  \phi^{\frac{2m-1}{2m}} u^2 \Big\rvert
  \le  \delta \lVert \phi u \rVert^2 + C_{\delta,m} \lVert u \rVert^2.
\end{equation}
The desired estimate follows from \cref{eq:27,eq:28,eq:26,eq:time-derivative}.

When~\(\kappa = 0\), the estimation of \cref{eq:25} is simpler since no fractional derivative is involved:
\begin{equation*}
\begin{split}
  \Big\lvert  \langle \partial_{xx} (\phi u), \phi u \rangle - \langle \phi u, \phi \partial_{xx} u \rangle   \Big\rvert
  & \le \lvert \langle \phi_{xx} u, \phi u \rangle  \rvert + 2 \lvert  \langle \phi_x u_x, \phi u \rangle  \rvert \\
  &\lesssim \delta \lVert \phi u \rVert_1^2 + C_{\delta} \lVert u \rVert_1^2.
\end{split}
\end{equation*}
We note that~\cref{eq:26,eq:time-derivative} remain valid at~\(\kappa = 0\) (so that~\(\kappa_1 = 0\) by \cref{eq:H-growth-of-F}). Combining this we prove the estimate for~\(\kappa = 0\).

\end{proof}

The following lemma proves \cref{lem:weighted-L2-energy-estimate} for~\(\kappa_1 =  0\).
\begin{lemma}\label{lem:weighted-energy-higher-m}
Let~\(m \ge 1\) and assume~\(\mathcal{B}_{\varphi,m} < \infty\). There exists~\(\mathcal{K}_m\) such that for~\(K \ge \mathcal{K}_m\),
  \begin{equation*}
    \mathbb{P} \Bigl(  \sup_{ t \ge 0 } \bigl(  \mathcal{G}^{1,m}_u (t) - Kt -K \lVert u_0 \rVert^{2p_1}  \bigr) \ge \rho   \Bigr) \le C_{q,K} \frac{ \mathbb{E} \lVert u_0 \rVert^{2p_1q} + 1 }{ \rho^{q/2 -1}}, \quad \rho \ge 2,
  \end{equation*}
  where~\(p_1\) is given in \cref{eq:def-p1}.
\end{lemma}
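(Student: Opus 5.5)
We follow the scheme of \cref{lem:L2-energy-estimate}, applied to the weighted quantity $\lVert \psi^m u\rVert^2$, with \cref{lem:IBP-weight-Laplacian-term-alpha-1} controlling the weighted drift and \cref{lem:L2-energy-estimate} (with $p=p_1$) absorbing the unweighted remainder.

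\emph{Itô formula.} Writing $\phi=\psi^m$, Itô's formula gives
\begin{equation*}
d\lVert \phi u\rVert^2 = \Bigl(-2a\lVert\phi u\rVert^2 - 2\nu\lVert D^{1+\kappa}(\phi u)\rVert^2 + 2\mathcal{A}_m[u] + 2\langle \phi u,\phi h\rangle + \sum_j b_j^2\lVert \phi e_j\rVert^2\Bigr)dt + dM(t),
\end{equation*}
where $M(t)=2\int_0^t\langle \phi^2 u, d\eta\rangle$ and $\mathcal{A}_m[u]$ is the commutator-type term in \cref{eq:def-A-u}. The sign convention in \cref{eq:def-A-u} has to be matched so that $\mathcal{A}_m$ is exactly the remainder after extracting $-\nu\lVert D^{1+\kappa}(\phi u)\rVert^2$.

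\emph{Absorbing the drift.} Since $0\le\psi\le\varphi$, the Itô correction is at most $\mathcal{B}_{\varphi,m}<\infty$. For the forcing term we use $\lvert\langle\phi u,\phi h\rangle\rvert\le \delta\lVert\phi u\rVert^2+C_\delta\lVert\phi h\rVert^2$. This needs $\lVert\varphi^m h\rVert<\infty$, which holds because $h\in\mathrm{span}\{e_1,\dots,e_N\}$ and $b_j\neq0$ for $j\le N$, so finiteness of $\mathcal{B}_{\varphi,m}$ gives $\lVert\varphi^m e_j\rVert<\infty$ for those $j$. Applying \cref{lem:IBP-weight-Laplacian-term-alpha-1} with small $\delta$ and absorbing the $\delta\lVert\phi u\rVert_{1+\kappa}^2$ terms into $2a\lVert\phi u\rVert^2+2\nu\lVert D^{1+\kappa}(\phi u)\rVert^2\ge 2(a\wedge\nu)\lVert\phi u\rVert_{1+\kappa}^2$ (up to a constant), we obtain
\begin{equation*}
\mathcal{G}^{1,m}_u(t)\le \lVert\phi(0)u_0\rVert^2 + Ct + C\int_0^t\lVert u\rVert^{2(p_1-1)}\lVert u\rVert_{1+\kappa}^2\,ds + M(t).
\end{equation*}
Because $\psi(0,x)=0$, the initial term vanishes.

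\emph{Controlling the remaining terms.} The integral is at most $C\,\mathcal{E}^{p_1}_u(t)$. By \cref{lem:L2-energy-estimate} with $p=p_1$, the event $\{\sup_t(\mathcal{E}^{p_1}_u(t)-K't-K'\lVert u_0\rVert^{2p_1})\ge\rho\}$ has probability at most $C(\mathbb{E}\lVert u_0\rVert^{(2p_1-1)q}+1)\rho^{1-q/2}$, which is dominated by the claimed bound since $(2p_1-1)q\le 2p_1q$. For the martingale, the quadratic variation satisfies
\begin{equation*}
d\langle M\rangle \le 4\sum_j b_j^2\langle \phi u,\phi e_j\rangle^2\,dt \le 4\lVert\phi u\rVert^2\mathcal{B}_{\varphi,m}\,dt,
\end{equation*}
and $\lVert \phi u\rVert^2$ is itself part of $\mathcal{G}^{1,m}_u$. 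As in \cref{lem:L2-energy-estimate}, we apply \cref{lem:martingale-linear-growth}. This requires the moment bound $\mathbb{E}\int_0^t\lVert\phi u\rVert^{q}\,ds\lesssim \mathbb{E}\lVert u_0\rVert^{2p_1q}+1$, i.e.\ a linear-in-$t$ estimate for $\mathbb{E}\int_0^t\lVert\phi u\rVert^q$.

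\emph{Main obstacle: the weighted moment bound.} We would prove it by applying Itô's formula to $\lVert\phi u\rVert^{q}$, giving $\frac q2\lVert\phi u\rVert^{q-2}\,d\lVert\phi u\rVert^2$ plus a correction bounded by $C\lVert\phi u\rVert^{q-2}\mathcal{B}_{\varphi,m}$. The drift then contains the cross term $\lVert\phi u\rVert^{q-2}\lVert u\rVert^{2(p_1-1)}\lVert u\rVert_{1+\kappa}^2$. The planned treatment is Young's inequality: $\lVert\phi u\rVert^{q-2}$ is absorbed by the damping term $a\lVert\phi u\rVert^{q}$, leaving $(\lVert u\rVert^{2(p_1-1)}\lVert u\rVert_{1+\kappa}^2)^{q/2}$. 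This last quantity is a higher unweighted moment, whose expected time integral would be controlled via Itô's formula for $\lVert u\rVert^{2p}$ with large $p$ (\cref{eq:19}), giving the dependence on $\mathbb{E}\lVert u_0\rVert^{2p_1q}$. The difficulty is that $\lVert u\rVert_{1+\kappa}^2$ enters only linearly in $\mathcal{E}^p_u$, so raising it to the power $q/2>1$ is not directly covered by \cref{eq:19}. If this Young/moment route fails, the fallback is a localization argument: use stopping times on the event where $\mathcal{E}^{p_1}_u$ grows linearly, as in \cref{lem:L2-energy-estimate}, and bound $\langle M\rangle_t$ pathwise by $t\sup_{s\le t}\lVert\phi u(s)\rVert^2$, avoiding higher weighted moments of the dissipation.
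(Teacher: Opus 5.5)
Your Itô computation, the absorption via \cref{lem:IBP-weight-Laplacian-term-alpha-1}, the vanishing initial term because $\psi(0,\cdot)=0$, and the reduction of the unweighted remainder to $\mathcal{E}^{p_1}_u$ and \cref{lem:L2-energy-estimate} are all correct and match the paper. Your treatment of $h$ even supplies a term the paper's Itô formula omits.

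The genuine gap is the martingale term, which you leave unresolved. Your main route through \cref{lem:martingale-linear-growth} needs $\mathbb{E}\int_0^t\lVert\psi^m u\rVert^q\,ds\lesssim 1+t$. This cannot be obtained from the sole hypothesis $\mathcal{B}_{\varphi,m}<\infty$. As you note, Young's inequality on $\lVert\psi^m u\rVert^{q-2}\lVert u\rVert^{2(p_1-1)}\lVert u\rVert_{1+\kappa}^2$ leaves $\lVert u\rVert_{1+\kappa}^q$, which no energy estimate controls. The only device the paper has for such products (\cref{lem:IBP-weighted-laplacian-term}) trades them for a heavier weight $\psi^{m'}$ with $m'>m$. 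That is exactly why \cref{lem:weigthed-L2-estimate-p-1} must assume $\mathcal{B}_{\varphi,\mathfrak{m}(p,q)}<\infty$. Your fallback does not help either. Bounding $\langle M\rangle_t$ by $t\sup_{s\le t}\lVert\psi^m u(s)\rVert^2$ is circular. Even on the event where $\lVert\psi^m u\rVert^2$ grows linearly, it only gives $\langle M\rangle_t\lesssim t^2$, which is too weak for a linear-growth bound on $M$.

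The missing observation is that the quadratic variation is dominated by the dissipation already contained in $\mathcal{G}^{1,m}_u$:
\[
\langle M\rangle_t\le 4\mathcal{B}_{\varphi,m}\int_0^t\lVert\psi^m u\rVert^2\,ds\le 4\mathcal{B}_{\varphi,m}\int_0^t\lVert\psi^m u\rVert_{1+\kappa}^2\,ds .
\]
Keep a fixed fraction $c_0\int_0^t\lVert\psi^m u\rVert_{1+\kappa}^2\,ds$ of the dissipation on the left of your integrated inequality. Split $M_t=(M_t-\gamma\langle M\rangle_t)+\gamma\langle M\rangle_t$ and choose $4\gamma\mathcal{B}_{\varphi,m}\le c_0/2$, so that $\gamma\langle M\rangle_t$ is absorbed. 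For $K$ large (hence $K'$ large) this gives
\begin{multline*}
\Bigl\{\sup_{t\ge0}\bigl(\mathcal{G}^{1,m}_u(t)-Kt-K\lVert u_0\rVert^{2p_1}\bigr)\ge\rho\Bigr\}\subset\Bigl\{\sup_{t\ge0}\bigl(\mathcal{E}^{p_1}_u(t)-K't-K'\lVert u_0\rVert^{2p_1}\bigr)\ge c_1\rho\Bigr\}\\
\cup\Bigl\{\sup_{t\ge0}\bigl(M_t-\gamma\langle M\rangle_t\bigr)\ge c_2\rho\Bigr\}.
\end{multline*}
By the exponential supermartingale bound \cref{lem:super-mart-ineq}, the last event has probability at most $e^{-\gamma c_2\rho}\lesssim_{q,\gamma}\rho^{1-q/2}$. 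No moment of $\lVert\psi^m u\rVert$ is needed at all, and the polynomial rate comes solely from \cref{lem:L2-energy-estimate}. This is the paper's proof.
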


\begin{proof}
By Itô's formula, we have
  \begin{equation*}
      d \lVert\psi^m u(t) \rVert^{2}= 2\mathcal{A}_m[u] \, dt + \Bigl[  - \nu \lVert D^{1+\kappa} (\psi^m u) \rVert^2  - a \lVert \psi^m u \rVert^2 +   \sum_{j = 1}^{\infty} b_j^2 \lVert \psi^m e_j \rVert^2  \Bigr] \,dt + d \tilde{M}_m(t),
\end{equation*}
where
\begin{equation*}
\tilde{M}_m(t) :=   \int_0^t  \sum_{j = 1}^{\infty}   b_j \langle \psi^m u, \psi^m e_j \rangle  \, d \beta_j.
\end{equation*}
By \cref{lem:IBP-weight-Laplacian-term-alpha-1},  we have
\begin{equation*}
\begin{split}
  d \lVert \psi^m u(t) \rVert^2 \le & \Bigl[ - 2\nu \lVert D^{1+\kappa}(\psi^m u) \rVert^2   - 2a \lVert \psi^m u  \rVert^2 + \delta \lVert \psi^m u \rVert_{1+\kappa}^2 + C_{\delta,m} (\lVert u \rVert^{2(p_1 - 1)} \lVert u \rVert_{1+\kappa}^2 +1) \Bigr] \, dt
  \\&+ d \tilde{M}_m (t) + \mathcal{B}_{\varphi,m} \, dt.
\end{split}
\end{equation*}
Integrating the above inequality over~\([0,t]\) and choosing~\(\delta \le a \wedge \nu\), we obtain
\begin{equation}\label{eq:9}
   \mathcal{G}_u^{1,m} (t)  \le C_1(\mathcal{B}_{\varphi,m} + C_{\delta,m}) t + C_2 \mathcal{E}^{p_1}_u (t) +  \tilde{M}_m(t).
\end{equation}
Since
\begin{equation*}
  \langle \tilde{M}_m \rangle_{t} =  \int_0^{t} \sum_{j = 1}^{\infty} b_j^2 \langle \psi^m u, \psi^m e_j \rangle^2 \, ds
  \le \mathcal{B}_{\varphi,m} \int_0^{t} \lVert \psi^m u \rVert^2\, ds,
\end{equation*}
for sufficiently large~\(K'>0\) for \cref{lem:L2-energy-estimate} to apply and \(K> 0\), we have
\begin{multline*}
 \Bigl\{ \sup_{t \ge 0 } \bigl( \mathcal{G}_u^{1,m} (t)  - K t - K \lVert u_0 \rVert^{2p_1} \bigr)  \ge \rho \Bigr\}  \\
\subset \Bigl\{  \sup_{t \ge 0 }\bigl(  \mathcal{E}^{p_1}_u  - K' t - K' \lVert u_0 \rVert^{2p_1} \bigr)  \ge c_1 \rho \Bigr\}  \cup
\Bigl\{  \sup_{t \ge 0 }  \bigl(\tilde{M}_m(t) - \gamma \langle \tilde{M}_m \rangle_t  \bigr) \ge c_2\rho  \Bigr\},
\end{multline*}
for some constants~\(c_1, c_2, \gamma > 0\). The probability of the first event on the RHS can be bounded by \cref{lem:L2-energy-estimate}, and the probability of the second event by \cref{lem:super-mart-ineq}. This completes the proof.
\end{proof}

Next we need to treat the case~\(\kappa_1>0\). This means~\(\kappa>0\) and~\(\kappa_1 \in (0,2\kappa)\).
\begin{lemma}\label{lem:IBP-weighted-laplacian-term}
  Let~\(\alpha > 0\). There are constants~\(m = m_{\alpha} \ge 1, \beta = \beta_\alpha \ge 0 \) such that for all~\(\delta > 0\),
\begin{equation}\label{eq:10}
  \lVert \psi u \rVert^{2 \alpha} \mathcal{A}_1[u]
\le
\delta \lVert \psi u \rVert^{2\alpha}  \lVert  \psi u \rVert^2_{1+\kappa} + C_{\delta}  \Bigl( \lVert u \rVert^{2\beta}  \lVert u \rVert_{1+\kappa}^2  + \lVert \psi^m u \rVert^2 +1 \Bigr),
\end{equation}
where
\begin{equation}\label{eq:m-expre}
m = 1 + \alpha(2+\kappa),
\end{equation}
and
\begin{equation}\label{eq:beta-expre}
\beta = \Bigl( 1 + 2(\alpha+\kappa) \Bigr) \vee \frac{\alpha(1+\kappa)}{\kappa}.
\end{equation}
\end{lemma}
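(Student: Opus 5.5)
We will bound the three pieces of \(\mathcal{A}_1[u]\) separately, as in the proof of \cref{lem:IBP-weight-Laplacian-term-alpha-1} with \(\phi=\psi\), and then multiply by \(\lVert \psi u\rVert^{2\alpha}\). The commutator terms \cref{eq:27,eq:28} are bounded by \(\delta\lVert \psi u\rVert_{1+\kappa}^2 + C_\delta\lVert u\rVert_{1+\kappa}^2\). The time-derivative term \cref{eq:time-derivative} is bounded by \(\delta\lVert\psi u\rVert^2+C\lVert u\rVert^2\).

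\textbf{Linear terms.} After multiplication by \(\lVert\psi u\rVert^{2\alpha}\), the \(\delta\)-parts are already of the required form \(\delta\lVert\psi u\rVert^{2\alpha}\lVert\psi u\rVert_{1+\kappa}^2\). It remains to control the mixed products \(\lVert \psi u\rVert^{2\alpha}\lVert u\rVert_{1+\kappa}^2\). Since \(\psi\le\varphi\lesssim\psi^m\) for \(t\ge 2\) (and \(\psi\) is bounded for small \(t\), where the weight is harmless), we interpolate
\[
\lVert \psi u\rVert\le \lVert u\rVert^{1-1/m}\lVert \psi^m u\rVert^{1/m}.
\]
Young's inequality with exponent \(m/\alpha\) on the factor \(\lVert\psi^m u\rVert^{2\alpha/m}\) gives
\[
\lVert\psi u\rVert^{2\alpha}\lVert u\rVert_{1+\kappa}^2\le \lVert\psi^m u\rVert^2 + C\bigl(\lVert u\rVert^{2\alpha(1-1/m)}\lVert u\rVert_{1+\kappa}^2\bigr)^{m/(m-\alpha)}.
\]
The second term is then reduced to \(\lVert u\rVert^{2\beta}\lVert u\rVert_{1+\kappa}^2\) by interpolating \(\lVert u\rVert_{1+\kappa}\) against the dissipation. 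This step is where the specific exponents \(m=1+\alpha(2+\kappa)\) and \(\beta\) of \cref{eq:m-expre,eq:beta-expre} come from. Instead of pure Young, one may prefer to keep one factor \(\lVert u\rVert_{1+\kappa}^2\) intact and absorb excess powers into \(\lVert u\rVert^{2\beta}\). For this we use \(\lVert\psi u\rVert_{1+\kappa}\)-type interpolation, \(\lVert \psi u\rVert_{L^\infty}\lesssim\lVert\psi u\rVert^{1-\theta}\lVert\psi u\rVert_{1+\kappa}^{\theta}\) with \(\theta=\frac{1}{2(1+\kappa)}\). The branch \(\alpha(1+\kappa)/\kappa\) of \(\beta\) arises from a Young exponent involving \(\kappa\), which suggests this route.

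\textbf{Nonlinear term (main obstacle).} As in \cref{eq:26}, integration by parts gives
\[
\lvert\langle \psi u,\psi F(u)_x\rangle\rvert=2\lvert\langle\psi\psi_x,\Phi(u)\rangle\rvert\lesssim \int \psi\lvert u\rvert^{3+\kappa_1}.
\]
We bound this by \(\lVert\psi u\rVert\,\lVert u\rVert_{L^\infty}^{1+\kappa_1}\lVert u\rVert_{L^4}^{\,\cdot}\), more precisely by \(\lVert \psi u\rVert\,\lVert u\rVert_{L^{4+2\kappa_1}}^{2+\kappa_1}\). Multiplying by \(\lVert\psi u\rVert^{2\alpha}\) produces \(\lVert \psi u\rVert^{2\alpha+1}\lVert u\rVert^{2+\kappa_1}_{L^{4+2\kappa_1}}\). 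The factor \(\lVert\psi u\rVert^{2\alpha+1}\) is handled by the same interpolation toward \(\lVert\psi^m u\rVert^2\). The \(u\)-factor is interpolated via \cref{lem:sobolev-interpolation} between \(\lVert u\rVert\) and \(\lVert u\rVert_{1+\kappa}\), using at most the power \(\lVert u\rVert_{1+\kappa}^2\). Finally, Young's inequality separates this into \(\lVert\psi^m u\rVert^2\) plus \(\lVert u\rVert^{2\beta}\lVert u\rVert_{1+\kappa}^2\), with \(\beta\ge 1+2(\alpha+\kappa)\) accounting for the surplus \(L^2\) powers.

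The delicate point is bookkeeping the exponents, so that the total Sobolev power of \(u\) is at most \(2\) while the weight power is at most \(m\); \(\kappa_1<\kappa\) and \(\kappa>0\) guarantee the needed room. Collecting the three estimates then yields \cref{eq:10}.
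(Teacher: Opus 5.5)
There is a genuine gap in how you handle the commutator terms. You apply Young's inequality to \cref{eq:27,eq:28} \emph{before} multiplying by \(\lVert \psi u\rVert^{2\alpha}\). This leaves \(C_\delta \lVert \psi u\rVert^{2\alpha}\lVert u\rVert_{1+\kappa}^2\), and that term is not controlled by the right-hand side of \cref{eq:10} uniformly in \(t\), which is what the application needs.

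Your own Young step shows the symptom. It produces \(\lVert u\rVert_{1+\kappa}^{2m/(m-\alpha)}\) with \(2m/(m-\alpha)>2\), and there is nothing to interpolate this against: the highest norm on the right is \(H^{1+\kappa}\), and the \(\delta\)-term is only quadratic in it. Your alternative of keeping \(\lVert u\rVert_{1+\kappa}^2\) intact would require bounding \(\lVert \psi u\rVert^{2\alpha}\) by powers of \(\lVert u\rVert\), which is impossible because \(\psi\) is unbounded.

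In fact the intermediate inequality is false. Take \(t\) large and \(u=f(\cdot-x_0)+\chi\cos(n\,\cdot)\), where \(f,\chi\) are fixed bumps, \(\mathrm{supp}\,\chi\subset[-\tfrac12,\tfrac12]\) (so \(\psi\le \ln 2.25<1\) there), and \(\Lambda=\varphi(x_0)\le t\) is large. As \(n\to\infty\), both \(\lVert u\rVert_{1+\kappa}^2\) and \(\lVert \psi u\rVert_{1+\kappa}^2\) grow like \(n^{2+2\kappa}\), with limiting ratio at most \((\ln 2.25)^2<1\). Meanwhile \(\lVert u\rVert\) and \(\lVert \psi^m u\rVert\) stay bounded in \(n\), and \(\lVert \psi u\rVert\gtrsim \Lambda\). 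Dividing by \(n^{2+2\kappa}\) and letting \(n\to\infty\), for \(\delta<1\) the inequality would force \(\Lambda^{2\alpha}\lesssim C_\delta\), which is a contradiction for large \(\Lambda\).

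The missing idea is to keep the commutator bounds in product form, \(\lVert u\rVert_{1+\kappa}\lVert \psi u\rVert_{\kappa}\) and \(\lVert \psi u\rVert_{1+\kappa}\lVert u\rVert_{\kappa}\). Each is linear in a top-order norm and has a lower-order partner.

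\textbf{First product.} Interpolate \(\lVert \psi u\rVert_\kappa\le \lVert \psi u\rVert^{\frac{1}{1+\kappa}}\lVert \psi u\rVert_{1+\kappa}^{\frac{\kappa}{1+\kappa}}\). Young's inequality with exponent \(\frac{2(1+\kappa)}{\kappa}\) then sends \(\lVert\psi u\rVert_{1+\kappa}\), together with the full weight \(\lVert \psi u\rVert^{2\alpha}\), into the \(\delta\)-term. What remains is \(\lVert \psi u\rVert^{2\alpha+\frac{2}{2+\kappa}}\lVert u\rVert_{1+\kappa}^{\frac{2(1+\kappa)}{2+\kappa}}\), which is now sub-quadratic in the top norm. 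Hölder's inequality \(\lVert \psi u\rVert\le \lVert \psi^m u\rVert^{1/m}\lVert u\rVert^{1-1/m}\), with exactly \(m=1+\alpha(2+\kappa)\), turns the weighted factor into \(\lVert\psi^m u\rVert^{\frac{2}{2+\kappa}}\lVert u\rVert^{2\alpha}\). Young's inequality with exponents \(2+\kappa\) and \(\frac{2+\kappa}{1+\kappa}\) then gives \(\lVert \psi^m u\rVert^2+\lVert u\rVert^{\frac{2\alpha(2+\kappa)}{1+\kappa}}\lVert u\rVert_{1+\kappa}^2\). This is where \(m\) comes from.

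\textbf{Second product.} Here the weight sits on the top-order factor. Young's inequality with exponent \(\frac{2\alpha+2}{2\alpha+1}\), together with \(\lVert\psi u\rVert\le\lVert\psi u\rVert_{1+\kappa}\), absorbs all of \(\lVert \psi u\rVert^{2\alpha}\lVert\psi u\rVert_{1+\kappa}\) into the \(\delta\)-term. This leaves the unweighted \(\lVert u\rVert_\kappa^{2\alpha+2}\le \lVert u\rVert^{2\alpha}\lVert u\rVert_{1+\kappa}^2\) when \(\alpha\le\kappa^{-1}\). When \(\alpha>\kappa^{-1}\), one uses the weight \(\psi^{\alpha(1+\kappa)}\) instead, and this is the source of the branch \(\alpha(1+\kappa)/\kappa\) of \(\beta\). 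The other branch, \(1+2(\alpha+\kappa)\), comes from the nonlinear term.

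Your outline for the nonlinear term and the time-derivative term is fine.
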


\begin{proof} Tracking \cref{eq:27,eq:28,eq:26,eq:time-derivative} for~\(m = 1\), we have
  \begin{equation}\label{eq:29}
  \lVert \psi u \rVert^{2 \alpha} \mathcal{A}_1 [u] \lesssim \lVert \psi u\rVert^{2 \alpha} \Bigl[  \lVert  u \rVert_{1+\kappa} \lVert \psi u \rVert_{\kappa} + \lVert  \psi u  \rVert_{1+\kappa} \lVert u \rVert_{\kappa} + \lVert \psi u \rVert \lVert u^{2+\kappa_1} \rVert    \Bigr]  =: J_1 + J_2 + J_3,
\end{equation}
where the LHS of \cref{eq:time-derivative} is bounded by~\(\lVert \psi u \rVert \lVert u \rVert  \) so the corresponding term can be bounded by either~\(J_1\) or~\(J_2\).

  For~\(J_1\), by \cref{lem:sobolev-interpolation} and Young's inequality, we have
\begin{equation*}
  \begin{split}
J_1 &\le \lVert \psi u \rVert^{2\alpha + \frac{1}{1+\kappa}} \lVert  u \rVert_{1+\kappa} \lVert \psi u \rVert_{1+\kappa}^{\frac{\kappa}{1+\kappa}}   \\
    &\le  \delta \lVert \psi u \rVert^{2\alpha} \lVert \psi u \rVert^2_{1+\kappa} + C_{\delta} \Bigl(  \lVert u \rVert^{2\beta_1} \lVert u \rVert^2_{1+\kappa} + \lVert \psi^{m_1} u \rVert^2    \Bigr),
  \end{split}
\end{equation*}
where
\begin{equation*}
m_1 = 1 + \alpha(2+\kappa), \quad \beta_1 = \frac{\alpha(2+\kappa)}{1+\kappa}.
\end{equation*}

For~\(J_2\), we have
\begin{equation*}
  \begin{split}
   J_2 & \le \lVert \psi u \rVert^{2\alpha} \lVert  \psi u \rVert_{1+\kappa} \lVert u \rVert^{\frac{1}{1+\kappa}} \lVert u \rVert^{\frac{\kappa}{1+\kappa}}_{1+\kappa} \\
    &\le \begin{cases}
\delta \lVert \psi u \rVert^{2\alpha} \lVert \psi u \rVert^2_{1+\kappa} + C_\delta \lVert u \rVert^{2\alpha} \lVert u \rVert^2_{1+\kappa},     & \alpha \le \kappa^{-1}, \\
 \delta \lVert \psi u \rVert^{2\alpha} \lVert  \psi u \rVert_{1+\kappa}^2 + C_{\delta} \Bigl(  \lVert u \rVert^{2 \beta_2} \lVert u \rVert_{1+\kappa}^2 + \lVert \psi^{m_2} u \rVert^2    \Bigr), & \alpha > \kappa^{-1},
\end{cases}
  \end{split}
\end{equation*}
where
\begin{equation*}
m_2 = \alpha(1+\kappa), \quad \beta_2 = \frac{\alpha(1+\kappa)}{\kappa}.
\end{equation*}

For~\(J_3\), using~\( H^{1/2} \hookrightarrow L^p \) and~\(\kappa_1 < 2\kappa\), we obtain
\begin{equation*}
J_3 \lesssim \lVert \psi u \rVert^{2\alpha+1} \lVert u \rVert^{2 + \kappa_1}_{\frac{1}{2}} \lesssim \lVert \psi u \rVert^{2 \alpha + 1} \bigl(  1 + \lVert u \rVert_{\frac{1}{2}}^{2 + 2 \kappa}  \bigr)
                    \lesssim \lVert u \rVert^{2 \beta_3} \lVert u \rVert_{1+\kappa}^2 + \lVert \psi^{m_3} u \rVert^2 + 1,
\end{equation*}
where
\begin{equation*}
m_3 = 1 + 2 \alpha, \quad \beta_3 = 1 + 2(\alpha+\kappa).
\end{equation*}

The lemma follows from these estimates and taking~\(m\),~\(\beta\) to be the maximum of~\(m_i\),~\(\beta_i\), at the cost of an additional additive constant on the RHS\@. This proves the lemma.
\end{proof}

Let
\begin{equation}\label{eq:def-p-2}
p_2 = p_2(\kappa) := \sup_{ \alpha > 0 } \frac{\beta_{\alpha} + 1}{ \alpha+1} \in (p_1, 2+2\kappa+\kappa^{-1}),
\end{equation}
where~\(\beta_{\alpha}\) is given by \cref{eq:beta-expre}.

The next lemma will be used to prove \cref{lem:weighted-L2-energy-estimate} for~\(\kappa_1 > 0\).
\begin{lemma}\label{lem:weigthed-L2-estimate-p-1}
  Let~\(q > 2, p > 1\) and assume that \(\mathcal{B}_{\varphi,m} < \infty\), where
\begin{equation}\label{eq:def-m-frak}
m= \mathfrak{m}(p,q) = 1 + q(p- \frac{1}{2})(2+\kappa) .
\end{equation}
There exists~\(\mathcal{K}_p > 0\) such that for~\(K \ge \mathcal{K}_p\), then
\begin{equation*}
\mathbb{P} \Bigl(  \sup_{ t \ge 0 }\bigl(  \mathcal{G}_u^{p,1}   - Kt - K \lVert u_0 \rVert^{2p_2 p } \bigr)\ge \rho    \Bigr) \le C_{q,K} \frac{ \mathbb{E} \lVert u_0  \rVert^{2 p_2p q} + 1 }{ \rho^{q/2 - 1}}, \quad \rho \ge 2.
\end{equation*}
\end{lemma}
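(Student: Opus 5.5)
We follow the template of \cref{lem:weighted-energy-higher-m}, now for the $p$-th power of the weighted energy with $m=1$. Set $X(t)=\lVert \psi u(t)\rVert^2$ and apply Itô's formula to $X^p$:
\begin{equation*}
dX^p = pX^{p-1}\Bigl[2\mathcal{A}_1[u] - \nu\lVert D^{1+\kappa}(\psi u)\rVert^2 - a X + \mathcal{B}_{\varphi,1}\Bigr]dt + 2p(p-1)X^{p-2}\sum_j b_j^2\langle \psi u,\psi e_j\rangle^2\,dt + d\tilde M^{(p)}_t,
\end{equation*}
where $\tilde M^{(p)}_t = 2p\int_0^t X^{p-1}\sum_j b_j\langle\psi u,\psi e_j\rangle\,d\beta_j$.

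The Itô correction is bounded by $C_p\mathcal{B}_{\varphi,1}X^{p-1}$. Young's inequality gives $X^{p-1}\le \delta X^p + C_\delta$, so this term is absorbed into the damping $-aX^p$.

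The main drift term $X^{p-1}\mathcal{A}_1[u]$ is handled by \cref{lem:IBP-weighted-laplacian-term} with $\alpha=p-1$, which yields
\begin{equation*}
\delta X^{p-1}\lVert\psi u\rVert_{1+\kappa}^2 + C_\delta\bigl(\lVert u\rVert^{2\beta_{p-1}}\lVert u\rVert_{1+\kappa}^2 + \lVert\psi^{m_{p-1}}u\rVert^2 + 1\bigr),
\end{equation*}
with $m_{p-1} = 1+(p-1)(2+\kappa)$.

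We choose $\delta\le (a\wedge\nu)/4$. Integrating in time then gives
\begin{equation*}
\mathcal{G}^{p,1}_u(t)\le \lVert\psi u_0\rVert^{2p} + Ct + C\,\mathcal{E}^{\beta_{p-1}+1}_u(t) + C\int_0^t \lVert \psi^{m_{p-1}}u\rVert^2\,ds + \tilde M^{(p)}_t .
\end{equation*}
Here $\psi(0,\cdot)=0$, so the initial weighted term vanishes. By definition of $p_2$ in \cref{eq:def-p-2}, $\beta_{p-1}+1\le p_2 p$, and Young's inequality lets us replace $\mathcal{E}^{\beta_{p-1}+1}_u$ by $\mathcal{E}^{p_2p}_u$ plus $Ct$.

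The remaining terms are controlled as follows. The integral $\int_0^t\lVert\psi^{m_{p-1}}u\rVert^2\,ds$ is bounded by $\mathcal{G}^{1,m_{p-1}}_u(t)$, using that $\lVert\psi^{m}u\rVert\lesssim\lVert\psi^{m}u\rVert_{1+\kappa}$ and the damping. This is controlled by \cref{lem:weighted-energy-higher-m}, since $m_{p-1}\le\mathfrak{m}(p,q)$ and hence $\mathcal{B}_{\varphi,m_{p-1}}<\infty$. The term $\mathcal{E}^{p_2p}_u$ is handled by \cref{lem:L2-energy-estimate}. For the martingale we use
\begin{equation*}
\langle \tilde M^{(p)}\rangle_t\le 4p^2\mathcal{B}_{\varphi,1}\int_0^t X^{2p-1}\,ds .
\end{equation*}

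As in \cref{lem:weighted-energy-higher-m}, the event $\{\sup_t(\mathcal{G}^{p,1}_u - Kt - K\lVert u_0\rVert^{2p_2p})\ge\rho\}$ is contained in a union of three events:
\begin{enumerate}
\item an $L^2$-energy deviation, bounded by \cref{lem:L2-energy-estimate} with exponent $p_2p$, which gives the factor $\mathbb{E}\lVert u_0\rVert^{2p_2pq}+1$;
\item a deviation of $\mathcal{G}^{1,m}_u$, bounded by \cref{lem:weighted-energy-higher-m};
\item a martingale deviation, bounded via \cref{lem:martingale-linear-growth}.
\end{enumerate}

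The main obstacle is the martingale estimate. Its quadratic variation involves $X^{2p-1}$, which cannot be dominated by the dissipation in $\mathcal{G}^{p,1}_u$ alone, so the super-martingale trick $\tilde M - \gamma\langle\tilde M\rangle$ is not directly available. We instead bound moments as in the proof of \cref{lem:L2-energy-estimate}:
\begin{equation*}
\mathbb{E}\langle\tilde M^{(p)}\rangle_t^{q/2}\lesssim t^{q/2-1}\,\mathbb{E}\int_0^t\lVert\psi u\rVert^{(2p-1)q}\,ds .
\end{equation*}
To control $\mathbb{E}\lVert \psi u\rVert^{(2p-1)q}$ uniformly in time, we estimate $\lVert\psi u\rVert\le\lVert\psi^{m}u\rVert^{1/m}\lVert u\rVert^{1-1/m}$ or use $\psi\le\varphi$. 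We then run the Itô argument above at the power $(p-\tfrac12)q$ instead of $p$, without the tail bound and taking expectations only. This requires $\mathcal{B}_{\varphi,m}<\infty$ with $m = 1+((p-\tfrac12)q-1)(2+\kappa)$, which is exactly what $\mathfrak{m}(p,q)$ in \cref{eq:def-m-frak} accommodates. \cref{lem:martingale-linear-growth} then gives the rate $\rho^{-(q/2-1)}$.
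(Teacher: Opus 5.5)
Your proposal is correct and follows the paper's proof essentially step for step. Both use Itô's formula for $\lVert\psi u\rVert^{2p}$ with \cref{lem:IBP-weighted-laplacian-term} at $\alpha=p-1$, the same three-event decomposition, and the martingale tail via \cref{lem:martingale-linear-growth}, with the moment bound obtained by rerunning the argument at power $(p-\tfrac12)q$. There are two harmless imprecisions. First, lowering $\beta_{p-1}$ to $p_2p-1$ by Young's inequality leaves an extra $\int_0^t\lVert u\rVert_{1+\kappa}^2\,ds$, i.e.\ a multiple of $\mathcal{E}^1_u(t)$ rather than $Ct$; this is covered by \cref{lem:L2-energy-estimate} with $p=1$. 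Second, the weight exponent $1+((p-\tfrac12)q-1)(2+\kappa)$ you need is at most, not exactly, $\mathfrak{m}(p,q)$.
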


\begin{proof}
  By Itô's formula and a similar calculation to \cref{lem:L2-energy-estimate}, using \cref{lem:IBP-weighted-laplacian-term} for~\(\delta < (a \wedge \nu)/2\) as in the proof of~\cref{lem:weigthed-L2-estimate-p-1}, and noting \cref{eq:def-m-frak}, we have
  \begin{equation*}
  \begin{split}
    d \lVert \psi u \rVert^{2p} &=
                     p \lVert \psi u \rVert^{2p-2}
                     \Bigl( \bigl( \mathcal{A}_1[u ] - \nu \lVert D^{1+\kappa} (\psi u) \rVert^2 - a \lVert \psi u \rVert^2 + \mathcal{B}_\varphi  \bigr) \, dt +   2 \langle \psi^2 u, d \eta \rangle   \Bigr) \\
    & \qquad + 2 p(p-1) \lVert \psi u \rVert^{2(p-2)} \sum_j b_j^2 \langle \psi u, \psi e_j \rangle^2 \\
&\le  \Bigl[    - p (a \wedge \nu)/2 \lVert \psi u \rVert^{2p-2} \lVert \psi u \rVert_{1+\kappa}^2 + C_{p, \mathcal{B}_{\varphi}} \Bigl( \lVert u \rVert^{2(p_2 p -1)} \lVert u \rVert^2_{1+\kappa} + \lVert \psi^{m_1} u \rVert^2 + 1    \Bigr)     \Bigr] \, dt + \tilde{M}''_p(t),
  \end{split}
  \end{equation*}
  where~\(m_1 = 1+(p-1)(2+\kappa)\) and
  \begin{equation*}
  \tilde{M}_{p}''(t) = 2p \int_0^t \lVert \psi u \rVert^{2(p-1)}\sum_{j = 1}^{\infty} b_j \langle \psi u, \psi e_j  \rangle \, d\beta_j.
  \end{equation*}
Integrating over~\(t\), we obtain
  \begin{equation}\label{eq:8}
   \mathcal{G}^{p,1}_u (t) \le C_{1, \mathcal{B}_{\varphi}} t +  C_2\mathcal{E}^{p_2 p}_u (t) +  C_3 \mathcal{G}^{1,m_1}(t)  +  \tilde{M}''_p (t).
  \end{equation}
Hence, we have for sufficiently large~\(K \),~\(K_1\) and~\(K_2\),
\begin{equation*}
  \begin{split}
  & \Bigl\{ \sup_{t \ge 0 }\bigl(  \mathcal{G}^{p,1}_u(t) - K t - K \lVert u_0 \rVert^{2p_2 p} \bigr) \ge \rho \Bigr\}\\
    \subset  &\Bigl\{  \sup_{t \ge 0 }\bigl( \mathcal{E}^{p_2 p}_u(t) - K_1 t - K_1 \lVert u_0 \rVert^{2p_2 p} \bigr)  \ge c_1\rho \Bigr\}  \\
    & \quad \cup \Bigl\{ \sup_{t \ge 0 } \bigl(  \mathcal{G}^{1,m_1} (t) - K_2 t - K_2 \lVert u_0 \rVert^{2 p_1}  \bigr)  \ge c_2 \rho  \Bigr\}  \cup  \Bigl\{  \sup_{t \ge 0 } \bigl(\tilde{M}_p''(t)  - t\bigr) \ge  c_3\rho  \Bigr\},
  \end{split}
\end{equation*}
for some constants~\(c_i\); note that~\(p_2 p > p_2 > p_1\). The probability of the first and second events can be bounded by \cref{lem:weighted-L2-energy-estimate,lem:weighted-energy-higher-m}, respectively. It remains to show that
\begin{equation}\label{eq:12}
\mathbb{P} \Bigl(  \sup_{t \ge 0 } \tilde{M}''_p(t) -  t \ge c_3 \rho  \Bigr) \lesssim_{\mathcal{B}_{\varphi,m}} \frac{ \mathbb{E} \lVert u_0 \rVert^{2 p_2 p q} + 1 }{\rho^{q/2-1}}, \quad \rho \ge 2.
\end{equation}

We will prove \cref{eq:12} using \cref{lem:martingale-linear-growth} as in the last step of the proof of \cref{lem:weighted-L2-energy-estimate}.
First, we have
\begin{equation*}
    \mathbb{E} \langle \tilde{M}_p'' \rangle_t^{q/2}  \lesssim_{p,q}t^{q/2-1} \mathbb{E} \int_0^t \lVert \psi u \rVert^{(2p-1)q} \, ds.
\end{equation*}
Taking expectation of \cref{eq:8} with~\(p\) replaced by~\(p' = (p-1/2)q\) and~\(m_1\) replaced by~\(m = \mathfrak{m}(p,q)\), we obtain
\begin{equation*}
\mathbb{E} \int_0^t \lVert \psi u \rVert^{(2p-1)q} \, ds \lesssim1+ t+ \mathbb{E} \mathcal{E}^{p_2 p'}_u(t) + \mathbb{E} \mathcal{G}^{1,m}_u(t).
\end{equation*}
Taking expectation of \cref{eq:6,eq:9}, respectively, we have
\begin{align*}
\mathbb{E} \mathcal{E}^{p_2 p'}_u(t) \lesssim \mathbb{E} \lVert u_0 \rVert^{2p_2 p'} + t, \quad \mathbb{E} \mathcal{G}^{1,m}_u(t) \lesssim_{\mathcal{B}_{\varphi,m}} \mathbb{E} \lVert u_0 \rVert^{2p_1} + t.
\end{align*}
Finally we note that
\begin{equation*}
2p_2 p' =  p_2(2p-1)q \le 2 p p_2 q.
\end{equation*}
Combining these and \cref{lem:martingale-linear-growth} proves \cref{eq:12}.
\end{proof}

Based on the above lemmas, we can now prove \cref{lem:weighted-L2-energy-estimate}.

\begin{proof}[Proof of \cref{lem:weighted-L2-energy-estimate}]
The proof is divided into two cases.

  If~\(\kappa_1 = 0\), then~\(\mathcal{E}^{\psi}_u = \mathcal{G}_u^{1,1}\) and~\cref{lem:weighted-L2-energy-estimate} follows from \cref{lem:weigthed-L2-estimate-p-1}, which gives~\(\bar{p} = p_1\).

  If~\(\kappa_1 > 0\), then~\(\mathcal{E}^{\psi}_u = \mathcal{G}_u^{1+\kappa_1,1}\) and \cref{lem:weighted-L2-energy-estimate} follows from \cref{lem:weighted-energy-higher-m} which gives~\(\bar{p} = p_2(1+\kappa_1)\).
\end{proof}

\subsection{Stopping time estimate}\label{sec:stopping-time}
Motivated by the works~\cite{KuksinShirik2012MathematicsTwoDimensionalTurbulence,NersesZhao2024ExponentialMixingWhiteForced,Gao2026PolynomialMixingWhiteforced}, we introduce the following stopping times. For~\(K \ge 1\), we define
\begin{equation*}
\begin{split}
\tau^u_p  & := \inf \{  t \ge 0: \mathcal{E}_u^p(t)  \ge (K+1)t + K \lVert u_0 \rVert^{2p}  + \rho  \}, \quad p \ge 1, \\
\tau^u_{\psi} & := \inf \{  t \ge 0: \mathcal{E}_u^{\psi}(t) \ge (K+1) t + K \lVert u_0 \rVert^{2 \bar{p}} + \rho  \}.
\end{split}
\end{equation*}
where~\(\bar{p}\) is introduced in \cref{eq:def-p-bar}. Let us define
\begin{equation*}
 \tau^u := \tau^u_1 \wedge  \tau^u_{r} \wedge \tau^u_{\psi},
\end{equation*}
where
\(r = 2 \mathbb{I}_{\kappa = 0}+ (1+\kappa_1) \mathbb{I}_{\kappa>0} \).
For the stopping time \(\tau^u\), we have the following estimate.
\begin{lemma}\label{lem:growth-estimate-combined-energy}
For~\(q \in (2,q_{\ast})\), if~\(K\) be sufficiently large, then
\begin{equation*}
\mathbb{P} \bigl(  \ell \le \tau^u < \infty \bigr) \le C_{q,K} \frac{\mathbb{E} \lVert u_0 \rVert^{2\bar{p} q}  + 1}{ (\rho + \ell)^{q/2 -1}} , \quad \rho \ge 2, \ \ell \ge 0.
\end{equation*}
\end{lemma}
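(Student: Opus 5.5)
The plan is to reduce the event $\{\ell \le \tau^u < \infty\}$ to a union of three events, one for each constituent stopping time, and then bound each piece by the supremum estimates already proved. Since $\tau^u = \tau^u_1 \wedge \tau^u_r \wedge \tau^u_\psi$, on $\{\ell \le \tau^u<\infty\}$ the minimum is attained by one of the three, say $\tau^u_\star$ with $\star \in \{1, r, \psi\}$. On that event $\ell \le \tau^u_\star < \infty$. By continuity of $t\mapsto \mathcal{E}^p_u(t)$ and $t\mapsto\mathcal{E}^\psi_u(t)$, at $t=\tau^u_\star$ the defining inequality holds with equality. For $\star=p\in\{1,r\}$ this reads
\begin{equation*}
\mathcal{E}^p_u(t) - Kt - K\lVert u_0\rVert^{2p} = t + \rho \ge \ell + \rho ,
\end{equation*}
and the analogous identity holds for $\star = \psi$, with $\bar p$ in place of $p$. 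Hence
\begin{equation*}
\{\ell\le\tau^u<\infty\} \subset \bigcup_{p\in\{1,r\}}\Bigl\{\sup_{t\ge0}\bigl(\mathcal{E}^p_u(t)-Kt-K\lVert u_0\rVert^{2p}\bigr)\ge \rho+\ell\Bigr\} \cup \Bigl\{\sup_{t\ge0}\bigl(\mathcal{E}^\psi_u(t)-Kt-K\lVert u_0\rVert^{2\bar p}\bigr)\ge\rho+\ell\Bigr\}.
\end{equation*}
The key point is that the extra "$+1$" in the slope $(K+1)t$ converts the time $\ell$ into an additional threshold $\ell$. This is what produces the factor $(\rho+\ell)$ in the denominator.

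Next I apply \cref{lem:L2-energy-estimate} with $p=1$ and with $p=r$, and \cref{lem:weighted-L2-energy-estimate}, each at level $\rho+\ell\ge 2$ and with $K$ large enough for all three to apply. This bounds the three probabilities respectively by
\begin{equation*}
C\frac{\mathbb{E}\lVert u_0\rVert^{q}+1}{(\rho+\ell)^{q/2-1}},\qquad C\frac{\mathbb{E}\lVert u_0\rVert^{(2r-1)q}+1}{(\rho+\ell)^{q/2-1}},\qquad C\frac{\mathbb{E}\lVert u_0\rVert^{2\bar p q}+1}{(\rho+\ell)^{q/2-1}}.
\end{equation*}
The restriction $q<q_\ast$ enters only through \cref{lem:weighted-L2-energy-estimate}.

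It remains to dominate all moments by $\mathbb{E}\lVert u_0\rVert^{2\bar p q}+1$, which I do via $x^{s}\le 1+x^{s'}$ for $s\le s'$. This requires $1\le 2r-1\le 2\bar p$. In the case $\kappa=0$ we have $r=2$ and $\bar p = p_1 = 2$, and $3\le 4$ holds. In the case $\kappa>0$ we have $r=1+\kappa_1$ and $\bar p\ge p_1(1+\kappa_1)$ or $\bar p = p_1 = 2+\kappa$; in either case $2\bar p\ge 2(1+\kappa_1)$ since $\kappa_1<\kappa$. Summing the three bounds gives the claim.

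The main obstacle is bookkeeping rather than analysis. I need to check that the first-hitting-time identity is legitimate, which requires continuity of the energy functionals: the integral term is continuous, $\lVert u(t)\rVert$ is continuous by \cref{thm:well-posedness}, and $\lVert\psi u\rVert$ is continuous by continuity of $\psi$ in $t$ together with dominated convergence. I also need a single $K$ that works for all three lemmas simultaneously, which I get by taking the maximum of the thresholds.
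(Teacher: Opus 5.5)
Your proposal is correct and follows the paper's proof. Both use the same inclusion into three supremum events: the extra unit of slope in $(K+1)t$ raises the threshold to $\rho+\ell$, and these events are then bounded by \cref{lem:L2-energy-estimate} (with $p=1,r$) and \cref{lem:weighted-L2-energy-estimate}; you additionally make explicit two checks the paper leaves implicit, namely the continuity of the energy functionals and the moment domination $(2r-1)q\le 2\bar p q$.
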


\begin{proof}
We have the inclusion
\begin{align*}
\{ \ell \le \tau^u < \infty \} & \subset  \bigcup_{ p = 1, r} \Bigl\{ \exists t = \tau^u \ge \ell: \  \mathcal{E}^p(t) -  K t -  K  \lVert u_0\rVert^{2p} \ge \rho+\ell  \Bigr\} \\
& \quad \cup \Bigl\{\exists t = \tau^u\ge \ell: \ \mathcal{E}^{\psi}_u(t) - K t  -  K \lVert u_0 \rVert^{2\bar{p}}  \ge \rho+\ell \Bigr\}.
\end{align*}
The probability of the events on the RHS can be estimated by \cref{lem:L2-energy-estimate,lem:weighted-L2-energy-estimate}. The conclusion follows.
\end{proof}
\subsection{Estimate for auxiliary process}\label{sec:esti-aux}

The goal of this section is to give a growth estimate of~\(v\). The idea is to use the available estimate on~\(u'\) and use Girsanov theorem to control the effect of the finite dimension term
\begin{equation}\label{eq:finite-dim-drift}
\mathsf{P}_N  [ F(u)_x - F(v)_x].
\end{equation}
To view \cref{eq:finite-dim-drift} as a drift term of the noise and apply Girsanov, we need suitable truncation for the Novikov condition to hold. This motivates the definition of the truncated processes. Here we borrow some ideas from the works~\cite{NersesZhao2024ExponentialMixingWhiteForced,Gao2026PolynomialMixingWhiteforced,NersesZhao2024PolynomialMixingWhiteforced,gao2024polynomialwave}.

To be more precise, let
\begin{equation*}
\tau = \tau^v \wedge \tau^u \wedge \tau^{u'}.
\end{equation*}
We define~\(\hat{u}\) to be the truncated process of~\(u\), such that~\(\hat{u}(t) = u(t)\) when~\(t \le \tau\), and~\(\hat{u}\) solves the linear equation~\(\hat{u}_t + a \hat{u} = - (- \partial_{xx})^{1+\kappa} \hat{u}\) for~\(t \ge \tau\); the truncated processes~\(\hat{u}'\) and~\(\hat{v}\) are defined similarly. We can establish estimate for~\(\tau^{\hat{u}}\) and~\(\tau^{\hat{u}'}\) similar to \cref{lem:growth-estimate-combined-energy}. We recall that in \cref{lem:bound-tau-aux-process} and in the remaining of this section, the constant~\(\bar{p} = \bar{p}(\kappa_1,\kappa)\) is introduced in \cref{eq:def-p-bar}.
\begin{proposition}\label{lem:bound-tau-aux-process}
Let~\(q \in (2,q_{\ast})\). For sufficiently large~\(K\),
\begin{equation*}
\mathbb{P} (\ell \le \tau^{\hat{u}} < \infty) \le C_{q} \frac{ \mathbb{E} \lVert u_0 \rVert^{2\bar{p}q} + 1 }{ (\rho + \ell)^{q/2-1}}, \quad \rho \ge 2, \ \ell \ge 0.
\end{equation*}
\end{proposition}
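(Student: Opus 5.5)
The plan is to reduce the bound on \(\tau^{\hat{u}}\) to the corresponding bound on \(\tau^u\) in \cref{lem:growth-estimate-combined-energy}, by treating separately the times before and after the truncation time \(\tau\).

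\textbf{Before \(\tau\).} On the interval \([0,\tau]\) we have \(\hat{u}=u\). The energies \(\mathcal{E}^p_{\hat u}\) and \(\mathcal{E}^\psi_{\hat u}\) are defined through integrals over \([0,t]\), so for \(t\le\tau\) they coincide with \(\mathcal{E}^p_u\) and \(\mathcal{E}^\psi_u\).

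\textbf{After \(\tau\).} For \(t\ge\tau\), \(\hat u\) solves the deterministic linear damped (hyper)viscous equation. The plain energy identity gives
\begin{equation*}
\frac{d}{dt}\lVert \hat u\rVert^{2p} + 2p\lVert\hat u\rVert^{2p-2}\bigl(a\lVert \hat u\rVert^2+\nu\lVert D^{1+\kappa}\hat u\rVert^2\bigr)=0 .
\end{equation*}
Hence \(\mathcal{E}^p_{\hat u}(t)\le \mathcal{E}^p_{\hat u}(\tau)\) for \(t\ge\tau\); this uses only that the coefficient \(p(a\wedge\nu)\) in \(\mathcal{E}^p\) is at most \(2p(a\wedge\nu)\), so the integral term is dominated by the decay.

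For the weighted energy, repeat the computation of \cref{lem:IBP-weight-Laplacian-term-alpha-1} and \cref{lem:IBP-weighted-laplacian-term} with \(F\equiv0\) and no noise. Then
\begin{equation*}
\mathcal{E}^\psi_{\hat u}(t)\le \mathcal{E}^\psi_{\hat u}(\tau)+C(t-\tau)+C\int_\tau^t\bigl(\lVert \hat u\rVert^{2\beta}\lVert \hat u\rVert_{1+\kappa}^2+\lVert\psi^{m}\hat u\rVert^2\bigr)ds .
\end{equation*}
The integral is bounded by a constant times a power of \(\lVert u(\tau)\rVert\), because the unweighted energy decays exponentially. The \(\psi^m\) term is controlled by the \(m\)-weighted linear estimate from the proof of \cref{lem:weighted-energy-higher-m} without noise, so \(\mathcal{B}_{\varphi,m}\) does not enter and there is no martingale.

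\textbf{Post-\(\tau\) exits.} The extra slope \(+t\) in the thresholds of \(\tau^u_p\) and \(\tau^u_\psi\) (namely \((K+1)t\) rather than \(Kt\)) absorbs the linear growth \(C(t-\tau)\) once \(K\ge C\). The remaining task is to handle the additive contribution of order \(\lVert u(\tau)\rVert^{2\bar p}\). I would bound it by comparing with the threshold at time \(\tau\): since \(\tau\le \tau^u\), we have
\begin{equation*}
\lVert u(\tau)\rVert^{2}\le (K+1)\tau+K\lVert u_0\rVert^2+\rho .
\end{equation*}
This makes the post-\(\tau\) excess grow at most like a power of \(\tau+\rho+\lVert u_0\rVert^{2}\). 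I expect this to be the main obstacle, because a nonlinear power of the threshold need not stay below the linear threshold. The cleanest fix I would try first is to enlarge \(K\) and absorb constants. Failing that, I would observe that the relevant increments are governed by the \(\tau^u_r\) bound with \(r\ge1\), so the exponent \(2\bar p\) can be matched.

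\textbf{Conclusion.} In either case we obtain the inclusion
\begin{equation*}
\{\ell\le\tau^{\hat u}<\infty\}\subset\{\ell'\le\tau^u<\infty\}\cup\{\text{event controlled by }\mathcal{E}^p_u,\mathcal{E}^\psi_u\text{ exceeding }c(\rho+\ell)\},
\end{equation*}
with \(\ell'\) comparable to \(\ell\). Applying \cref{lem:growth-estimate-combined-energy} and the tail bounds of \cref{lem:L2-energy-estimate,lem:weighted-L2-energy-estimate} then yields the claimed \((\rho+\ell)^{-(q/2-1)}\) bound with moment \(\mathbb{E}\lVert u_0\rVert^{2\bar p q}+1\).
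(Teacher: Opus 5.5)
Your treatment of times before \(\tau\) and your post-\(\tau\) energy inequalities essentially match the paper's Steps 1--2. After \(\tau\), the weighted energy of \(\hat u\) is bounded linearly by \(\mathcal{G}^{1,1}_u(\tau)+\mathcal{E}^{p_1}_u(\tau)\) if \(\kappa_1=0\), and by \(\mathcal{G}^{p,1}_u(\tau)+\mathcal{E}^{\beta+1}_u(\tau)+\mathcal{G}^{1,m}_u(\tau)+t\) with \(p=1+\kappa_1\) if \(\kappa_1>0\). The gap is the step that turns this into a tail event, and both of your proposed fixes fail.

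You try to control the energy of \(u\) at time \(\tau\) through \(\tau\le\tau^u\). But \(\tau^u\) only controls \(\mathcal{E}^1_u\), \(\mathcal{E}^r_u\) and \(\mathcal{E}^\psi_u\). The quantities that actually appear are \(\mathcal{E}^{p_1}_u(\tau)\) with \(p_1=2+\kappa\), resp.\ \(\mathcal{E}^{\beta+1}_u(\tau)\) and \(\lVert\psi^m u(\tau)\rVert^2\) with \(m=1+\kappa_1(2+\kappa)>1\).
\begin{itemize}
\item Raising the threshold of \(\tau^u_1\) to a power gives a bound superlinear in \(\tau+\rho\). No choice of \(K\) absorbs this into a threshold linear in \(t\) and \(\rho\).
\item The fallback through \(\tau^u_r\) does not help. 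For \(\kappa>0\) one has \(r=1+\kappa_1<p_1\) and \(r<\beta+1\), so the exponents match only at \(\kappa=0\), where \(r=p_1=\bar p=2\).
\item The term \(\lVert\psi^m u(\tau)\rVert^2\) is not controlled by \(\tau^u\) at all. Controlling it needs \cref{lem:weighted-energy-higher-m}, hence \(\mathcal{B}_{\varphi,m}<\infty\), contrary to your remark that this assumption does not enter.
\item Even at \(\kappa=0\), the post-\(\tau\) bound adds \(C\,\mathcal{E}^{p_1}_u(\tau)\), which is as large as the threshold itself. With the same \(K\) in \(\tau\) and \(\tau^{\hat u}\), nothing keeps the truncated energy below \((K+1)t+K\lVert u_0\rVert^{2\bar p}+\rho\) for \(t\) slightly larger than \(\tau\).
\end{itemize}
Your final inclusion is therefore asserted rather than derived.

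The missing idea (the paper's Step 3) is to bypass \(\tau^u\) entirely. Instead, prove the sup-tail bounds of \cref{lem:L2-energy-estimate,lem:weighted-energy-higher-m,lem:weigthed-L2-estimate-p-1} directly for \(\hat u\), with a larger constant. Each post-\(\tau\) comparison is linear in finitely many energies of \(u\) at time \(\tau\). Each of these energies has its own linear-in-time tail bound; \cref{lem:L2-energy-estimate} holds for every \(p\), in particular \(p_1\) and \(\beta+1\). Combined with \(\tilde K(\tau+t)\ge\tilde K\tau\), this gives, for example,
\begin{align*}
\sup_{t\ge0}\bigl(\mathcal{G}^{1,m}_{\hat u}(t)-\tilde Kt-\tilde K\lVert u_0\rVert^{2p_1}\bigr)
&\le C\sup_{t\ge0}\bigl(\mathcal{G}^{1,m}_{u}(t)-K_1t-K_1\lVert u_0\rVert^{2p_1}\bigr)\\
&\quad+C\sup_{t\ge0}\bigl(\mathcal{E}^{p_1}_{u}(t)-K_2t-K_2\lVert u_0\rVert^{2p_1}\bigr).
\end{align*}
Here \(K_1,K_2\) equal \(\tilde K\) divided by a fixed constant, so they are admissible for the \(u\)-lemmas once \(\tilde K\) is large. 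This is exactly where ``sufficiently large \(K\)'' is used. It also explains why the threshold of \(\tau^{\hat u}_\psi\) carries \(\lVert u_0\rVert^{2\bar p}\): \(\bar p\) (\(=p_1\), resp.\ \(p_2(1+\kappa_1)\)) is the largest \(L^2\)-energy exponent entering these linear comparisons. The bound on \(\tau^{\hat u}\) then follows just as \cref{lem:growth-estimate-combined-energy} follows from \cref{lem:L2-energy-estimate,lem:weighted-L2-energy-estimate}. No split between exits before and after \(\tau\) is needed.
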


\begin{proof} The proof of of \cref{lem:bound-tau-aux-process} is divided into several steps.

\textbf{Step 1.}
Let~\(z\) be the solution to the linear PDE
\begin{equation}\label{eq:linear-PDE}
z_t + a z = -(-\partial_{xx})^{1+\kappa} z.
\end{equation}
We have the following energy estimates. For~\(p, k \ge 1\),
  \begin{align}
  \label{eq:1}
    & \frac{d}{dt} \lVert z \rVert^{2p} + C_{a,\nu} \lVert z \rVert^{2(p-1)} \lVert z \rVert_{1+\kappa}^2  \le 0, \\
    \label{eq:18}
    & \frac{d}{dt} \lVert  \psi^k z \rVert^2 + C_{a,\nu,k} \bigl(  \lVert \psi^k z \rVert^2_{1+\kappa}  \bigr) \le C_{\nu,k} \lVert z \rVert^{2(p_1 -1)} \lVert u \rVert_{1+\kappa}^2 , \\
    \label{eq:20}
    & \frac{d}{dt} \lVert \psi z \rVert^{2p} + C_{a,\nu,p} \lVert \psi z \rVert^{2(p-1)} \lVert \psi z \rVert_{1+\kappa}^2
      \le C_{\nu,p} \bigl( \lVert u \rVert^{2\beta} \lVert u \rVert_{1+\kappa}^2 + \lVert \psi^m u \rVert^2 + 1    \bigr),
  \end{align}
  where~\(p_1\) is given in \cref{eq:def-p1}, and~\(\beta=\beta, m\) are given in \cref{lem:IBP-weighted-laplacian-term} with~\(\alpha = p-1\).

Indeed,
  \cref{eq:1} is essentially contained in the proof of \cref{lem:L2-energy-estimate}. Then \cref{eq:18} follows from \cref{lem:IBP-weight-Laplacian-term-alpha-1}, and \cref{eq:20} follows from \cref{lem:IBP-weighted-laplacian-term}.

\textbf{Step 2.} Let~\(u\) be the solution to \cref{eq:conservation-law} and~\(\hat{u}\) be the truncated process defined as in above with an arbitrary stopping time~\(\tau\).
Using \cref{eq:1}-\cref{eq:20}, we can compare the energy between the original process and the truncated process. More precisely, for~\(t \ge 0\):
  \begin{align}
\notag
    \mathcal{E}^p_{\hat{u}}(\tau + t ) & \le C_{a,\nu} \mathcal{E}^p_u(\tau), \\
    \label{eq:7}
    \mathcal{G}^{1,k}_{\hat{u}}(\tau + t) & \le C_{a,\nu,k} \mathcal{G}^{1,k}_u(\tau) + C_{\nu,k} \mathcal{E}^{p_1}_{u}(\tau), \\
    \notag
    \mathcal{G}^{p,1}_{\hat{u}}(\tau + t) & \le C_{a,\nu,p} \mathcal{G}^{p,1}_u(\tau)
                               + C_{\nu,p} \Bigl(  \mathcal{E}^{\beta+1}_u(\tau) + \mathcal{G}^{1,m}_u(\tau) + t \Bigr),
  \end{align}
  where the constants~\(C\),~\(p_1\),~\(\beta\) and~\(m\) are the same as in above arguments.

\textbf{Step 3. Proof of \cref{lem:bound-tau-aux-process}}

We first prove that for sufficiently large constant~\(K\), the statements in \cref{lem:L2-energy-estimate,lem:weigthed-L2-estimate-p-1,lem:weighted-energy-higher-m} hold with~\(u\) replaced by~\(\hat{u}\).

Indeed, we will only prove the probability bound for~\(\mathcal{G}^{1,m}_{\hat{u}}\) as an example. The other two statements are similar. By \cref{eq:7},  we have
  \begin{align*}
    \sup_{ t \ge 0 } \bigl(  \mathcal{G}^{1,m}_{\hat{u}}(t) - \tilde{K} t - \tilde{K} \lVert u_0 \rVert^{2p_1}  \bigr)
&    \le C_{a,\nu,m} \sup_{t \ge 0 } \bigl( \mathcal{G}^{1,m}_u(t) - K_1 t -  K_1\lVert  u_0 \rVert^{2 p_1}  \bigr) \\
& \qquad   + C_{\nu,m} \sup_{ t \ge 0 }  \bigl(  \mathcal{E}^{p_1}_u(t) - K_2 t - K_2 \lVert u_0 \rVert^{2 p_1}  \bigr),
  \end{align*}
  where~\(K_1 = \tilde{K}/4C_{a,\nu,m}\) and~\(K_2 = \tilde{K}/4C_{\nu,m}\), and they can be arbitrarily large if~\(\tilde{K}\) is large. The desired bound follows from \cref{lem:L2-energy-estimate,lem:weigthed-L2-estimate-p-1}.

Then, similarly to the derivation of \cref{lem:growth-estimate-combined-energy} from \cref{lem:L2-energy-estimate,lem:weighted-L2-energy-estimate}, \cref{lem:bound-tau-aux-process} follows immediately from the above results.
\end{proof}

We consider the transform on~\(\Omega = \mathcal{C}_0(0,\infty; H)\) defined by
\begin{equation*}
(\Phi^{u,u'} \omega)_t  = \omega_t - \int_0^t \mathbb{I}_{\{  s \le \tau \}} \mathsf{P}_N \bigl(  F(u)_x - F(v)_x \bigr) \, ds
\end{equation*}
Employing the strategy in~\cite[Section 3.3.3]{KuksinShirik2012MathematicsTwoDimensionalTurbulence}, we can estimate the total variation distance between~\(\mathbb{P}\) and~\(\Phi^{u,u'}_{\ast} \mathbb{P}\).
In this section we will write
\begin{equation*}
d = \lVert u - u' \rVert,  R = \lVert u \rVert \vee \lVert u' \rVert.
\end{equation*}
\begin{proposition}\label{lem:TV-distance-Phi}
Let~\(\tau\) be defined for sufficiently large~\(K\) and~\(\rho \ge 2\). Let~\(\epsilon > 0\). There exists~\(N \ge 1\) such that if~\(b_j \ne 0\) for~\(1 \le j \le N\), then for some constant~\(C\) depending on~\(N,K,\epsilon\) and~\(\max_{1 \le j \le N} b_j^{-1}\), such that
\begin{equation*}
\lVert \mathbb{P} - \Phi^{u,u'}_{\ast} \mathbb{P} \rVert_{TV} \le \frac{1}{2} \bigl[  \exp \bigl( d^2  e^{ C\rho + \epsilon R^{2\bar{p}} } \bigr) - 1 \bigr]^{1/2}.
\end{equation*}
\end{proposition}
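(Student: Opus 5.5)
We follow the route of \cite[Section 3.3.3]{KuksinShirik2012MathematicsTwoDimensionalTurbulence}: bound the total variation distance by a relative entropy or chi-square quantity computed via Girsanov, and control the drift by the Foia\c{s}--Prodi estimate up to the stopping time~\(\tau\). The shift \(\Phi^{u,u'}\) acts only on the first \(N\) modes through the drift
\begin{equation*}
g(s) = \mathbb{I}_{\{s \le \tau\}} \mathsf{P}_N\bigl(F(u)_x - F(v)_x\bigr).
\end{equation*}
Since \(b_j \neq 0\) for \(j \le N\), we can write the noise increment as \(Q\,dW\) and the drift as \(Q\,\tilde g\), with \(\lvert \tilde g \rvert \le C_N \max_{j\le N}\lvert b_j\rvert^{-1}\lVert g \rVert\). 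The standard bound (e.g.\ \cite[Lemma 3.3.13]{KuksinShirik2012MathematicsTwoDimensionalTurbulence}) then gives
\begin{equation*}
\lVert \mathbb{P} - \Phi_*^{u,u'}\mathbb{P}\rVert_{TV} \le \tfrac12\Bigl[\exp\Bigl(C \Bigl\lVert \int_0^\infty \lvert \tilde g\rvert^2\,ds \Bigr\rVert_{L^\infty(\Omega)}\Bigr)-1\Bigr]^{1/2}.
\end{equation*}
This bound is valid once Novikov holds. Novikov is automatic here because we will show that the integral is bounded almost surely. So the whole task reduces to a deterministic bound \(\int_0^\tau \lVert \mathsf{P}_N(F(u)-F(v))_x\rVert^2 ds \le d^2 e^{C\rho+\epsilon R^{2\bar p}}\) on every \(\omega\).

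To get this bound, we first use that \(\mathsf{P}_N\) projects onto \(N\) fixed \(H^{1+\kappa}\) functions, so \(\lVert \mathsf{P}_N \partial_x G\rVert \le C_N \lVert G\rVert_{L^1\cap L^2}\)-type dual bounds hold. In particular \(\lVert \mathsf{P}_N\partial_x(wF'(z))\rVert \le C_N\lVert w\rVert\,\lVert F'(z)\rVert_{L^\infty}\lesssim C_N\lVert w\rVert\,\lVert z\rVert_{\frac12+\kappa'}^{1+\kappa_1}\), where \(w=u-v\), \(w(0)=u-u'\) and \(\lVert w(0)\rVert = d\). The next step is to bound \(\lVert w(s)\rVert^2\) for \(s\le\tau\). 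On \([0,T_\varepsilon]\) we use \cref{lem:w-naive-growth-bd}; for \(s\ge T_\varepsilon\) we use \cref{lem:FP-estimate} with \(\varepsilon\) small, which gives
\begin{equation*}
\lVert w(s)\rVert^2 \le d^2 \exp\Bigl(-a(s-T_\varepsilon) + C\int_0^{T_\varepsilon}(e_u^{\kappa_1}+e_v^{\kappa_1}) + C_*\varepsilon\int_{T_\varepsilon}^{s}(e_u^\alpha+e_v^\alpha+\tilde e_u+\tilde e_v)\Bigr).
\end{equation*}
Before \(\tau\), the energy integrals are bounded by the defining thresholds of \(\tau^u_1,\tau^u_r,\tau^u_\psi\) and the analogous ones for \(v\): each integral is at most \((K+1)s+K\lVert u_0\rVert^{2\bar p}+\rho\) up to constants. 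Choosing \(\varepsilon\) so small that \(C_*\varepsilon (K+1)\le a/2\) makes the exponent at most \(-\tfrac a2 s + C\rho + C(T_\varepsilon)\), plus a term \(\epsilon' R^{2\bar p}\). The constant in front of the \(R\) term is \(C_*\varepsilon K\) on the tail, but it is \(CK\) on the initial block \([0,T_\varepsilon]\), which is too large.

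This initial block is the main obstacle: \cref{lem:w-naive-growth-bd} carries a full constant \(C\) in front of \(\int_0^{T_\varepsilon} e_u\), and this would produce \(e^{CKR^{2\bar p}}\) rather than \(e^{\epsilon R^{2\bar p}}\). To fix it, I would first use the Young/interpolation splitting in that lemma's proof with a small factor \(\delta\) in place of \(a\wedge\nu\). Alternatively, I would bound the early energy integral only through \(\mathcal{E}^p_u(T_\varepsilon)\le (K+1)T_\varepsilon+\rho+K\lVert u_0\rVert^{2p}\), and absorb the polynomial part \(R^{2p}\) with \(p<\bar p\) via \(CR^{2p}\le \epsilon R^{2\bar p}+C_\epsilon\). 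This works because the exponents appearing in \(\tau_1,\tau_r\) (namely \(p\le r\)) are strictly below \(\bar p\). If the \(\tau_\psi\) term involves \(R^{2\bar p}\) itself, I would instead start the Foia\c{s}--Prodi window at a time where \(\psi\ge c\) and rely on \cref{lem:w-naive-growth-bd} only with the unweighted energies, whose exponents are lower. Once the early-time factor is of the form \(e^{C\rho+\epsilon R^{2\bar p}}\), we multiply the bound on \(\lVert w\rVert^2\) by \(\lVert z\rVert^{2+2\kappa_1}_{\frac12+\kappa'}\lesssim e_u^{\kappa_1}+e_v^{\kappa_1}\) and integrate over \([0,\tau]\). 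The energy thresholds control \(\int e_u^{\kappa_1}\) against the decaying exponential \(e^{-as/2}\) by summing over unit intervals. This yields \(\int_0^\tau\lvert\tilde g\rvert^2\le d^2e^{C\rho+\epsilon R^{2\bar p}}\), after renaming constants depending on \(N\), \(K\), \(\epsilon\) and \(\max_{j\le N}\lvert b_j\rvert^{-1}\), which proves the claim.
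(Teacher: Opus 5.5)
Your proposal is correct and follows essentially the same route as the paper. It uses Girsanov on the first \(N\) modes with an almost-sure bound on \(\int_0^\tau \lVert \mathcal{A} \rVert^2\,dt\) (which gives Novikov for free). On \([0,T]\) it uses \cref{lem:w-naive-growth-bd}, with the term \(K R^{2(1+\kappa_1)}\) absorbed into \(C_{K,\epsilon} + \frac{\epsilon}{4} R^{2\bar{p}}\) by Young's inequality. This is your second fix; your first fix, shrinking the splitting parameter, would only enlarge the constant. Only \(p = 1+\kappa_1 < \bar{p}\) is needed there, and note that \(r = \bar{p} = 2\) when \(\kappa = 0\). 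After \(T\), it applies \cref{lem:FP-estimate} with \(\varepsilon \sim (a \wedge \epsilon)/(C_{\ast} K)\).

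Your drift bound \(\lVert \mathcal{A} \rVert^2 \lesssim_N \lVert w \rVert^2 (e_u^{\kappa_1} + e_v^{\kappa_1})\) is actually more accurate than the paper's \(\lVert w \rVert^2 (\lVert u \rVert_{1+\kappa}^2 + \lVert v \rVert_{1+\kappa}^2)\) when \(\kappa_1 > 0\). Summing over unit intervals is an equivalent substitute for the paper's integration against \(H(t)\).
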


\begin{proof}
Using the direct sum decomposition,
\begin{equation*}
\Omega = \mathcal{C}(0,\infty; \mathsf{P}_N H) \oplus \mathcal{C} (0, \infty; \mathsf{Q}_N H) :=  \Omega^{(1)} \oplus \Omega^{(2)},
\end{equation*}
we can rewrite~\(\Phi^{u,u'}\) as
\begin{equation*}
\Phi^{u,u'} (\omega^{(1)}, \omega^{(2)}) = \bigl( \Psi^{u,u'} (\omega^{(1)}, \omega^{(2)}), \omega^{(2)} \bigr),  \quad \omega = (\omega^{(1)}, \omega^{(2)}) \in \Omega,
\end{equation*}
where~\(\Psi^{u,u'}: \Omega^{(1)} \to \Omega^{(1)}\) is given by
\begin{equation*}
\Psi^{u,u'} (\omega^{(1)}, \omega^{(2)})_t :=  \omega_t^{(1)} + \int_0^t \mathcal{A} (s; \omega^{(1)}, \omega^{(2)}) \, ds, \quad \mathcal{A}(t) := - \mathbb{I}_{\{   t \le \tau \}} \mathsf{P}_N \bigl( F(u)_x - F(v)_x \bigr).
\end{equation*}
Writing~\(\mathbb{P} = \mathbb{P}_{N} \otimes \mathbb{Q}_N\), by~\cite[Lemma 3.3.13]{KuksinShirik2012MathematicsTwoDimensionalTurbulence}, we have
\begin{equation*}
\lVert \mathbb{P} - \Phi^{u,u'}_{\ast} \mathbb{P} \rVert_{TV} \le \int_{ \Omega^{(2)}} \lVert \Psi^{u,u'}_{\ast} (\mathbb{P}_N, \omega^{(2)}) - \mathbb{P}_N \rVert_{TV} \mathbb{Q}_N (d \omega^{(2)}).
\end{equation*}
The transform~\(\Psi^{u,u'}\) is acting on a finite dimensional Wiener process, and hence by the Girsanov theorem, for each~\(\omega^{(2)}\), we have
\begin{equation}\label{eq:2}
\lVert \Psi^{u,u'}_{\ast} ( \mathbb{P}_N, \omega^{(2)} ) - \mathbb{P}_N \rVert_{TV} \le \frac{1}{2} \Bigl[  \Bigl(  \mathbb{E}_N \exp \bigl(  6 \sup_{1 \le j \le N}  b_j^{-2}  \int_0^{\infty} \lVert \mathcal{A} (t; \cdot , \omega^{(2)})\rVert^2 \, dt   \bigr) \Bigr)^{1/2} - 1 \Bigr]^{1/2},
\end{equation}
provided that the Novikov condition
\begin{equation}\label{eq:novikov}
\mathbb{E}_N \exp \Bigl(  c \int_0^{\infty} \lVert \mathcal{A}(t; \cdot, \omega^{(2)}) \rVert^2 \, dt  \Bigr) < \infty
\end{equation}
is satisfied for some~\(c > 0\).

Next we will bound~\(\int_0^{\infty} \lVert \mathcal{A} \rVert^2\) which leads to \cref{eq:novikov} and the desire bound in the statement of the lemma. Recall that~\(w = u - v\). We have
\begin{equation}\label{eq:14}
\begin{split}
\lVert \mathcal{A} (t)\rVert^2 =  \mathbb{I}_{\{ t \le \tau \}} \lVert \mathsf{P}_N ( F(u)_x - F(v)_x )  \rVert
& \le \mathbb{I}_{\{  t \le \tau \}} \sum_{j = 1}^N \langle  w \int_0^1 F'(z_{\lambda}) \, d \lambda,  \partial_x e_j \rangle^2   \\
& \le \mathbb{I}_{\{  t \le \tau \}} \sum_{j = 1}^N \lVert w(t) \rVert^2 ( \lVert u(t) \rVert_{1+\kappa}^2  + \lVert v(t) \rVert_{1+\kappa}^2  ) \lVert e_j \rVert_1^2 \\
& \lesssim_N \mathbb{I}_{\{  t \le \tau \}} \lVert w(t) \rVert^2    ( \lVert u(t) \rVert_{1+\kappa}^2  + \lVert v(t) \rVert_{1+\kappa}^2  ).
\end{split}
\end{equation}

We first show exponential decay of~\(\lVert w(t) \rVert \), here we borrow some ideas from the works~\cite{NersesZhao2024ExponentialMixingWhiteForced,Gao2026PolynomialMixingWhiteforced,NersesZhao2024PolynomialMixingWhiteforced,gao2024polynomialwave}. Indeed, by \cref{lem:FP-estimate}, for
\begin{equation}\label{eq:def-var-epsilon}
\varepsilon = \frac{ a \wedge \epsilon}{ 16 C_{\ast} K}.
\end{equation}
there are~\(T \ge 2\) and~\(N \ge 1\) so that \cref{eq:FP-estimate-ineq} holds.
If~\(t\le T \wedge \tau\), then by \cref{lem:w-naive-growth-bd} and we have
\begin{equation*}
\lVert w(t) \rVert^2 \le d^2  \exp \Bigl( - a t + C_{a,\nu} \int_0^t \bigl[ e^{\kappa_1}_u(s) + e^{\kappa_1}_v(s) \bigr] \, ds \, dr      \Bigr),
\end{equation*}
where by the definition of~\(\tau\), the integral in the exponential can be further bounded by
\begin{equation*}
  \mathcal{E}_u^{1+\kappa_1 } (t ) + \mathcal{E}_v^{1+\kappa_1 } (t) \le 2 \rho + 2(K+1) t + K R^{2(1+\kappa_1)}
  \le 2 \rho + 2(K+1) t +  KC_{\delta}   + K \delta R^{2\bar{p}},
\end{equation*}
where we use the Young's inequality~\(R^{2(1+\kappa_1)} \le C_{\delta}+ \delta R^{2\bar{p}}\).
By choosing~\(\delta = \frac{\epsilon}{4K C_{a,\nu}}\), we obtain
\begin{equation}\label{eq:tau-small}
\lVert w(t) \rVert^2 \le d^2 \exp \Bigl(  -a t + C_{K,\epsilon} (\rho+t) + \frac{\epsilon}{4} R^{2\bar{p}} \Bigr), \quad t \le \tau \wedge T.
\end{equation}
When~\(t \in [T, \tau]\), the definition of~\(\tau\), we have
\begin{equation*}
  \int_T^t \bigl[  e^{\alpha}_u(s) + e^{\alpha}_v(s) + \tilde{e}_u(s) + \tilde{e}_v(s) \bigr] \, ds
  \le 4 \rho +  4 (K+1) t + K R_r + K R^{2\bar{p}} \le 8K(\rho +t )+ K R^{2\bar{p}},
\end{equation*}
and hence by \cref{lem:FP-estimate}, \cref{eq:tau-small} at~\(t = T\) and our choice of~\(\varepsilon\) in \cref{eq:def-var-epsilon}, we have
, we have
\begin{equation}\label{eq:L2-norm-decay-under-coupling}
\begin{split}
  \lVert w(t) \rVert^2 & \le d^2 \exp \Bigl(  -a T + C_{K,\epsilon} (\rho+T) + \frac{\epsilon}{2} R^{2\bar{p}} \Bigr) \times
               \exp \Bigl(  - a (t-T) + C_{\ast} \varepsilon \bigl(  8K(\rho+t) + 2K R^{2\bar{p}}  \bigr) \Bigr)
  \\
  & \le d^2 \exp \Bigl(  - \frac{a}{2} t + C_{K,\epsilon} \rho + \epsilon R^{2\bar{p}}/2 \Bigr).
\end{split}
\end{equation}
By adjusting the constant~\(C_{K,\epsilon}\), the inequality \cref{eq:L2-norm-decay-under-coupling} still holds for~\(t \le T \wedge \tau\).

Let
\begin{equation*}
H(t) = \int_0^t( \lVert u(s) \rVert^2_{1+\kappa} + \lVert v(s) \rVert^2_{1+\kappa} )   \, ds.
\end{equation*}
Then~\(H \ge 0\) and
\begin{equation}\label{eq:15}
H(\tau) \le \mathcal{E}^1_u(\tau) + \mathcal{E}^1_v(\tau) \le 2\rho + (K+1) \tau + KR_1 \le C_{K,\epsilon}(\rho + \tau) +  \epsilon R^{2\bar{p}}/2.
\end{equation}
By \cref{eq:14,eq:L2-norm-decay-under-coupling,eq:15}, we have
\begin{align*}
  \int_0^{\infty}\lVert \mathcal{A}  \rVert^2  \, dt & \lesssim_N \int_0^{\tau} \lVert w(t) \rVert^2  \, dH(t) \le d^2 e^{C_{K,\epsilon} \rho + \epsilon R^{2\bar{p}}/2} \int_0^{\tau} e^{ - \frac{a}{2}t}\, dH(t) \\
                         &\le d^2 e^{C_{K,\epsilon} \rho + \epsilon R^{2\bar{p}}/2}  H(\tau) e^{ - a\tau/2}  \\
  & \le d^{2} e^{ C_{K,\epsilon} \rho + \epsilon R^{2\bar{p}}}.
\end{align*}
Then the Novikov condition \cref{eq:novikov} follows, and this and \cref{eq:2} finish the proof of the lemma.
\end{proof}

Next, we provide estimate on the growth of the process~\(v\).
\begin{lemma}\label{lem:tau-v-relation}
We have
\begin{align}\label{eq:3}
\mathbb{P} \{ \tau^v < \infty \} &\le \mathbb{P} \{ \tau^{\hat{v}} < \infty \} + \mathbb{P} \{ \tau^u < \infty \} + \mathbb{P} \{  \tau^{u'} < \infty \}, \\\label{eq:4}
 \mathbb{P} \{  \tau^{\hat{v}} < \infty \} & \le \mathbb{P} \{  \tau^{\hat{u}'} < \infty \}  +  \lVert\mathbb{P} -  \Phi^{u,u'}_{\ast} \mathbb{P} \rVert_{TV}.
\end{align}
In particular, for all~\(\rho \ge 2\) and~\(q > 2\),
\begin{equation*}
\mathbb{P} \{ \tau^v < \infty \},  \mathbb{P} \{ \tau^{\hat{v}} < \infty \} \le C ( R^{2\bar{p} q}  + 1)\rho^{-q/2 +1} + \frac{1}{2} \Bigl(  \exp ( \lVert u - u' \rVert^2  e^{ C\rho + \epsilon R^{2\bar{p}}} ) - 1 \Bigr)^{1/2}.
\end{equation*}
\end{lemma}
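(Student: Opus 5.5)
For \cref{eq:3} I would compare $v$ with its truncation $\hat{v}$. Set $\tau=\tau^v\wedge\tau^u\wedge\tau^{u'}$. By construction $\hat v=v$ on $[0,\tau]$, and the stopping-time functionals $\mathcal{E}^p$, $\mathcal{E}^\psi$ at a time $t\le\tau$ depend only on the path up to $t$. On the event $\{\tau^v<\infty\}\cap\{\tau^u=\infty\}\cap\{\tau^{u'}=\infty\}$ we therefore have $\tau=\tau^v<\infty$. The energies of $v$ and $\hat v$ agree up to $\tau^v$, so the threshold defining $\tau^{\hat v}$ is hit at $\tau^v$ as well, and $\tau^{\hat v}\le\tau^v<\infty$. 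Hence
\begin{equation*}
\{\tau^v<\infty\}\subset\{\tau^{\hat v}<\infty\}\cup\{\tau^u<\infty\}\cup\{\tau^{u'}<\infty\},
\end{equation*}
and \cref{eq:3} follows by a union bound. Since the thresholds involve $\lVert v(0)\rVert=\lVert u'\rVert$, I would check that $v$ and $u'$ share the initial datum, which holds by \cref{eq:aux-process}.

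For \cref{eq:4} I would use a Girsanov-type representation. The process $\hat v$ solves the same equation as $u'$ with the noise $\omega$ replaced by $\Phi^{u,u'}\omega$, because the drift $\mathsf{P}_N[F(u)_x-F(v)_x]$ acts only up to $\tau$ and after $\tau$ both truncated processes follow the linear equation \cref{eq:linear-PDE}. This yields, pathwise, $\hat v(\omega)=\hat u'(\Phi^{u,u'}\omega)$, where $\hat u'$ denotes the truncated $u'$ built from the stopping rule that is read off along the path. Then
\begin{equation*}
\mathbb{P}\{\tau^{\hat v}<\infty\}=\Phi^{u,u'}_{\ast}\mathbb{P}\{\tau^{\hat u'}<\infty\}\le\mathbb{P}\{\tau^{\hat u'}<\infty\}+\lVert\mathbb{P}-\Phi^{u,u'}_\ast\mathbb{P}\rVert_{TV}.
\end{equation*}
The delicate part is the measurability and adaptedness bookkeeping: the transform must be invertible on path space, and the truncation in $\hat u'$ has to be defined intrinsically (a stopping time of the path itself) so that the identity $\hat v=\hat u'\circ\Phi$ is exact. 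I would argue as in \cite[Section 3.3.3]{KuksinShirik2012MathematicsTwoDimensionalTurbulence}, using strong uniqueness from \cref{thm:well-posedness}.

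For the quantitative bound I would insert known estimates into the two inequalities. Taking $\ell=0$ in \cref{lem:growth-estimate-combined-energy} gives
\begin{equation*}
\mathbb{P}\{\tau^u<\infty\},\ \mathbb{P}\{\tau^{u'}<\infty\}\lesssim(R^{2\bar pq}+1)\rho^{-q/2+1},
\end{equation*}
with deterministic initial data $\lVert u_0\rVert\le R$. \cref{lem:bound-tau-aux-process}, applied to $u'$, gives the same bound for $\tau^{\hat u'}$. \cref{lem:TV-distance-Phi} bounds the total-variation term. Combining \cref{eq:4} yields the bound for $\hat v$, and \cref{eq:3} then yields the bound for $v$ after absorbing constants into $C$. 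The range of $q$ is the one permitted by these lemmas.

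The main obstacle is the pathwise identity $\hat v=\hat u'\circ\Phi^{u,u'}$ in the second step; everything else is a union bound.
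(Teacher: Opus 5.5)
Your proposal is correct and follows the paper's proof. You get \cref{eq:3} by comparing the energies of \(v\) and \(\hat v\) up to \(\tau=\tau^v\) on \(\{\tau^u=\tau^{u'}=\infty\}\), \cref{eq:4} from the pathwise identity \(\hat v(\omega)=\hat u'(\Phi^{u,u'}\omega)\) plus a total-variation bound, and the quantitative estimate by inserting \cref{lem:growth-estimate-combined-energy}, \cref{lem:bound-tau-aux-process} and \cref{lem:TV-distance-Phi}.

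One point is more careful than the paper. The paper asserts the identity with \(\hat u'\) truncated at the same \(\tau\), which tacitly requires \(\tau\circ\Phi^{u,u'}=\tau\). Your insistence that the truncation time of \(\hat u'\) be chosen compatibly with \(\Phi^{u,u'}\) makes this step precise. A concrete choice is \(\tau\circ(\Phi^{u,u'})^{-1}\), which is admissible because the proof of \cref{lem:bound-tau-aux-process} works for an arbitrary truncation time.
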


\begin{proof}
On the event~\(\{ \tau^u = \tau^{u'} = \infty \}\), we have~\(\tau^v = \tau^{\hat{v}}\), this gives \cref{eq:3}.

Since~\(\hat{u}' = u'\), \(\hat{v} = v\) before~\(t \le \tau\), by the definition of~\(\Phi^{u,u'}_{\ast}\), we have
\begin{equation*}
\hat{u} \bigl(  t, \Phi^{u,u'}(\omega) \bigr) = u' \bigl( t, \Phi^{u,u'}(\omega) \bigr) = v \bigl( t, \omega\bigr) = \hat{v} (t, \omega), \quad  t \le \tau (\omega).
\end{equation*}
For~\(t \ge \tau\), \(\hat{u}\) and~\(\hat{v}\) follows the same deterministic dynamics, so
\begin{equation}\label{eq:5}
\mathbb{P} \Bigl\{  \hat{u} (t, \Phi^{u,u'}_{\ast} \omega) = \hat{v}(t, \omega), \quad \forall t \ge 0  \Bigr\} = 1.
\end{equation}
Hence,
\begin{equation*}
\mathbb{P} \{  \tau^v < \infty \}  = \Phi^{u,u'}_{\ast} \mathbb{P} \{  \tau^{\hat{u}'} < \infty \} \le \mathbb{P} \{  \tau^{\hat{u}'} < \infty \} + \lVert \mathbb{P} -\Phi^{u,u'}_{\ast} \mathbb{P} \rVert_{TV}.
\end{equation*}
This proves \cref{eq:4}.
\end{proof}

We can further estimate the loss of the maximal coupling. For~\(u, u' \in H\), we define
\begin{equation*}
\Delta (u, u') = d_{TV} \bigl(  \mathcal{L}\{ v(t) \}_{[0, \mathcal{T} ]}, \mathcal{L} \{ u'(t) \}_{[0, \mathcal{T} ]}  \bigr).
\end{equation*}
\begin{lemma}\label{lem:max-coupling-distance}
For every choice of~\(K\) and~\(\rho\), \(\epsilon\), there are constants~\(C_1 = C_1(q)\) and~\(C_2 = C_2(K,\epsilon)\) such that
\begin{equation*}
\Delta (u,u') \le C_1 (R^{2\bar{p}q}+1)  \rho^{-q/2+1} +   \Bigl(  \exp \bigl( d^2  e^{C_2 \rho + \epsilon R^{2\bar{p}}} \bigr) - 1 \Bigr)^{1/2}.
\end{equation*}
\end{lemma}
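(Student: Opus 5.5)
The estimate for \(\Delta(u,u')\) will be obtained by comparing the law of \(v\) on \([0,\mathcal{T}]\) with that of \(u'\) through the truncated processes and the Girsanov transform \(\Phi^{u,u'}\). The point is to reduce the total variation distance on path space to \(\lVert \mathbb{P}-\Phi^{u,u'}_{\ast}\mathbb{P}\rVert_{TV}\) plus the probabilities that the truncation actually acts, and then to invoke \cref{lem:TV-distance-Phi} and \cref{lem:tau-v-relation}, \cref{lem:bound-tau-aux-process}.

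\textbf{Step 1: reduce to truncated processes.} By the triangle inequality for \(d_{TV}\) on \(\mathcal{C}(0,\mathcal{T};H)\),
\begin{equation*}
\Delta(u,u') \le d_{TV}\bigl(\mathcal{L}(v),\mathcal{L}(\hat v)\bigr) + d_{TV}\bigl(\mathcal{L}(\hat v),\mathcal{L}(\hat u')\bigr) + d_{TV}\bigl(\mathcal{L}(\hat u'),\mathcal{L}(u')\bigr).
\end{equation*}
Since \(v=\hat v\) on \([0,\tau]\), the first term is at most \(\mathbb{P}\{\tau<\infty\}\le \mathbb{P}\{\tau^u<\infty\}+\mathbb{P}\{\tau^{u'}<\infty\}+\mathbb{P}\{\tau^{v}<\infty\}\). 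The third term is handled the same way, with the bound \(\mathbb{P}\{\tau<\infty\}\).

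\textbf{Step 2: the Girsanov term.} By \cref{eq:5}, \(\hat v(\cdot,\omega)=\hat u'(\cdot,\Phi^{u,u'}\omega)\) almost surely. Here \(\hat u'\) is the truncation of \(u'\), i.e.\ the process written as \(\hat u\) in \cref{eq:5}. Hence \(\mathcal{L}(\hat v)\) is the image of \(\Phi^{u,u'}_{\ast}\mathbb{P}\) under the measurable map \(\omega\mapsto\hat u'(\cdot,\omega)\). Pushforwards do not increase total variation, so
\begin{equation*}
d_{TV}\bigl(\mathcal{L}(\hat v),\mathcal{L}(\hat u')\bigr)\le \lVert \mathbb{P}-\Phi^{u,u'}_{\ast}\mathbb{P}\rVert_{TV}\le \tfrac12\bigl(\exp(d^2e^{C\rho+\epsilon R^{2\bar p}})-1\bigr)^{1/2}
\end{equation*}
by \cref{lem:TV-distance-Phi}.

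\textbf{Step 3: the stopping-time terms.} Apply \cref{lem:growth-estimate-combined-energy} with \(\ell=0\) and deterministic initial data \(u_0=u\) or \(u'\). This gives \(\mathbb{P}\{\tau^u<\infty\},\mathbb{P}\{\tau^{u'}<\infty\}\le C_q(R^{2\bar p q}+1)\rho^{-q/2+1}\), using \(\lVert u\rVert,\lVert u'\rVert\le R\). For \(\mathbb{P}\{\tau^v<\infty\}\), use \cref{lem:tau-v-relation}, which contributes one more term of each type. Collecting everything with adjusted constants \(C_1=C_1(q)\) and \(C_2=C_2(K,\epsilon)\), the coefficient \(\tfrac12\) in front of the Girsanov square roots absorbed after summing, gives the claim.

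\textbf{Main obstacle.} The delicate point is Step 2: one must justify that \(\mathcal{L}(\hat v)=(\hat u')_{\ast}\Phi^{u,u'}_{\ast}\mathbb{P}\) as laws on path space, and not only that the hitting probabilities agree as in \cref{lem:tau-v-relation}. This requires \(\Phi^{u,u'}\) to be a measurable map on \(\Omega\), and the identity \cref{eq:5} to hold pathwise for all \(t\ge0\). The latter uses strong uniqueness for the equation of \(u'\) driven by the shifted noise up to \(\tau\), and for the deterministic linear dynamics afterwards. I would verify this using the adaptedness of the drift \(\mathcal{A}\) together with the pathwise uniqueness from \cref{thm:well-posedness}.

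A second bookkeeping issue is making the constants uniform. The prefactor in front of the Girsanov term must not grow when it is summed with the \(\tau^v\) estimate. I would handle this by bounding \(\tfrac12(\cdot)^{1/2}+\tfrac12(\cdot)^{1/2}\le(\cdot)^{1/2}\), which matches the coefficient \(1\) in the statement.
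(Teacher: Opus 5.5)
Your proposal is correct and is essentially the paper's argument. You split off $\mathbb{P}(\tau<\infty)$, use \cref{eq:5} to write $\hat v=\hat u'\circ\Phi^{u,u'}$ so that the remaining discrepancy is at most $\lVert \mathbb{P}-\Phi^{u,u'}_{\ast}\mathbb{P}\rVert_{TV}$, and close with \cref{lem:TV-distance-Phi,lem:tau-v-relation} and \cref{lem:growth-estimate-combined-energy}. Your path-space triangle inequality is just a cleaner rendering of the paper's chain through the event $\{\tau=\infty\}$.

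One bookkeeping correction: $\mathbb{P}(\tau<\infty)$ enters twice, so the square-root term appears with total coefficient $3/2$ rather than $1$. This is harmless, because $\tfrac94(e^{x}-1)\le e^{9x/4}-1$ and $\ln(9/4)<\rho$ let you absorb it into $C_2$.
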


\begin{proof}
By the definition of the maximal coupling, we have
\begin{align*}
\Delta(u,u') & = \sup_{ \Gamma \in \mathcal{B} \bigl( \mathcal{C}(0, \mathcal{T} ; H) \bigr) } \Big\lvert \mathbb{P} \{ v(\cdot) \in \Gamma \} - \mathbb{P} \{ u'(\cdot) \in \Gamma \}\Big\rvert \\
& \le  \mathbb{P} (\tau < \infty) + \sup_{ \Gamma \in \mathcal{B} \bigl(  \mathcal{C}(0, \mathcal{T} ;H) \bigr)  } \Big\lvert   \mathbb{P} \{ v(\cdot) \in \Gamma, \ \tau = \infty \} - \mathbb{P} \{ u'(\cdot) \in \Gamma, \ \tau = \infty \} \Big\rvert \\
& \le \mathbb{P} (\tau < \infty) +  \sup_{ \Gamma \in \mathcal{B} \bigl(  \mathcal{C}(0, \mathcal{T} ;H) \bigr)  } \Big\lvert   \mathbb{P} \{ \hat{v}(\cdot) \in \Gamma, \ \tau = \infty \} - \mathbb{P} \{ \hat{u}'(\cdot) \in \Gamma, \ \tau = \infty \} \Big\rvert \\
& \le \mathbb{P} (\tau < \infty) + \lVert \mathbb{P} - \Phi^{u,u'}_{\ast} \mathbb{P}  \rVert_{TV},
\end{align*}
where the last inequality follows from \cref{eq:5}. The statement then follows from \cref{lem:TV-distance-Phi,lem:tau-v-relation}.
\end{proof}

\section{Proof of Main result}\label{sec:mixing-proof}

The goal of this section is to prove \cref{thm:main-theorem}, by constructing a extension and show that it satisfies the properties \cref{eq:abstract-recurrence,eq:abstract-squeezing} in \cref{thm:main-theorem}. The outline is as follows.
\begin{enumerate}
  \item\label{item:1} Pick~\(\mathcal{T}>0\), and construct a extension as outlined in \cref{sec:constr-mixing-extens}.
  \item\label{item:2} Show that for any ball~\(B = B_H(0,d)\),~\(\tau_{\mathbf{B}}\) has exponential moments. This is done by \cref{lem:exp-recurrence}, which actually specifies the choice of~\(\mathcal{T}\). This verifies \cref{eq:abstract-recurrence}, with~\(g(r) = 1+r^2\).
  \item\label{item:3} Build the stopping time~\(\sigma\) needed for \cref{eq:abstract-squeezing} out of stopping times~\(\tau^u, \tau^{u'}\) introduced in \cref{sec:esti-aux,sec:stopping-time}. This requires choices of the constants~\(K, \rho\). In \cref{sec:poly-squeezing} we will show that if~\(K\) and~\(\rho\) chosen sufficiently large and~\(\delta\) is sufficiently small, then the resulting stopping time will satisfy \cref{eq:abstract-squeezing} with~\(B = B_H(0,\delta)\).
\end{enumerate}
The resulting extension then satisfies the assumptions of \cref{thm:main-theorem}, and \cref{thm:main-theorem} follows immediately from \cref{THACPM}.

\subsection{Recurrence}\label{sec:recurrence}

Recall that we assume~\(N \ge 1\) is arbitrary but fixed, and \cref{eq:span-assumption} holds.
  Let~\(W_h(t) = th + W(t)\). We consider the event where the noise is ``small'' up to time~\(T\):
  \begin{equation*}
    \Omega_{T,\mu} :=  \Bigl\{  \sup_{t \in [0,T] }\lVert W_h(t) \rVert_{1+\kappa} \le \mu\Bigr\}.
  \end{equation*}
  Under the assumption \cref{eq:span-assumption}, the probability~\(\mathbb{P}(\Omega_{T,\mu}) > 0\) for every~\(T, \delta>0\).
We first prove a result for the solution to equation \cref{eq:conservation-law} under small noise, namely, on \(\Omega_{T,\mu}\).
\begin{lemma}\label{lem:apriori-small-noise}
  Let~\(T > 0\). Then there exists~\(\mu_0 = \mu_0(T,R)\) such that   \begin{equation*}
    \sup_{ t \in [0,T]} \lVert u(t) \rVert^2 \le 2(R^2+1), \quad
    \int_0^T \lVert u(t) \rVert^2_{1+\kappa} \, dt \le C_{a,\nu} T(R^2+1)^{1+\kappa_1}, \qquad \text{ on } \Omega_{T,\mu}, \ \mu \le \mu_0.
  \end{equation*}
\end{lemma}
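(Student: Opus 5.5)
We argue pathwise on \(\Omega_{T,\mu}\) with the decomposition \(u = z + W_h\). Here \(\|u_0\|\le R\) and \(W_h(t) = th + W(t)\) with \(W(t)=\sum_j b_j e_j\beta_j(t)\). The difference \(z = u - W_h\) solves the random PDE
\begin{equation*}
z_t + a z + \nu D^{2+2\kappa} z + \bigl(F(z+W_h)\bigr)_x = -aW_h - \nu D^{2+2\kappa} W_h, \qquad z(0)=u_0,
\end{equation*}
with no stochastic differential; the \(h\) term is absorbed since \(\partial_t(th)=h\). The right-hand side lies in \(L^2(0,T;H^{-1-\kappa})\) with norm \(\lesssim \mu\sqrt{T}\) on \(\Omega_{T,\mu}\), because \(\|W_h\|_{1+\kappa}\le\mu\). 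The plan is a deterministic \(L^2\) energy estimate for \(z\), closed by a continuity (bootstrap) argument on \([0,T]\). We then transfer the bounds to \(u\) using \(\|u\|\le\|z\|+\mu\) and \(\|u\|_{1+\kappa}\le \|z\|_{1+\kappa}+\mu\).

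\textbf{Energy identity.} Testing with \(z\) gives
\begin{equation*}
\tfrac12\tfrac{d}{dt}\|z\|^2 + a\|z\|^2+\nu\|D^{1+\kappa}z\|^2 = \langle F(z+W_h),z_x\rangle + \langle -aW_h-\nu D^{2+2\kappa}W_h, z\rangle .
\end{equation*}
The forcing term is bounded by \(\tfrac{a\wedge\nu}{4}\|z\|_{1+\kappa}^2 + C\mu^2\). For the nonlinear term, write \(F(z+W_h)=F(z)+[F(z+W_h)-F(z)]\). The term \(\langle F(z),z_x\rangle=\int \partial_x G(z)\,dx = 0\), with \(G'=F\), since \(z\in H^{1+\kappa}\) decays. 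The remainder is \(\int W_h\int_0^1F'(z+\lambda W_h)\,d\lambda\, z_x\). Using \(|F'(y)|\lesssim |y|^{1+\kappa_1}\) from \cref{eq:H-growth-of-F}, it is bounded by
\begin{equation*}
\lesssim \|W_h\|_{L^\infty}\bigl(\|z\|_{L^\infty}^{\kappa_1}\|z\|+\mu^{1+\kappa_1}\bigr)\|z_x\| \lesssim \mu\bigl(\|z\|_{1+\kappa}^{1+\kappa_1}\|z\| + \mu^{1+\kappa_1}\|z\|_{1+\kappa}\bigr).
\end{equation*}
When \(\kappa_1=0\) this is at most \(\mu\|z\|_{1+\kappa}^2 + C\mu^{4}\). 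When \(\kappa_1>0\) we interpolate \(\|z\|_{L^\infty}^{\kappa_1}\) via \cref{lem:sobolev-interpolation} between \(\|z\|\) and \(\|z\|_{1+\kappa}\). Since \(\kappa_1<\kappa\), the total power of \(\|z\|_{1+\kappa}\) is \(<2\), and Young gives \(\le \tfrac{a\wedge\nu}{4}\|z\|_{1+\kappa}^2 + C\mu^{c}(1+\|z\|^2)^{1+\kappa_1'}\) with some \(c>0\). This is the step I expect to be delicate: the exponents must balance so that a positive power of \(\mu\) multiplies the \(\|z\|\)-polynomial.

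\textbf{Bootstrap.} The estimate takes the form
\begin{equation*}
\tfrac{d}{dt}\|z\|^2 + (a\wedge\nu)\|z\|_{1+\kappa}^2 \le C\mu^{c}\bigl(1+\|z\|^2\bigr)^{1+\kappa_1'} + C\mu^2 .
\end{equation*}
Let \(t^*\) be the first time \(\|z\|^2\) reaches \(R^2+\tfrac12\). On \([0,t^*]\) the right side is at most \(C\mu^c(R^2+2)^{1+\kappa_1'}+C\mu^2\). Hence \(\|z(t)\|^2\le R^2 + CT\mu^c(R^2+2)^{1+\kappa_1'}\), which is \(<R^2+\tfrac12\) once \(\mu\le\mu_0(T,R)\). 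Thus \(t^*>T\). Then \(\|u\|^2\le 2\|z\|^2+2\mu^2\le 2(R^2+1)\). Integrating the same inequality gives \(\int_0^T\|z\|_{1+\kappa}^2\le C_{a,\nu}(R^2+1)\), and therefore
\begin{equation*}
\int_0^T\|u\|_{1+\kappa}^2 \le 2\int_0^T\|z\|_{1+\kappa}^2 + 2T\mu^2 \le C_{a,\nu}T(R^2+1)^{1+\kappa_1},
\end{equation*}
with room to spare, absorbing \(T\ge\) const by enlarging constants, or keeping the crude bound if \(T\) is small.
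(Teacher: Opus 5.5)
Your argument is correct and is essentially the paper's proof. Both use the same splitting \(u = z + W_h\), an \(L^2\) energy estimate in which the pure transport term cancels, Young's inequality to leave a \(\mu\)-small coefficient in front of a polynomial in \(\lVert z\rVert\), and a nonlinear Gronwall/continuity argument on \([0,T]\); cancelling \(\langle F(z),z_x\rangle\) rather than \(\langle F(u)_x,u\rangle\), and bootstrapping instead of comparing with \(y'\le\delta y^3\), are cosmetic differences. Your hedge about small \(T\) is justified: the stated bound \(\int_0^T\lVert u\rVert_{1+\kappa}^2\le C_{a,\nu}T(R^2+1)^{1+\kappa_1}\) can only hold for \(T\) bounded below (already for \(F\equiv 0\) and high-frequency \(u_0\) with \(\lVert u_0\rVert=R\) the integral stays of order \(R^2/\nu\) as \(T\to 0\)), and your sharper bound \(C_{a,\nu}(R^2+1)+2T\mu^2\) yields it in the regime \(T\gtrsim 1\) where the lemma is actually used.
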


\begin{proof}
Motivated by the work~\cite{NersesZhao2024PolynomialMixingWhiteforced}, we write \(u= v + W_h\), then~\(v\) is a classical solution to the PDE
  \begin{equation*}
   \partial_t v + F(u)_{x} - \nu D^{2+2\kappa} u + a(v + W_h) = 0.
 \end{equation*}
 Taking inner product with~\(v\) in~\(L^2\), we obtain
 \begin{equation*}
 \frac{d}{dt} \lVert v \rVert^2 + 2a \lVert v \rVert^2 = 2\langle - F(u)_x , v \rangle +2 \nu\langle D^{2+2\kappa} u, v \rangle  - 2a \langle v, W_h \rangle =:  I_1 + I_2 + I_3.
\end{equation*}
Using~\(\langle F(u)_x, u \rangle = 0 \) and integration by parts we have
\begin{equation}\label{eq:super-quad-F}
  \begin{split}
  I_1 &= 2\langle F(u), \partial_x W_h \rangle  \le C\lVert u \rVert^{2+\kappa_1}_{\frac{1}{2}} \mu
        \le  C \lVert u \rVert_{1+\kappa} \lVert u \rVert^{1+\kappa_1} \mu \\
 & \leq\delta \lVert v \rVert^{2(1+\kappa_1)} + C_{\delta} \bigl(  \lVert v \rVert_{1+\kappa}^2\mu^2  + \mu^{2+2\kappa_1} + \mu^4   \bigr).
  \end{split}
\end{equation}
For~\(I_2\) and~\(I_3\) we have
\begin{equation*}
  I_2 \le - 2\nu \lVert D^{1+\kappa} u \rVert^2 + 2\nu \lVert D^{1+\kappa} u \rVert \mu
  \le - \frac{3\nu}{2} \lVert D^{1+\kappa} u \rVert^2 + C_{\nu} \mu^2
\end{equation*}
and
\begin{equation*}
I_3 \le 2a \lVert v \rVert  \lVert W_h \rVert \le \frac{a}{2} \lVert v \rVert^2 + C_a \mu^2.
\end{equation*}
Therefore, we have
\begin{equation*}
  \frac{d}{dt} \lVert v \rVert^2 +\frac{3 (a\wedge \nu)}{2} \lVert v \rVert^2_{1+\kappa}
  \le \delta \lVert v \rVert^{2(1+\kappa_1)} +  C_{\delta} ( \lVert v \rVert_{1+\kappa}^2 \mu^2 + \mu^4 + \mu^2 + \mu^{2+2\kappa_1} ).
\end{equation*}
For every~\(\delta > 0\), by choosing~\(\mu\) sufficiently small, we have
\begin{equation}\label{eq:21}
\frac{d}{dt} \lVert v \rVert^2 + (a\wedge \nu) \lVert v \rVert_{1+\kappa}^2  \le \delta ( \lVert v \rVert^{2(1+\kappa_1)} +  1) \le \delta \bigl( \lVert v \rVert^2+1  \bigr)^{1+\kappa_1} \le \delta \bigl( \lVert v \rVert^2 + 1  \bigr)^3.
\end{equation}
Hence,
\begin{equation*}
\lVert v(t) \rVert^2 \le \frac{\lVert v(0) \rVert^2+1 }{ \sqrt{ 1 - \delta t (\lVert v(0) \rVert^2+1) }}  - 1  \le \sqrt{2}(R^2 +1), \quad \forall t \in [0,T],
\end{equation*}
as long as~\(\delta \le \frac{1}{2T(R^2+1)}\) and then~\(\mu\) sufficiently small so that the first inequality in \cref{eq:21} holds. Using the uniform bound on~\(\lVert v \rVert \), from \cref{eq:21} we can further deduce that
\begin{equation*}
(a\wedge \nu) \int_0^T \lVert v(t) \rVert^2_{1+\kappa} \, dt  \le C T (R^2+1)^{1+\kappa_1}.
\end{equation*}
The conclusion follows since~\(W_h =u-v\) is small.
\end{proof}

\begin{lemma}\label{lem:recurrence}
  Let~\(N \ge 1\) be arbitrary but fixed. For any~\(R, d > 0\), there exists constants~\(p,T > 0\) depending on~\(R, d,a,\nu,h,\mathcal{B}_1,\kappa,\kappa_1\) such that
  \begin{equation*}
  P_T \bigl( u_0 , B_H(0,d) \bigr)   \ge p, \quad \forall u_0 \in B_H(0,R).
\end{equation*}
\end{lemma}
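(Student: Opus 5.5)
The plan is a two-phase argument: first let the dissipation and damping shrink the solution deterministically on the small-noise event \(\Omega_{T,\mu}\), then use continuity in the noise to absorb the small perturbation. Since \(h \in \mathrm{span}\{e_1,\dots,e_N\}\) with \(b_j \neq 0\) for \(j\le N\), the event \(\Omega_{T,\mu}\) (where \(W_h = th + W\) is uniformly small in \(H^{1+\kappa}\)) has positive probability for every \(T,\mu>0\). This is the Gaussian support property: the shift \(th\) lies in the Cameron--Martin space of the first \(N\) coordinates, and \(\mathcal{B}_1<\infty\) gives \(\mathbb{P}(\sup_{[0,T]}\lVert W\rVert_{1+\kappa}\le\mu)>0\). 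Lower-bounding this probability uniformly in \(u_0\in B_H(0,R)\) is automatic, because the event does not depend on \(u_0\). It therefore suffices to show that, for suitable \(T\) and \(\mu\), every \(u_0\in B_H(0,R)\) satisfies \(\lVert u(T)\rVert < d\) on \(\Omega_{T,\mu}\). We then take \(p=\mathbb{P}(\Omega_{T,\mu})\).

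For the deterministic step, I would redo the energy computation of \cref{lem:apriori-small-noise} with \(v=u-W_h\), but keep the damping term to get decay. Using \(\langle F(u)_x,u\rangle=0\) and the bounds on \(I_1,I_2,I_3\), we have
\begin{equation*}
\frac{d}{dt}\lVert v\rVert^2 + a\lVert v\rVert^2 \le C_\delta\bigl(\lVert v\rVert^2_{1+\kappa}\mu^2+\mu^2+\mu^{2+2\kappa_1}+\mu^4\bigr) + C\mu\lVert u\rVert_{1+\kappa}\lVert u\rVert^{1+\kappa_1}.
\end{equation*}
Rather than bounding the last term by \(\delta\lVert v\rVert^{2(1+\kappa_1)}\), I would use the a priori bounds of \cref{lem:apriori-small-noise}. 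On \([0,T]\) these give \(\lVert u\rVert^2\le 2(R^2+1)\) and \(\int_0^T\lVert u\rVert_{1+\kappa}^2\le C T(R^2+1)^{1+\kappa_1}\). After integrating, Cauchy--Schwarz bounds the last term by \(C\mu\sqrt{T}\,(R^2+1)^{\text{power}}\sqrt{T(R^2+1)^{1+\kappa_1}}\). The \(\mu^2\lVert v\rVert_{1+\kappa}^2\) term is controlled in the same way, since \(\lVert v\rVert_{1+\kappa}\le \lVert u\rVert_{1+\kappa}+\mu\).

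Gronwall then yields
\begin{equation*}
\lVert v(T)\rVert^2 \le e^{-aT}R^2 + \mu\, C(T,R).
\end{equation*}
I would first fix \(T\) so large that \(e^{-aT}R^2\le d^2/8\). Then I would choose \(\mu\le\mu_0(T,R)\) so small that \(\mu\,C(T,R)\le d^2/8\) and also \(\lVert W_h(T)\rVert\le\mu<d/4\). This gives \(\lVert u(T)\rVert\le\lVert v(T)\rVert+\mu<d\), so \(P_T(u_0,B_H(0,d))\ge\mathbb{P}(\Omega_{T,\mu})=:p>0\). The dependence on \(\mathcal{B}_1\), \(h\) and \(\kappa,\kappa_1\) enters only through \(\mathbb{P}(\Omega_{T,\mu})\) and the constants of \cref{lem:apriori-small-noise}.

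I expect the main obstacle to be the careful justification that \(\mathbb{P}(\Omega_{T,\mu})>0\) in the \(H^{1+\kappa}\) norm. I would handle this by splitting \(W=\mathsf{P}_NW+\mathsf{Q}_NW\) into independent parts. The low modes plus the drift \(th\) live in a finite-dimensional space where \(b_j\neq0\), so small-ball probabilities for a shifted Brownian motion are positive by Girsanov. The tail part \(\mathsf{Q}_NW\) is a centered Gaussian process in \(H^{1+\kappa}\) by \(\mathcal{B}_1<\infty\), so its small-ball probability is positive by the standard Anderson/Doob maximal estimate. A secondary technical point is that the super-quadratic term \(I_1\) must be controlled only through the already-established bounds of \cref{lem:apriori-small-noise}, so that the smallness comes from the factor \(\mu\).
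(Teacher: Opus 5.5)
Your proof is correct, but it is organized differently from the paper's.

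\textbf{The paper's route.} It introduces the solution \(\hat u\) of the unforced equation and uses \(\lVert \hat u(T)\rVert \le e^{-aT}\lVert u_0\rVert\) to fix \(T\). It then estimates \(w = u - \hat u - W_h\), which vanishes at \(t=0\). The difference term \(\langle F(u)_x - F(\hat u)_x, u - \hat u\rangle\) is handled as in \cref{lem:w-naive-growth-bd} with \(N=0\). Gronwall gives \(\lVert w(T)\rVert^2 \le \mu \exp\bigl(C\int_0^T (e_u^{\kappa_1} + e_{\hat u}^{\kappa_1})\,dt\bigr)\), and the exponent is bounded on \(\Omega_{T,\mu}\) via \cref{lem:apriori-small-noise} and \cref{eq:unforced}.

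\textbf{Your route.} You work directly with \(v = u - W_h\) and exploit \(\langle F(u)_x, u\rangle = 0\). The nonlinearity then only appears through the \(O(\mu)\) term \(2\langle F(u), \partial_x W_h\rangle\). After inserting the same a priori bounds from \cref{lem:apriori-small-noise}, a linear Gronwall driven by the damping gives \(\lVert v(T)\rVert^2 \le e^{-aT}R^2 + \mu C(T,R)\). Your order of choices (\(T\) from \(a,R,d\) first, then \(\mu \le \mu_0(T,R)\)) is consistent.

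\textbf{Comparison.} Your route is shorter: it needs neither the auxiliary unforced flow nor the Foia\c{s}--Prodi-type difference estimate with its exponential-of-integral factor. The paper's route is the standard ``deterministic limit dynamics plus stability of the flow under small forcing'' template. That template is what one would need if the deterministic dynamics did not simply decay to \(0\) through the energy identity, for instance with a deterministic force not absorbed into \(W_h\). Here, \(\hat u\) decays by exactly the cancellation you use, so the paper's route offers no extra advantage.

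\textbf{Positivity of \(\mathbb{P}(\Omega_{T,\mu})\).} Your justification splits off the first \(N\) modes, where the drift \(th\) is removed by Girsanov/Cameron--Martin using \(b_j \neq 0\). The independent Gaussian tail is small in \(\mathcal{C}(0,T;H^{1+\kappa})\) with positive probability thanks to \(\mathcal{B}_1 < \infty\). This supplies a step the paper only asserts.

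\textbf{Minor point.} The term \(\mu^2\lVert v\rVert_{1+\kappa}^2\) in your differential inequality is superfluous, since \(-2\nu\lVert D^{1+\kappa}u\rVert^2 + 2\nu\langle D^{1+\kappa}u, D^{1+\kappa}W_h\rangle \le \nu\mu^2\) already. It does no harm.
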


\begin{proof}
  We will show that~\(u \in B_H(0,d)\) on~\(\Omega_{T,\mu}\) for sufficiently large~\(T\) and small~\(\mu\). Let~\(\hat{u}\) be solution to the unforced equation \begin{equation*}
\hat{u}_t + a \hat{u} + \nu (-\partial_{xx})^{1+\kappa} \hat{u} +
\bigl( F(\hat{u}) \bigr)_x =0,
\end{equation*}
it holds that
  \begin{equation}\label{eq:unforced}
   \lVert \hat{u}(t) \rVert \le e^{- at} \lVert u_0 \rVert , \quad \int_0^T \lVert \hat{u}(t) \rVert^2_{1+\kappa} \le \frac{\lVert u_0 \rVert^2 }{a \wedge \nu}.
  \end{equation}
Thus, we can choose~\(T\) large so that~\( \lVert \hat{u}(T) \rVert \le d/3 \).

Let us consider~\(\tilde{w} = u-\hat{u}\) and \(w = \tilde{w} -  W_h \). Then~\(w(0) = 0\) and~\(w(t)\) solves
\begin{equation*}
\partial_t w + F(u)_x - F(\hat{u})_x - \nu D^{2+2\kappa} (w + W_h) + a (w+W_h ) = 0.
\end{equation*}
Taking~\(L^2\)-inner product with~\(w\), we have
\begin{equation*}
  \frac{d}{dt} \lVert w \rVert^2 + 2a \lVert w \rVert^2 + 2\nu \lVert D^{1+\kappa} w \rVert^2 =
  \langle \nu D^{2+2\kappa} W_h - a W_h , w \rangle  + \langle F(u)_x - F(\hat{u})_x, w \rangle =:  J_1 + J_2.
\end{equation*}
We have
\begin{equation*}
J_1 \le \nu \lVert D^{1+\kappa} W_h \rVert \lVert D^{1+\kappa}w \rVert + a \lVert W_h \rVert \lVert w \rVert \le \mu (\nu \lVert D^{1+\kappa} w \rVert + a \lVert w \rVert  ).
\end{equation*}
For~\(J_2\),
\begin{equation*}
  \begin{split}
    J_2 & \le \langle F(u)_x - F( \hat{u} )_x , u - \hat{u} \rangle + \langle F(u) - F(\hat{u}), \partial_x W_h \rangle \\
&          \le \frac{(a\wedge \nu) }{2} \lVert \tilde{w} \rVert_{1+\kappa}^2 + C_{a,\nu } \lVert \tilde{w} \rVert^2 \bigl( e_u^{\kappa_1}(t) + e_{\hat{u}}^{\kappa_1}(t) \bigr) + \langle F(u) - F(\hat{u}), \partial_x W_h \rangle \\
        & \le \frac{(a\wedge \nu) }{2} \lVert w \rVert_{1+\kappa}^2 +
          C_{a,\nu}( \lVert w \rVert^2+\mu) \Bigl( e^{\kappa_1}_u(t) + e^{\kappa_1}_{\hat{u}}(t)   \Bigr),
  \end{split}
\end{equation*}
on~\(\Omega_{\mu}\), where
the second line is essentially repeating the estimate in the proof of \cref{lem:w-naive-growth-bd} with~\(N = 0\) and~\((u,v,w) = (u,\hat{u},\tilde{w})\), and the third line follows from \cref{eq:super-quad-F}.
By Gronwall's inequality, we have
\begin{equation*}
\lVert w(T) \rVert^2 \le \mu e^{ C_{a,\nu} \int_0^T \Phi(t) \, dt}, \quad \Phi(t) = e^{\kappa_1}_u(t) + e^{\kappa_1}_{\hat{u}}(t).
\end{equation*}
By \cref{eq:unforced} and \cref{lem:apriori-small-noise}, for sufficiently small~\(\mu\), we have
\begin{equation*}
\int_0^T \Phi(t) \, dt  \le C_{R,a,\nu,\kappa_1} T, \quad \text{ on } \Omega_{\mu}.
\end{equation*}
Hence, by choosing~\(\mu<d/3\) even smaller, we can have~\(\lVert w(T) \rVert \le d/3 \), and thus~\(\lVert u(T) \rVert \le d \) on~\(\Omega_{\mu}\) as desired.
\end{proof}

Using \cref{lem:growth-estimate-combined-energy} and the estimate \cref{eq:L2-norm-decay-under-coupling}, following the same argument as~\cite[Prop.\ 4.2]{NersesZhao2024ExponentialMixingWhiteForced}, or~\cite[Sect.\ 4.4]{Shirik2008ExponentialMixingRandomly}, one can establish the recurrence of the mixing extension~\((\mathbf{u}_t, \mathbf{P}_{\mathbf{u}})\) for an appropriately chosen time step~\(\mathcal{T} = mT\), where~\(T\) is found in \cref{lem:recurrence}.
\begin{lemma}\label{lem:recurrence-for-mixing-extension}
There exists an integer~\(N \ge 1\), depending on \(a, \nu, h, \mathcal{B}_1, \mathcal{B}_{\varphi,m}\), such that for any \(R, d \ge 0\), there exist constants \(p\) and \(\mathcal{T}\) for which the mixing extension constructed in \cref{sec:mixing-extension} using the time step \(\mathcal{T}\) satisfies
  \begin{equation*}
  \mathbf{P}_{\mathbf{u}} \bigl(  \mathbf{u}(\mathcal{T}) \in B_H(0,d)^2 \bigr) \ge p, \quad \forall \mathbf{u} \in B_H(0,R)^2,
  \end{equation*}
  provided~\(h\) satisfies \cref{eq:span-assumption}.
\end{lemma}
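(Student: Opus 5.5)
The plan follows the scheme of~\cite[Prop.\ 4.2]{NersesZhao2024ExponentialMixingWhiteForced} and~\cite[Sect.\ 4.4]{Shirik2008ExponentialMixingRandomly}. We fix \(N\) once and for all, so that \cref{lem:FP-estimate,lem:TV-distance-Phi} apply with \(\varepsilon\) as in \cref{eq:def-var-epsilon}, and we assume \(b_j\ne0\) for \(j\le N\). Given \(R,d\), we apply \cref{lem:recurrence} to get \(T,p_0\) with \(P_T(u_0,B_H(0,d/2))\ge p_0\) for \(u_0\in B_H(0,R)\). Since \(T\) can be enlarged (by \cref{lem:apriori-small-noise} and \cref{eq:unforced}), we may take \(\mathcal{T}=T\) and assume it is large.

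The key observation is that \(\mathbf{u}(\mathcal{T})=(\mathfrak{U}_{\mathcal{T}},\mathfrak{U}'_{\mathcal{T}})\) with marginals \(\mathcal{L}(u(\mathcal{T}))\) and \(\mathcal{L}(u'(\mathcal{T}))\). We split into two cases according to \(\lVert u-u'\rVert\).

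\emph{Case 1: \(\lVert u-u'\rVert\le\delta\), with \(\delta\) small.} We work on the event \(\{\mathfrak{U}'=\mathfrak{V}\}\cap\{\tau=\infty\}\) (with the \(\tau\) of \cref{sec:esti-aux} built for \(\tilde u,\tilde v\)). On this event \(\tilde u'(\mathcal{T})=\tilde v(\mathcal{T})\). By \cref{eq:L2-norm-decay-under-coupling}, \(\lVert\tilde u(\mathcal T)-\tilde v(\mathcal T)\rVert\le \delta e^{-a\mathcal T/4+C\rho+\epsilon R^{2\bar p}}\), which is below \(d/2\) once \(\delta\) is small. Then
\[
\mathbf P(\mathbf u(\mathcal T)\in B_H(0,d)^2)\ge \mathbb P(\lVert u(\mathcal T)\rVert\le d/2)-\Delta(u,u')-\mathbb P(\tau<\infty).
\]
We choose \(\rho\) large so that \(C(R^{2\bar pq}+1)\rho^{-q/2+1}\le p_0/4\) in \cref{lem:max-coupling-distance,lem:growth-estimate-combined-energy,lem:tau-v-relation}. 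Then we shrink \(\delta\) so that the Girsanov term \(\bigl(\exp(\delta^2e^{C\rho+\epsilon R^{2\bar p}})-1\bigr)^{1/2}\le p_0/4\). This leaves probability at least \(p_0/4\).

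\emph{Case 2: \(\lVert u-u'\rVert>\delta\).} The coupling may fail immediately. On \(\{\mathfrak U'\ne\mathfrak V\}\) the two components are conditionally independent, and on \(\{\mathfrak U'=\mathfrak V\}\) the first component is driven by \(\Lambda\), which has the law of \(\eta-\mathsf P_N F(u)_x\). So \(\tilde u\) has the law of \(u\), while \(\tilde u'\) has the law of \(u'\).

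I would first try a two-step argument. Run one step of length \(T\) to bring both components into \(B_H(0,\delta/2)\), which places the pair in Case~1, and then run the Case~1 step. Since the target event is a product set, landing both components in the ball needs a lower bound on a joint probability, not just on the marginals. On the event where the maximal coupling fails, conditional independence gives the product bound \(p_0^2\). On the event where it succeeds, \(\tilde u'=\tilde v\) with \(\tilde v\) driven by the same \(\Lambda\) as \(\tilde u\). In that case I would apply \cref{lem:recurrence} to \(v\) directly, using the small-noise event \(\Omega_{T,\mu}\) for \(\Lambda\) together with the estimates for \(v\) from \cref{lem:tau-v-relation}. I would also use that \(\mathsf P_N(F(u)_x-F(v)_x)\) is controlled by \cref{eq:14} on \(\tau=\infty\), and \cref{lem:apriori-small-noise} applies to both \(u\) and \(v\) on the same small-noise set.

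\emph{Main obstacle.} The delicate step is Case~2 on the event \(\{\mathfrak U'=\mathfrak V\}\): showing that \(\tilde u\) and \(\tilde v\) enter the small ball simultaneously with positive probability. The auxiliary equation carries the extra drift \(\mathsf P_N[F(u)_x-F(v)_x]\), so \cref{lem:recurrence} cannot be applied verbatim. My plan is to redo the energy argument of \cref{lem:recurrence} for the pair \((u,v)\) under small \(W_h\), treating the drift as a perturbation bounded by \(C_N\lVert w\rVert(\lVert u\rVert_{1+\kappa}+\lVert v\rVert_{1+\kappa})\).

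Finally, we set \(\mathcal T=2T\), taking \(\mathcal T\) as a multiple of \(T\) as the paper indicates, and take \(p=p_0^2/4\cdot\min\) of the two case bounds.
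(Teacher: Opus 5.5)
Your Case~1 is sound. It uses the ingredients the paper points to: the decay bound \cref{eq:L2-norm-decay-under-coupling} on \(\{\tau=\infty\}\), the tail bounds for \(\tau\), and \cref{lem:max-coupling-distance}. The paper's sketch instead takes \(\mathcal{T}=mT\) and lets the decay in \cref{eq:L2-norm-decay-under-coupling} produce closeness, rather than assuming \(\lVert u-u'\rVert\le\delta\); for this part the two routes are interchangeable. The genuine gap is Case~2, which is where the whole difficulty lies: when \(\lVert u-u'\rVert\) is of order \(R\), the bound on \(\Delta(u,u')\) in \cref{lem:max-coupling-distance} is useless. \emph{First}, your product bound \(p_0^2\) on \(\{\mathfrak{U}'\neq\mathfrak{V}\}\) is not justified. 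Write \(\mu_v,\mu_{u'}\) for the laws of \(v,u'\) on \(\mathcal{C}(0,\mathcal{T};H)\). A maximal coupling satisfies \(\tilde{\mathbf{P}}(\mathfrak{V}\in\cdot,\ \mathfrak{U}'=\mathfrak{V})=\mu_v\wedge\mu_{u'}\). Hence on the failure event \(\mathfrak{U}'\) and \(\mathfrak{V}\) are distributed according to the mutually singular remainders \((\mu_{u'}-\mu_v\wedge\mu_{u'})/\Delta\) and \((\mu_v-\mu_v\wedge\mu_{u'})/\Delta\), and \(\tilde u\) is a functional of \(\mathfrak{V}\). These remainders need not charge \(\{y(T)\in B\}\) at all, so \(P_T(u,B),P_T(u',B)\ge p_0\) give no lower bound there. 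Conditional independence only helps if the conditional marginals are the true transition probabilities.

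\emph{Second}, on the success event you must bound \((\mu_v\wedge\mu_{u'})(\Gamma)\) from below. Here \(\Gamma\) is the set of paths \(y\) such that both \(y(T)\) and the solution \(\tilde u(T)\) driven by the \(\Lambda\) recovered from \(y\) lie in the ball. A bound \(\mathbb{P}(u(T)\in B,\,v(T)\in B)\ge p_1\) only controls \(\mu_v(\Gamma)\). Passing to \(\mu_v\wedge\mu_{u'}\) needs a Girsanov comparison of densities on an event where the drift (bounded, but not small) is controlled. Moreover, \(p_1\) is exactly the step you leave open, and it does not follow by redoing \cref{lem:apriori-small-noise,lem:recurrence}. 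When \(\lVert u-u'\rVert\sim R\) the \(\mathsf{P}_N\)-drift is not perturbative. Tested against \(v\), your bound \(C_N\lVert w\rVert(\lVert u\rVert_{1+\kappa}+\lVert v\rVert_{1+\kappa})\) produces a term of order \(\lVert v\rVert^2\lVert v\rVert_{1+\kappa}\); equivalently, \(\langle\mathsf{Q}_NF(v)_x,v\rangle=\langle F(v),\partial_x\mathsf{P}_Nv\rangle\) no longer vanishes. So the energy estimate does not close for data of size \(R\). \emph{Third}, the two-step scheme does not fit the construction. With step \(\mathcal{T}=2T\), a single maximal coupling is taken on \(\mathcal{C}(0,2T;H)\), with no re-coupling at time \(T\). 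What you describe is two steps of the extension with step \(T\), which gives a bound at time \(2\mathcal{T}\), not the one-step statement. An extra ingredient is needed. The usual device in the Kuksin--Shirikyan scheme is to let the extension evolve the two components with independent noises on every step that starts with \(\lVert u-u'\rVert\) above a threshold, chosen compatibly with the squeezing step of \cref{sec:poly-squeezing}. Then Case~2 genuinely yields \(p_0^2\) and your Case~1 covers the rest. The alternative is to keep the construction and prove the density comparison above, which requires controlling the auxiliary process for far-apart data.
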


By \cref{lem:L2-moment-growth},~\(\Phi(u) :=  1 + \lVert u \rVert^2 \) is a Lyapunov function for the family~\((u, \mathbb{P}_u)\). Hence, by the argument in~\cite[Sect.\ 3]{Shirik2008ExponentialMixingRandomly}, we obtain the recurrence property as follows.
\begin{proposition}\label{lem:exp-recurrence}
  Let~\(N \ge 1\) be as in \cref{lem:recurrence-for-mixing-extension} and~\(h\) satisfy \cref{eq:span-assumption}. For all~\(d > 0\), there exist~\(\delta, \mathcal{T},C\) such that the mixing extension constructed using the time step~\(\mathcal{T}\) satisfies
  \begin{equation*}
  \mathbb{E}_{\mathbf{u}} e^{ \delta \tau_d} \le C \bigl(  1 + \lVert u \rVert^2 + \lVert u' \rVert^2   \bigr).
  \end{equation*}
\end{proposition}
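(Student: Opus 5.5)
The plan is to combine a Lyapunov drift condition for the coupled chain, sampled at the grid times $k\mathcal{T}$, with the uniform hitting probability of \cref{lem:recurrence-for-mixing-extension}. A standard Kuksin--Shirikyan regeneration argument then turns these two facts into exponential moments of the hitting time. Here $\tau_d$ is the first time the pair $\mathbf{u}_t=(\tilde u_t,\tilde u'_t)$ enters $\mathbf{B}=B_H(0,d)^2$. It suffices to control the discrete hitting time $\tau_d'=\min\{k\mathcal{T}:\mathbf{u}(k\mathcal{T})\in\mathbf{B}\}\ge\tau_d$.

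\textbf{Step 1 (drift).} Set $\mathbf{\Phi}(\mathbf{u})=1+\lVert u\rVert^2+\lVert u'\rVert^2$. The extension is built from couplings whose marginals are exact copies of the original process. Applying \cref{lem:L2-moment-growth} with $p=1$ to each marginal therefore gives
\begin{equation*}
\mathbf{E}_{\mathbf{u}}\mathbf{\Phi}(\mathbf{u}(\mathcal{T}))\le e^{-a\mathcal{T}}\mathbf{\Phi}(\mathbf{u})+2C_{a,1,\mathcal{B}_0}.
\end{equation*}
Choose $R$ large enough that $e^{-a\mathcal{T}}\mathbf{\Phi}+2C\le \frac12\mathbf{\Phi}$ whenever $\mathbf{u}\notin B_H(0,R)^2$. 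This uses only $\mathcal{T}\ge 1$, so $R$ can be fixed before $\mathcal{T}$. With $R$ and $d$ fixed, \cref{lem:recurrence-for-mixing-extension} then supplies $\mathcal{T}$ and $p>0$ such that $\mathbf{P}_{\mathbf{u}}(\mathbf{u}(\mathcal{T})\in\mathbf{B})\ge p$ for all $\mathbf{u}\in B_H(0,R)^2$. By the Markov property of the discrete chain $\mathbf{u}(k\mathcal{T})$, this is a one-step lower bound.

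\textbf{Step 2 (exponential moment of return to the large ball).} Let $\sigma_R$ be the first grid time $k\ge 0$ at which $\mathbf{u}(k\mathcal{T})\in B_H(0,R)^2$. The process $2^{k\wedge\sigma_R}\mathbf{\Phi}(\mathbf{u}((k\wedge\sigma_R)\mathcal{T}))$ is a supermartingale, so $\mathbf{E}_{\mathbf{u}}2^{\sigma_R}\le \mathbf{\Phi}(\mathbf{u})$. Applying the drift bound once more gives $\mathbf{E}_{\mathbf{u}}\mathbf{\Phi}(\mathbf{u}(\mathcal{T}))\le C\mathbf{\Phi}(\mathbf{u})$. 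Combining these, the successive return times $\sigma^{(j)}$ to $B_H(0,R)^2$, each shifted by one step, satisfy $\mathbf{E}[2^{\sigma^{(j+1)}-\sigma^{(j)}-1}\mid\mathcal{G}_{\sigma^{(j)}}]\le C_R$. This uses that $\mathbf{\Phi}\le 1+2R^2$ at each return.

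\textbf{Step 3 (geometric trials).} At every return, the next step lands in $\mathbf{B}$ with probability at least $p$, independently of the past by the strong Markov property at grid times. Hence the number $J$ of failed trials satisfies $\mathbf{P}(J\ge j)\le(1-p)^j$. Write $\tau_d'/\mathcal{T}\le\sum_{i\le J}(\sigma^{(i+1)}-\sigma^{(i)})+\sigma_R+1$ and use Hölder's inequality. For $\delta'$ small, $\mathbf{E}2^{\delta'\cdot(\text{increment})}\le C_R^{\delta'}$ is close to $1$, so $C_R^{\delta'}(1-p)<1$ makes the resulting geometric series summable. The outcome is
\begin{equation*}
\mathbf{E}_{\mathbf{u}}e^{\delta\tau_d}\le C\,\mathbf{E}_{\mathbf{u}}2^{\delta''\sigma_R}\le C\mathbf{\Phi}(\mathbf{u}),
\end{equation*}
by Jensen's inequality applied to Step 2.

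The main obstacle is the order of the constants. The drift radius $R$ must not depend on $\mathcal{T}$, while $\mathcal{T}$ and $p$ from \cref{lem:recurrence-for-mixing-extension} depend on $R$. The uniform $e^{-a t}$ decay in \cref{lem:L2-moment-growth} resolves this, because it works for every $\mathcal{T}\ge1$ simultaneously. Beyond that, one only needs to check that the extension is Markov at grid times, which holds by its construction from independent copies of the coupling.
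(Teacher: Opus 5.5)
Your proposal is correct and follows the paper's own route. The paper only notes that \(1+\lVert u\rVert^2\) is a Lyapunov function by \cref{lem:L2-moment-growth}, and then appeals to Shirikyan's regeneration argument combined with \cref{lem:recurrence-for-mixing-extension}; you carry out exactly that argument in detail.

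One minor fix concerns Step~1. The damping \(a>0\) may be small, and \(\mathcal{T}\ge1\) alone then does not give the contraction factor \(\frac12\), since that would need \(e^{-a}<\frac12\). Replace \(\frac12\) by \(\gamma=\frac{1+e^{-a}}{2}<1\). This factor is attained outside \(B_H(0,R)^2\) for some \(R\) depending only on \(a\) and \(\mathcal{B}_0\). Then use the supermartingale \(\gamma^{-(k\wedge\sigma_R)}\mathbf{\Phi}\) in Step~2 in place of \(2^{k\wedge\sigma_R}\mathbf{\Phi}\); everything else goes through unchanged.
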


\subsection{Polynomial squeezing}\label{sec:poly-squeezing}

Let~\(\mathcal{T}\) be chosen at the end of \cref{sec:recurrence}, and suppose that we have constructed the extension using~\(\mathcal{T}\) as outlined in \cref{sec:constr-mixing-extens}.
For the process in the extension~\((\tilde{u}, \tilde{u}')\), we define~\(\sigma:=  \tilde{\tau} \wedge \sigma_1\) where
\begin{equation*}
 \tilde{\tau} :=  \tau^{\tilde{u}} \wedge \tau^{ \tilde{u}'}, \quad \sigma_1 := \inf_{ t \ge 0 }  \{  \tilde{u}'(t) \neq \tilde{v}(t) \},
\end{equation*}
where~\(\tau^{\tilde{u}}, \tau^{\tilde{u}'}\) are introduced in \cref{sec:stopping-time,sec:esti-aux}, and both depend on constants~\(K \) and~\(\rho\).
\begin{lemma}\label{lem:probability-of-Q}
Let~\(q \in (2,q_{\ast})\). There exist sufficiently large~\(K, \rho > 0\) and sufficiently small~\(\delta > 0\) such that for all~\(\mathbf{u} \in B_H(0,\delta)^2\),
\begin{equation*}
\mathbf{P}_{\mathbf{u}} (\sigma \in [k \mathcal{T}, (k+1)\mathcal{T}]) \le \frac{1}{2 (k+1)^{q/2-1}}, \quad k \ge 0.
\end{equation*}
\end{lemma}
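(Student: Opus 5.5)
We split according to which stopping time realizes \(\sigma=\tilde\tau\wedge\sigma_1\):
\begin{equation*}
\{\sigma\in[k\mathcal{T},(k+1)\mathcal{T}]\}\subset\{k\mathcal{T}\le\tilde\tau<\infty\}\cup\{\sigma_1\in[k\mathcal{T},(k+1)\mathcal{T}],\ \tilde\tau> k\mathcal{T}\}.
\end{equation*}
The two events are treated separately.

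\textbf{First event.} The marginals of the extension satisfy \(\tilde u\stackrel{\mathrm d}{=}u\) and \(\tilde u'\stackrel{\mathrm d}{=}u'\). Hence \cref{lem:growth-estimate-combined-energy}, applied with \(\ell=k\mathcal{T}\) and deterministic initial data \(\lVert u_0\rVert\le\delta\le1\), gives
\begin{equation*}
\mathbf P(k\mathcal{T}\le\tau^{\tilde u}<\infty)\le C_{q,K}\,2\,(\rho+k\mathcal{T})^{-(q/2-1)},
\end{equation*}
and the same bound holds for \(\tilde u'\). Since \(\mathcal{T}\ge1\), we have \(\rho+k\mathcal{T}\ge \rho(k+1)/2\) once \(\rho\ge\mathcal{T}\) (or simply \(\rho+k\mathcal{T}\ge\min(\rho,1)(k+1)\)). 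So this contribution is at most \(C_{q,K}\rho^{-(q/2-1)}(k+1)^{-(q/2-1)}\), and taking \(\rho\) large makes it \(\le\frac18(k+1)^{-(q/2-1)}\). The constant \(K\) is fixed first, large enough for all earlier lemmas to apply; \(\rho\) is chosen next.

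\textbf{Second event.} The event \(\sigma_1\in[k\mathcal{T},(k+1)\mathcal{T}]\) means the maximal coupling fails on the \(k\)-th step, so we condition on \(\mathcal F_{k\mathcal{T}}\). On \(\{\sigma_1>k\mathcal{T},\ \tilde\tau>k\mathcal{T}\}\) the Foia\c{s}--Prodi bound \cref{eq:L2-norm-decay-under-coupling} applies up to \(k\mathcal{T}\). It gives
\begin{equation*}
d_k:=\lVert\tilde u(k\mathcal{T})-\tilde u'(k\mathcal{T})\rVert^2\le \delta^2 e^{-ak\mathcal{T}/2+C_{K,\epsilon}\rho+\epsilon R^{2\bar p}}.
\end{equation*}
Before \(\tilde\tau\) we also have \(R_k^2:=\lVert\tilde u(k\mathcal{T})\rVert^2\vee\lVert\tilde u'(k\mathcal{T})\rVert^2\le (K+1)k\mathcal{T}+K\delta^2+\rho\), which grows only linearly in \(k\). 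By the construction of the extension and the Markov property, the conditional probability of decoupling on step \(k\) equals \(\Delta(\tilde u(k\mathcal{T}),\tilde u'(k\mathcal{T}))\). We bound it with \cref{lem:max-coupling-distance}, using a level \(\rho_k:=\rho(k+1)\) that grows with \(k\) in place of \(\rho\):
\begin{equation*}
\Delta\le C_1(R_k^{2\bar pq}+1)\rho_k^{-q/2+1}+\bigl(\exp(d_k e^{C_2\rho_k+\epsilon R_k^{2\bar p}})-1\bigr)^{1/2}.
\end{equation*}
With \(\epsilon\) small relative to \(a\mathcal{T}\), the exponent in the second term is of order \(-ak\mathcal{T}/2+C(\rho+\rho_k)+\epsilon(Kk\mathcal{T}+\rho)^{\bar p}\). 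This term must decay in \(k\), and that is the main obstacle.

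\textbf{The main obstacle.} The power \(R_k^{2\bar p}\sim k^{\bar p}\) with \(\bar p>1\) defeats the linear decay \(-ak\mathcal{T}\), and \(\rho_k\) growing linearly competes with it as well. My plan is to avoid using \(R_k\) inside \(\Delta\) at all. Instead I would apply the Girsanov estimate \cref{lem:TV-distance-Phi} directly along the whole trajectory from time \(0\), in the style of Kuksin--Shirikyan. There the relevant \(R\) is the initial radius \(\le\delta\), and the bound on \(\int\lVert\mathcal A\rVert^2\) over \([k\mathcal{T},(k+1)\mathcal{T}]\) comes with the factor \(e^{-ak\mathcal{T}/2}\) from \cref{eq:L2-norm-decay-under-coupling}. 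On the event \(\{\tilde\tau=\infty\}\) this gives a decoupling probability at step \(k\) of order \(\delta\, e^{C\rho}e^{-ak\mathcal{T}/4}\). This is exponentially small in \(k\), hence \(\le\frac18(k+1)^{-(q/2-1)}\) once \(\delta\) is small for the fixed \(\rho\). The polynomial terms \(C_1(\cdots)\rho^{-q/2+1}\) are absorbed into the \(\tilde\tau\)-part already treated in the first event.

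Summing the contributions then gives \(\le\frac12(k+1)^{-(q/2-1)}\). The order of choices is \(K\), then \(\epsilon\) and \(\mathcal{T}\), then \(\rho\), then \(\delta\).
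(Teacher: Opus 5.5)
Your treatment of the second event has a genuine gap. In the extension, decoupling on $[k\mathcal{T},(k+1)\mathcal{T}]$ comes from a fresh maximal coupling started at the current state. Its conditional probability given $\mathcal{F}_{k\mathcal{T}}$ is therefore exactly $\Delta(\tilde u(k\mathcal{T}),\tilde u'(k\mathcal{T}))$, and what must be bounded is $\mathbf{E}_{\mathbf{u}}\mathbb{I}_{\{\sigma\ge k\mathcal{T}\}}\Delta(\tilde u(k\mathcal{T}),\tilde u'(k\mathcal{T}))$. This is a one-step quantity that depends only on the current state. \cref{lem:TV-distance-Phi,lem:max-coupling-distance} apply to it only with $d=d_k$, $R=R_k$, and stopping times whose thresholds contain $K\lVert\tilde u(k\mathcal{T})\rVert^{2p}$.

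A Girsanov estimate ``along the whole trajectory from time $0$'' with $R\le\delta$ controls a different object, namely a globally defined auxiliary process. It does not control the failure probability of the step-$k$ maximal coupling. Also, $\{\tilde\tau=\infty\}$ is not $\mathcal{F}_{k\mathcal{T}}$-measurable, so you cannot restrict to it and then invoke $\Delta$.

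The polynomial term $C_1(R_k^{2\bar pq}+1)\rho_k^{-q/2+1}$ in the bound for $\Delta$ comes from $\mathbb{P}(\tau<\infty)$ for the fresh one-step processes, not from an event of the coupled trajectory. Nothing lets you ``absorb it into the first event'', and with your pathwise bound $R_k^{2\bar pq}\lesssim k^{\bar pq}$ it is not even small. To make your route rigorous you would have to:
redefine the one-step stopping times through the remaining global budget;
re-prove \cref{lem:TV-distance-Phi,lem:tau-v-relation,lem:max-coupling-distance} for them;
show via the Markov property that their failure probabilities sum to $\tilde\tau$-probabilities.
None of this is in the proposal.

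The missing ideas are the ones the paper uses against exactly the obstacles you identified.
\begin{itemize}
\item It applies \cref{lem:max-coupling-distance} with level $\rho+\mu k\mathcal{T}$ for a \emph{small} slope $\mu$. Your $\rho_k=\rho(k+1)$ has slope $\rho$, which is why it fights the decay.
\item It treats the polynomial term in expectation rather than pathwise. By \cref{lem:L2-moment-growth}, $\mathbf{E}_{\mathbf{u}}\lVert\tilde u(k\mathcal{T})\rVert^{2\bar pq}\le e^{-a\bar pqk\mathcal{T}}\lVert u\rVert^{2\bar pq}+C$ uniformly in $k$, so that term is $\lesssim(\rho+\mu k\mathcal{T})^{-q/2+1}$.
\item In the exponential term it combines the decay \cref{eq:16} with the linear bound \cref{eq:17} on $R_k^{2\bar p}$. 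Then $\epsilon R_k^{2\bar p}$ contributes only $\epsilon(K+1)k\mathcal{T}$, and choosing $\epsilon$ and then $\mu$ small makes the coefficient of $k\mathcal{T}$ in the exponent at most $-a/4$.
\end{itemize}
Your worry that only $R_k^2$ grows linearly is justified for $\kappa>0$ under the literal definition of $\tau^u$, since there $r<\bar p$. The cure is to add $\tau^u_{\bar p}$ to $\tau^u$, which \cref{lem:L2-energy-estimate} permits; it is not to replace the one-step Girsanov argument.

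A smaller slip in the first event: $\rho+k\mathcal{T}\ge\rho(k+1)/2$ holds when $\rho\le 2\mathcal{T}$, not when $\rho\ge\mathcal{T}$. Uniformly in $k$ one only has $\rho+k\mathcal{T}\ge\min(\rho,\mathcal{T})(k+1)$, so the constant $C_{q,K}$ must be beaten by both $\rho$ and $\mathcal{T}$.
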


\begin{proof}
Let~\(\mathcal{Q}_k  = \{  \sigma \in [k \mathcal{T}, (k+1) \mathcal{T}] \}\), \(k \ge 0\). Conditioning on~\(\mathcal{F}_{k \mathcal{T}}\), by the Markov property of the dynamics at~\(\mathcal{T} \mathbb{Z}\), we have
\begin{equation*}
\begin{split}
\mathbf{P}_{\mathbf{u}} ( \mathcal{Q}_k ) &= \mathbf{E}_{\mathbf{u}} \Bigl(  \mathbb{I}_{\{  \sigma \ge k \mathcal{T} \}} \mathbf{E} [ \mathcal{Q}_k | \mathcal{F}_{k \mathcal{T}}]  \Bigr) = \mathbf{E}_{\mathbf{u}} \bigl(  \mathbb{I}_{\{ \sigma \ge k \mathcal{T} \}} \mathbf{E} [  \mathcal{Q}_k | \mathbf{u}(k \mathcal{T}) ] \bigr)  \\
& \le \mathbf{P}_{\mathbf{u}} (k \mathcal{T} \le \tilde{\tau} < \infty ) + \mathbf{E}_{\mathbf{u}} \mathbb{I}_{\{  \sigma \ge k \mathcal{T} \}} \Delta \bigl(  u(k \mathcal{T}), u'(k \mathcal{T}) \bigr) =:  P_1 + P_2.
\end{split}
\end{equation*}
By \cref{lem:growth-estimate-combined-energy}, the first term~\(P_1\) is bounded by~\(C_{q,K}(\delta^{2 \bar{p}q}+1)  (k \mathcal{T}+2)^{-q/2+1} \le \frac{1}{6 (k+1)^{q/2-1}}\) if~\(\delta\) is sufficiently small and~\(\rho\) sufficiently large. For~\(P_2\), by \cref{lem:max-coupling-distance} with~\(\rho\) replaced by~\(\rho + \mu  k \mathcal{T}\) with~\(\mu >0 \) to be specified later,
\begin{equation*}
P_2 \le \mathbf{E}_{\mathbf{u}} \mathbb{I}_{\{  \sigma \ge k \mathcal{T} \}} \cdot \Bigl[C ( 2R^{2\bar{p}q}+ 1) (\rho + \mu k \mathcal{T})^{- q/2+1} +  \Bigl(   \exp \bigl(  d_k^2  e^{ C_{K,\epsilon}( \rho + \mu k \mathcal{T}) + \epsilon R^{2\bar{p}}_{k}} \bigr) - 1 \Bigr)^{1/2}   \Bigr] :=  P_{21} + P_{22},
\end{equation*}
where~\(R_k = \lVert \tilde{u}(k \mathcal{T}) \rVert \vee  \lVert \tilde{u}'(k \mathcal{T}) \rVert  \) and~\(d_k = \lVert \tilde{u}(k \mathcal{T}) - \tilde{u}'(k \mathcal{T}) \rVert \).
For~\(P_{21}\), we have
\begin{equation*}
P_{21} \le C \frac{\mathbf{E}_{\mathbf{u}} \lVert \tilde{u}(k \mathcal{T}) \rVert^{2\bar{p}q}+ \lVert \tilde{u}'(k \mathcal{T}) \rVert^{2\bar{p}q} + 1  }{ (\rho + \mu k \mathcal{T})^{q/2-1}} \le C_q \frac{ e^{ - a \bar{p} q k \mathcal{T} } 2R^{2\bar{p}q} + 1}{(\rho + \mu k \mathcal{T})^{q/2-1}} \le \frac{1}{6(k+1)^{q/2-1}}
\end{equation*}
using \cref{lem:L2-moment-growth}.
By \cref{eq:L2-norm-decay-under-coupling}, for some constant~\(C'_{K,\epsilon} > 0\) we have
\begin{equation}\label{eq:16}
d_k^2 \le d^2 \exp \Bigl(  - \frac{a}{2}k \mathcal{T} +  C'_{K,\epsilon}( \rho + \mu k \mathcal{T})+ \epsilon R^{2\bar{p}}/2  \Bigr),
\end{equation}
and by the definition of~\(\sigma\),
\begin{equation}\label{eq:17}
  R^{2\bar{p}}_k  \le (K+1) k \mathcal{T} + 2K R^{2\bar{p}} + 2 \rho.
\end{equation}
Using \cref{eq:16,eq:17}, we have~\(P_{22} \le (\exp (d^2 e^{\mathcal{A}}) -1)^{1/2}\) where
\begin{equation*}
  \begin{split}
    \mathcal{A} &\le
        \Bigl\{    - \frac{a}{2}k \mathcal{T} +  C_{K,\epsilon}'( \rho + \mu k \mathcal{T})+ \epsilon R^{2\bar{p}}/2 \Bigr\}
        + \Bigl\{ C_{K,\epsilon} (\rho + \mu  k \mathcal{T}) + \epsilon \bigl(  (K+1) k \mathcal{T} + 2K R^{2\bar{p}} + 2 \rho \bigr)  \Bigr\} \\
      &
        =:  - \frac{a}{2} k \mathcal{T} + \mathcal{A}_1 k \mathcal{T} + \mathcal{A}_2 \rho + \mathcal{A}_3 R^{2 \bar{p}}.
  \end{split}
\end{equation*}
where~\(\mathcal{A}_i\) are expressions in~\(C_{K,\epsilon}, C_{K,\epsilon}', K, \epsilon\) and~\(\mu \). In particular,
\begin{equation*}
\mathcal{A}_1 = \mu  ( C_{K,\epsilon} + C_{K,\epsilon}' ) + \epsilon (K + 1).
\end{equation*}
By first choosing~\(\epsilon\) small, and then~\(\mu \) sufficiently small, we can guarantee that~\(\mathcal{A}_1 \le a/4\), and hence
\begin{equation*}
  P_{22} \le \Bigl[  \exp \Bigl(  d^2 e^{ - \frac{a}{4} k \mathcal{T} + \mathcal{C}(\rho + R^{2 \bar{p}}  )} \Bigr)  - 1\Bigr]^{1/2}
  \le \Bigl[  \exp \Bigl(  4 \delta^2 e^{ - \frac{a}{4} k \mathcal{T} + \mathcal{C}(\rho + \delta^{2\bar{p}})}  \Bigr) - 1 \Bigr]^{1/2}.
\end{equation*}
By choosing~\(\delta \) sufficiently small, we have~\(P_{22} \le \frac{1}{6(k+1)^{q/2-1}}\), and this completes the proof.
\end{proof}

\bigskip
Finally, let us verify \cref{eq:abstract-squeezing} with~\(B = B_H(0,\delta)\) and~\(\sigma\) constructed using sufficiently large~\(K\) and~\(\rho\).

\begin{proof}[Derivation of \cref{eq:abstract-squeezing} from \cref{lem:probability-of-Q}]

  Note that~\(g(r) = 1+r^2\). Let us verify all the inequalities in \cref{eq:abstract-squeezing} for any~\(p_0 \in (1, q^*/2-2)\) with~\(q^*> 6\).
Indeed, for any fixed~\(p_0 \in (1, q^*/2-2)\), there exists some \(q\in (6,q^*)\) such that \(p_0 \in (1, q/2-2)\).

By \cref{eq:16,eq:17}, for~\(t \le \sigma\),~\(\lVert \tilde{u}(t) - \tilde{u}'(t) \rVert  \) decays exponentially, so the first inequality in \cref{eq:abstract-squeezing} holds for all~\(p_0\). By \cref{lem:probability-of-Q}, we have
  \begin{equation*}
  \mathbf{P}_{\mathbf{u}} ( \sigma = \infty) \ge 1 - \sum_{k = 0}^{\infty} \mathbf{P}_{\mathbf{u}} \Bigl\{  \sigma \in [k \mathcal{T}, (k+1)\mathcal{T}] \Bigr\} \ge 1 -\frac{1}{2} \sum_{k = 0}^{\infty} \frac{1}{(1+k)^{q/2 - 1}} \ge 1 - \frac{\pi^2}{12} >  0.
\end{equation*}
This is the second inequality in \cref{eq:abstract-squeezing}. The third inequality follows from
\begin{equation*}
  \begin{split}
    \mathbf{E}_{\mathbf{u}} ( \mathbb{I}_{ \{  \sigma < \infty \} } \sigma^{p_0} )
    &\le \sum_{k = 0}^{\infty} \mathbf{E}_{\mathbf{u}} \Bigl( \mathbb{I}_{\{  \sigma \in [k\mathcal{T}, (k+1) \mathcal{T}] \}} \sigma^{p_0} \Bigr) \\
    &\lesssim_{q,\mathcal{T}} \sum_{k = 0}^{\infty} (k+1)^{p_0} \cdot \frac{1}{2(1+k)^{q/2 - 1}} \lesssim_{q, \mathcal{T}} \sum_{k = 0}^{\infty} \frac{1}{(k+1)^{ q/2 - p_0 - 1} }< \infty.
  \end{split}
\end{equation*}
The fourth inequality follows from
\begin{equation*}
\mathbf{E}_{\mathbf{u}} \Bigl(  \mathbb{I}_{\{  \sigma < \infty \}} \bigl(  \lVert \tilde{u}(\sigma) \rVert^{2p_0} + \lVert \tilde{u}'(\sigma) \rVert^{2 p_0}   \bigr) \Bigr) \le C_{p_0 } \bigl(  \lVert \tilde{u}(0) \rVert^{2p_0} + \lVert \tilde{u}'(0) \rVert^{2p_0} +   \mathbf{E}_{\mathbf{u} }  \bigl(  \mathbb{I}_{ \{ \sigma < \infty \}} ( 1 + \sigma^{p_0} ) \bigr)   \bigr) \le C_{p_0}.
\end{equation*}
This finishes the proof.
\end{proof}

\appendix
\section{Auxiliary results}\label{sec:appendix}
\subsection{Functional inequalities}\label{sec:funct-ineq}
Commutator estimate.
\begin{lemma}\label{lem:commutator}
For~\(\kappa > 0\),
\begin{equation}\label{eq:commutator-s-ge-1-endpt}
\lVert D^{1+\kappa} (fg) - f D^{1+\kappa} g \rVert_{L^2} \lesssim \lVert \partial_x f \rVert_{L^{\infty}} \lVert D^{\kappa} g \rVert_{L^2} + \lVert D^{1+\kappa} f \rVert_{L^{\infty}} \lVert g \rVert_{L^2}.
\end{equation}
\end{lemma}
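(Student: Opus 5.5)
We prove \cref{eq:commutator-s-ge-1-endpt} on the Fourier side, by a paraproduct (Littlewood--Paley) decomposition of \(fg\). Write \(s = 1+\kappa\) and \(\widehat{fg}(\xi) = \int \hat f(\xi-\eta)\hat g(\eta)\,d\eta\). The commutator then has symbol
\begin{equation*}
\bigl[D^{s}(fg) - fD^{s}g\bigr]^{\wedge}(\xi) = \int \bigl(|\xi|^{s} - |\eta|^{s}\bigr)\hat f(\xi-\eta)\hat g(\eta)\,d\eta .
\end{equation*}
Using dyadic projections \(\Delta_j\) and \(S_j=\sum_{k<j-2}\Delta_k\), we split \(fg = \sum_j S_{j}f\,\Delta_j g + \sum_j \Delta_j f\, S_{j} g + \sum_{|j-k|\le 2}\Delta_j f\,\Delta_k g\). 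The commutator splits the same way into three pieces: low--high (\(f\) at low frequency), high--low, and high--high.

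\emph{Low--high piece.} Here \(|\xi-\eta|\ll|\eta|\sim|\xi|\sim 2^j\). By the mean value theorem,
\begin{equation*}
|\xi|^{s}-|\eta|^{s} = s\int_0^1 |\eta+\theta(\xi-\eta)|^{s-2}(\eta+\theta(\xi-\eta))\,d\theta\,(\xi-\eta),
\end{equation*}
which is \((\xi-\eta)\) times a smooth symbol of order \(s-1=\kappa\) in \(\eta\) on this region. Expanding it as a Coifman--Meyer bilinear multiplier gives, for each \(j\), the bound \(\lVert \partial_x S_j f\rVert_{L^\infty}\lVert D^{\kappa}\Delta_j g\rVert\) up to a uniform constant. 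Almost orthogonality of the outputs, which are frequency-localized at \(2^j\), lets us sum in \(j\) to get \(\lVert\partial_x f\rVert_{L^\infty}\lVert D^\kappa g\rVert\).

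\emph{High--low piece.} The two terms are estimated separately. For the first, \(\lVert D^s(\Delta_j f S_j g)\rVert \lesssim 2^{js}\lVert\Delta_j f\rVert_{L^\infty}\lVert S_jg\rVert\). Bernstein gives \(2^{js}\lVert\Delta_jf\rVert_{L^\infty}\lesssim \lVert D^{s}f\rVert_{L^\infty}\); more precisely, \(2^{js}\Delta_j f = \tilde\Delta_j D^s f\) with \(\tilde\Delta_j\) uniformly bounded on \(L^\infty\). Orthogonality in \(j\) then needs an \(\ell^2\)-sum over the \(g\)-side, and the clean way to get it is to bound by \(\lVert D^s f\rVert_{L^\infty}\sum_j\lVert\Delta_{\ge j-2} g\rVert\)-type square functions. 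The second term is \(\Delta_j f\, D^s S_j g\), with \(\lVert D^s S_j g\rVert_{L^2}\lesssim 2^{j}\lVert D^{\kappa}S_j g\rVert\), which we pair with \(\lVert\Delta_j f\rVert_{L^\infty}\lesssim 2^{-j}\lVert\partial_x f\rVert_{L^\infty}\). The spatial outputs sit at frequency \(\sim 2^j\), so the squares sum.

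\emph{High--high piece.} The output frequency is \(\lesssim 2^j\), so \(D^s\) costs at most \(2^{js}\) and \(|\eta|^s\lesssim 2^{js}\) as well. We bound this piece by \(\sum_j 2^{js}\lVert\Delta_j f\rVert_{L^\infty}\lVert\tilde\Delta_j g\rVert\lesssim \lVert D^sf\rVert_{L^\infty}\sum_j\lVert\tilde\Delta_j g\rVert\).

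The main obstacle is the summation in the last two pieces. An \(L^\infty\) bound on \(f\) does not square-sum, and \(\sum_j\lVert\tilde\Delta_j g\rVert\) is an \(\ell^1\)-type quantity, not \(\lVert g\rVert\). To fix this, I would allocate the derivatives unevenly, using \(\lVert D^{s}f\rVert_{L^\infty}\) or \(\lVert\partial_x f\rVert_{L^\infty}\) with a gain of \(2^{-\epsilon j}\) for high \(j\), and control low \(j\) via \(\lVert D^\kappa g\rVert\). Alternatively, I would invoke the known Kato--Ponce/Kenig--Ponce--Vega endpoint commutator estimates with \(L^\infty\times L^2\) exponents, which are established in the literature (e.g.\ Li's sharp commutator estimates), and cite them. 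Given the difficulty of the endpoint \(\ell^1\) summation, my fallback plan is to reduce the lemma to that cited result after the decomposition above, checking that the \(s=1+\kappa>1\) case yields exactly the two terms claimed.
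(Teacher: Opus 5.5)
\textbf{Your paraproduct route has a genuine gap, and the fix you propose does not close it.} In the high--low piece, \(\sum_j D^{1+\kappa}(\Delta_j f\,S_j g)\) is, up to harmless frequency-localized multipliers, \(\sum_j \tilde\Delta_j(D^{1+\kappa}f)\,S_j g\). In the high--high piece, the term \(\sum_{|j-k|\le 2}\Delta_j f\,D^{1+\kappa}\Delta_k g\) is likewise \(\sum_j \tilde\Delta_j(D^{1+\kappa}f)\,\tilde\Delta_j g\). Neither carries any decay in \(j\).

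You only have \(\lVert\partial_x f\rVert_{L^\infty}\) and \(\lVert D^{1+\kappa}f\rVert_{L^\infty}\), so the best pointwise dyadic bound is \(\lVert\Delta_j f\rVert_{L^\infty}\lesssim\min\bigl(2^{-j}\lVert\partial_x f\rVert_{L^\infty},\,2^{-j(1+\kappa)}\lVert D^{1+\kappa}f\rVert_{L^\infty}\bigr)\). In the first piece this leaves, for large \(j\), terms \(\lVert D^{1+\kappa}f\rVert_{L^\infty}^2\lVert S_j g\rVert^2\to\lVert D^{1+\kappa}f\rVert_{L^\infty}^2\lVert g\rVert^2\) to be square-summed, which diverges. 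In the second piece it leaves an \(\ell^1\)-sum of \(\lVert\tilde\Delta_j g\rVert\). A gain \(2^{-\epsilon j}\) would require \(f\in B^{1+\kappa+\epsilon}_{\infty,\infty}\), which is not assumed.

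The missing ingredient is that paraproducts and resonant products with an \(L^\infty\) (indeed \(BMO\)) symbol are bounded on \(L^2\). This is the Carleson-measure estimate \(\sum_j\int|\Delta_j F|^2|S_j g|^2\,dx\lesssim\lVert F\rVert_{BMO}^2\lVert g\rVert^2\), or equivalently the \(L^\infty\times L^2\to L^2\) Coifman--Meyer theorem. It must be applied to the whole sum with inputs \((D^{1+\kappa}f,g)\), not term by term in \(j\). With it, both pieces are \(\lesssim\lVert D^{1+\kappa}f\rVert_{L^\infty}\lVert g\rVert\) and your decomposition closes.

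The remaining terms do sum, but only if you keep the dyadic structure. For \(\Delta_j f\,D^{1+\kappa}S_j g\), expand \(S_j g=\sum_{k<j-2}\Delta_k g\) and use the factor \(2^{-(j-k)}\). Your bound \(\lVert D^{1+\kappa}S_j g\rVert\le 2^j\lVert D^\kappa S_j g\rVert\) followed by summing squares diverges.

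\textbf{Your fallback is the paper's proof, but it must be made precise.} The classical Kato--Ponce commutator estimate gives \(\lVert\partial_x f\rVert_{L^\infty}\lVert D^\kappa g\rVert+\lVert D^{1+\kappa}f\rVert\,\lVert g\rVert_{L^\infty}\). That puts the \(L^\infty\) on the wrong factor, so it does not yield the lemma. You name the right source, though. The paper uses Li's refined fractional Leibniz rule \cite[Theorem 1.2, (1.6)]{Li2019KatoPonceFractional} with \(p=p_2=2\), \(p_1=\infty\), \(s_1=1\), \(s_2=\kappa\):
\begin{equation*}
\lVert D^{1+\kappa}(fg)-fD^{1+\kappa}g-\partial_x f\,\tilde D^{\kappa}g-g\,D^{1+\kappa}f\rVert_{L^2}\lesssim\lVert Df\rVert_{BMO}\lVert D^{\kappa}g\rVert_{L^2},
\end{equation*}
where \(\tilde D^{\kappa}\) has symbol \(i^{-1}(1+\kappa)|\xi|^{\kappa}\mathop{\mathrm{sgn}}(\xi)\). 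The extra terms are then handled directly:
\begin{itemize}
\item \(\lVert\partial_x f\,\tilde D^\kappa g\rVert\le(1+\kappa)\lVert\partial_x f\rVert_{L^\infty}\lVert D^\kappa g\rVert\) by Plancherel;
\item \(\lVert g\,D^{1+\kappa}f\rVert\le\lVert D^{1+\kappa}f\rVert_{L^\infty}\lVert g\rVert\);
\item \(\lVert Df\rVert_{BMO}=\lVert\mathcal H\partial_x f\rVert_{BMO}\lesssim\lVert\partial_x f\rVert_{L^\infty}\), since \(\mathcal H\) maps \(L^\infty\) to \(BMO\).
\end{itemize}
So the term \(\lVert D^{1+\kappa}f\rVert_{L^\infty}\lVert g\rVert\) comes from the explicitly subtracted \(g\,D^{1+\kappa}f\), not from a remainder. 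The \(BMO\)-paraproduct bound your argument lacks is built into Li's theorem. Naming this statement and these exponents is exactly the ``check'' your last sentence defers.
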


\begin{proof}
By the fractional Leibniz rule in~\cite[Theorem 1.2 (1.6)]{Li2019KatoPonceFractional} with~\(p = p_2 = 2\), \(p_1 = \infty\), \(s_1 = 1\) and~\(s_2 = \kappa\), we have
\begin{equation*}
 \lVert D^{1+\kappa} (fg) - f D^{1+\kappa} g - \partial_x f \tilde{D}^{\kappa} g - g D^{1+\kappa} f  \rVert_{L^2} \lesssim \lVert D f \rVert_{BMO} \lVert D^{\kappa} g \rVert_{L^2},
\end{equation*}
where
\begin{equation*}
(\tilde{D}^{\kappa} h)^{\wedge} (\xi) :=  i^{-1} (1+\kappa) \lvert \xi \rvert^{\kappa} \mathop{\mathrm{sgn}}(\xi) \cdot \hat{h}(\xi).
\end{equation*}
Note that~\(\lVert \tilde{D}^{\kappa} h \rVert_{L^2} = (1+\kappa) \lVert D^{\kappa} h \rVert_{L^2}\) by Plancherel equality.

We have
\begin{equation*}
\lVert \partial_x f \cdot \tilde{D}^{\kappa} g \rVert_{L^2} \le \lVert \partial_x f \rVert_{L^{\infty}} \lVert  \tilde{D}^{\kappa} g \rVert_{L^2}     = (1+\kappa) \lVert \partial_x  f\rVert_{L^{\infty}} \lVert D^{\kappa} g \rVert_{L^2},
\end{equation*}
and by the boundedness of Hilbert transform~\(\mathcal{H}: L^{\infty} \to BMO\),
\begin{equation*}
 \lVert D f \rVert_{BMO} = \lVert \mathcal{H} (\partial_x f)  \rVert_{BMO} \lesssim  \lVert \partial_x f \rVert_{L^{\infty}}.
\end{equation*}
The conclusion follows.
\end{proof}

Kato--Ponce inequality (or fractional Leibniz rule)
\begin{lemma}{\cite[Theorem 1]{GrafakOh2014KatoPonceInequality}}\label{lem:kato-ponce}
For~\(s \in (0,1)\) and~\(1/p = 1/p_1 + 1/q_1 = 1/p_2 +1/q_2\), \(p,p_1,p_2,q_1, q_2 \in  (1,\infty\Bigr]\),
\begin{equation*}
\lVert D^s (fg) \rVert_{L^p} \lesssim \lVert D^s f \rVert_{L^{p_1}} \lVert g \rVert_{L^{q_1}} +  \lVert f \rVert_{L^{p_2}} \lVert D^{s}g \rVert_{L^{q_2}}.
\end{equation*}
\end{lemma}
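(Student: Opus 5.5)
This is the Kato--Ponce inequality on \(\mathbb{R}\) for \(s\in(0,1)\). I would prove it via Littlewood--Paley theory and Bony's paraproduct decomposition. Let \(\Delta_j\) be the Littlewood--Paley projections and \(S_j=\sum_{k<j-2}\Delta_k\). Write
\begin{equation*}
fg = \sum_j S_j f\,\Delta_j g + \sum_j \Delta_j f\, S_j g + \sum_{\lvert j-k\rvert\le 2}\Delta_j f\,\Delta_k g =: \Pi_1 + \Pi_2 + \Pi_3 .
\end{equation*}
The goal is to bound \(\lVert D^s \Pi_i\rVert_{L^p}\) separately. \(\Pi_1\) and \(\Pi_2\) are low--high paraproducts, so \(D^s\) can be moved onto the high-frequency factor. \(\Pi_3\) is the resonant high--high term, and it uses \(s>0\).

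\textbf{Step 1 (the term \(\Pi_1\)).} Each summand \(S_jf\,\Delta_j g\) has Fourier support in an annulus of size \(2^j\). Hence \(D^s(S_jf\,\Delta_jg)=S_jf\,\tilde\Delta_j D^s g\) up to a harmless multiplier of order zero that is adapted to the annulus. Use the Littlewood--Paley square function characterization of \(L^p\) for \(1<p<\infty\), the pointwise bound \(\lvert S_j f\rvert\lesssim Mf\), and the Fefferman--Stein vector-valued maximal inequality. These give
\begin{equation*}
\lVert D^s\Pi_1\rVert_{L^p}\lesssim \lVert f\rVert_{L^{p_2}}\lVert D^s g\rVert_{L^{q_2}}.
\end{equation*}
When \(p_2=\infty\) the maximal function is simply bounded by \(\lVert f\rVert_{L^\infty}\). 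This step is routine provided \(p<\infty\).

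\textbf{Step 2 (the terms \(\Pi_2\) and \(\Pi_3\)).} The term \(\Pi_2\) is symmetric to \(\Pi_1\) and gives \(\lVert D^sf\rVert_{L^{p_1}}\lVert g\rVert_{L^{q_1}}\). For \(\Pi_3\), the summand \(\Delta_jf\,\Delta_kg\) has Fourier support in a ball of radius \(\sim 2^j\), not in an annulus. I would therefore write
\begin{equation*}
D^s\Pi_3=\sum_\ell \Delta_\ell D^s \sum_{j\ge \ell-3}\Delta_j f\,\tilde\Delta_j g .
\end{equation*}
Then I would use \(\lvert\Delta_\ell D^s h\rvert\lesssim 2^{\ell s}M h\) and the geometric factor \(2^{(\ell-j)s}\). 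Since \(s>0\), this factor is summable over \(j\ge\ell\), and it lets me transfer \(2^{js}\) onto \(\Delta_j f\). Combined with Cauchy--Schwarz in \(j\) and the vector-valued maximal inequality, this yields the bound \(\lVert D^sf\rVert_{L^{p_1}}\lVert g\rVert_{L^{q_1}}\).

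\textbf{Main obstacle.} The hard part is the set of endpoint cases \(p=\infty\) and the cases where some of \(p_i,q_i\) equal \(\infty\). For these, square-function arguments in \(L^\infty\) fail. The natural fallback is to work in \(BMO\)/Triebel--Lizorkin spaces, using \(\dot F^0_{\infty,2}=BMO\) and the Carleson-measure characterization. This is exactly the difficulty treated by Grafakos--Oh (and Bourgain--Li for \(p=\infty\)).

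In the paper the lemma is only applied with \(p=2\), \(p_1=\infty\) and \(q_1,p_2,q_2\) finite or \(\infty\) on the \(g\)-side. So I would prove the case \(1<p<\infty\) by the plan above and cite~\cite[Theorem 1]{GrafakOh2014KatoPonceInequality} for the full range of exponents.
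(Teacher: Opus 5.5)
The paper gives no argument of its own for this lemma. It simply quotes \cite[Theorem 1]{GrafakOh2014KatoPonceInequality}, so your fallback of citing Grafakos--Oh for the full range is exactly the paper's proof. Your remark about \(p=\infty\) is correct and sharper than the paper's citation: Grafakos--Oh's theorem is stated for a finite target exponent, and the case \(p=\infty\) is due to Bourgain--Li. The paper never uses \(p=\infty\), however.

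The self-contained part of your plan overclaims. Step~1 is \emph{not} routine for every \(p<\infty\). If \(q_2=\infty\) (so \(p_2=p\)), your chain of square function, \(\lvert S_jf\rvert\lesssim Mf\), Fefferman--Stein and H\"older ends with
\begin{equation*}
\Bigl\lVert \Bigl(\sum_j 2^{2js}\lvert\Delta_j g\rvert^2\Bigr)^{1/2}\Bigr\rVert_{L^\infty}.
\end{equation*}
This quantity is not controlled by \(\lVert D^s g\rVert_{L^\infty}\), because the Littlewood--Paley square function is unbounded on \(L^\infty\). The same failure occurs for \(\Pi_2\) when \(p_1=\infty\). \(\Pi_3\) is unaffected, since there you can put the square function on whichever factor carries the finite exponent. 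In these endpoint cases you need the Coifman--Meyer paraproduct bound \(L^p\times\mathrm{BMO}\to L^p\), proved via Carleson measures. You would apply it with \(b=D^sg\), using \(\lVert b\rVert_{\mathrm{BMO}}\lesssim\lVert b\rVert_{L^\infty}\).

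This matters because you misread how the lemma is used. In \cref{lem:fractional-product}, and in the bound for \(I_{21}\) in the proof of \cref{lem:FP-estimate}, the paper takes \(p=p_1=p_2=2\) and \(q_1=q_2=\infty\). That is, it needs \(\lVert D^s(fg)\rVert\lesssim\lVert D^sf\rVert\,\lVert g\rVert_{L^\infty}+\lVert f\rVert\,\lVert D^sg\rVert_{L^\infty}\). The \(\Pi_1\) part of this estimate is precisely the endpoint your square-function argument cannot reach. So the part you propose to prove yourself covers none of the cases the paper actually uses, and everything rests on the citation, just as in the paper.
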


As a corollary, we have the following.
\begin{lemma}\label{lem:fractional-product}
for~\(s \in (0,1)\), we have
\begin{equation}\label{eq:Hs-norm-of-product}
\lVert fg \rVert_s \lesssim \lVert f \rVert_s \bigl(  \lVert g \rVert_{L^{\infty}} + \lVert D^s g \rVert_{L^{\infty}}   \bigr) \lesssim \lVert f \rVert_s \lVert g \rVert_{\frac12 + 2s} .
\end{equation}
\end{lemma}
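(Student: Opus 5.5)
The first inequality in \cref{eq:Hs-norm-of-product} will come from \cref{lem:kato-ponce}, and the second from Sobolev embedding on the real line. Since \(\lVert h \rVert_s \asymp \lVert h \rVert + \lVert D^s h \rVert\) for \(s \in (0,1)\), I will bound the two pieces of \(\lVert fg \rVert_s\) separately.

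\textbf{Step 1: the \(L^2\) and \(D^s\) parts.} The \(L^2\) part is immediate:
\[
\lVert fg \rVert \le \lVert f \rVert \, \lVert g \rVert_{L^\infty}.
\]
For the homogeneous part I apply \cref{lem:kato-ponce} with \(p = 2\), \(p_1 = 2\), \(q_1 = \infty\), \(p_2 = 2\), \(q_2 = \infty\), which gives
\[
\lVert D^s(fg) \rVert \lesssim \lVert D^s f \rVert \, \lVert g \rVert_{L^\infty} + \lVert f \rVert \, \lVert D^s g \rVert_{L^\infty}.
\]
Adding the two bounds and using \(\lVert f \rVert + \lVert D^s f \rVert \lesssim \lVert f \rVert_s\) yields the first inequality.

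\textbf{Step 2: reduction of \(\lVert g \rVert_{L^\infty} + \lVert D^s g \rVert_{L^\infty}\) to \(\lVert g \rVert_{\frac12 + 2s}\).} I use the one-dimensional embedding \(H^{\frac12+\epsilon} \hookrightarrow L^\infty\), obtained from Cauchy--Schwarz on the Fourier side:
\[
\lVert h \rVert_{L^\infty} \le \int \lvert \hat h \rvert \, d\xi \le \Bigl( \int (1+\lvert\xi\rvert^2)^{-\frac12-\epsilon} \, d\xi \Bigr)^{1/2} \lVert h \rVert_{\frac12+\epsilon} \lesssim_\epsilon \lVert h \rVert_{\frac12+\epsilon}.
\]
Taking \(h = g\) and \(\epsilon = 2s\) gives \(\lVert g \rVert_{L^\infty} \lesssim \lVert g \rVert_{\frac12+2s}\).

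Taking \(h = D^s g\) and \(\epsilon = s\) gives
\[
\lVert D^s g \rVert_{L^\infty} \lesssim \lVert D^s g \rVert_{\frac12+s} \lesssim \lVert g \rVert_{\frac12+2s},
\]
since \(\lvert\xi\rvert^s (1+\lvert\xi\rvert^2)^{\frac14+\frac s2} \le (1+\lvert\xi\rvert^2)^{\frac14+s}\).

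\textbf{Main obstacle.} The only delicate point is whether \cref{lem:kato-ponce}, as cited, permits the endpoint exponent \(\infty\) on the factor carrying \(D^s\), as in \(\lVert D^s g \rVert_{L^\infty}\). The stated range \((1,\infty]\) includes it, and the Grafakos--Oh theorem allows \(p_1, q_2 = \infty\) when \(p < \infty\). If that endpoint were unavailable, I would avoid it as follows: estimate \(\lVert f \, D^s g \rVert\) by Hölder with \(\lVert f \rVert_{L^r}\) and \(\lVert D^s g \rVert_{L^{r'}}\) for \(r\) close to 2, then use Sobolev embeddings. This costs slightly more regularity, which is absorbed by the margin \(\frac12 + 2s\).
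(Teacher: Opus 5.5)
Your proposal is correct and follows the paper's proof essentially verbatim: you apply Kato--Ponce with \(p=p_1=p_2=2\), \(q_1=q_2=\infty\) for the first inequality, and you spell out the trivial \(L^2\) part that the paper leaves implicit; for the second inequality you use the embedding \(H^{\frac12+\epsilon}\hookrightarrow L^\infty\) together with the multiplier bound \(\lvert\xi\rvert^s(1+\lvert\xi\rvert^2)^{\frac14+\frac s2}\le(1+\lvert\xi\rvert^2)^{\frac14+s}\). Your endpoint worry is moot, since the cited lemma explicitly allows \(q_2=\infty\) when \(p=2<\infty\), so the fallback is not needed.
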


\begin{proof}
The first inequality follows from \cref{lem:kato-ponce} with~\(p = p_1 = p_2 = 2\) and~\(q_1 = q_2 = \infty\). Since~\(H^{\frac12 +s } (\mathbb{R}) \subset L^{\infty}(\mathbb{R})\), we have
\begin{equation*}
\lVert g \rVert_{L^{\infty}} \lesssim  \lVert g \rVert_{\frac12 + 2s}, \quad \lVert D^s g \rVert_{L^{\infty}} \lesssim \lVert D^s g \rVert_{\frac12 + s} =   \lVert (1+ \lvert \xi \rvert^2 )^{\frac{1+2s}{4}} \lvert \xi \rvert^s  \hat{g} \rVert \le \lVert (1+ \lvert \xi \rvert^2 )^{\frac{1+4s}{4}} \hat{g} \rVert = \lVert g \rVert_{\frac12 + 2s},
\end{equation*}
and the second inequality follows.
\end{proof}

\begin{lemma}\label{lem:Lip-sobolev-embedding}
Let~\(\Phi: \mathbb{R} \to \mathbb{R}\) to be a global Lipschitz function and~\(s \in (0,1)\). Then
\begin{equation*}
\lVert \Phi(u) \rVert_{s} \lesssim \lVert \Phi \rVert_{\mathrm{Lip}} \lVert u \rVert_{s}.
\end{equation*}
\end{lemma}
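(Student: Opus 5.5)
The plan is to prove the bound through the Gagliardo (Slobodeckij) characterization of \(H^s(\mathbb{R})\) for \(s\in(0,1)\). For such \(s\) we have the norm equivalence
\begin{equation*}
\lVert f \rVert_s^2 \asymp \lVert f \rVert^2 + [f]_s^2, \qquad [f]_s^2 := \int_{\mathbb{R}}\int_{\mathbb{R}} \frac{\lvert f(x)-f(y)\rvert^2}{\lvert x-y\rvert^{1+2s}}\,dx\,dy ,
\end{equation*}
which follows from Plancherel, since \([f]_s^2 = c_s \int \lvert \xi\rvert^{2s}\lvert \hat f(\xi)\rvert^2\,d\xi\). Working with this form avoids Fourier multipliers altogether: the pointwise Lipschitz property of \(\Phi\) can be inserted directly into the double integral.

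The first step is the seminorm. Because \(\lvert \Phi(u(x))-\Phi(u(y))\rvert \le \lVert \Phi\rVert_{\mathrm{Lip}}\lvert u(x)-u(y)\rvert\) holds pointwise, integrating against the kernel \(\lvert x-y\rvert^{-1-2s}\) gives \([\Phi(u)]_s \le \lVert \Phi\rVert_{\mathrm{Lip}}[u]_s\) immediately.

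The second step is the \(L^2\) part. Here I would use the normalization \(\Phi(0)=0\), which is implicit in the statement (otherwise \(\Phi(u)\) need not lie in \(L^2(\mathbb{R})\)). In every application in the paper this holds or can be arranged: \(\Phi = F'\) with \(\lvert F'(x)\rvert\lesssim \lvert x\rvert^{1+\kappa_1}\), and in practice one replaces \(\Phi\) by \(\Phi-\Phi(0)\). With this normalization, \(\lvert \Phi(u)\rvert \le \lVert\Phi\rVert_{\mathrm{Lip}}\lvert u\rvert\) pointwise, so \(\lVert \Phi(u)\rVert \le \lVert\Phi\rVert_{\mathrm{Lip}}\lVert u\rVert\). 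Adding the two estimates and applying the norm equivalence yields the claim.

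The remaining point is that the paper in fact applies this lemma to \(F'\), which is only locally Lipschitz, with \(u\in L^\infty\). This is handled by applying the lemma to a truncated \(\Phi\) that agrees with \(F'\) on \([-\lVert u\rVert_{L^\infty},\lVert u\rVert_{L^\infty}]\) and is extended by constants outside. Its Lipschitz constant is \(\sup_{\lvert r\rvert\le\lVert u\rVert_{L^\infty}}\lvert F''(r)\rvert\lesssim \lVert u\rVert_{L^\infty}^{\kappa_1}\), which is exactly the factor used earlier. The only genuinely nontrivial ingredient is the norm equivalence, which is standard, so I expect no real obstacle. The main caveats are that the constant blows up as \(s\to 0\) or \(s\to 1\), which is harmless since \(s\) is fixed, and the need to state the condition \(\Phi(0)=0\) explicitly.
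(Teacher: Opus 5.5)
Your proposal is correct and follows the same route as the paper. Both arguments use the equivalence of the Slobodeckij seminorm with the $\dot H^s$ seminorm and insert the pointwise Lipschitz bound $\lvert \Phi(u(x))-\Phi(u(y))\rvert \le \lVert \Phi\rVert_{\mathrm{Lip}}\lvert u(x)-u(y)\rvert$ into the double integral. Your version is slightly more complete than the paper's: the paper's proof controls only the homogeneous seminorm and silently needs $\Phi(0)=0$ for the $L^2$ part, and you state this explicitly (it holds for $\Phi=F'$ under (H1)). You also supply the truncation argument that justifies applying the lemma to the merely locally Lipschitz $F'$.
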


\begin{proof}
Recall that for~\(s \in (0,1)\), the Slobodeckij seminorm
\begin{equation*}
[ u ]_{H^s(\mathbb{R}^n)} =   \Bigl( \iint_{\mathbb{R}^n \times \mathbb{R}^n} \frac{|u(x) - u(y)|^2}{|x-y|^{n+2s}} \,dx\,dy \Bigr)^{1/2}
\end{equation*}
is equivalent to the~\(H^s\)-seminorm~\(\lVert \cdot \rVert_{\dot{H}^s}\). Let~\(L = \lVert \Phi \rVert_{\mathrm{Lip}}\). We have
\begin{equation*}
\lVert \Phi(u) \rVert_{\dot{H}^s} \lesssim [ \Phi(u)]_{H^s}  \le  L [u]_{H^s} \lesssim L \lVert u \rVert_{\dot{H}^s}.
\end{equation*}
The conclusion follows.
\end{proof}

The following deals with~\(H^{1+s}\), which is a corollary of the general result on composition operator norm, see\cite[Theorem 2, Chap 5.3.6]{RunstSickel2011SobolevSpacesFractional}.
\begin{lemma}\label{lem:composition-operator}
  Let \(\Phi \in \mathrm{Lip}_{1+\alpha}  \) with \(\alpha > \kappa' \). Then there exists \(C = C(\Phi,\kappa') \) such that
\begin{equation*}
\lVert \Phi (u) \rVert_{1+\kappa'} \le C \lVert u \rVert^{\alpha}_{L^{\infty}} \lVert u \rVert_{1+\kappa'}.
\end{equation*}
\end{lemma}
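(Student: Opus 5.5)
The plan is to derive the lemma from the general composition estimate in Runst--Sickel \cite[Theorem 2, Chap 5.3.6]{RunstSickel2011SobolevSpacesFractional}. As a check, and to make the \(\alpha\)-dependence explicit, I would also reduce it by hand to the two fractional lemmas already stated.

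First, split the \(H^{1+\kappa'}\) norm as
\begin{equation*}
\lVert \Phi(u) \rVert_{1+\kappa'} \lesssim \lVert \Phi(u) \rVert + \lVert \partial_x \Phi(u) \rVert_{\kappa'}, \qquad \partial_x \Phi(u) = \Phi'(u)\, u_x .
\end{equation*}
The \(L^2\) part is easy. Since \(\Phi \in \mathrm{Lip}_{1+\alpha}\), we have \(\lvert \Phi(r) \rvert \lesssim \lvert r \rvert^{1+\alpha}\). Hence \(\lVert \Phi(u) \rVert \lesssim \lVert u \rVert_{L^\infty}^{\alpha} \lVert u \rVert\).

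For the derivative part, apply the first inequality of \cref{lem:fractional-product} with the \(H^{\kappa'}\) factor on \(u_x\) and the multiplier \(g = \Phi'(u)\):
\begin{equation*}
\lVert \Phi'(u) u_x \rVert_{\kappa'} \lesssim \lVert u_x \rVert_{\kappa'} \bigl( \lVert \Phi'(u) \rVert_{L^\infty} + \lVert D^{\kappa'} \Phi'(u) \rVert_{L^\infty} \bigr).
\end{equation*}
This version needs \(D^{\kappa'}\Phi'(u)\) in \(L^\infty\), which is too strong. So I would instead use \cref{lem:kato-ponce} with the fractional derivative placed on \(\Phi'(u)\) in an \(L^{p}\) space rather than \(L^\infty\):
\begin{equation*}
\lVert D^{\kappa'}(\Phi'(u) u_x) \rVert \lesssim \lVert \Phi'(u) \rVert_{L^\infty} \lVert D^{\kappa'} u_x \rVert + \lVert D^{\kappa'} \Phi'(u) \rVert_{L^{p_2}} \lVert u_x \rVert_{L^{q_2}}.
\end{equation*}
The first term is bounded by \(\lVert u \rVert_{L^\infty}^{\alpha} \lVert u \rVert_{1+\kappa'}\), using \(\lvert \Phi'(r) \rvert \lesssim \lvert r \rvert^{\alpha}\), which follows from the Hölder bound on \(\Phi'\) (I take \(\Phi'(0)=0\), as \(\Phi \in \mathrm{Lip}_{1+\alpha}\) forces).

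The second term is where the hypothesis \(\alpha > \kappa'\) enters, and I expect it to be the main obstacle. Because \(\Phi'\) is only \(\alpha\)-Hölder, the composition \(\Phi'(u)\) cannot be differentiated more than \(\alpha\) times. I would use the Slobodeckij/difference characterization of \(W^{\kappa',p}\), as in the proof of \cref{lem:Lip-sobolev-embedding}. The pointwise bound
\begin{equation*}
\lvert \Phi'(u(x)) - \Phi'(u(y)) \rvert \lesssim \lvert u(x) - u(y) \rvert^{\alpha}
\end{equation*}
turns the Gagliardo integrand for \(\Phi'(u)\) into one for \(u\) with smoothness \(\kappa'/\alpha < 1\). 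This gives
\begin{equation*}
\lVert \Phi'(u) \rVert_{\dot W^{\kappa',p_2}} \lesssim \lVert u \rVert_{\dot W^{\kappa'/\alpha,\, \alpha p_2}}^{\alpha}.
\end{equation*}
It then remains to choose \(p_2, q_2\) with \(1/p_2 + 1/q_2 = 1/2\) and use Gagliardo--Nirenberg interpolation between \(L^\infty\) and \(H^{1+\kappa'}\). The goal is to bound
\begin{equation*}
\lVert u \rVert_{\dot W^{\kappa'/\alpha,\alpha p_2}}^{\alpha} \, \lVert u_x \rVert_{L^{q_2}} \lesssim \lVert u \rVert_{L^\infty}^{\alpha} \lVert u \rVert_{1+\kappa'},
\end{equation*}
and matching scaling exponents shows this balances exactly.

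If the exponent bookkeeping fails in some range, I would fall back on quoting Runst--Sickel directly. Their Theorem 2 in Chap.\ 5.3.6 gives exactly \(\lVert \Phi(u) \rVert_{H^{s}} \lesssim \lVert u \rVert_{L^\infty}^{\alpha}\lVert u \rVert_{H^s}\) for \(\Phi\) with \(\Phi' \in C^{\alpha}\), \(\Phi(0)=0\), and \(1 < s < 1+\alpha\). With \(s = 1+\kappa'\) this is precisely the condition \(\alpha > \kappa'\).
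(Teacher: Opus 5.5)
Your main route, invoking the Runst--Sickel composition theorem with \(s=1+\kappa'\) so that the admissibility condition \(1<s<1+\alpha\) is exactly \(\kappa'<\alpha\), is the paper's proof; the paper offers nothing beyond that citation. Your preliminary steps are correct: the \(L^2\) part, \(\Phi'(0)=0\) forced by \(\lvert\Phi(x)\rvert\lesssim\lvert x\rvert^{1+\alpha}\), and the term \(\lVert\Phi'(u)\rVert_{L^\infty}\lVert D^{\kappa'}u_x\rVert\). When you paraphrase the cited theorem, keep \(\Phi'(0)=0\) (equivalently \(\lvert\Phi'(t)\rvert\lesssim\lvert t\rvert^{\alpha}\)) among the hypotheses. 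Indeed, \(\Phi(t)=t\) has H\"older derivative and \(\Phi(0)=0\), yet it violates the homogeneous bound for small \(u\).

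The by-hand check, however, has a genuine gap. The gap is not in the exponent bookkeeping, which does balance and forces \(p_2=2(1+\kappa')/\kappa'\) and \(q_2=2(1+\kappa')\). The problem is this:
your Gagliardo-difference argument controls the Slobodeckij seminorm of \(\Phi'(u)\) in \(\dot W^{\kappa',p_2}=\dot B^{\kappa'}_{p_2,p_2}\), whereas the Kato--Ponce inequality needs \(\lVert D^{\kappa'}\Phi'(u)\rVert_{L^{p_2}}\), which is the \(\dot F^{\kappa'}_{p_2,2}\) seminorm. These coincide only for \(p_2=2\). For \(p_2>2\) one has \(\dot F^{\kappa'}_{p_2,2}\hookrightarrow\dot B^{\kappa'}_{p_2,p_2}\) strictly, so the inequality you need runs the wrong way.

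Because the scaling is exact, you also cannot buy back the difference with a Slobodeckij exponent \(\kappa''>\kappa'\). With \(1/p_2+1/q_2=1/2\), the Gagliardo--Nirenberg constraints on the two factors then force \(\kappa''\le\kappa'\).

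Repairing the argument means pushing the \(\alpha\)-H\"older modulus through a ball-means-of-differences characterization of the Triebel--Lizorkin norm itself. There, Jensen's inequality turns the ball-means functional of \(\Phi'(u)\) into the \(\alpha\)-th power of a ball-means functional of \(u\) with parameters \((\kappa'/\alpha,\alpha p_2,2\alpha)\). You then need a Gagliardo--Nirenberg-type bound for that functional that holds for arbitrary microscopic index.

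That is precisely the machinery packaged in the Runst--Sickel theorem. So let the citation carry the proof, and present the computation only as a heuristic for why \(\alpha>\kappa'\) is the natural threshold.
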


\medskip

Finally, for convenience we record the following interpolation inequality for Sobolev spaces.
\begin{lemma}\label{lem:sobolev-interpolation}
  Let~\(0 \le s_0 < s_1 \leq s_2 < s_3\) and~\(\alpha_1 + \alpha_2  = \alpha_0 + \alpha_3\).
  Then
  \begin{equation*}
  \lVert f \rVert_{s_1}^{\alpha_1} \lVert f \rVert_{s_2}^{\alpha_2}  \le \lVert f \rVert^{\alpha_0}_{s_0} \lVert f \rVert^{\alpha_3}_{s_3},
  \end{equation*}
  provided that~\(\alpha_1s_1 + \alpha_2 s_2  \le \alpha_0 s_0 + \alpha_3 s_3 \).
\end{lemma}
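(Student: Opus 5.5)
The inequality follows from Fourier/Plancherel plus a comparison of weights. The case \(s_1 = s_2\) is included, and in that case it is plain interpolation. I will read the statement as holding up to a multiplicative constant, i.e.\ with \(\lesssim\), as it is used throughout the paper. The proof has three steps, in this order.

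\textbf{Step 1: reduce to one space.} Set \(s := (\alpha_1 s_1 + \alpha_2 s_2)/(\alpha_1+\alpha_2)\). By Hölder's inequality in frequency,
\[
\lVert f \rVert_{s}^{\alpha_1+\alpha_2} \ge \lVert f \rVert_{s_1}^{\alpha_1} \lVert f \rVert_{s_2}^{\alpha_2}.
\]
Here is why. Write the weight as \((1+\lvert\xi\rvert^2)^{s_i} = \langle\xi\rangle^{2s_i}\) and take the measure \(\lvert\hat f\rvert^2 d\xi\). With exponents \((\alpha_1+\alpha_2)/\alpha_1\) and \((\alpha_1+\alpha_2)/\alpha_2\), this is the log-convexity of \(s \mapsto \log \lVert f\rVert_s^2\). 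Log-convexity gives
\[
\lVert f\rVert_{s} \le \lVert f\rVert_{s_1}^{\theta}\lVert f\rVert_{s_2}^{1-\theta},
\]
which is the wrong direction. So I will not route through Step 1. Instead I go directly: use log-convexity on the bracket \([s_0,s_3]\) for each factor.

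\textbf{Step 2: interpolate each factor on \([s_0,s_3]\).} For \(i=1,2\), write \(s_i = (1-\theta_i)s_0 + \theta_i s_3\). Log-convexity (Hölder on the Fourier side) gives
\[
\lVert f\rVert_{s_i} \le \lVert f\rVert_{s_0}^{1-\theta_i}\lVert f\rVert_{s_3}^{\theta_i}.
\]
Multiplying the two bounds yields
\[
\lVert f\rVert_{s_1}^{\alpha_1}\lVert f\rVert_{s_2}^{\alpha_2} \le \lVert f\rVert_{s_0}^{A_0}\lVert f\rVert_{s_3}^{A_3},
\]
where \(A_3 = \alpha_1\theta_1+\alpha_2\theta_2\) and \(A_0 = \alpha_1+\alpha_2 - A_3\). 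By construction \(A_0 s_0 + A_3 s_3 = \alpha_1 s_1 + \alpha_2 s_2\), and \(A_0 + A_3 = \alpha_0+\alpha_3\).

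\textbf{Step 3: compare exponents (the main obstacle).} The hypothesis \(\alpha_1 s_1+\alpha_2 s_2 \le \alpha_0 s_0+\alpha_3 s_3\) together with the equal total mass gives \((A_3-\alpha_3)(s_3-s_0)\le 0\), so \(A_3\le\alpha_3\) and hence \(A_0\ge\alpha_0\). It remains to show
\[
\lVert f\rVert_{s_0}^{A_0}\lVert f\rVert_{s_3}^{A_3} \le \lVert f\rVert_{s_0}^{\alpha_0}\lVert f\rVert_{s_3}^{\alpha_3}.
\]
This is equivalent to
\[
\bigl(\lVert f\rVert_{s_0}/\lVert f\rVert_{s_3}\bigr)^{A_0-\alpha_0}\le 1,
\]
which holds because \(\lVert f\rVert_{s_0}\le \lVert f\rVert_{s_3}\) for inhomogeneous norms (the weight \(\langle\xi\rangle^{2s}\) is increasing in \(s\)). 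The constant is \(1\), and nothing is lost when \(f=0\).

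The delicate point is exactly this direction of the monotonicity, since it relies on using the inhomogeneous norms \(\lVert\cdot\rVert_s\) as the paper defines them. I would also note that in the paper's applications the norm \(\lVert\cdot\rVert_{L^\infty}\) is bounded first by \(\lVert\cdot\rVert_{1/2+}\), which is consistent with this reading.
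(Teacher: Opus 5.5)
Your argument is correct and is essentially the paper's: both use only log-convexity of \(s\mapsto\lVert f\rVert_s\) (H\"older on the Fourier side) and monotonicity of the inhomogeneous norms. The paper moves the left endpoint down to \(t_0\le s_0\), defined by \(\alpha_1 s_1+\alpha_2 s_2=\alpha_0 t_0+\alpha_3 s_3\), so that the exponents match exactly, and then uses \(\lVert f\rVert_{t_0}\le\lVert f\rVert_{s_0}\); you instead keep the interval \([s_0,s_3]\) and shift the excess exponent \(A_0-\alpha_0\) from \(s_0\) to \(s_3\), which avoids the possibly negative-order norm \(\lVert f\rVert_{t_0}\) and the division by \(\alpha_0\). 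Just delete Step 1: its first display, \(\lVert f\rVert_s^{\alpha_1+\alpha_2}\ge\lVert f\rVert_{s_1}^{\alpha_1}\lVert f\rVert_{s_2}^{\alpha_2}\), is false in general, being the reverse of log-convexity, as you yourself then observe.
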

\begin{proof}
  Recall that the Sobolev norm~\(\lVert f \rVert_{H^s} := \lVert  ( 1 + \lvert \xi \rvert^2  )^{s/2} \hat{f} \rVert_{L^2}  \) is log-convex and increasing in~\(s\). Let~\(t_{0}\) satisfy~\(\alpha_1 s_1 + \alpha_2 s_2 = \alpha_0 t_0 + \alpha_3 s_3 \). Then~\(t_0 \le s_0\) by assumption. By log-convexity and monotonicity, we have
  \begin{equation*}
  \lVert f \rVert_{s_1}^{\alpha_1} \lVert f \rVert_{s_2}^{\alpha_2}    \le \lVert f \rVert^{\alpha_0}_{t_0} \lVert f \rVert^{\alpha_3}_{s_3} \le    \lVert f \rVert^{\alpha_0}_{s_0} \lVert f \rVert^{\alpha_3}_{s_3}.
\end{equation*}
This completes the proof.
\end{proof}

\subsection{Fractional derivatives of space-time weight function \texorpdfstring{\(\psi\)}{ψ}}

\begin{lemma}\label{lem:frac-deri-of-psi}
  Let~\(\psi(t,x)\) be defined in \cref{eq:def-psi}. Then for all~\(m \ge 1\),
  \begin{equation*}
    \sup_{t \ge 0} \lVert D^s \psi^m(t, \cdot) \rVert_{L^{\infty}} \le C(s,m), \ s \ge 1, \quad
    \sup_{t \ge 0} \lVert \partial_x^k \psi^m(t, \cdot) \rVert_{L^{\infty}} \le C(k,m), \ k \ge 1.
  \end{equation*}
\end{lemma}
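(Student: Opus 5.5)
We need to prove Lemma frac-deri-of-psi: uniform-in-time bounds on the fractional derivatives $D^s\psi^m$ for $s\ge1$ and on the classical derivatives $\partial_x^k\psi^m$ for $k\ge1$.

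We first treat the integer derivatives $\partial_x^k\psi^m$. Write $\psi(t,x)=\varphi(x)\,G\bigl(t/\varphi(x)\bigr)$, where
\begin{equation*}
G(y)=1-e^{-y},\qquad \varphi(x)=\ln(2+x^2).
\end{equation*}
We record three facts.
\begin{enumerate}
\item $\varphi\ge\ln 2>0$, and $\varphi^{(j)}(x)=O\bigl((1+|x|)^{-j}\bigr)$ for every $j\ge1$.
\item $\partial_x\psi=\varphi'\,H(t/\varphi)$ with $H(y)=G(y)-yG'(y)=1-e^{-y}-ye^{-y}$, so that $0\le H\le1$.
\item More generally, every function $y^iG^{(i)}(y)$ is bounded on $[0,\infty)$.
\end{enumerate}
Differentiating $\psi$ repeatedly by Faà di Bruno and the Leibniz rule produces a finite sum of terms of the form
\begin{equation*}
\varphi^{1-l}\,\prod_r\varphi^{(j_r)}\;\times\;(\text{bounded function of }t/\varphi).
\end{equation*}
Consequently $|\partial_x^k\psi|\lesssim_k(1+|x|)^{-k}\varphi(x)$, and in particular $|\partial_x^k\psi|\lesssim_k(1+|x|)^{-1}$ uniformly in $t$; the logarithm is absorbed by one power of $(1+|x|)^{-1}$. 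Applying the Leibniz rule to $\psi^m$ and using $0\le\psi\le\varphi$, every term of $\partial_x^k\psi^m$ with $k\ge1$ contains at least one derivative factor. This gives
\begin{equation*}
|\partial_x^k\psi^m|\lesssim_{k,m}(1+|x|)^{-1}\varphi^{m-1}\lesssim(1+|x|)^{-1/2}.
\end{equation*}
Hence the integer derivatives are bounded uniformly in $t$, and moreover $\partial_x\psi^m$ and all its derivatives lie in $L^2\cap L^\infty$ and decay.

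For the fractional derivatives with $s\ge1$, write $s=1+\sigma$ and $D^s\psi^m=D^{\sigma}\mathcal H\,\partial_x\psi^m$ up to sign, where $\mathcal H$ is the Hilbert transform; recall that $\psi^m$ itself is unbounded, so we must first pass to $g:=\partial_x\psi^m$. We then need $\|D^\sigma\mathcal H g\|_{L^\infty}$ bounded uniformly in $t$. We estimate it by splitting frequencies with a Littlewood–Paley cutoff:
\begin{equation*}
\|D^\sigma\mathcal H g\|_{L^\infty}\le\|\widehat{D^\sigma\mathcal Hg}\|_{L^1}\lesssim\int_{|\xi|\le1}|\xi|^\sigma|\hat g|\,d\xi+\int_{|\xi|>1}|\xi|^{\sigma}|\hat g|\,d\xi.
\end{equation*}
The high-frequency piece is bounded by Cauchy–Schwarz via $\|g\|_{H^{\sigma+1}}$ (with a weight $|\xi|^{-1}$ in $L^2$). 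This is finite uniformly in $t$, since the previous step gives $|\partial_x^jg|\lesssim(1+|x|)^{-1}\varphi^{m-1}$, which is in $L^2$.

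The low-frequency piece is the main obstacle, because $g$ decays only like $\varphi^{m-1}/|x|$ and is not in $L^1$, so $\hat g$ need not be bounded near $0$. We handle it by using $\hat g(\xi)=i\xi\widehat{\psi^m}$ heuristically, or more rigorously by estimating $|\hat g(\xi)|\lesssim\log^{m}(2+1/|\xi|)$. To obtain that bound, split $g=g\chi_{|x|\le1/|\xi|}+g\chi_{|x|>1/|\xi|}$. The first part is bounded in $L^1$ by $\int^{1/|\xi|}\varphi^{m-1}/(1+|x|)\,dx\lesssim\log^m(2+1/|\xi|)$. For the second part, integrate by parts once; this gains a factor $|\xi|^{-1}$ against $|\partial_xg|\lesssim\varphi^{m-1}(1+|x|)^{-2}$, giving an $O\bigl(\log^{m-1}\bigr)$ contribution plus a boundary term of the same size. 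Since $|\xi|^\sigma\log^m(2+1/|\xi|)$ is integrable near $0$ for $\sigma\ge0$, the low-frequency part is bounded uniformly in $t$.

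For $s$ an integer, $D^s=\pm\mathcal H^{s}\partial_x^s$, and the same argument (or, for even $s$, the integer case directly) applies. The constants depend only on $s$ and $m$, which proves the lemma.
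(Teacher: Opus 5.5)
\textbf{Gap: non-integer \(m\).} Your overall route is the paper's. You prove uniform-in-\(t\) pointwise decay of \(\partial_x^k\psi^m\), then bound \(\lVert D^s\psi^m\rVert_{L^\infty}\lesssim\int|\xi|^{s-1}|\hat g(\xi)|\,d\xi\) with \(g=\partial_x\psi^m\), split into low and high frequencies. The gap is in the integer-derivative step when \(m\) is not an integer. The lemma allows this, and the paper uses it (e.g.\ \(\psi^m\) with \(m=1+\alpha(2+\kappa)\) or \(m=\mathfrak{m}(p,q)\) in the weighted energy estimates). For such \(m\) there is no Leibniz expansion of \(\psi^m\). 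The chain rule instead produces terms \(\psi^{m-j}\prod_{r=1}^{j}\partial_x^{k_r}\psi\) with \(1\le j\le k\), and for \(j>m\) the exponent \(m-j\) is negative. Since \(\psi(t,x)\le t\), these negative powers blow up as \(t\to0\). Neither \(0\le\psi\le\varphi\) nor your bound \(|\partial_x^\ell\psi|\lesssim(1+|x|)^{-\ell}\varphi\) controls them, because the latter does not vanish as \(t\to0\). So \(|\partial_x^k\psi^m|\lesssim(1+|x|)^{-1}\varphi^{m-1}\) is unproved for \(k>m\). You need arbitrarily large \(k\), both for the lemma itself and for \(\lVert g\rVert_{H^{\sigma+1}}\).

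\textbf{The missing observation.} The derivatives of \(\psi\) are small relative to \(\psi\) itself. With \(y=t/\varphi\), one has \(\partial_x[B(y)]=-\frac{\varphi'}{\varphi}\,yB'(y)\). Hence the profile functions of \(y\) appearing in \(\partial_x^\ell\psi\), \(\ell\ge1\), are \((y\frac{d}{dy})^iH\) with \(H(y)=1-e^{-y}-ye^{-y}\). These are bounded and vanish at \(y=0\), so they are \(\lesssim1-e^{-y}\). This gives \(|\partial_x^\ell\psi|\lesssim(1-e^{-t/\varphi})(1+|x|)^{-\ell}\lesssim\psi\,(1+|x|)^{-\ell}\). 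Then every chain-rule term is \(\lesssim\psi^m(1+|x|)^{-k}\le\varphi^m(1+|x|)^{-k}\), whatever the sign of \(m-j\). This is exactly the paper's device: write \(\psi=\varphi\,h(u)\), divide each factor \(\partial_x^\ell\psi\) in Fa\`a di Bruno's formula by \(h(u)\), and use \(h^m\le1\). For integer \(m\) your argument goes through.

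\textbf{Two smaller remarks.} First, the low-frequency piece is not an obstacle. You already have \(\sup_t\lVert g\rVert_{L^2}<\infty\), and \(|\xi|^\sigma\le1\) on \(\{|\xi|\le1\}\). So Cauchy--Schwarz gives \(\int_{|\xi|\le1}|\xi|^\sigma|\hat g|\,d\xi\lesssim\lVert g\rVert_{L^2}\), which is the paper's step. Your \(\log^m(2+1/|\xi|)\) bound via integration by parts is correct but unnecessary, and it requires justifying pointwise values of \(\hat g\) for \(g\notin L^1\). Second, at high frequencies your Cauchy--Schwarz bound through \(\lVert g\rVert_{H^{\sigma+1}}\) is an equally valid alternative to the paper's argument. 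The paper uses \(\lVert\partial_x^kg\rVert_{L^1}\), \(k>s\), to get \(|\hat g(\xi)|\lesssim|\xi|^{-k}\). Yours needs only \(L^2\)-decay of derivatives, while the paper's needs \(L^1\)-decay, which its sharper bound \((1+|x|)^{-k+\varepsilon}\) provides.
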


\begin{proof} Let~\(\phi = \psi^m\). We will establish
\begin{equation}\label{eq:23}
\sup_{t \ge 0} \lvert \partial_x^k \phi(t,x) \rvert \lesssim ( 1 + \lvert x \rvert  )^{- k + \varepsilon}, \quad k \ge  1, \ \forall \varepsilon > 0,
\end{equation}
which will imply the first estimate as well.
Indeed, let~\(g_t (x) = \partial_x \phi(t,x)\). Then
\begin{equation*}
\bigl(  D^s \phi(t, \cdot)   \bigr)^{\wedge} = -i \mathop{\mathrm{sgn}}(\xi) \lvert \xi \rvert^{s-1} \hat{g}_t(\xi),
\end{equation*}
and the statement of the lemma follows from
\begin{equation*}\label{eq:13}
\sup_{t \ge 0} \int_{\mathbb{R}} \lvert \xi \rvert^{s-1} \lvert  \hat{g}_t(\xi) \rvert \, d \xi < \infty.
\end{equation*}
By \cref{eq:23},
\begin{equation*}
\sup_{t \ge 0 } \lVert g_t \rVert_{L^2} < \infty, \quad \sup_{ t \ge 0 } \lVert \partial_x^k g_t \rVert_{L^1} < \infty, \quad k \ge 1.
\end{equation*}
Choosing~\(\mathbb{N} \ni k_0 > s\), we have
\begin{equation*}
\lvert \hat{g_t}(\xi) \rvert = \lvert \xi \rvert^{-k} \widehat{ \partial_x^k g_t }(\xi) \le \frac{C_k}{\lvert \xi \rvert^k }.
\end{equation*}
Hence,
\begin{align*}
  \int_{\mathbb{R}} \lvert \xi \rvert^{s-1} \hat{g}_t(\xi) \, d \xi
  & \le \Bigl[ \int_{\lvert \xi \rvert \le 1 } \lvert \xi \rvert^{2s-2} \, d \xi  \Bigr]^{1/2} \lVert g_t \rVert_{L^2} +  \int_{\lvert \xi \rvert \ge 1 } C_k \lvert \xi \rvert^{-k + s -1} \, d \xi < \infty.
\end{align*}

It remains to show \cref{eq:23}. We will sketch the computation by using the Fa\`adi Bruno's formula:
\begin{equation*}\label{eq:FB-chain}
\frac{d}{d x^k} f\bigl(g(x)\bigr) = \sum_{\substack{ \alpha_j \ge 0, \\ \alpha_1 + 2\alpha_2 + \dotsb + k\alpha_k = k } } \frac{k!}{\alpha_1! \dotsb \alpha_k!} f^{(\alpha_1 + \dotsb + \alpha_k)}\bigl(g(x)\bigr) \prod_{ \ell=1}^k \Bigl( \frac{ g^{(\alpha_j)}(x)}{ \ell!}\Bigr)^{\alpha_{\ell}}.
\end{equation*}
We write~\( \psi(t,x) = \varphi(x) \cdot  h(u) \), where~\(u = t/\varphi(x)\). Then~\(h(u) = 1- e^{-u} \in [0,1]\). For~\(k \ge 1\), we have
\begin{equation}\label{eq:24}
 \lvert\partial_x^k \phi(t,x)   \rvert \le \varphi(x)^m  \sum_{ \alpha_1 + \dotsb + k \alpha_k = k} C_{\alpha,k}\prod_{ \ell = 1 }^k \Big\lvert   \frac{\partial_x^{\ell} \psi(t,x)}{ h(u) }  \Big\rvert^{\alpha_{\ell}}.
\end{equation}
For every~\(\ell \ge 1\), we have
\begin{align*}
  \partial_x^{\ell} \psi(t,x) &= \sum_{ s = 0}^{\ell} \varphi^{(\ell-s)}(x) \sum_{ \beta_1 + \dotsb + s \beta_s = s }C_{\beta} h^{(\beta_1 + \dotsb + \beta_s)} ( u )  \prod_{ p = 1}^{^s} \bigl[ \partial_x^p \Bigl(  \frac{t}{\varphi(x)} \Bigr) \bigr]^{\beta_p} \\
                 &= \sum_{ s = 0} \varphi^{(\ell-s)}(x) \sum_{ \beta_1 + \dotsb + s \beta_s = s }C_{\beta} h^{(\beta_1 + \dotsb + \beta_s)} (  u )  \prod_{ p = 1}^{^s} \Bigl[ u \sum_{ \gamma_1 + \dotsb +p  \gamma_p = p} C_{\gamma} \frac{ \prod_{q=1}^p \varphi^{(q)}(x)}{\varphi^{\gamma_1 + \dotsb + \gamma_p}}  \Bigr]^{\beta_p} \\
  & = \varphi^{(\ell)}(x) h(u) +  \sum_{ s = 1}^{\ell} \varphi^{(\ell-s)}(x) \sum_{ \substack{ a \ge 1, \ b \ge 1 \\  \gamma_1 + \dotsb + s \gamma_s = s} }C_{a,b,\gamma} h^{(a)} ( u) u^a  \varphi^{-b} \prod_{q=1}^s \bigl[   \varphi^{(q)}(x) \bigr]^{\gamma_q},
\end{align*}
Since~\(\frac{u}{ h(u)}\),~\(h^{(a)}(u), a \ge 1\) are bounded, and~\(\lvert \varphi^{(q)}(x) \rvert \lesssim (1+\lvert x \rvert )^{-q} \),~\(q \ge 1\), we have
\begin{equation*}
\Big\lvert \frac{\partial_x^{\ell} \psi(t,x)}{ h \bigl(  t/\varphi(x) \bigr)}  \Big\rvert \lesssim_{\ell} (1+ \lvert x \rvert )^{ - \ell}.
\end{equation*}
Plugging this into \cref{eq:24}, we obtain
\begin{equation*}
\lvert \partial_x^k \phi(t,x) \rvert \lesssim (1+ \lvert x \rvert )^{-k} \varphi(x)^m \lesssim ( 1 + \lvert x \rvert  )^{-k+\varepsilon},
\end{equation*}
as desired.

\end{proof}

\subsection{Martingale inequalities}\label{sec:mart-ineq}

We recall some martingale inequalities.
\begin{lemma}\label{lem:super-mart-ineq}
Let~\(X_t\) be a non-negative super-martingale, then for every~\(x > 0\),
\begin{equation*}
\mathbb{P}( \sup_{t \ge 0 } X_{t } \ge x  )  \le \frac{\mathsf{E} X_0}{ x}.
\end{equation*}
In particular, if~\(M_t\) is a local martingale, then
\begin{equation*}
\mathbb{P} ( \sup_{t \ge 0 } (M_t - \gamma \langle M \rangle_t ) \ge \rho  )  = \mathbb{P} ( \sup_{ t \ge 0 } e^{ \gamma M_t - \frac{\gamma^2}{2} \langle M \rangle_t } \ge e^{  \gamma \rho}  )\le e^{ - \gamma \rho}.
\end{equation*}
\end{lemma}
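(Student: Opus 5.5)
The first claim is the maximal inequality for non-negative supermartingales, and I would prove it by optional stopping. Let \(x>0\) and set \(\tau_x := \inf\{t \ge 0: X_t \ge x\}\). Assume \(X\) has right-continuous paths, as is the case for the exponential processes we use below. Fix \(T>0\). The bounded stopping time \(\tau_x \wedge T\) satisfies
\[
\mathsf{E} X_0 \ge \mathsf{E} X_{\tau_x \wedge T} \ge \mathsf{E}\bigl( X_{\tau_x} \mathbb{I}_{\{\tau_x \le T\}}\bigr) \ge x\, \mathsf{P}(\tau_x \le T).
\]
The first step is optional stopping for supermartingales. The second drops the non-negative contribution from \(\{\tau_x > T\}\). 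The third uses \(X_{\tau_x} \ge x\) on \(\{\tau_x<\infty\}\), which follows from right-continuity. Letting \(T\to\infty\) gives \(\mathsf{P}(\tau_x<\infty) \le \mathsf{E}X_0/x\). Since \(\{\sup_t X_t > x'\} \subset \{\tau_{x'}<\infty\}\) for every \(x'<x\), we get \(\mathsf{P}(\sup_t X_t \ge x) \le \mathsf{E}X_0/x'\) for all \(x'<x\). Letting \(x' \uparrow x\) finishes the first part.

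For the second claim, take a continuous local martingale \(M\) with \(M_0 = 0\) and \(\gamma>0\), and set
\[
X_t := \exp\Bigl(2\gamma M_t - \tfrac{(2\gamma)^2}{2}\langle M\rangle_t\Bigr) = \exp\bigl(2\gamma (M_t - \gamma\langle M\rangle_t)\bigr).
\]
By Itô's formula, \(X\) is a non-negative continuous local martingale with \(X_0=1\). By Fatou's lemma applied along a localizing sequence, it is a supermartingale with \(\mathsf{E}X_0 = 1\). Because \(r \mapsto e^{2\gamma r}\) is increasing, the events coincide:
\[
\{\sup_t (M_t - \gamma\langle M\rangle_t) \ge \rho\} = \{\sup_t X_t \ge e^{2\gamma\rho}\}.
\]
The first part then gives the bound \(e^{-2\gamma\rho} \le e^{-\gamma\rho}\).

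The exponent in the statement is written as \(\gamma M - \frac{\gamma^2}{2}\langle M\rangle\), which does not match \(\gamma(M - \gamma\langle M\rangle)\) literally. I would therefore present the argument with the exponential martingale of parameter \(2\gamma\) as above. This yields a sharper bound that implies the stated one. Only the exponential tail in \(\rho\) is used later, in the proof of \cref{lem:weighted-energy-higher-m}.

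The only delicate point is the reduction from a local martingale to a supermartingale and the justification of optional stopping. Both are standard once we note non-negativity and apply Fatou's lemma along a localizing sequence.
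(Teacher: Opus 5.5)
Your argument is correct. The paper gives no proof of this lemma and only recalls it; the intended route is exactly yours, namely the maximal inequality for non-negative supermartingales applied to an exponential supermartingale.

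You are also right that the displayed equality is not literally true. Since \(\gamma M_t-\gamma^2\langle M\rangle_t\le \gamma M_t-\frac{\gamma^2}{2}\langle M\rangle_t\), one only has the inclusion \(\{\sup_t (M_t-\gamma\langle M\rangle_t)\ge\rho\}\subset\{\sup_t e^{\gamma M_t-\frac{\gamma^2}{2}\langle M\rangle_t}\ge e^{\gamma\rho}\}\). That inclusion already yields the stated bound \(e^{-\gamma\rho}\) with the parameter-\(\gamma\) exponential.

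Your choice of parameter \(2\gamma\) instead gives an exact identity of events and the sharper bound \(e^{-2\gamma\rho}\). This is in fact the form the paper uses in the proof of \cref{lem:martingale-linear-growth}: there \(\exp(\gamma M_t-\frac{\gamma^2}{2}\langle M\rangle_t)\) bounds the probability of \(\{\sup_t(M_t-\frac{\gamma}{2}\langle M\rangle_t)\ge\rho/2\}\) by \(e^{-\gamma\rho/2}\). The literal statement would only give \(e^{-\gamma\rho/4}\), which is equally sufficient there. So the lemma is used in that proof as well, not only in the one you mention.

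The hypotheses you add (right-continuous paths for \(X\), and \(M\) continuous with \(M_0=0\)) are the implicit ones. They hold in every application in the paper.
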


\begin{lemma}\label{lem:martingale-linear-growth}
Let~\(M_t\) be a continuous martingale with~\(M_0 = 0\). Assume that for some~\(q \ge 2\),
\begin{equation*}
\mathbb{E} \langle M \rangle_t^{q/2} \le K_q  t^{q/2}, \quad t \ge 1.
\end{equation*}
Then for every~\(\gamma > 0\), there exists a constant~\(C = C_{q,\gamma}\) such that
\begin{equation*}
\mathbb{P} \bigl( \sup_{t \ge 0 }  (M_t - \gamma t) \ge \rho   \bigr) \le C_{q,\gamma} K_q \rho^{ - \frac{q}{2} + 1}, \quad \rho \ge 2.
\end{equation*}
\end{lemma}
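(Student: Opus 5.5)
We prove this by the usual time-discretization combined with the Burkholder--Davis--Gundy inequality and Chebyshev's inequality.

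\textbf{Step 1 (reduction to dyadic blocks).} Since \(M_t - \gamma t \le M_t\), the event \(\{\sup_{t\ge 0}(M_t-\gamma t)\ge \rho\}\) is contained in the union of the events
\begin{equation*}
\Bigl\{\sup_{0\le t\le \rho}M_t\ge \rho\Bigr\}, \qquad
\Bigl\{\sup_{t\in[2^{k-1}\rho,\,2^k\rho]}M_t\ge \rho+\gamma 2^{k-1}\rho\Bigr\},\quad k\ge1 .
\end{equation*}
On the \(k\)-th block we therefore only need \(\sup_{s\le 2^k\rho}|M_s|\ge c_\gamma 2^k\rho\), with \(c_\gamma=\min(1,\gamma/2)\) (absorbing \(1/2\)).

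\textbf{Step 2 (moment bound on each block).} For each block we use Chebyshev with the \(q\)-th moment and the BDG inequality:
\begin{equation*}
\mathbb{P}\Bigl(\sup_{s\le T}|M_s|\ge x\Bigr)\le x^{-q}\,\mathbb{E}\sup_{s\le T}|M_s|^q\le C_q x^{-q}\,\mathbb{E}\langle M\rangle_T^{q/2}\le C_q K_q\, T^{q/2}x^{-q}.
\end{equation*}
This applies for \(T\ge1\), which holds because \(\rho\ge2\). With \(T=2^k\rho\) and \(x=c_\gamma 2^k\rho\) (including \(k=0\)), the block bound becomes \(C_qK_q c_\gamma^{-q}(2^k\rho)^{-q/2}\).

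\textbf{Step 3 (summation).} Summing the geometric series over \(k\ge0\) gives a bound \(C_{q,\gamma}K_q\rho^{-q/2}\le C_{q,\gamma}K_q\rho^{-q/2+1}\), using \(\rho\ge 2\). The claimed rate is weaker than what we obtain, so it certainly holds.

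The only delicate point is the case \(q=2\) near the borderline. There the bound is \(\rho^{-1}\le \rho^{0}\), so the statement is trivial; and for \(q>2\) the series converges, so no obstacle remains.

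If \(M\) is only a local martingale, apply the above to stopped martingales and let the stopping times go to infinity using Fatou's lemma. Note also that BDG is valid for continuous local martingales with \(q\ge2\), so the constants depend only on \(q\) and \(\gamma\).
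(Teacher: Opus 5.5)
Your proof is correct, and it takes a different route from the paper.

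The paper splits \(M_t-\gamma t=(M_t-\frac{\gamma}{2}\langle M\rangle_t)+\frac{\gamma}{2}(\langle M\rangle_t-2t)\). It bounds the first piece by \(e^{-\gamma\rho/2}\) using the exponential supermartingale \(\exp(\gamma M_t-\frac{\gamma^2}{2}\langle M\rangle_t)\). It treats the second piece by a union over unit time intervals and Chebyshev applied to \(\langle M\rangle_{n+1}\).

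You never separate off the quadratic variation. The dyadic blocks \([2^{k-1}\rho,2^k\rho]\) let the drift raise the threshold to order \(2^k\rho\). BDG then turns the assumed \(q/2\)-moment of \(\langle M\rangle_T\) into a \(q\)-th moment bound for \(\sup_{s\le T}|M_s|\), which is exactly what makes the blocks summable. You even get the sharper rate \(\rho^{-q/2}\).

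The difference is not just cosmetic. In the paper's second step the Chebyshev bound is written with \((2n+2\rho/\gamma)^{q}\) in the denominator, which would need a \(q\)-th moment of \(\langle M\rangle\). With the \(q/2\)-th moment actually assumed, the term bounds are \(K_q\bigl(\frac{n+1}{2n+2\rho/\gamma}\bigr)^{q/2}\to K_q2^{-q/2}\), and the series diverges. Concretely, \(M_t=\sqrt{3}\,B_t\) satisfies the hypothesis with \(K_q=3^{q/2}\), yet \(\sup_t(\langle M\rangle_t-2t)=\infty\) almost surely. So splitting off \(\langle M\rangle_t-2t\) with a fixed slope cannot give decay in \(\rho\) under moment assumptions alone. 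That decomposition pays off when \(\langle M\rangle_t\) is controlled pathwise (for instance \(\langle M\rangle_t\le 2t\) a.s.), where it gives exponential tails. Your argument uses precisely the stated hypothesis.

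Two minor points. The case \(q=2\) needs no separate comment, since \(\sum_k 2^{-kq/2}\) converges for every \(q>0\). The localization remark is also unnecessary, because BDG holds directly for continuous local martingales.
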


\begin{proof}
We have
\begin{equation*}
\mathbb{P} \bigl(  \sup_{t \ge 0 } (M_t - \gamma t) \ge \rho  \bigr) \le \mathbb{P} \bigl( \sup_{ t \ge 0} (M_t - \frac{\gamma}{2}\langle M \rangle_t ) \ge \rho /2  \bigr) + \mathbb{P} \bigl( \sup_{t \ge 0 } (\langle M \rangle_t - 2t ) \ge 2\rho /\gamma  \bigr).
\end{equation*}
The first term is bounded by~\(e^{ - \gamma \rho/2}\) by \cref{lem:super-mart-ineq} since~\(E_t = \exp ( \gamma M_t - \frac{\gamma^2}{2} \langle M \rangle_t  )\) is a super-martingale and~\(E_0 = 1\). For the second term, since~\(t \mapsto \langle M \rangle_t\) is non-decreasing, we have
\begin{align*}
\mathbb{P} \bigl( \sup_{t \ge 0 } (\langle M \rangle_t - 2t) \ge 2 \rho/\gamma   \bigr) &\le \sum_{n = 0}^{\infty} \mathbb{P} ( \langle M \rangle_{n+1} \ge 2 n + 2\rho/\gamma)
\le \sum_{n = 0}^{\infty } \frac{K_q (n+1)^{q/2} }{ (2n+2\rho/\gamma)^{q}} \\&\lesssim_{q,\gamma}  K_q  \int_{\rho}^{\infty} \frac{dx}{ x^{q/2} }
\lesssim_{q,\gamma} K_q \rho^{-q/2+1}.
\end{align*}
\end{proof}

\newcommand{\etalchar}[1]{$^{#1}$}


\begin{thebibliography}{DDG{\etalchar{+}}23}

\bibitem[Abe89]{abergel1989attractor}
F~Abergel.
\newblock Attractor for a {{Navier}}--{{Stokes}} flow in an unbounded domain.
\newblock {\em ESAIM: Mod{\'e}lisation math{\'e}matique et analyse
  num{\'e}rique}, 23(3):359--370, 1989.

\bibitem[ACQ11]{AmirCorwinQuaste2011ProbabilityDistributionFree}
Gideon Amir, Ivan Corwin, and Jeremy Quastel.
\newblock Probability distribution of the free energy of the continuum directed
  random polymer in 1 + 1 dimensions.
\newblock {\em Communications on Pure and Applied Mathematics}, 64(4):466--537,
  2011.

\bibitem[Ali07]{Alibau2007EntropyFormulationFractal}
Nathaël Alibaud.
\newblock Entropy formulation for fractal conservation laws.
\newblock {\em Journal of Evolution Equations}, 7(1):145--175, 2007.

\bibitem[AV24]{AgrestVeraar2024CriticalVariationalSetting}
Antonio Agresti and Mark Veraar.
\newblock The critical variational setting for stochastic evolution equations.
\newblock {\em Probability Theory and Related Fields}, 188(3-4):957--1015,
  April 2024.

\bibitem[Bak13]{Bakhti2013BurgersEquationPoisson}
Yuri Bakhtin.
\newblock The {{Burgers}} equation with {{Poisson}} random forcing.
\newblock {\em Annals of Probability}, 41(4):2961--2989, July 2013.

\bibitem[Bak16]{Bakhti2016InviscidBurgersEquation}
Yuri Bakhtin.
\newblock Inviscid {{Burgers}} equation with random kick forcing in noncompact
  setting.
\newblock {\em Electronic Journal of Probability}, 21:50 pp., 2016.

\bibitem[Ban19]{Banerj2019FractionalHyperviscosityInduced}
Debarghya Banerjee.
\newblock Fractional hyperviscosity induced growth of bottlenecks in energy
  spectrum of {{Burgers}} equation solutions.
\newblock {\em The European Physical Journal B}, 92(9):209, 2019.

\bibitem[BCK14]{bakhtin2014space}
Yuri Bakhtin, Eric Cator, and Konstantin Khanin.
\newblock Space-time stationary solutions for the {{Burgers}} equation.
\newblock {\em Journal of the American Mathematical Society}, 27(1):193--238,
  2014.

\bibitem[BK10]{BakhtiKhanin2010LocalizationPerronFrobeniusTheory}
Yuri Bakhtin and Konstantin Khanin.
\newblock Localization and {{Perron-Frobenius}} theory for directed polymers.
\newblock {\em Moscow Mathematical Journal}, 10(4):667--686, 838, 2010.

\bibitem[BK13]{BoritcKhanin2013HyperbolicityMinimizers1D}
Alexandre Boritchev and Konstantin Khanin.
\newblock On the hyperbolicity of minimizers for {{1D}} random {{Lagrangian}}
  systems.
\newblock {\em Nonlinearity}, 26(1):65--80, January 2013.

\bibitem[BK18]{BakhtiKhanin2018GlobalSolutionsRandom}
Yuri Bakhtin and Konstantin Khanin.
\newblock On global solutions of the random {{Hamilton}}--{{Jacobi}} equations
  and the {{KPZ}} problem.
\newblock {\em Nonlinearity}, 31(4):R93--R121, February 2018.

\bibitem[BK21]{BoritcKuksin2021OnedimensionalTurbulenceStochastic}
Alexandre Boritchev and Sergei Kuksin.
\newblock {\em One-Dimensional Turbulence and the Stochastic {{Burgers}}
  Equation}, volume 255 of {\em Mathematical {{Surveys}} and {{Monographs}}}.
\newblock American Mathematical Society, Providence, Rhode Island, July 2021.

\bibitem[BL18]{BakhtiLi2018ZeroTemperatureLimit}
Yuri Bakhtin and Liying Li.
\newblock Zero temperature limit for directed polymers and inviscid limit for
  stationary solutions of stochastic {{Burgers}} equation.
\newblock {\em Journal of Statistical Physics}, 172(5):1358--1397, September
  2018.

\bibitem[BL19]{bakhtin2019thermodynamic}
Yuri Bakhtin and Liying Li.
\newblock Thermodynamic limit for directed polymers and stationary solutions of
  the {{Burgers}} equation.
\newblock {\em Communications on Pure and Applied Mathematics}, 72(3):536--619,
  2019.

\bibitem[Bor18]{Boritc2018ExponentialConvergenceStationary}
Alexandre Boritchev.
\newblock Exponential convergence to the stationary measure for a class of {{1D
  Lagrangian}} systems with random forcing.
\newblock {\em Stochastics and Partial Differential Equations: Analysis and
  Computations}, 6(1):109--123, March 2018.

\bibitem[Bur40]{Burger1940HydrodynamicsApplicationModel}
J.~M. Burgers.
\newblock Hydrodynamics. — application of a model system to illustrate some
  points of the statistical theory of free turbulence.
\newblock {\em Nederl. Akad. Wetensch., Proc.}, 43:2--12, 1940.

\bibitem[Bur74]{Burger1974NonlinearDiffusionEquation}
J.M. Burgers.
\newblock {\em The Nonlinear Diffusion Equation: Asymptotic Solutions and
  Statistical Problems}.
\newblock D. Reidel Pub. Co, 1974.

\bibitem[CP19]{chen2019invariant}
Gui-Qiang~G Chen and Peter~HC Pang.
\newblock Invariant measures for nonlinear conservation laws driven by
  stochastic forcing.
\newblock {\em Chinese Annals of Mathematics, Series B}, 40(6):967--1004, 2019.

\bibitem[DDG{\etalchar{+}}23]{drivas2023invariant}
Theodore~D Drivas, Alexander Dunlap, Cole Graham, Joonhyun La, and Lenya
  Ryzhik.
\newblock Invariant measures for stochastic conservation laws on the line.
\newblock {\em Nonlinearity}, 36(9):4553--4584, 2023.

\bibitem[DGR21]{dunlap2021stationary}
Alexander Dunlap, Cole Graham, and Lenya Ryzhik.
\newblock Stationary solutions to the stochastic {{Burgers}} equation on the
  line.
\newblock {\em Communications in Mathematical Physics}, 382(2):875--949, 2021.

\bibitem[Dro03]{Dronio2003VanishingNonlocalRegularization}
Jerome Droniou.
\newblock Vanishing non-local regularization of a scalar conservation law.
\newblock {\em Electronic Journal of Differential Equations}, 2003(117):1--20,
  2003.

\bibitem[DS05]{DirrSougan2005LargetimeBehaviorViscous}
Nicolas Dirr and Panagiotis~E. Souganidis.
\newblock Large-time behavior for viscous and nonviscous {{Hamilton-Jacobi}}
  equations forced by additive noise.
\newblock {\em SIAM Journal on Mathematical Analysis}, 37(3):777--796
  (electronic), 2005.

\bibitem[DV15]{debussche2015invariant}
Arnaud Debussche and Julien Vovelle.
\newblock Invariant measure of scalar first-order conservation laws with
  stochastic forcing.
\newblock {\em Probability Theory and Related Fields}, 163(3):575--611, 2015.

\bibitem[EKMS00]{EKhaninMazelSinai2000InvariantMeasuresBurgers}
Weinan E, Konstantin Khanin, A.~Mazel, and Yakov~G. Sinai.
\newblock Invariant measures for {{Burgers}} equation with stochastic forcing.
\newblock {\em Annals of Mathematics. Second Series}, 151(3):877--960, 2000.

\bibitem[Gao24]{gao2024polynomialwave}
Peng Gao.
\newblock Polynomial mixing for the white-forced wave equation on the whole
  line.
\newblock {\em arXiv preprint arXiv:2412.13230v2}, 2024.

\bibitem[Gao26]{Gao2026PolynomialMixingWhiteforced}
Peng Gao.
\newblock Polynomial mixing for the white-forced hyperviscous {{Burgers}}
  equation on the whole line.
\newblock {\em Journal of Differential Equations}, 480:114617, November 2026.

\bibitem[GIKP05]{GomesIturriKhaninPadill2005ViscosityLimitStationary}
Diogo Gomes, Renato Iturriaga, Konstantin Khanin, and Pablo Padilla.
\newblock Viscosity limit of stationary distributions for the random forced
  {{Burgers}} equation.
\newblock {\em Moscow Mathematical Journal}, 5(3):613--631, 2005.

\bibitem[GO14]{GrafakOh2014KatoPonceInequality}
Loukas Grafakos and Seungly Oh.
\newblock The {{Kato}}--{{Ponce}} inequality.
\newblock {\em Communications in Partial Differential Equations},
  39(6):1128--1157, June 2014.

\bibitem[Hei11]{Heil2011BasisTheoryPrimer}
Christopher Heil.
\newblock {\em A Basis Theory Primer: Expanded Edition}.
\newblock Applied and {{Numerical Harmonic Analysis}}. Birkh\"auser, Boston,
  2011.

\bibitem[HK03]{HoangKhanin2003RandomBurgersEquation}
Viet~Ha Hoang and Konstantin Khanin.
\newblock Random {{Burgers}} equation and {{Lagrangian}} systems in non-compact
  domains.
\newblock {\em Nonlinearity}, 16(3):819--842, 2003.

\bibitem[IK03]{IturriKhanin2003BurgersTurbulenceRandom}
R.~Iturriaga and Konstantin Khanin.
\newblock Burgers turbulence and random {{Lagrangian}} systems.
\newblock {\em Communications in Mathematical Physics}, 232(3):377--428, 2003.

\bibitem[IKZ20]{IturriKhaninZhang2020ExponentialConvergenceSolutions}
Renato Iturriaga, Konstantin Khanin, and Ke~Zhang.
\newblock Exponential convergence of solutions for random
  {{Hamilton}}--{{Jacobi}} equations.
\newblock {\em Stochastics and Partial Differential Equations-Analysis and
  Computations}, 8(3):544--579, September 2020.

\bibitem[KS12]{KuksinShirik2012MathematicsTwoDimensionalTurbulence}
Sergei Kuksin and Armen Shirikyan.
\newblock {\em Mathematics of Two-Dimensional Turbulence}.
\newblock Cambridge University Press, 1 edition, September 2012.

\bibitem[Kuk24]{KuksinK41TheoryTurbulence}
Sergei Kuksin.
\newblock The {{K41}} theory and turbulence in {{1D Burgers}} equation.
\newblock {\em Chaos: an Interdisciplinary Journal of Nonlinear Science},
  34(2):022103, February 2024.

\bibitem[KZ17]{KhaninZhang2017HyperbolicityMinimizersRegularity}
Konstantin Khanin and Ke~Zhang.
\newblock Hyperbolicity of minimizers and regularity of viscosity solutions for
  a random {{Hamilton}}--{{Jacobi}} equation.
\newblock {\em Communications in Mathematical Physics}, 355(2):803--837,
  October 2017.

\bibitem[Li19]{Li2019KatoPonceFractional}
Dong Li.
\newblock On kato--ponce and fractional {{Leibniz}}.
\newblock {\em Revista Matem\'atica Iberoamericana}, 35(1):23--100, January
  2019.

\bibitem[Mey92]{Meyer1992WaveletsOperators}
Yves Meyer.
\newblock {\em Wavelets and Operators}.
\newblock Number~37 in Cambridge Studies in Advanced Mathematics. Cambridge
  university press, Cambridge, transferred to digital print edition, 1992.

\bibitem[MQR21]{MatetsQuasteRemeni2021KPZFixedPoint}
Konstantin Matetski, Jeremy Quastel, and Daniel Remenik.
\newblock The {{KPZ}} fixed point.
\newblock {\em Acta Mathematica}, 227(1):115--203, 2021.

\bibitem[NZ24a]{NersesZhao2024ExponentialMixingWhiteForced}
Vahagn Nersesyan and Meng Zhao.
\newblock Exponential mixing for the white-forced complex {{Ginzburg}}--landau
  equation in the whole space.
\newblock {\em SIAM Journal on Mathematical Analysis}, 56(3):3646--3678, June
  2024.

\bibitem[NZ24b]{NersesZhao2024PolynomialMixingWhiteforced}
Vahagn Nersesyan and Meng Zhao.
\newblock Polynomial mixing for the white-forced {{Navier-Stokes}} system in
  the whole space, October 2024.

\bibitem[RS11]{RunstSickel2011SobolevSpacesFractional}
Thomas Runst and Winfried Sickel.
\newblock Sobolev spaces of fractional order, nemytskij operators, and
  nonlinear partial differential equations.
\newblock In {\em Sobolev Spaces of Fractional Order, Nemytskij Operators, and
  Nonlinear Partial Differential Equations}. De Gruyter, July 2011.

\bibitem[Shi08]{Shirik2008ExponentialMixingRandomly}
Armen Shirikyan.
\newblock Exponential mixing for randomly forced partial differential
  equations: Method of coupling.
\newblock In Claude Bardos and Andrei Fursikov, editors, {\em Instability in
  Models Connected with Fluid Flows {{II}}}, volume~7, pages 155--188. Springer
  New York, New York, NY, 2008.

\bibitem[Sin91]{Sinai1991TwoResultsConcerning}
{\relax Ya}.~G. Sina{\u \i}.
\newblock Two results concerning asymptotic behavior of solutions of the
  {{Burgers}} equation with force.
\newblock {\em Journal of Statistical Physics}, 64(1-2):1--12, 1991.

\bibitem[Tad04]{Tadmor2004BurgersEquationVanishing}
Eitan Tadmor.
\newblock Burgers' equation with vanishing hyper-viscosity.
\newblock {\em Communications in Mathematical Sciences}, 2(2):317--324, 2004.

\end{thebibliography}
\end{document}